\newtheorem{theorem}{Theorem}[section]
\newtheorem{proposition}[theorem]{Proposition}
\newtheorem{remark}[theorem]{Remark}
\newtheorem{lemma}[theorem]{Lemma}
\newtheorem{corollary}[theorem]{Corollary}
\newcommand\md{\textrm{\upshape mod }}
\numberwithin{equation}{section}
\newcommand{\floor}[1]{\left\lfloor #1 \right\rfloor}
\title{Explicit estimates for the Goldbach summatory functions}
\author[Gautami Bhowmik]{Gautami Bhowmik}
\address{Laboratoire Paul Painlev\'e LABEX-C2EMPI , Universit\'e de Lille, Batiment M2, 59655 Villeneuve-d'Ascq, cedex France}
\email{gautami.bhowmik@univ-lille.fr}
\author[A.-M. Ernvall-Hyt\"onen]{Anne-Maria Ernvall-Hyt\"onen}
\address{Mathematics and Statistics, P. O 68, 00014 University of Helsinki, Finland}
\email{anne-maria.ernvall-hytonen@helsinki.fi}
\author[Neea Paloj\"arvi]{Neea Paloj\"arvi}
\address{School of Science, The University of New South Wales (Canberra), ACT, Australia}
\email{n.palojarvi@unsw.edu.au}
\subjclass[2020]{11P32, 11M26, 11M41}
\keywords{Explicit estimates, Goldbach summatory function, arithmetic progressions}
\begin{document}

\begin{abstract} 
In order to study the analytic properties of the Goldbach generating function  we 
consider a smooth version, similar to the Chebyshev function for the Prime Number 
Theorem. In this paper, we obtain 
explicit numerical estimates for the average order of
its summatory function
both in the classical case and in arithmetic progressions. In addition, we derive new
explicit estimates for sums over zeros and for the function $\psi(u,\chi)$. Our results 
 are  general and describe how the explicit bounds depend on other known explicit 
 estimates.
 These support the known asymptotic results under the (Generalised) Riemann 
 Hypothesis involving error terms.
\end{abstract}

\maketitle

\section{Introduction}
A  question posed in 1742,  known  in its contemporary form as the Goldbach conjecture, asks if every even integer greater than $2$ can be expressed 
as the sum of two prime numbers. Almost three centuries later, the hypothesis remains unproved though it 
is known to be statistically true and is empirically supported by calculations for all numbers  
up to \(4\cdot 10^{18}\) \cite{OHP2014}.  

For analytic study, rather than showing that the Goldbach function
\begin{equation*}
    g(n)=\sum_{\substack{p_1+p_2=n \\ p_i \text{ prime}}} 1
\end{equation*}
is positive for all even $n$, 
it is easier to handle the smoother form over powers of prime
\begin{equation}
\label{def:Gn}
G(n)=\sum_{\ell+m=n}\Lambda(\ell)\Lambda(m),
\end{equation}
where \(\Lambda\) is the von Mangoldt function defined to be \(\log p\)  for a positive power of a prime number $p$ and defined to vanish elsewhere.
Hence \(g(n)\) can be recovered easily from $G(n)$ by partial summation and if \(G(n)\)  could be shown to be sufficiently large (see e.g. \cite[Section 2]{Granville2008}), the Goldbach conjecture would be true. However this seems to not yet be within our reach 
though we do know explicit upper bounds for $g(n)$. \cite{DGNP1993}.

Further, Hardy and Littlewood \cite{HL23} conjectured that
\begin{equation}\label{conj:HL}
g(n)\sim \frac{2n}{(\log n)^2}C_2\prod_{\substack{p\mid n\\ p>2}}\frac{p-1}{p-2},
\end{equation}

where \(C_2=2\prod_{p>2}(1-\frac{1}{(p-1)^2})\) is the twin-prime constant.

It is natural in analytic number theory to obtain information on erratic arithmetic functions via the better-behaved problem on a partial sum, 
up to some cut-off value. Consider thus  
the \textit{Goldbach summatory function}
\[ 
S(x)=\sum_{n\le x}G(n).
\]
 An asymptotic of the form
\[
\sum_{n\le x}g(n)\sim \frac{x^{2}}{2\log^{2}x}
\]
was  known since Landau \cite{L1900} though more precise work on \(S(x)\), begun by Fujii \cite{F1991_1} almost a century later, 
required information on zeros of the Riemann zeta function due to Gallagher \cite{Gallagher1989}. 
The congruence Goldbach function, defined for \( a, b \)  positive integers coprime to \(q\),  as
\[
G(n;q,a,b)=\sum_{\substack{\ell+m=n\\\ell\equiv a,\,m\equiv b\,(\md q)}}\Lambda(\ell)\Lambda(m)
\]
together with its summatory function \(S(x; q,a,b)\), has been studied more recently (see \cite{BHMS2019}).

 We notice that this situation is analogous to the very classical question counting primes. Let \(\pi (x) :=\sum_{p\le x} 1\) denote the prime counting function and let \(\psi \) be the Chebyshev function, indeed
\begin{equation}
\label{eq:defpsi}
{\displaystyle \psi (x):=\!\!\!\!\sum _{\stackrel {p^{k}\leq x,}{p{\text{ is prime}}}}\!\!\!\!\log p=\sum_{n\le x}\Lambda(n)\;}.
\end{equation}
Now, we note that $g(2N) \leq 2\pi(2N)-\pi(nN)-\pi(N-1)$. Similarly, the arithmetic progression situation is connected to 
 the primes in arithmetic progression, whose counting function, for $a$ coprime to $q$, is defined as 
\[\pi(x; q, a):=\sum_{\substack{p\le x\\p\equiv a(\md q)}}1,\]
their legendary asymptotics being 
\[
\pi(x)\sim \frac{x}{\log x} ,\  \psi(x)\sim x\  \text {and}\  \pi(x; q, a)\sim \frac{1}{\varphi(q)}\frac{x}{\log(x)}
\] respectively.

To obtain precise analytic information on functions involving prime numbers, we encounter the  Dirichlet $L$-functions, including the Riemann zeta function.
For the summatory functions it is possible to obtain an expression involving a dominant term, which is computed knowing the order of specific zeros at central points of the associated 
\(L\)-functions and to separate 
a term containing the trivial zeros  from an oscillatory term 
involving the infinitely many non-trivial zeros which can not be studied without assuming some conjectures. 
And since  many 
plausible hypotheses on zeros of $L$-functions  are out of reach at the
moment,  it is difficult to treat average orders of the Goldbach
functions unconditionally.

Perhaps  the
most widely known of such conjectures is the  \textit {Generalized Riemann Hypothesis} (GRH) and the weaker one on
\textit {Siegel zeros}. If $\chi$ is a Dirichlet character modulo $q$  and $L(s, \chi)$ 
the associated Dirichlet $L$-function, the real part of its  non-trivial 
zeros are expected, according to the GRH, to lie on the line $\Re (s)=\frac{1}{2} $. 
The existence of a Landau-Siegel zero  would serve as  a counterexample to
 the GRH. Such an eventual zero, let us denote it by 
$\beta_1$, would be a real one associated to a unique primitive 
quadratic Dirichlet character of conductor $q_\chi$ with $\beta_1=1- \frac{1}{c(\epsilon)\log q}$ for all $\epsilon >0$.
Siegel's theorem of 1930 asserts that $c(\epsilon) \ll_{\epsilon} q^{\epsilon}$.
Unfortunately $c(\epsilon)$ cannot be computed effectively
for any \( \epsilon < 1/2 \). There is abundant literature on this topic, for example \cite{MV2007} could be an entry point.

Then there are the folkloric
assumptions on \textit {Linear Independence of zeros} which states that the imaginary parts
of the non-trivial zeros of \(L\)-functions are linearly independent over the rationals and the \textit {Non-Coincidence of zeros} (see \cite[p. 353]{Conrey2003}) which
expects that different primitive Dirichlet characters with the same modulus do not 
have a common non-trivial zero with the same multiplicity. If we relax the 
condition on multiplicity we get the 
\textit {Distinct Zero conjecture} which predicts that for any given conductor any two
 distinct Dirichlet \(L\)-functions associated to it do not have a common 
 non-trivial zero, except for a possible multiple zero at \( s=1/2\) (see \cite{BHMS2019}). 
 
 As an example of a conditional Goldbach result, we note that  assuming the \textit {Riemann Hypothesis} (RH),  the average
order
of \(G(n)\)
 can  be written as 
\[
 S(x)= \frac{x^2}{2}-H(x)+O(x\log^3 x)
\]
where the oscillating term
 \(H(x)=\sum_{\rho}\frac{x^{\rho+1}}{\rho(\rho+1)}\)
is a sum over \(\rho\), the
non-trivial zeros of the Riemann zeta function \cite{GY2017, LZ2012},
whereas an unconditional form
\[
S(x)= \frac{x^2}{2}+O( x^{3/2+\epsilon} )
\]
for all positive \( \epsilon\) would already imply the RH \cite{Granville2008}.
See also a more detailed formula for the function $S(x)$ obtained
by Goldston and Suriajaya \cite{GS2023}. 

Similarly for the congruence Goldbach function, for \(a,b\) coprime to $q$, we state the following simplified summatory expression without 
the oscillating term
\begin{equation}
\label{eq:SBBound}
S(x;q,a,b)= \frac{x^2}{2\varphi(q)^2} +O(x^{1+B_q})
\end{equation}
where $B_q$ depends on the non-trivial zeros of the associated Dirichlet $L$-functions. In her very recent pre-print \cite{N2024}, Nguyen generalized the type \eqref{eq:SBBound} of result to the case where each of the summands is congruent to a different modulus.

On the other hand,
for $a$ co-prime to an odd $q$ , if the  \textit {Distinct Zero Conjecture} is assumed be true,
 the asymptotic formula
\begin{equation*}
S(x;q,a,a)=\frac{x^2}{2\varphi(q)^2}+O_{q,\varepsilon}(x^{3/2+\varepsilon})
\end{equation*}
for any $\varepsilon>0$
is equivalent to the GRH for the functions $L(s,\chi)$ with any character $\chi$ mod $q$ \cite{BHMS2019}. As far as we know there are no  explicit estimates for $S(x)$ or $S(x;q,a,b)$ in  literature.

In this paper we study explicit versions of the above asymptotics. Our results 
are unconditional in the sense that we do not assume GRH, DZC or any other similar 
conjecture. The details are described  in Section \ref{sec:mainresults}.
The estimates are divided in two different categories:  completely explicit results
and general results that can be numerically improved if certain other explicit
assumptions are improved.  In addition, our general results indicate
the terms that  affect our error terms most (see Remark 
\ref{rmk:dependence}.) In Section \ref{sec:preliminarylemmas}, we derive the main 
lemmas to prove the general results on the lines of the papers \cite{BHMS2019, 
Granville2008}. The general theorems are proved in Sections \ref{sec:FirstGeneral} 
and \ref{sec:SecondGeneral}. The explicit versions of the most of the assumptions that we used
are described in Section \ref{sec:ExplicitAssumptions} containing estimates for
different sums over zeros. In Section \ref{sec:explicitA5}
we provide new explicit estimates for the function $\psi(u,\chi)$. 
Finally, the main explicit results are proved in Section \ref{sec:ProofsMain}.

\section {Related  explicit results}
We notice that many of the results above have implied constants, some of these can be determined, and are called \textit{effective} others 
which can not be determined and called 
\textit{ineffective}. Siegel's Theorem, cited above, is a classical 
example that involves the second type of constant and Goldbach sums that 
depend on
this theorem also give rise 
to such constants. The numbers behind the effective constants are often 
not specified and when they are, we obtain \textit{explicit} numeric 
values, which answer questions like 
which cut-off points can be used.  As a prototype, note that the 
original form of the Prime Number Theorem \[ \pi(x)\sim \frac{x}{\log x} \] of 1896.
Schoenfeld \cite{SchoenfeldSharperRH} showed that, numerically, 
under the RH
\[ |\pi(x)- \text{li}(x)|  \le \frac{1}{\sqrt{8\pi}}\sqrt x \log x,\]
where $\text{li}(x):=\lim_{\epsilon\to 0+}\int_0^{1-\epsilon}\frac{dt}{\log{t}}+\int_{1+\epsilon}^x\frac{dt}{\log{t}}$, for all \( x\geq 2657\).

Though explicit results around Goldbach functions  have been  
little investigated,
one such consideration  is that
of  Fujii \cite{F1991_3} who proved that under the linear 
independence conjecture on 
the first 70 zeros on the critical line of the Riemann zeta function the
oscillating term 
$$
R(x)=\Re\sum_{\gamma_{\chi}>0}\frac{x^{i\gamma_{\chi}}}{(1/2+i\gamma_{\chi})(3/2+i\gamma_{\chi})}
$$
with the sum over ordinates of the non-trivial zeros of $\zeta(s)$, the inequalities
\[R(x) > 0.012, \quad R(x)< -0.012\] hold for an unbounded sequence of positive real numbers $x$.
More recently \cite{MT2022} Mossinghoff and Trudgian improved the above bounds to 
\[ R(x)< -0.022978,\quad R(x)>0.021030\]
 under the RH
and that these oscillations are almost optimal.
Though the authors assume linear independence of imaginary parts of zeros 
of $\zeta(s)$, in fact a weaker form called \textit{ N-independence} is
enough to establish large oscillations for sums of 
arithmetic functions. A similar classical example is that the 
Mertens' conjecture 
$M(x)=\sum_{n\leq x}\mu (n)\le x^{1/2}$ implies infinitely many 
linear dependences over the rationals among the ordinates of the zeros of the zeta function on the critical line in the upper half plane.

Moreover, Martin \cite{Ma2022} has recently provided numerical data on the exceptional sets of the Goldbach function in arithmetic progressions.

To study explicit results on the Goldbach functions, we could build upon existing explicit results on zeros of
$L$-functions. Here we quote some of these.

Languasco \cite{L2023}, recently obtained numerical constants for 
the region of possible existence of the Siegel zero for primes 
$4.10^5<q\le 10^7$ and for even quadratic Dirichlet characters.
Morrill and Trudgian \cite{MT2020} made explicit a method of Pintz to obtain
 $1-\frac{0.933}{\log q}<\beta_1<1$. In another paper \cite{BMOR2021} 
it is proved that $\beta_1\le 1-\frac{40}{\sqrt q \log^2 q },\ \  q\ge 3$.
At about the same time, Bordignon \cite{B2019,B2020} improved
such bounds using  explicit estimates for $ L'(\sigma,\chi) $ for
$\sigma$ close to unity.

Earlier computational work of Platt \cite {P2016}, who verified the GRH 
for primitive characters to height up to $\max(108/q, A.107/q)$  with $A$ 
depending on the parity of the conductor,  shows that there is no 
exceptional zero for primes $q\leq 4\cdot10^5$. 
These are in line with the bounds 
 $\beta_1>1-\frac{1}{1-R\log (\max (q, q|t|, 10)}$ with 
 $R = 5.60$. \cite{K2018}, or $R=9.46$ \cite{M1984}. 
 
 The Vinogradov-Korobov type
estimates of the zero-free regions of the Riemann zeta function
\[ 
\sigma>1- \frac{c}{\log|t|^{2/3}(\log \log |t|)^{1/3}}, 
\]  with a positive constant $c$, have 
been studied for a very long time.
It is possible to take $c = 1/53.989, |t| \ge 3 $, see \cite{Bellotti2024}. 
Very recently Khale \cite{K2024} established, for the first time 
a similar result for $L$-functions,  with a dependence on the
conductor, for  $|\Im(s)| \geq 10$, i.e. 
with explicit $c$ and $c_{q,1}$ in 
\begin{equation}
\label{eq:VinogradovZeroFree}
    \sigma>1- \frac{1}{c_{q,1}\log q + c\log|t|^{2/3}(\log \log |t|)^{1/3}}.
\end{equation} 

Another question relevant to the study of the explicit Goldbach problem
is that of recent explicit results obtained  on primes in arithmetic 
progressions obtained unconditionally that for all \( q\geq 4 \) and for \(x\) larger than
\(  \exp(8\sqrt q\log ^3 q)\) 
\[ \left|\pi(x; q,a)- \frac{\text{Li}(x)}{\varphi(q)}\right|  \le \frac{1}{160}\frac{x}{\log^2 x},\]
where $\text{Li}(x):=\int_2^x (\log{t})^{-1}\, dt$, in \cite{BMOR2018} and under the GRH
in \cite{GM2019}. 

In a  more general context, we can  describe the splitting of primes in a 
given Galois extension of a field. Thus, for example, the classical 
Chebotarev density 
theorem predicts such a density by computing the set of conjugacy 
classes of the associated group. 
Let $L/K$ a finite Galois extension with
Galois group $G$, let $C$ be a union of conjugacy classes of $G$
and let the function $\psi_C$ count the number of non-ramified prime ideals
of norm up to $x$ with a von Mangoldt type weight (compare to the definition \eqref{eq:defpsi}).
Now the Chebotarev density theorem states that
$\psi_C(x)\sim \frac{|C|}{|G|}x$ as $x$ tends to infinity.

Grenié and Molteni \cite{GM2019} have recently obtained an explicit 
conditional version for the above asymptotic form. They
show that, under the GRH, the density result can be expressed with the 
dependence on
the degree of extension $n_L$ and discriminant $\Delta_L$. Their results
applied to the \(q^{th}\)  cyclotomic Dedekind zeta function can give estimates for
primes in arithmetic progressions. Their auxiliary results 
with $\Delta_L \leq q^{\varphi(q)}$ could provide, conditional 
to the GRH, appropriate estimates in the Goldbach setting.

 Pintz' recent  paper on Goldbach problems  \cite{P2023} has a title that
 could be misleading  in this context since it does not contain {\it explicit} in the 
 sense of {\it numerical} results.

\section{Main results}
\label{sec:mainresults}
We provide two types of results. 
The estimates in Section \ref{sec:resultsExplicit} are 
completely explicit. In Section \ref{sec:resultsGeneral}, we provide 
effective results that can be explicitly computed when numerical values for
certain formulas are known. 
The former  are ready for use in an explicit setting whereas the effective ones
appear with a possibility for improvement whenever better estimates for related
functions become available. 

Before describing the results we introduce a bit of notation.
For $\rho_{\chi}$ being the non-trivial zeros of an $L$-function associated to
a Dirichlet character $\chi$ modulo $q$, let us define 
$B_{\chi}:=\sup \{\Re(\rho_\chi)\}$, $B_q:=\sup\{B_\chi : \chi \pmod q\}$ and
\begin{equation}
\label{eq:defBq}
    B_q^*(t):=\min\{B_q,1-\eta_q(t)\}, \text{ where } \eta_q(t):=\frac{1}{c_1\log{q}+c_2(\log{t})^{2/3}(\log{\log{t}})^{1/3}}.
\end{equation}
Here $c_1$ and $c_2$ are  as in assumption \ref{eq:ZeroFree}. 
Let  $W_{-1}(x)$ denote the Lambert W-function with the branch where 
$-1/e \leq x<0$. 
Further, let  $\varphi_1^*(q)$ denote the number of primitive and principal 
characters $\pmod q$. 

\subsection{Numerical results}
\label{sec:resultsExplicit}
First we present the results for $S(x)$.

\begin{theorem}
\label{thm:main1}
Assume that
\begin{equation*}
    \log{x} \geq \exp\left(-4W_{-1}\left(-\left(4\cdot 5^{3/4}\cdot 53.989^{3/4}\right)^{-1}\right)\right) \approx 1.7\cdot10^{13}.
\end{equation*}
Then
$\left|S(x)- \frac{x^2}{2}\right| \leq 6.794x^{B_1^*(x)+1}$.
\end{theorem}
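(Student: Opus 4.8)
The starting point is the classical identity expressing $S(x)$ in terms of the Chebyshev function via convolution. Since $G(n)=\sum_{\ell+m=n}\Lambda(\ell)\Lambda(m)$, summing over $n\le x$ gives $S(x)=\sum_{\ell+m\le x}\Lambda(\ell)\Lambda(m)=\sum_{m\le x}\Lambda(m)\psi(x-m)$, or after Abel summation, $S(x)=\int_0^x \psi(u)\,d\psi(x-u)$. The plan is to insert the explicit approximation $\psi(u)=u+E(u)$, where $E(u)$ is controlled by a sum over the non-trivial zeros of $\zeta$, and to expand the product $\psi(u)\psi(x-u)=(u+E(u))(x-u+E(x-u))$. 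The main term $\int_0^x u\,d(\text{something})$ reproduces $x^2/2$; the cross terms and the term $\int E(u)\,dE(x-u)$ contribute the error. I would quote the preliminary lemmas promised in Section~\ref{sec:preliminarylemmas} to handle the resulting integrals; in particular I expect a lemma bounding $\int_0^x |E(u)|\,du$ and one bounding a bilinear expression $\sum_\rho\sum_{\rho'}\frac{x^{\rho+\rho'+1}}{\cdots}$ in terms of $x^{B_1+1}$ times a constant built from $\sum_\rho 1/|\rho(\rho+1)|$ and similar sums over zeros.

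The key quantitative inputs are then: (i) an explicit bound $|\psi(u)-u|\le \varepsilon_0(u)$ valid for $u$ in the relevant range, ultimately feeding into the $x^{B_1^*(x)+1}$ shape through $B_1^*$ rather than a crude power-saving; (ii) the Vinogradov--Korobov zero-free region for $\zeta$ with the constant $c=1/53.989$ from \cite{Bellotti2024}, which is exactly what forces the peculiar hypothesis on $\log x$ — the quantity $B_1^*(x)=\min\{B_1,\,1-\eta_1(x)\}$ with $\eta_1(x)=\bigl(c_2(\log x)^{2/3}(\log\log x)^{1/3}\bigr)^{-1}$ (here $c_1\log 1=0$) only becomes a useful exponent once $\eta_1(x)$ is small enough that the error term's logarithmic factors are absorbed, and tracing where $4\cdot 5^{3/4}\cdot 53.989^{3/4}$ and the Lambert $W_{-1}$ arise is precisely the bookkeeping of solving an inequality of the form $(\log x)^{1/4}\ge \text{const}\cdot(\log\log x)^{1/12}$ or similar; (iii) explicit values for the sums over zeros $\sum_\rho |\rho|^{-1}$-type quantities, which will be supplied in Section~\ref{sec:ExplicitAssumptions}. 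Combining (i)--(iii) with the integral estimates yields a bound of the form $C\,x^{B_1^*(x)+1}$, and the numerical work is to check $C\le 6.794$.

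The main obstacle, I expect, is twofold. First, one must be careful that the convolution argument does not lose a factor of $\log x$ (or more) that would spoil the clean exponent $B_1^*(x)+1$: the naive bound $\int_0^x |E(u)||E'(x-u)|$ is delicate because $\psi$ is a step function, so the Abel summation / Stieltjes manipulation has to be organized so that the derivative of $\psi$ (a sum of point masses $\Lambda$) is paired against the \emph{continuous} part, and the boundary terms at $u=0$ and $u=x$ are handled separately. Second, and more seriously, is the endgame of choosing the threshold on $\log x$: one needs the explicit zero-free region to guarantee $E(u)\ll u\cdot u^{-\eta_1(u)}$ (up to logs) on the whole range $0\le u\le x$, not just near $x$, and then to solve the transcendental inequality cleanly enough that it collapses to the stated $\exp(-4W_{-1}(-(4\cdot 5^{3/4}\cdot 53.989^{3/4})^{-1}))$. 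Getting the constant down to $6.794$ rather than something larger will require using sharp forms of the auxiliary lemmas (e.g. the best available explicit $\psi(u)-u$ estimate in the Vinogradov--Korobov range) rather than off-the-shelf bounds, so I would expect the bulk of the real effort to be in optimizing those constants and verifying the final numerical inequality.
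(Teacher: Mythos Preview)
Your high-level decomposition is correct and in fact agrees with the paper: one writes $S(x)=\tfrac{x^2}{2}-2H(x,\chi_0)+R(x)+O(x\log x)$, where $H(x,\chi_0)=\sum_{|\rho|<x}\frac{x^{\rho+1}}{\rho(\rho+1)}$ is bounded by $d_8(1)\,x^{B_1^*+1}$ via assumption~\ref{eq:assumptionrhorho1}, and $R(x)$ is the bilinear remainder you call $\int E\,dE$. You also correctly identify the endgame: once $R(x)$ is shown to be $\ll (\log x)^5 x^{2B_1^*(x)}$, the condition on $\log x$ is exactly the solution of $5c_2(\log\log x)^{4/3}\le(\log x)^{1/3}$ via $W_{-1}$, which absorbs $(\log x)^5 x^{2B_1^*}$ into $x^{1+B_1^*}$.

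Where your plan diverges from the paper, and where the real gap lies, is the treatment of $R(x)$. You propose to expand it as a double sum $\sum_\rho\sum_{\rho'}\frac{x^{\rho+\rho'+1}}{\cdots}$ over pairs of zeta zeros and control it by sums like $\sum_\rho 1/|\rho(\rho+1)|$. The paper does \emph{not} do this, and the preliminary lemmas in Section~\ref{sec:preliminarylemmas} are not of the type you anticipate. Instead, the paper writes $R(x)=R(x;\chi_0,\chi_0)=\int_0^1 W(\alpha,\chi_0)^2 T(-\alpha)\,d\alpha$ with $W(\alpha,\chi_0)=\sum_{n\le x}(\Lambda(n)-1)e(n\alpha)$, bounds $|R|\le K(\chi_0)=\int|W|^2|T|\,d\alpha$, and then estimates $\int_{-\xi}^\xi|W|^2\,d\alpha$ via Gallagher's lemma (Lemma~\ref{lemma:Gallagher}), which converts the problem to an explicit \emph{mean-square} bound for $\psi(t+h)-\psi(t)-h$ over $t\in[x,2x]$ (Lemma~\ref{lemma:psilambdadelta}). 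That variance estimate, fed by the explicit formula truncated at $T=x$ together with the zero-counting assumptions \ref{eq:defIntervalZeroGeneral}, \ref{eq:defzerosGeneral}, is what produces the crucial shape $f_1(1,x_0)(\log x)^5 x^{2B_1^*}$ with an explicit $f_1(1,x_0)=\frac{\pi^2 d_5(1)^2(2x_0+1)}{2\log 2}$.

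Your direct bilinear-zero-sum route would instead yield integrals of Beta-function type $\int_0^x u^{\rho'}(x-u)^{\rho-1}\,du$ summed over pairs $(\rho,\rho')$; making that double sum explicitly convergent with a usable constant, unconditionally and uniformly in the unknown geometry of the zeros off the half-line, is a genuine obstacle you have not addressed, and nothing in the paper supplies it. The circle-method detour through Gallagher and short-interval second moments is not cosmetic: it is what replaces a pointwise bound $|\psi_0(u)|\ll u^{B_1^*}(\log u)^2$ (which would leave unabsorbable log factors on $x^{1+B_1^*}$) by an $L^2$ bound whose output carries the exponent $2B_1^*$ rather than $1+B_1^*$, so that the log powers \emph{can} be traded against $x^{1-B_1^*}$ under the Lambert-$W$ hypothesis. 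Finally, the numerical value $6.794$ is obtained by plugging the explicit constants $c_2=53.989$, $d_5(1)=\tfrac{1}{2\pi}$, $d_8(1)=0.04621$, $x_0=T_1=2\pi e+1$ (from Section~\ref{sec:ExplicitAssumptions}) into $f_1(1,x_0)+2d_8(1)+f_{7,\zeta}(1,T_0,x_0)/\log x$; your proposal gives no indication of how the analogous constant would emerge from the bilinear-sum route.
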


The following theorem gives a little bit more precise description of the term $S(x)$.

\begin{theorem}
\label{thm:main2}
     Assume $\log{x} \geq 10^6$. Then
    \begin{equation*}
         \left|S(x)-\frac{x^2}{2}+2\sum_{|\rho_\zeta|<x}\frac{x^{\rho_\zeta+1}}{\rho_\zeta(\rho_\zeta+1)}\right|\leq  13.149(\log{x})^5x^{2B_1^*(x)},
    \end{equation*}
    where the sum runs over the non-trivial zeros of the Riemann zeta function.
\end{theorem}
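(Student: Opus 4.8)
The plan is to start from the standard explicit-formula representation of $G(n)$ in terms of the non-trivial zeros of $\zeta$, sum over $n\le x$, and carefully track every error term to get an effective constant. Concretely, writing $G(n)=\sum_{\ell+m=n}\Lambda(\ell)\Lambda(m)$, one expresses $S(x)=\sum_{n\le x}G(n)=\sum_{\ell+m\le x}\Lambda(\ell)\Lambda(m)$ and splits this as a double sum that, after applying the truncated explicit formula $\psi(u)=u-\sum_{|\rho_\zeta|<T}\frac{u^{\rho_\zeta}}{\rho_\zeta}+(\text{error})$ to the inner Chebyshev-type sum, produces a main term $\frac{x^2}{2}$, the oscillatory double-zero sum, and cross terms. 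The key identity to exploit is $\sum_{\ell+m\le x}\Lambda(\ell)\Lambda(m)=\int_0^x\psi(u)\,d\psi(x-u)$ or, more conveniently, $S(x)=\int_0^x\psi(x-u)\,d\psi(u)$; integrating by parts and inserting $\psi(u)=u-E(u)$ with $E(u)=\sum_{|\rho_\zeta|<u}\frac{u^{\rho_\zeta}}{\rho_\zeta}+\text{(smaller error)}$ gives the shape $\frac{x^2}{2}-2\int_0^x E(u)\,du+\int_0^x E(u)\,dE(x-u)$, where the second term produces $2\sum_{|\rho_\zeta|<x}\frac{x^{\rho_\zeta+1}}{\rho_\zeta(\rho_\zeta+1)}$ after integration (up to boundary and truncation corrections) and the last term is the quadratic-in-zeros contribution bounded by $(\log x)^5 x^{2B_1^*(x)}$.

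The main steps I would carry out, in order, are: (1) set up the integral representation of $S(x)$ and isolate $\frac{x^2}{2}$; (2) invoke the effective bound for $\psi(u)-u$ in terms of a sum over zeros (the explicit assumption analogous to what the paper sets up in Section \ref{sec:ExplicitAssumptions}, with the Vinogradov–Korobov zero-free region \eqref{eq:VinogradovZeroFree} entering through $B_1^*(x)$) to replace $E(u)$ by its zero-sum $\sum_{|\rho_\zeta|<u}\frac{u^{\rho_\zeta}}{\rho_\zeta}$ plus a controlled error; (3) integrate the zero-sum term to extract $2\sum_{|\rho_\zeta|<x}\frac{x^{\rho_\zeta+1}}{\rho_\zeta(\rho_\zeta+1)}$, checking that moving the truncation $|\rho_\zeta|<u$ to the fixed cutoff $|\rho_\zeta|<x$ costs only an admissible amount; (4) bound the quadratic term $\int_0^x E(u)\,dE(x-u)$ using an explicit bound on $\sum_{|\rho_\zeta|<x}\frac{x^{\Re\rho_\zeta}}{|\rho_\zeta|}$ (of size $\ll (\log x)^2 x^{B_1^*(x)}$ by the explicit zero-density/counting estimates), so its square contributes $\ll (\log x)^4 x^{2B_1^*(x)}$, with one further log from the convolution; (5) collect all error contributions and verify, under the hypothesis $\log x\ge 10^6$, that the total is at most $13.149(\log x)^5 x^{2B_1^*(x)}$.

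The hard part will be step (4) combined with step (5): getting the constant $13.149$ requires an honest explicit bound on the convolution $\int_0^x E(u)\,dE(x-u)$, which is genuinely a product of two zero-sums, and each zero-sum must be estimated via an explicit version of the Riemann–von Mangoldt formula and of $\sum_{0<\gamma_\zeta<T}1/\gamma_\zeta$-type sums; the bookkeeping of the powers of $\log x$ (one gets $(\log x)^2$ from each zero-sum, plus one more from integrating against $d\psi$ or from the number of zeros in the convolution range) is where an off-by-one in the exponent of $\log x$ would be easy to make, and where the condition $\log x\ge 10^6$ is used to absorb lower-order terms into the leading $(\log x)^5$. A secondary obstacle is handling the boundary terms and the discrepancy between the moving cutoff $|\rho_\zeta|<u$ (natural in the truncated explicit formula at height $u$) and the fixed cutoff $|\rho_\zeta|<x$ in the statement; I would deal with this by taking the truncation height $T=x$ throughout and bounding the contribution of zeros with $|\rho_\zeta|<x$ but ordinate exceeding $u$ by the same zero-density estimates, absorbing it into the error term. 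Once the explicit bounds on sums over zeros from Section \ref{sec:ExplicitAssumptions} are in hand, the argument is a careful but routine integration-by-parts and triangle-inequality computation; I would present it as such, deferring the numerical verification of $13.149$ to a short final calculation under $\log x\ge 10^6$.
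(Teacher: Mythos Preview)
Your overall decomposition into main term, linear-in-zeros term $2H(x)=2\sum_{|\rho_\zeta|<x}\frac{x^{\rho_\zeta+1}}{\rho_\zeta(\rho_\zeta+1)}$, and a quadratic-in-zeros remainder is exactly the structure the paper uses (see Remark~\ref{rmk:SFormula} and the definition of $R(x;\chi_0,\chi_0)$ in \eqref{eq:defHR}). The gap is entirely in your step~(4).

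You propose to bound the quadratic term by using $|E(u)|\le x^{B_1^*(x)}\sum_{|\rho_\zeta|<x}|\rho_\zeta|^{-1}\ll (\log x)^2 x^{B_1^*(x)}$ and then ``squaring''. But the quadratic term, after one integration by parts, has the shape $\sum_{|\rho_\zeta|<x}\int_0^x (x-u)^{\rho_\zeta-1}E(u)\,du$. Taking absolute values inside gives
\[
\ll (\log x)^2\sum_{|\rho_\zeta|<x}\int_0^x (x-u)^{B_1^*(x)-1}u^{B_1^*(x)}\,du \ll (\log x)^2\,x^{2B_1^*(x)}\cdot N(x,\chi_0),
\]
and $N(x,\chi_0)\asymp x\log x$, not a power of $\log x$. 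So the direct pointwise approach loses a full factor of $x$: it yields $x^{1+2B_1^*(x)}(\log x)^3$, not $(\log x)^5 x^{2B_1^*(x)}$. The oscillation of $(x-u)^{\rho_\zeta-1}$ in $u$ (coming from the large imaginary parts $\gamma_\zeta$) is what must be exploited, and absolute-value bounds destroy it.

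The paper does \emph{not} bound the quadratic term this way. It writes the quadratic term as $R(x;\chi_0,\chi_0)=\int_0^1 W(\alpha,\chi_0)^2 T(-\alpha)\,d\alpha$, bounds it by $K(\chi_0)=\int_{-1/2}^{1/2}|W(\alpha,\chi_0)|^2|T(-\alpha)|\,d\alpha$, and then applies Gallagher's lemma (Lemma~\ref{lemma:Gallagher}) to convert $\int_{|\alpha|\le\xi}|W(\alpha,\chi_0)|^2\,d\alpha$ into a mean square of $\psi(t+h)-\psi(t)-h$ over short intervals (Lemma~\ref{lemma:EstW}). That short-interval mean square is controlled in Lemma~\ref{lemma:psilambdadelta} via the explicit formula together with the zero-pairing estimate $\sum_{|\gamma_\zeta|\le T}\frac{1}{1+|\gamma_\zeta-y|}\ll (\log T)^2$ of Lemma~\ref{lemma:gammachi}. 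It is precisely this $L^2$ machinery that recovers the cancellation among zeros and produces $(\log x)^5 x^{2B_1^*(x)}$; the five logarithms arise as $(\log x)^3$ from the short-interval bound, one $\log$ from the dyadic sum over $\alpha$-ranges in Lemma~\ref{lemma:EstJ}, and one from the extra $\log_2(2x/x_0)$ factor there. Theorem~\ref{thm:main2} is then just Proposition~\ref{prop:SxSecond} with the explicit constants of Section~\ref{sec:ExplicitAssumptions} plugged in. Your plan would need to incorporate this Gallagher/short-interval step (or an equivalent $L^2$ argument) to close; as written it cannot reach the stated bound.
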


Next we present similar numerical results in the case of arithmetic progressions.

\begin{theorem}
\label{thm:main3}
Assume that $x \geq q >4\cdot 10^5$, $q \not\equiv 2\pmod{4}$, $a$ and $b$ are integers with $(ab,q)=1$, 
 \begin{equation}
\label{eq:logqLowerNum}
    \log{x} \geq \exp\left(-4W_{-1}\left(-\left(4\cdot 5^{3/4}\left(36.75\left(\frac{1}{e}\right)^{2/3}+61.5\right)^{3/4}\right)^{-1}\right)\right) \approx 6.7\cdot10^{13}.
\end{equation}
 Then
    \begin{equation*}
    \left|S(x;q,a,b)- \frac{x^2}{2\varphi_1^*(q)^2}\right| \leq 5.805x^{B_q^*(x)+1}.
\end{equation*}
\end{theorem}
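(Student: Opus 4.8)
The plan is to mimic the argument leading to Theorem \ref{thm:main1}, but with the Riemann zeta function replaced throughout by the family of Dirichlet $L$-functions modulo $q$. First I would express $S(x;q,a,b)$ via an explicit formula: since $G(n;q,a,b)=\sum_{\ell+m=n,\ \ell\equiv a,\ m\equiv b}\Lambda(\ell)\Lambda(m)$, summing over $n\le x$ and interchanging the order of summation gives a double sum over prime powers $\ell\equiv a$, $m\equiv b$ in arithmetic progressions, which unfolds into $\frac{1}{\varphi(q)^2}\sum_{\chi_1,\chi_2}\overline{\chi_1(a)\chi_2(b)}\,(\text{something involving }\psi(u,\chi_1)\text{ and }\psi(u,\chi_2))$ integrated up to $x$. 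The main term $\frac{x^2}{2\varphi_1^*(q)^2}$ comes from the pair of principal characters (more precisely, from primitive and principal characters, which is why $\varphi_1^*(q)$ rather than $\varphi(q)$ appears), and all remaining contributions must be bounded by the error term. This is exactly the kind of preliminary lemma the introduction promises is derived in Section \ref{sec:preliminarylemmas} on the lines of \cite{BHMS2019, Granville2008}, so I would invoke that general identity and concentrate on the estimates.

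Next I would insert the explicit bounds for $\psi(u,\chi)$ announced in Section \ref{sec:explicitA5}, together with the explicit sums over zeros of $L(s,\chi)$ from Section \ref{sec:ExplicitAssumptions}. Because we are not assuming GRH, the zeros $\rho_\chi$ can have real part as large as $B_\chi$, and the best unconditional control on $B_q$ away from the line $\Re(s)=1$ is the Vinogradov--Korobov-type zero-free region \eqref{eq:VinogradovZeroFree} with the explicit constants $c_{q,1}=c_1\log q$ and $c=c_2$ of Khale \cite{K2024}; this is precisely why the definition \eqref{eq:defBq} of $B_q^*(t)=\min\{B_q,\,1-\eta_q(t)\}$ enters, with $\eta_q(t)=(c_1\log q+c_2(\log t)^{2/3}(\log\log t)^{1/3})^{-1}$. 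The error term in the explicit formula for $S(x;q,a,b)$ is then of size a constant times $x^{1+B_q^*(x)}$ times logarithmic factors, provided $x$ is large enough that the logarithmic and lower-order powers of $x$ are absorbed into a clean constant — and this smallness-of-$x$ threshold is exactly the role of hypothesis \eqref{eq:logqLowerNum}. The number $4\cdot 5^{3/4}(36.75 e^{-2/3}+61.5)^{3/4}$ appearing there should be traceable: the $5^{3/4}$ and the $36.75,\ 61.5$ are the numerical values of (functions of) $c_1$ and $c_2$ from assumption \eqref{eq:ZeroFree}, and the $W_{-1}$ appears because solving the inequality $x^{1-B_q^*(x)}\ge(\log x)^{C}$ for the crossover point $\log x$ leads, after taking logarithms twice, to an equation of the form $y e^{y}=\text{const}$ whose solution is a Lambert $W$ value; the factor $-4$ and the exponent $2/3$ in $\eta_q$ account for the shape $\exp(-4W_{-1}(\cdot))$.

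The hard part will be bookkeeping the constants so that the final numerical coefficient comes out as $5.805$ (slightly smaller than the $6.794$ of Theorem \ref{thm:main1}, which is plausible since the $q$-aspect saving and the larger $x$-threshold tighten the estimate). Concretely, one must: (i) track how the contributions of the non-principal characters — including the possible exceptional real zero $\beta_1$, controlled here by the hypothesis $q>4\cdot10^5$ together with Platt's verification \cite{P2016} that there is no exceptional zero for $q\le 4\cdot10^5$, and more generally by the explicit bounds on $\beta_1$ quoted in Section \ref{sec:mainresults} — combine with the principal-character main term; (ii) verify that the condition $q\not\equiv 2\pmod 4$ (so that there do exist primitive characters mod $q$ and $\varphi_1^*(q)$ behaves as needed) is used only to make the main term exact; and (iii) confirm that, under \eqref{eq:logqLowerNum}, every stray factor of $\log x$, $\log q$, $\varphi(q)$ and every subdominant power $x^{1/2+\epsilon}$-type term from the explicit $\psi(u,\chi)$ estimate is genuinely dominated by $x^{1+B_q^*(x)}$ with room to spare for the clean constant. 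I expect steps (i) and (iii) to be where most of the work lies; the structural part of the proof is a routine transcription of the $S(x)$ argument, and I would present it by stating the general explicit formula as a lemma, substituting the Section \ref{sec:ExplicitAssumptions} and Section \ref{sec:explicitA5} inputs, and then doing the final numerical optimization that fixes both the threshold \eqref{eq:logqLowerNum} and the constant $5.805$.
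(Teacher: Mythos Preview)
Your outline is essentially the paper's own argument: the paper packages your steps into Proposition~\ref{prop:firstMain} (in its primitive/principal-character variant, Remark~\ref{rmk:primitiveS}), and then the proof of Theorem~\ref{thm:main3} consists of substituting the explicit constants $c_1=10.5$, $c_2=61.5$, $x_0=e^e$, $T_0=4\cdot10^5$, $T_1=e^e$ and the values of $d_1(q),\dots,d_{11}$ from Section~\ref{sec:ExplicitAssumptions} and Corollaries~\ref{cor:psiPrincipal}--\ref{cor:psiPrimitive}, checking that the coefficient of $x^{1+B_q^*(x)}$ is maximized at $q=4\cdot10^5+1$, and verifying that the secondary terms $25.090\,x^{1+B_q^*(x)}((\log\log x)^2/\log x)^2$ and $2.001\,x^{(1+B_q^*(x))/2}\log x$ are absorbed under~\eqref{eq:logqLowerNum}. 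Your identification of $36.75(1/e)^{2/3}+61.5$ with $3.5c_1(\log\log x_0/\log x_0)^{2/3}+c_2$ is exactly right.

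One factual slip: Platt's verification plays no role here, because the hypothesis is $q>4\cdot10^5$, which is precisely the range Platt does \emph{not} cover. The possible Siegel zero is instead controlled by Bordignon's explicit bounds (Lemma~\ref{lemma:SiegelBounds}), $\beta_1\le 1-100/(\sqrt{q}\log^2 q)$, which feed into the constant $c_{q,3}$ and into the logarithmic-derivative bound $c_{q,4}$ of Corollary~\ref{cor:LDerivative}; these are what make the $d_8(q)$ term and the $f_6,f_7$ terms explicit. The condition $q\not\equiv 2\pmod 4$ is used exactly as you say, via Lemma~\ref{lemma:psi1Estimate}, to ensure $\varphi^*(q)\neq 0$ and to bound $q/\varphi_1^*(q)$.
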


\begin{theorem}
\label{thm:main4}
    Assume that $x \geq q >4\cdot 10^5$, $\log{x}\geq 10^8$, $q \not\equiv 2\pmod{4}$, $a$ and $b$ are integers with $(ab,q)=1$.  Then
     \begin{multline*}
         \left|S(x;q,a,b)-\frac{x^2}{2\varphi_1^*(q)^2}+\frac{1}{\varphi_1^*(q)^2}\sum_{\chi \text{ primitive or principal } \pmod q}\left(\overline{\chi}(a)+\overline{\chi}(b)\right)\sum_{|\rho_\chi|<x}\frac{x^{\rho_\chi+1}}{\rho_\chi(\rho_\chi+1)}\right|\\
         \leq 7.246(\log{x})^5x^{2B_q^*(x)},
    \end{multline*}
    where the sum over zeros runs over non-trivial zeros of the function $L(s,\chi)$.
\end{theorem}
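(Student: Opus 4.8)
The plan is to start from the explicit analogue of the classical formula for $S(x;q,a,b)$ in terms of $\psi(u,\chi)$. First I would decompose $S(x;q,a,b)$ using orthogonality of characters: writing the congruence conditions $\ell\equiv a$, $m\equiv b\pmod q$ as averages over Dirichlet characters mod $q$, one obtains
\[
S(x;q,a,b)=\frac{1}{\varphi(q)^2}\sum_{\chi_1,\chi_2\pmod q}\overline{\chi_1}(a)\,\overline{\chi_2}(b)\sum_{n\le x}\ \sum_{\ell+m=n}\chi_1(\ell)\Lambda(\ell)\,\chi_2(m)\Lambda(m),
\]
and after an Abel/partial-summation step the inner double sum becomes an integral of $\psi(u,\chi_1)\psi(x-u,\chi_2)$-type quantities (this is exactly the kind of manipulation carried out for the general theorems in Sections \ref{sec:FirstGeneral}--\ref{sec:SecondGeneral}, which I may invoke). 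The diagonal-ish contribution coming from the principal characters produces the main term $\frac{x^2}{2\varphi_1^*(q)^2}$ together with the explicit oscillating term $\frac{1}{\varphi_1^*(q)^2}\sum_{\chi}(\overline{\chi}(a)+\overline{\chi}(b))\sum_{|\rho_\chi|<x}\frac{x^{\rho_\chi+1}}{\rho_\chi(\rho_\chi+1)}$, once one inserts the explicit truncated explicit formula for $\psi(u,\chi)$ proved in Section \ref{sec:explicitA5}. The restriction to primitive-or-principal $\chi$ (hence the $\varphi_1^*(q)$ in place of $\varphi(q)$) comes from reducing every $\chi$ to the $L$-function of its primitive inducing character, the difference being absorbed into the error term using that $q\not\equiv 2\pmod 4$ (so there are no imprimitive characters that contribute a non-negligible correction at this level of precision).

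Next I would collect the error terms. There are three sources: (i) the truncation error in the explicit formula for each $\psi(u,\chi)$ with $|\rho_\chi|<x$, which by the Section \ref{sec:explicitA5} estimate is $O((\log u)^2 \cdot u^{B_\chi^*(u)}\log(qu)/\ldots)$-shaped and, after the double integral over $u\in[0,x]$, contributes something of size $x^{2B_q^*(x)}$ times a power of $\log x$ and $\log q$; (ii) the contribution of the convolution of one main-term-type factor with one error-type factor, which is of order $x\cdot x^{B_q^*(x)}\le x^{2B_q^*(x)}$ precisely because $B_q^*(x)\ge 1/2$ always (here the hypothesis $q>4\cdot 10^5$, which rules out Siegel zeros in this range via Platt's computation cited in the excerpt, is what lets us keep $B_q^*(x)$ under control and, more importantly, guarantees $B_q^*(x)<1$ so that the oscillating term genuinely dominates the error); (iii) prime-power terms $\ell$ or $m$ equal to higher powers, contributing $O(x^{3/2}\log^2 x)$, which is again below $x^{2B_q^*(x)+\text{(log factors)}}$ in the stated range. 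Summing over the $\le\varphi_1^*(q)^2\le q^2$ pairs of characters and dividing by $\varphi_1^*(q)^2$ keeps everything uniform in $q$, with the $q$-dependence hidden inside $B_q^*(x)$ and harmless $\log q\le\log x$ factors (using $x\ge q$).

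The final step is purely numerical: track every constant through the chain above and verify that the total is at most $7.246(\log x)^5 x^{2B_q^*(x)}$ once $\log x\ge 10^8$. The power $(\log x)^5$ is the expected one — roughly $(\log x)^2$ from the $\psi(u,\chi)$ explicit formula error, one more $\log x$ from summing $1/|\rho_\chi(\rho_\chi+1)|$ or from the integration, and a couple more from the crude bounds on the cross terms and prime-power terms — and the threshold $\log x\ge 10^8$ is chosen (larger than the $6.7\cdot 10^{13}$-type bound would suggest is unnecessary, but) so that all the lower-order powers of $\log x$ and the $\log q$ factors are safely dominated with room to spare in the leading constant. I expect the main obstacle to be this last bookkeeping: keeping the constant genuinely below $7.246$ requires being careful about which error terms to bound by $x^{2B_q^*(x)}$ versus $x^{B_q^*(x)+1}$ and using $B_q^*(x)\ge 1/2$ at exactly the right places, since a lazy bound would blow the constant up by an order of magnitude. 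A secondary subtlety is making the primitive-vs-all-characters reduction uniformly in $q$ without picking up a factor depending on the number of divisors of $q$; this is handled by noting the inducing-character correction is itself a $\psi$-type error already accounted for in (i).
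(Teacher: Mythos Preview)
Your proposal has a genuine gap in the core argument. The paper does \emph{not} obtain the error $x^{2B_q^*(x)}(\log x)^5$ by directly convolving explicit formulas for $\psi(u,\chi_1)$ and $\psi(x-u,\chi_2)$; that approach fails at exactly the place where you wave your hands. In item (ii) you write that the cross term is ``of order $x\cdot x^{B_q^*(x)}\le x^{2B_q^*(x)}$'', but since $B_q^*(x)\le 1$ this inequality goes the wrong way: $x^{1+B_q^*(x)}\ge x^{2B_q^*(x)}$. More seriously, you never address the genuinely hard term, the convolution of two zero-sums $\sum_{\rho_1}u^{\rho_1}/\rho_1$ and $\sum_{\rho_2}(x-u)^{\rho_2}/\rho_2$; bounding this trivially gives $x^{2B_q^*(x)+1}(\sum_\rho 1/|\rho|)^2$, which is larger than the target by a factor of $x/\log x$. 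The paper circumvents this entirely: after the orthogonality step (Lemma \ref{lemma:Schi12}) the residual term is written via the circle method as $R(x;\chi_1,\chi_2)=\int_0^1 W(\alpha,\chi_1)W(\alpha,\chi_2)T(-\alpha)\,d\alpha$, bounded by Cauchy--Schwarz in terms of $K(\chi)=\int|W(\alpha,\chi)|^2|T(-\alpha)|\,d\alpha$, and then Gallagher's lemma (Lemma \ref{lemma:Gallagher}) converts $\int|W|^2$ into a short-interval mean-square of $\psi(t+h,\chi)-\psi(t,\chi)$ (Lemma \ref{lemma:psilambdadelta}). It is this Gallagher short-interval mechanism, not a direct explicit-formula convolution, that produces the exponent $2B_q^*(x)$ with only $(\log x)^5$ loss. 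Theorem \ref{thm:main4} is then just Proposition \ref{prop:Sxqab} together with Remark \ref{rmk:primitiveprincipalMainLemma} and the explicit constant tables of Section \ref{sec:ProofsMain}.

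Two smaller points: your description of the truncation error in the explicit formula as ``$O((\log u)^2\cdot u^{B_\chi^*(u)}\cdots)$-shaped'' is not what Section \ref{sec:explicitA5} gives --- the error there is of type $u(\log u)(\log\log u)/T$ plus lower-order terms, with no $u^{B_q^*}$; the $u^{B_q^*}$ contribution is the zero-sum itself, not its truncation error. And the claim that ``$q>4\cdot10^5$ rules out Siegel zeros via Platt's computation'' is backwards: Platt's verification covers $q\le 4\cdot10^5$, and for $q>4\cdot10^5$ a Siegel zero may exist (the paper uses Bordignon's bounds from Lemma \ref{lemma:SiegelBounds} to control it).
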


\subsection{General results}
\label{sec:resultsGeneral}
In this section, we provide general explicit results. We formulate the 
results,
under the assumptions described in Appendix \ref{appendix:assumptions} 
and 
use the functions $f_j$ described in Appendix \ref{appendix:f}. 

The first main results describe the sizes of the functions $S(x)$ and $S(x;q,a,b)$.

\begin{proposition}
\label{prop:firstMainS(x)}
 Assume that \ref{assumptionProductExceptional}, \ref{eq:rhoSecondUpper}, \ref{eq:defIntervalZeroGeneral} and \ref{eq:defzerosGeneral} hold with $T\geq T_1$ and  \ref{assumption:psiuchi}, \ref{eq:assumption1OverRhoSum}, \ref{eq:assumptionrhorho1} and \ref{eq:assumptionLambda} hold with $T \geq T_0$ for the character $\chi_0 \pmod 1$.

   For $x \geq \max\{83,x_0,T_0\}$, $x_0 \geq \max\{e^e, T_1\}$ and
\begin{equation}
\label{eq:logqLowerSx}
    \log{x} \geq \exp\left(-4W_{-1}\left(-\left(4\cdot 5^{3/4}c_2^{3/4}\right)^{-1}\right)\right),
\end{equation}
we have
\begin{equation*}
    \left|S(x)- \frac{x^2}{2}\right| \leq \left(f_1(1,x_0)+2d_{8}(1) +\frac{f_{7,\zeta}(1, T_0,x_0)}{\log{x}}\right)x^{B_1^*(x)+1}.
\end{equation*}
\end{proposition}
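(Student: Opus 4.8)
The plan is to start from the defining Dirichlet convolution $S(x) = \sum_{n\le x} G(n) = \sum_{n \le x}\sum_{\ell+m=n}\Lambda(\ell)\Lambda(m)$ and to rewrite this double sum in terms of the Chebyshev-type function $\psi(u)$ (i.e.\ $\psi(u,\chi_0)$ for the trivial character mod $1$). Interchanging the order of summation and summing over $m$ first, one gets an expression of the shape $S(x) = \sum_{\ell \le x}\Lambda(\ell)\,\psi(x-\ell) = \int_0^x \psi(u)\,d\psi(x-u)$ up to boundary corrections; after an integration by parts / Abel summation this becomes an integral $\int_0^x \psi(u)\psi(x-u)\,\frac{du}{?}$ together with a clean main term. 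The point is to substitute the explicit approximation $\psi(u) = u - \sum_{|\rho_\zeta|<T}\frac{u^{\rho_\zeta}}{\rho_\zeta} + (\text{error})$ coming from assumptions \ref{assumption:psiuchi}, \ref{eq:assumption1OverRhoSum}, \ref{eq:assumptionrhorho1}, \ref{eq:assumptionLambda} for the character $\chi_0\pmod 1$ with $T\ge T_0$, and to track every resulting term. The dominant contribution is $\int_0^x (x-u)\,du = \tfrac{x^2}{2}$, which explains the main term $\frac{x^2}{2}$.

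The key steps, in order, are: (1) reduce $S(x)$ to an explicit integral/sum involving $\psi$; (2) insert the truncated explicit formula for $\psi$ and expand, obtaining a main term $x^2/2$, a "zero term" built from $\sum_{|\rho_\zeta|<T} \frac{x^{\rho_\zeta+1}}{\rho_\zeta(\rho_\zeta+1)}$-type pieces, a term quadratic in the zero sum, and cross terms with the error in the explicit formula; (3) bound the sums over zeros using \ref{eq:rhoSecondUpper}, \ref{eq:defIntervalZeroGeneral}, \ref{eq:defzerosGeneral} with $T\ge T_1$, and the bounds $\ref{eq:assumption1OverRhoSum}$, $\ref{eq:assumptionrhorho1}$ on $\sum 1/|\rho|$ and $\sum 1/|\rho(\rho+1)|$ — this is where the $d_8(1)$ and the $f$-functions enter; (4) estimate the contribution of the exceptional zero $\beta_1$ separately via \ref{assumptionProductExceptional}, which is why the final bound is phrased in terms of $B_1^*(x) = \min\{B_1, 1-\eta_1(x)\}$ rather than just $B_1$; (5) choose $T$ optimally as a function of $x$ (roughly a small power of $x$), which forces the lower bound \eqref{eq:logqLowerSx} on $\log x$ through the Lambert-$W$ computation balancing the Vinogradov--Korobov zero-free term $c_2(\log T)^{2/3}(\log\log T)^{1/3}$ against $\log x$; (6) collect all error contributions, verify each is $\le (\text{some }f\text{-term})\,x^{B_1^*(x)+1}$ for $x\ge\max\{83,x_0,T_0\}$, and sum the constants to get $f_1(1,x_0)+2d_8(1)+f_{7,\zeta}(1,T_0,x_0)/\log x$.

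The main obstacle I expect is Step (2)–(3): the honest bookkeeping of the convolution of two truncated explicit formulas. Writing $\psi(u) = u - E(u)$ with $E(u)$ the sum over zeros plus error, the product $\psi(u)\psi(x-u)$ produces a term $E(u)E(x-u)$ whose integral is a genuine double sum over pairs of zeros $\rho,\rho'$; controlling this requires care to avoid a spurious factor of $(\log x)$ or of the number of zeros up to $T$, and this is precisely where the single-character sub-case of the arguments in \cite{BHMS2019, Granville2008} must be carried out explicitly. One must also be careful that the truncation error in the explicit formula for $\psi$, which typically looks like $\frac{u\log^2(uT)}{T}$, when convolved and integrated does not dominate $x^{B_1^*(x)+1}$; this is what pins down how large $T$ must be taken relative to $x$, and hence feeds back into the hypothesis \eqref{eq:logqLowerSx}. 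The remaining steps — bounding individual sums over zeros, isolating $\beta_1$, and assembling constants — are routine given the assumptions in Appendix \ref{appendix:assumptions} and the explicit functions $f_j$ of Appendix \ref{appendix:f}, though tedious.
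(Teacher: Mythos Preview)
Your proposal rebuilds the whole machinery from scratch, whereas the paper's proof of this proposition is a two-line deduction from Proposition~\ref{prop:SxSecond}: one bounds the zero sum $2\sum_{|\rho_\zeta|<x}\frac{x^{\rho_\zeta+1}}{\rho_\zeta(\rho_\zeta+1)}$ by $2d_8(1)x^{1+B_1^*(x)}$ via \ref{eq:assumptionrhorho1}, and then shows $(\log x)^5 x^{2B_1^*(x)}\le x^{1+B_1^*(x)}$ and $(\log x)^4 x^{2B_1^*(x)}\le x^{1+B_1^*(x)}/\log x$. The constants $f_1(1,x_0)$ and $f_{7,\zeta}(1,T_0,x_0)$ are already supplied by Proposition~\ref{prop:SxSecond}, so the content of \emph{this} proof is only step~(c) below, not your steps (1)--(3).

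More importantly, two pieces of your plan are off. First, your step~(5) misreads the role of condition~\eqref{eq:logqLowerSx}: there is no optimisation over a truncation parameter $T$ (the paper fixes $T=x$ throughout Section~\ref{sec:preliminarylemmas}). The Lambert-$W$ condition is exactly what forces $\eta_1(x)\log x\ge 5\log\log x$, i.e.\ $x^{1-B_1^*(x)}\ge(\log x)^5$, so that the error $f_1(1,x_0)(\log x)^5 x^{2B_1^*(x)}$ in Proposition~\ref{prop:SxSecond} becomes $\le f_1(1,x_0)x^{1+B_1^*(x)}$. Second, your step~(4) is vacuous here: for $q=1$ the relevant $L$-function is $\zeta(s)$, which has no real nontrivial zero, so there is no exceptional $\beta_1$ to isolate.

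Finally, on the underlying machinery (your steps (1)--(3), which really belong to the proof of Proposition~\ref{prop:SxSecond}): the paper does \emph{not} handle the ``term quadratic in the zero sum'' by direct convolution of two explicit formulas as you sketch. Instead it passes through the circle-method identity $S(x)=\int_0^1 S(\alpha,\chi_0)^2T(-\alpha)\,d\alpha$, isolates $R(x;\chi_0,\chi_0)=\int W(\alpha,\chi_0)^2T(-\alpha)\,d\alpha$, and bounds $\int|W|^2$ on short arcs via Gallagher's lemma (Lemma~\ref{lemma:Gallagher}) and a mean-value estimate for $\psi(t+h)-\psi(t)$ (Lemma~\ref{lemma:psilambdadelta}). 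Your direct approach to the double zero sum, without Gallagher's lemma or an equivalent $L^2$ device, is precisely where the ``spurious factor'' you worry about would appear, and it would not reproduce the specific constants $f_1,f_{7,\zeta}$ of Appendix~\ref{appendix:f}.
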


The next one is a similar result for the function $S(x;q,a,b)$.

\begin{proposition}
\label{prop:firstMain}
Let $q\geq 83$, $a$ and $b$ be integers with $(ab,q)=1$. Assume that \ref{assumptionProductExceptional} holds for $|\Im(s)| \geq T_1$ and \ref{eq:rhoSecondUpper}, \ref{eq:defIntervalZeroGeneral} and \ref{eq:defzerosGeneral} hold for $T\geq T_1$ and for all $\chi$, and assumptions \ref{assumption:psiuchi}, \ref{eq:assumption1OverRhoSum}, \ref{eq:assumptionrhorho1} and \ref{eq:assumptionLambda} hold with some $T_0$ for all characters $\chi \pmod q$. Assume also that for all primitive characters $\chi^*$ that induce a character $\chi \pmod q$ and $T \geq T_1$, we have \ref{eq:assumptionLogDer}. 

For $x \geq \max\{x_0,4\cdot10^{15}, q, T_0\}$, $x_0 \geq \max\{e^e, T_1\}$ and
\begin{equation}
\label{eq:logqLower}
    \log{x} \geq \exp\left(-4W_{-1}\left(-\left(4\cdot 5^{3/4}\left(3.5c_1\left(\frac{\log{\log{x_0}}}{\log{x_0}}\right)^{2/3}+c_2\right)^{3/4}\right)^{-1}\right)\right),
\end{equation}
we have
\begin{multline}
\label{eq:SxMain}
    \left|S(x;q,a,b)- \frac{x^2}{2\varphi(q)^2}\right| \leq \max\left\{f_1(q,x_0)+\frac{2d_{8}(q)}{\varphi(q)} +\frac{f_7(q, T_0,x_0)}{\log{x}},1\right\}x^{B_q^*(x)+1} \\
    +2e^{2C_0}{x^{1+B_q^*(x)}}\left(\frac{\log\log{x}}{\log{x}}\right)^2+2.1x^{\frac{1+B_q^*(x)}{2}}\log{x}.
\end{multline}
\end{proposition}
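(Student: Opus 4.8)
The plan is to start from an explicit version of the classical identity relating the Goldbach summatory function to a convolution of $\psi$-type functions over characters. Writing $G(n;q,a,b)$ in terms of $\Lambda$ restricted to residue classes, one expands each factor through the orthogonality of Dirichlet characters mod $q$, so that
\[
S(x;q,a,b)=\frac{1}{\varphi(q)^2}\sum_{\chi_1,\chi_2 \pmod q}\overline{\chi_1}(a)\overline{\chi_2}(b)\sum_{n\le x}\sum_{\ell+m=n}\chi_1(\ell)\Lambda(\ell)\chi_2(m)\Lambda(m).
\]
The inner double sum over $n\le x$ can be rewritten, after interchanging the order of summation, as $\int$ or a sum of the product of the partial-summation functions $\psi(u,\chi_1)$ and $\psi(x-u,\chi_2)$, up to boundary terms; this is exactly the point where the preliminary lemmas of Section~\ref{sec:preliminarylemmas} (built along the lines of \cite{BHMS2019,Granville2008}) should be invoked to produce a clean explicit formula. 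The main term $\frac{x^2}{2\varphi(q)^2}$ comes from the pair $(\chi_1,\chi_2)=(\chi_0,\chi_0)$ with both characters principal, where $\psi(u,\chi_0)$ is close to $u$ (after removing the primes dividing $q$, whose contribution is $O(\log q\cdot\text{stuff})$ and must be tracked explicitly).

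Next I would bound each of the remaining contributions. For a non-principal $\chi$ (or the principal one after subtracting the main term), one uses assumption \ref{assumption:psiuchi} to replace $\psi(u,\chi)$ by its sum over zeros plus an error, and then assumptions \ref{eq:assumption1OverRhoSum}, \ref{eq:assumptionrhorho1}, \ref{eq:assumptionLambda}, \ref{eq:rhoSecondUpper}, \ref{eq:defIntervalZeroGeneral}, \ref{eq:defzerosGeneral} to estimate the resulting sums over zeros; the key feature is that every zero $\rho_\chi$ satisfies $\Re(\rho_\chi)\le B_q^*(x)$ once $x$ is large enough, thanks to the Vinogradov--Korobov-type zero-free region \ref{eq:ZeroFree} and the definition \eqref{eq:defBq}, which is precisely why the lower bound \eqref{eq:logqLower} on $\log x$ (the $W_{-1}$-expression) appears: it is the threshold beyond which the Vinogradov--Korobov term in $\eta_q(t)$ beats $B_q$. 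The possible Siegel/exceptional zero $\beta_1$ is handled separately via \ref{assumptionProductExceptional}, contributing the term $2e^{2C_0}x^{1+B_q^*(x)}(\log\log x/\log x)^2$; the term $2.1\,x^{(1+B_q^*(x))/2}\log x$ collects the diagonal/square-root-sized contributions coming from zeros of the two $L$-functions pairing up, i.e.\ from sums of the shape $\sum_{\rho_{\chi_1},\rho_{\chi_2}}x^{\rho_{\chi_1}+\rho_{\chi_2}}/(\cdots)$ bounded trivially when at most one exponent is near $1$. Assumption \ref{eq:assumptionLogDer} on $L'/L$ for the primitive inducing characters is what lets us pass from $\chi\pmod q$ to its primitive counterpart $\chi^*$ with an explicitly controlled cost (the $f_7$, $d_8$ terms).

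The remaining work is bookkeeping: collect the constant in front of $x^{B_q^*(x)+1}$ by packaging the various contributions into the functions $f_1(q,x_0)$, $d_8(q)$, $f_7(q,T_0,x_0)$ from Appendix~\ref{appendix:f}, verify the stated ranges ($x\ge\max\{x_0,4\cdot10^{15},q,T_0\}$, $x_0\ge\max\{e^e,T_1\}$) are exactly those needed for each lemma to apply, and take a maximum with $1$ to absorb degenerate cases where a subdominant term would otherwise exceed the claimed bound. I expect the main obstacle to be the explicit treatment of the double sum over zeros from the pair of non-principal (or mixed) characters: one must split the $(\rho_{\chi_1},\rho_{\chi_2})$-sum according to whether $|\rho_{\chi_i}|<x$ or not, use the explicit zero-counting and $\sum 1/|\rho|$, $\sum 1/|\rho(\rho+1)|$ bounds uniformly in $q$, and combine them so that the final exponent is $2B_q^*(x)$ in the "more precise" statements and only $B_q^*(x)+1$ here — getting the constants $5.805$, $6.794$, $7.246$, $13.149$ to come out requires care that the $\log x$ powers and the $q$-dependence cancel as claimed, and that the exceptional-zero term is genuinely smaller than the $W_{-1}$-threshold guarantees.
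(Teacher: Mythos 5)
Your proposal correctly sketches the orthogonality decomposition and the role of the assumptions in Appendix~\ref{appendix:assumptions}, and it roughly recognises that the lower bound~\eqref{eq:logqLower} is there to make the Vinogradov--Korobov factor $\eta_q(x)$ large enough. But it misses the structural centrepiece of the paper's argument, namely the split into the two regimes $q>x^{\frac{1-B_q^*(x)}{2}}\log x$ and $q\le x^{\frac{1-B_q^*(x)}{2}}\log x$, and consequently misattributes the two ``extra'' terms in~\eqref{eq:SxMain}. In the paper, the terms $2e^{2C_0}x^{1+B_q^*(x)}(\log\log x/\log x)^2$ and $2.1\,x^{(1+B_q^*(x))/2}\log x$ both arise \emph{only} in the large-$q$ case: one applies the trivial estimate
\[
0\le S(x;q,a,b)\le\Bigl(\sum_{\substack{l\le x\\ l\equiv a}}\Lambda(l)\Bigr)\Bigl(\sum_{\substack{m\le x\\ m\equiv b}}\Lambda(m)\Bigr)\le\frac{(x\log x)^2}{q^2}+\frac{2x\log^2 x}{q}+\log^2 x,
\]
bounds the right-hand side by $x^{1+B_q^*(x)}+2.1\,x^{(1+B_q^*(x))/2}\log x$ using $q>x^{(1-B_q^*)/2}\log x$, and controls the main term $x^2/(2\varphi(q)^2)$ by the Rosser--Schoenfeld lower bound for $\varphi(q)$, which is where $2e^{2C_0}(\log\log x/\log x)^2$ comes from. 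Your account instead attributes the first of these to the Siegel/exceptional zero and the second to diagonal contributions from paired zeros $\rho_{\chi_1},\rho_{\chi_2}$; neither is correct, and neither mechanism you describe would produce terms of these shapes with those constants.

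Without the case split the plan cannot close. In the small-$q$ regime the paper applies Proposition~\ref{prop:Sxqab} directly, bounds the remaining zero sum by $2d_8(q)x^{B_q^*(x)+1}/\varphi(q)$ via~\ref{eq:assumptionrhorho1}, and then shows $(\log x)^5x^{2B_q^*(x)}\le x^{1+B_q^*(x)}$: either $1-B_q^*(x)\ge 5\log\log x/\log x$ (trivially done), or else $\log q<3.5\log\log x$, and then~\eqref{eq:5loglog} forces $\eta_q(x)\log x\ge 5\log\log x$, which is exactly what the $W_{-1}$ threshold~\eqref{eq:logqLower} guarantees. This is more specific than your ``threshold beyond which Vinogradov--Korobov beats $B_q$'': the minimum in $B_q^*(x)=\min\{B_q,1-\eta_q(x)\}$ already takes care of that; the $W_{-1}$ condition is needed to absorb the $\log$ powers. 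You should also note that your estimate naturally yields the sharper $(\log x)^5x^{2B_q^*(x)}$ form of Proposition~\ref{prop:Sxqab}; converting it to the $x^{B_q^*(x)+1}$ form of~\eqref{eq:SxMain} is precisely the content of this last step, not an incidental cancellation of $\log x$ powers as your write-up suggests.
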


\begin{remark}
    If $q \leq \log{x}$, the estimate \eqref{eq:SxMain} can be written as
    \begin{equation*}
        \left|S(x;q,a,b)- \frac{x^2}{2\varphi(q)^2}\right| \leq \left(f_1(q,x_0)+\frac{2d_{8}(q)}{\varphi(q)} +\frac{f_7(q, T_0,x_0)}{\log{x}}\right) x^{B_q^*(x)+1}. 
    \end{equation*}
\end{remark}

\begin{remark}
\label{rmk:dependence}
    From Propositions \ref{prop:firstMain} and \ref{prop:firstMainS(x)} 
    we see that the  estimates essential 
    to sharpen the result in Proposition \ref{prop:firstMain} are related 
    to the assumptions \ref{eq:assumption1OverRhoSum} and 
    \ref{eq:assumptionrhorho1}. These could improve the assumptions and 
    lower the term $x_0$. In particular, when $d_5(q)$ and $d_8(q)$ are
    small, the  crucial contribution comes from the term $x_0$. Indeed, 
    in those cases the coefficient of the error term does not depend that
    much on the explicit result itself but more on the lower bound where 
    some of them start to matter.
    
    However, it should be noted that better bounds for other results in 
    Appendix \ref{appendix:assumptions} may also improve these two 
    assumptions. For example,  sharper estimates for the number of zeros
    (see also the assumption \ref{eq:defzerosGeneral}) may improve those 
    estimates. In addition, it should be noted that the lower bound for
    $\log{x}$ increases when $x_0$ decreases.
\end{remark}

Since many explicit results are proved only for primitive or principal 
characters, it would be useful to provide a version of Proposition 
\ref{prop:firstMain} that does not require knowledge of the behavior of 
imprimitive or non-principal characters. The next remark describes one
result of this  kind.
\begin{remark}
    \label{rmk:primitiveS}
    Assume the hypotheses \ref{assumptionProductExceptional}, \ref{assumption:psiuchi}---\ref{eq:assumptionLambda} for primitive and principal characters. Then we have  a version of the estimate \eqref{eq:SxMain} where the terms $\varphi(q)$ are replaced with the terms $\varphi_1^*(q)$ and the second last term in \eqref{eq:SxMain} is
    \begin{equation*}
        82.366x^{1+B_q^*(x)}\left(\frac{(\log\log{x})^2}{\log{x}}\right)^2.
    \end{equation*}
\end{remark}

The second main result gives another estimate for the functions $S(x)$ and $S(x;q,a,b)$. They are used to prove Theorems \ref{prop:firstMainS(x)} and \ref{prop:firstMain}. First, we describe the result for the term $S(x)$ and then for the term $S(x;q,a,b)$.

\begin{proposition}
    \label{prop:SxSecond}
      Under the same assumptions of Appendix \ref{appendix:assumptions} as in Proposition \ref{prop:firstMainS(x)},
     for $x\geq \{83,x_0, T_0\}$, $x_0 \geq \max\{e^e, T_1\}$, we have
    \begin{equation*}
         \left|S(x)-\frac{x^2}{2}+2\sum_{|\rho_\zeta|<x}\frac{x^{\rho_\zeta+1}}{\rho_\zeta(\rho_\zeta+1)}\right|\leq  f_1(1,x_0)(\log{x})^5x^{2B_1^*(x)}+f_{7,\zeta}(1, T_0,x_0)(\log{x})^4x^{2B_1^*(x)},
    \end{equation*}
    where the sum runs over the non-trivial zeros of the Riemann zeta function.
\end{proposition}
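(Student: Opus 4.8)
The plan is to start from the explicit formula for $S(x)$ in terms of the function $\psi(u,\chi_0)$ for the trivial character $\chi_0 \pmod 1$, i.e.\ from the identity $S(x) = \sum_{n \le x} G(n)$ with $G(n) = \sum_{\ell+m=n} \Lambda(\ell)\Lambda(m)$. Opening up $G$ and interchanging the order of summation, $S(x) = \int_0^x \psi(u)\,d\psi(x-u)$-type expressions reduce, after partial summation, to combinations of $\int_0^x \psi(u)\psi(x-u)\,du$ and boundary terms; the standard manipulation (as in \cite{BHMS2019, Granville2008}) rewrites this as a main term $\tfrac{x^2}{2}$ plus a sum of convolution-type integrals of the error $\psi(u)-u$ against itself and against $u$. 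The key input at this stage is the explicit formula $\psi(u) - u = -\sum_{|\rho_\zeta| < T} \frac{u^{\rho_\zeta}}{\rho_\zeta} + (\text{tail and trivial-zero terms})$, which is exactly assumption \ref{assumption:psiuchi} for $\chi_0 \pmod 1$. Substituting this into the convolution and integrating term by term against the zeros produces the sum $2\sum_{|\rho_\zeta|<x}\frac{x^{\rho_\zeta+1}}{\rho_\zeta(\rho_\zeta+1)}$ — the second factor $\rho_\zeta+1$ in the denominator arises precisely from the extra integration $\int_0^x u^{\rho_\zeta}\,du = \frac{x^{\rho_\zeta+1}}{\rho_\zeta+1}$ inherent in passing from $\psi$ to its summatory function.

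Next I would collect the error terms. These fall into three groups: (i) the diagonal/cross terms coming from the product of two zero-sums, which after bounding $|u^{\rho}| \le u^{B_1^*(x)}$ and using assumption \ref{eq:assumption1OverRhoSum} (a bound on $\sum 1/|\rho_\zeta|$) and \ref{eq:assumptionrhorho1} (a bound on $\sum 1/|\rho_\zeta(\rho_\zeta+1)|$) contribute a term of size controlled by $x^{2B_1^*(x)}$ times powers of $\log x$ from the zero counts; (ii) the tail $|\rho_\zeta| \ge T$ contributions and the trivial-zero contributions from \ref{assumption:psiuchi}, which are where the functions $f_{7,\zeta}(1,T_0,x_0)$ enter — these are exactly the auxiliary functions designed in Appendix \ref{appendix:f} to package the dependence on $T_0$ and $x_0$; and (iii) the $\Lambda$-weighted remainder estimated via \ref{eq:assumptionLambda}. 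The cleanest route is to choose the cutoff $T = x$ (legitimate since $x \ge T_0$ and $x \ge x_0 \ge T_1$, so all the relevant assumptions apply at height $T=x$), which makes the zero-sum in the statement finite and aligns the error bookkeeping with the functions $f_j$ as tabulated. The bound $f_1(1,x_0)(\log x)^5 x^{2B_1^*(x)} + f_{7,\zeta}(1,T_0,x_0)(\log x)^4 x^{2B_1^*(x)}$ then emerges by matching: the $(\log x)^5$ term absorbs the product of two zero-counts (each $\ll \log x \cdot (\text{one power from the }1/\rho\text{ sums being }\ll \log^2)$) and the $(\log x)^4$ term the single-sum-times-tail contributions.

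The main obstacle I expect is the careful bookkeeping of the convolution's boundary and tail terms so that everything fits under a clean $x^{2B_1^*(x)}$ with integer powers of $\log x$ — in particular, ensuring that the truncation error from replacing $T$ by $x$ in the explicit formula, when convolved, does not exceed $x^{2B_1^*(x)}(\log x)^4$, and that interchanging the (infinite, before truncation) sum over zeros with the integral is justified by absolute convergence coming from \ref{eq:assumption1OverRhoSum}. A secondary technical point is that $B_1^*(x)$ depends on $x$ through the Vinogradov–Korobov-type term $\eta_1(x)$, so the bounds $u^{B_1^*(u)} \le u^{B_1^*(x)}$ for $u \le x$ require the (trivial) monotonicity of $B_q^*$, but one must be slightly careful near the lower range $x \ge e^e$ where $\eta_q$ is evaluated; this is why the hypothesis $x_0 \ge e^e$ appears. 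Once these are in hand, the stated inequality follows by collecting constants into $f_1$ and $f_{7,\zeta}$ exactly as their definitions in Appendix \ref{appendix:f} anticipate.
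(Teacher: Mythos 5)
The overall skeleton you describe — express $S(x)$ via $G(n)=\sum_{\ell+m=n}\Lambda(\ell)\Lambda(m)$, pull out the main term $x^2/2$ and the oscillating sum $2\sum_{\rho}\frac{x^{\rho+1}}{\rho(\rho+1)}$ from a single application of the explicit formula for $\psi$, and attribute the rest to an ``error against error'' term — matches the paper's structure (the paper's formula for $I(\chi)$ and $H(x,\chi)$ in Lemma \ref{lemma:Schi12} and Remark \ref{rmk:SFormula} is exactly this). But there is a genuine gap in how you propose to handle the quadratic error term, and it is precisely the step that makes the proposition true at the stated order.

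Your plan (i) bounds the ``diagonal/cross terms coming from the product of two zero-sums'' pointwise, via \ref{eq:assumption1OverRhoSum} ($\sum 1/|\rho| \ll \log^2 T$) and \ref{eq:assumptionrhorho1}. If carried out, this gives $|\psi(u)-u| \ll u^{B_1^*}\log^2 u$ pointwise, and a convolution of two such errors over a range of length $\asymp x$ produces a bound of order $x^{2B_1^*+1}(\log x)^4$ — a full factor of $x$ worse than the target $x^{2B_1^*}(\log x)^5$. In fact, the corresponding object in the paper is $R(x;\chi_0,\chi_0)=\int_0^1 W(\alpha,\chi_0)^2 T(-\alpha)\,d\alpha$, and $W(\alpha,\chi_0)$ is \emph{not} uniformly $\ll x^{B_1^*}\log^2 x$ in $\alpha$ (it is of size comparable to $x$ near rationals with small denominator), so a pointwise-in-$\alpha$ bound also fails. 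The cancellation that rescues the estimate lives in the $L^2$ mean of $W$: the paper bounds $|R|\le K(\chi_0)=\int |W|^2|T|\,d\alpha$, dyadically decomposes in $\alpha$ exploiting $|T(-\alpha)|\le\min\{x,1/(2|\alpha|)\}$, and reduces $\int_{-\xi}^{\xi}|W|^2\,d\alpha$ via Gallagher's large-sieve inequality (Lemma \ref{lemma:Gallagher}) to a mean-square of $\psi$ over short intervals (Lemma \ref{lemma:psilambdadelta}). That mean-square estimate in turn rests on a pair-correlation-type bound for the sum $\sum 1/(1+|\gamma_1-\gamma_2|)$ over pairs of zeros (Lemma \ref{lemma:gammachi}), together with \ref{eq:rhoSecondUpper}, \ref{eq:defIntervalZeroGeneral}, \ref{eq:defzerosGeneral}. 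None of this machinery — the $W/T$ circle-method formulation, Cauchy–Schwarz $|R|\le K$, Gallagher's lemma, the short-interval mean-square — appears in your sketch, yet it is the part that actually produces $x^{2B_1^*(x)}(\log x)^5$ (the $(\log x)^5$ is one $\log$ from the dyadic decomposition times $(\log x)^4$ from $f_1 \sim d_5(1)^2 \cdot$ constants, not from a ``product of two zero-counts''). As written, your route would fail to deliver the claimed exponent.

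A secondary point: once you invoke Remark \ref{rmk:SFormula}, the residual terms of size $x(\log x)(\log\log x)f_{6,\zeta}(1,x_0)$, together with the $(\log x)^4$-order pieces from Remark \ref{rmk:J}, need to be checked to fit under $f_{7,\zeta}(1,T_0,x_0)(\log x)^4 x^{2B_1^*(x)}$; this is where the assumption $B_1^*(x)\ge 1/2$ and the explicit definitions of $f_{4,\zeta},f_{5,\zeta},f_{6,\zeta},f_{7,\zeta}$ in Appendix \ref{appendix:f} enter. Your sketch only gestures at this bookkeeping and does not identify where each $f_j$ comes from.
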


\begin{proposition}    
\label{prop:Sxqab}
   Let $q\geq 83$, and let $a$ and $b$ be integers with $(ab,q)=1$. With the same assumptions of Appendix \ref{appendix:assumptions} as in Proposition \ref{prop:firstMain}, 
     for $x \geq \{q, T_0\}$, $x_0 \geq e^e$ we have
    \begin{multline*}
         \left|S(x;q,a,b)-\frac{x^2}{2\varphi(q)^2}+\frac{1}{\varphi(q)^2}\sum_{\chi \pmod q}\left(\overline{\chi}(a)+\overline{\chi}(b)\right)\sum_{|\rho_\chi|<x}\frac{x^{\rho_\chi+1}}{\rho_\chi(\rho_\chi+1)}\right| \\
         \leq  f_1(q,x_0)(\log{x})^5x^{2B_q^*(x)}+f_7(q, T_1,x_0)(\log{x})^4x^{2B_q^*(x)},
    \end{multline*}
    where the sum over zeros runs over the non-trivial zeros of the function $L(s,\chi)$.
\end{proposition}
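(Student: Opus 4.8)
The plan is to run the argument that proves Proposition~\ref{prop:SxSecond}, carrying the orthogonality weights of the Dirichlet characters $\bmod\ q$ along. Since $(ab,q)=1$, the congruences $\ell\equiv a,\ m\equiv b\pmod q$ force $(\ell m,q)=1$, so character orthogonality gives, after summing $G(n;q,a,b)$ over $n\le x$,
\[
S(x;q,a,b)=\frac{1}{\varphi(q)^{2}}\sum_{\chi_{1},\chi_{2}\pmod q}\overline{\chi_{1}}(a)\,\overline{\chi_{2}}(b)\,T(x;\chi_{1},\chi_{2}),
\]
where $T(x;\chi_{1},\chi_{2}):=\sum_{\ell+m\le x}\chi_{1}(\ell)\Lambda(\ell)\chi_{2}(m)\Lambda(m)=\sum_{\ell\le x}\chi_{1}(\ell)\Lambda(\ell)\,\psi(x-\ell,\chi_{2})$ after summing over $m$ first. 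Thus everything is reduced to the explicit estimates for $\psi(\cdot,\chi_{1})$ and $\psi(\cdot,\chi_{2})$.

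Next I would insert the explicit formula of assumption~\ref{assumption:psiuchi} with truncation height $T=x$ (admissible because $x\ge\max\{q,T_{0}\}$): writing $\delta(\chi)=1$ for the principal character $\chi_{0}\pmod q$ and $\delta(\chi)=0$ otherwise, it reads $\psi(y,\chi)=\delta(\chi)y-\sum_{|\rho_{\chi}|<x}y^{\rho_{\chi}}/\rho_{\chi}+E(y,\chi)$ with an explicit remainder satisfying $\sup_{0\le y\le x}|E(y,\chi)|\ll(\log x)^{2}$. Substituting the formula for $\psi(x-\ell,\chi_{2})$ into $T(x;\chi_{1},\chi_{2})$: the $\delta(\chi_{2})$--part equals $\delta(\chi_{2})\int_{0}^{x}\psi(t,\chi_{1})\,dt$, which upon inserting the explicit formula for $\psi(\cdot,\chi_{1})$ gives $\delta(\chi_{1})\delta(\chi_{2})\tfrac{x^{2}}{2}-\delta(\chi_{2})\sum_{|\rho_{\chi_{1}}|<x}\frac{x^{\rho_{\chi_{1}}+1}}{\rho_{\chi_{1}}(\rho_{\chi_{1}}+1)}$ up to $O(x(\log x)^{2})$; the zero--sum part equals $-\sum_{|\rho_{\chi_{2}}|<x}\int_{0}^{x}(x-t)^{\rho_{\chi_{2}}-1}\psi(t,\chi_{1})\,dt$ after integration by parts, which, inserting the explicit formula for $\psi(\cdot,\chi_{1})$ and using $\int_{0}^{x}(x-t)^{\rho-1}t\,dt=\frac{x^{\rho+1}}{\rho(\rho+1)}$, produces a second mixed term $-\delta(\chi_{1})\sum_{|\rho_{\chi_{2}}|<x}\frac{x^{\rho_{\chi_{2}}+1}}{\rho_{\chi_{2}}(\rho_{\chi_{2}}+1)}$, a double sum over the zeros of $\chi_{1}$ and $\chi_{2}$, and a further error term; the $E$--part is $\sum_{\ell\le x}\chi_{1}(\ell)\Lambda(\ell)E(x-\ell,\chi_{2})$. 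Only $\chi_{1}=\chi_{0}$, resp.\ $\chi_{2}=\chi_{0}$, feeds the principal term and one of the two mixed terms, and $\overline{\chi_{0}}(a)=\overline{\chi_{0}}(b)=1$; carrying the weights $\varphi(q)^{-2}\overline{\chi_{1}}(a)\overline{\chi_{2}}(b)$ and summing over all $\chi_{1},\chi_{2}$, the main terms collect into
\[
\frac{x^{2}}{2\varphi(q)^{2}}-\frac{1}{\varphi(q)^{2}}\sum_{\chi\pmod q}\bigl(\overline{\chi}(a)+\overline{\chi}(b)\bigr)\sum_{|\rho_{\chi}|<x}\frac{x^{\rho_{\chi}+1}}{\rho_{\chi}(\rho_{\chi}+1)},
\]
with coefficient $2$ on $\chi=\chi_{0}$ exactly as in Proposition~\ref{prop:SxSecond}; this is the quantity subtracted on the left of the asserted inequality.

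What remains is to bound $\varphi(q)^{-2}\sum_{\chi_{1},\chi_{2}}$ of: (i)~the $O(x(\log x)^{2})$ pieces together with $\sum_{\ell\le x}\chi_{1}(\ell)\Lambda(\ell)E(x-\ell,\chi_{2})$, bounded in absolute value by $\sup|E|\cdot\psi(x)\ll x(\log x)^{2}$; (ii)~the double sum over zeros $\sum_{\rho_{1},\rho_{2}}\rho_{1}^{-1}x^{\rho_{1}+\rho_{2}}\frac{\Gamma(\rho_{1}+1)\Gamma(\rho_{2})}{\Gamma(\rho_{1}+\rho_{2}+1)}$; and (iii)~the term $\sum_{\rho_{2}}\int_{0}^{x}(x-t)^{\rho_{2}-1}E(t,\chi_{1})\,dt$, which an integration by parts (using $E(0,\chi_{1})=0$) reduces to sums of the same types as in (i)--(ii). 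Every zero counted has imaginary part $<x$, so by the zero--free region~\ref{eq:ZeroFree} its real part lies in $[\eta_{q}(x),B_{q}^{*}(x)]$; hence $|x^{\rho_{1}+\rho_{2}}|\le x^{2B_{q}^{*}(x)}$, and the $\Gamma$--quotient is estimated by the preliminary lemmas of Section~\ref{sec:preliminarylemmas} (Stirling--type bounds that decay in $|\Im\rho_{1}|$ and $|\Im\rho_{2}|$), which collapses the double sum over zeros to products of the sums $\sum_{|\rho_{\chi}|<x}|\rho_{\chi}|^{-1}$ and $\sum_{|\rho_{\chi}|<x}|\rho_{\chi}(\rho_{\chi}+1)|^{-1}$ controlled in assumptions~\ref{eq:assumption1OverRhoSum} and~\ref{eq:assumptionrhorho1} (the zero count~\ref{eq:defzerosGeneral} entering as well). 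Every contribution is then at most a constant times $(\log x)^{5}x^{2B_{q}^{*}(x)}$ or $(\log x)^{4}x^{2B_{q}^{*}(x)}$; since $B_{q}^{*}(x)\ge1/2$, the lower--bound hypothesis on $\log x$ makes $x^{2B_{q}^{*}(x)}$ dominate the stray powers of $\log x$ and the $x(\log x)^{2}$ of (i), and the explicit constants aggregate into the functions $f_{1}(q,x_{0})$ and $f_{7}(q,T_{1},x_{0})$ of Appendix~\ref{appendix:f}, giving the claimed estimate.

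The main obstacle is steps (ii)--(iii): the naive bound $\Gamma(\rho_{1}+1)\Gamma(\rho_{2})/\Gamma(\rho_{1}+\rho_{2}+1)\ll(\Re\rho_{2})^{-1}$ is far too weak, since the $\asymp x\log x$ zeros of height $<x$ would then yield only the useless size $x^{1+B_{q}^{*}(x)}$; one must instead extract, \emph{explicitly} and uniformly in $q$, the decay of this $\Gamma$--quotient in the imaginary parts of $\rho_{1},\rho_{2}$, so that the double sum over zeros genuinely collapses to $\bigl(\sum_{|\rho|<x}|\rho|^{-1}\bigr)$--type sums, and likewise in (iii) one needs the oscillation of $(x-t)^{i\Im\rho_{2}}$ rather than a pointwise estimate of $E$. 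Beyond that, the only additional care is to keep every constant explicit in $q,T_{0},x_{0}$ through these estimates and to check that $T=x$ is an admissible truncation height in~\ref{assumption:psiuchi} over all of $t\in[0,x]$ (the bounded initial and terminal ranges of $t$ being disposed of trivially); otherwise the computation is, line for line, that of Proposition~\ref{prop:SxSecond} with the twisted weights $\overline{\chi_{1}}(a)\overline{\chi_{2}}(b)$ and the sum $\varphi(q)^{-2}\sum_{\chi_{1},\chi_{2}}$ inserted.
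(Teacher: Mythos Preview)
Your outline takes a genuinely different route from the paper, and the central step has a real gap.

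The paper does \emph{not} substitute the explicit formula directly into $T(x;\chi_1,\chi_2)$ and then estimate a double sum over zeros via Beta/Gamma quotients. Instead, following \cite{BHMS2019}, it uses the circle method: Lemma~\ref{lemma:Schi12} writes
\[
S(x;\chi_1,\chi_2)=\tfrac{\delta_0(\chi_1)\delta_0(\chi_2)}{2}x^2-\delta_0(\chi_2)H(x,\chi_1)-\delta_0(\chi_1)H(x,\chi_2)+R(x;\chi_1,\chi_2)+(\text{small}),
\]
with $R(x;\chi_1,\chi_2)=\int_0^1 W(\alpha,\chi_1)W(\alpha,\chi_2)T(-\alpha)\,d\alpha$. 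After the orthogonality average over $\chi_1,\chi_2$, Cauchy--Schwarz gives $|R(x;\chi_1,\chi_2)|\le K(\chi_1)^{1/2}K(\chi_2)^{1/2}$ with $K(\chi)=\int|W(\alpha,\chi)|^2|T(-\alpha)|\,d\alpha$, and $K(\chi)$ is bounded in Lemma~\ref{lemma:EstJ} via Gallagher's lemma (Lemma~\ref{lemma:Gallagher}) and the short-interval mean-square estimate Lemma~\ref{lemma:psilambdadelta}. The ``preliminary lemmas of Section~\ref{sec:preliminarylemmas}'' you invoke are precisely these circle-method lemmas, not Stirling-type bounds on $\Gamma$; so your appeal to them for the decay of $\Gamma(\rho_1+1)\Gamma(\rho_2)/\Gamma(\rho_1+\rho_2+1)$ is a misattribution. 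Note also that the proof of Proposition~\ref{prop:SxSecond} in the paper is itself via this circle-method route (Remarks~\ref{rmk:SFormula} and~\ref{rmk:J}), not by the direct substitution you describe.

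The obstacle you flag in (ii) is exactly the point where your sketch fails. For zeros $\rho_1,\rho_2$ with $\gamma_1,\gamma_2$ of the same sign, Stirling gives no exponential damping in $B(\rho_1+1,\rho_2)$; the polynomial factor is essentially $1/|\rho_1+\rho_2|$, so your double sum is $\sum_{\rho_1,\rho_2}x^{\beta_1+\beta_2}/(|\rho_1|\cdot|\rho_1+\rho_2|)$. Bounding this by $x^{2B_q^*(x)}(\log x)^5$ requires precisely the Gallagher-type input $\sum_{|\gamma_2|\le T}(1+|\gamma_1-y|)^{-1}\ll (\log qT)^2$ of Lemma~\ref{lemma:gammachi}, followed by summing over $\rho_1$; you never carry this out, and ``collapsing to products of $\sum|\rho|^{-1}$'' is not what actually happens. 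The paper's approach packages this same information through $\int|W|^2$, avoiding the Beta integral entirely and making the explicit constants tractable. Your term (iii) has the same unresolved difficulty: integrating $(x-t)^{\rho_2-1}$ against $E(t,\chi_1)$ by parts does not reduce cleanly to (i)--(ii), and a pointwise $\sup|E|$ bound there would again lose the oscillation you acknowledge is essential.
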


\begin{remark}
\label{rmk:primitiveprincipalMainLemma}
    If we want to consider the sum over primitive and principal characters modulo $q$ instead of the sum over all characters modulo $q$, then we only need to change $\varphi(q)$ to $\varphi_1^*(q)$ on the right-hand side of the result. 
\end{remark}

Since the conjecture of Hardy and Littlewood \eqref{conj:HL} is expected to 
approximate $G(n)$, it is natural to consider the average order of the error term $G(n)-J(n)$ on the lines of (\cite{Granville2008} and \cite{BHMS2019}), where $G(n)$ is as in \eqref{def:Gn} and $J(n)$ is defined as
\begin{equation}
\label{def:Jn}
J(n):=nC_2\prod_{\substack{p\mid n\\p>2}}\frac{p-1}{p-2}.
\end{equation}
We will now give an explicit version of Theorem 3 of \cite{BHMS2019}.

\begin{proposition}\label{prop:JG}
Let $q \geq 83$, let $G(n)$ and $J(n)$ be as in \eqref{def:Gn} and \eqref{def:Jn}, respectively, and let $d(n)$ denote the number of positive divisors of $n$.
     Assume that all characters $\chi \pmod q$ and all primitive characters $\chi^* \pmod q$ that induce any $\chi \pmod q$ satisfy the following conditions: \ref{assumptionProductExceptional} holds for $|\Im(s)|\geq T_1$, \ref{eq:assumptionLogDer} holds for $\chi^*$ and $T\geq T_1$,  \ref{eq:rhoSecondUpper}, \ref{eq:defIntervalZeroGeneral} and \ref{eq:defzerosGeneral} hold for $T \geq T_1$ and $\chi$ and \ref{assumption:psiuchi}, \ref{eq:assumption1OverRhoSum} and \ref{eq:assumptionrhorho1} hold for $T \geq T_0$ and $\chi$. Assume also that \ref{eq:assumptionLambda} holds for $T_0$, and let $x_0 \geq \max\{e^e,T_1\}$. 
     
     Then, for $x\geq \max\{q,T_0, x_0, 572\}$ and for any positive integer $c$ we 
     have 
     \begin{equation*}
    \sum_{\substack{n\leq x\\n\equiv c\pmod{q}}}(G(n)-J(n))\leq \frac{2d_8(q)d(q)}{\varphi(q)}x^{B_q^{*}+1}+f_8(q,T_1,x_0,c)(\log{x})^5x^{2B_q^*(x)}.
    \end{equation*}
\end{proposition}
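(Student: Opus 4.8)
The plan is to reduce the sum $\sum_{n\le x,\,n\equiv c\,(q)}(G(n)-J(n))$ to a combination of the summatory functions already controlled in Propositions~\ref{prop:firstMain} and~\ref{prop:Sxqab}, together with an elementary estimate for the partial sums of $J(n)$ in an arithmetic progression. First I would split $G(n)$ according to the residues of the two summands modulo $q$: writing $n\equiv c\,(q)$ as $\ell\equiv a$, $m\equiv c-a\,(q)$ and summing over $a\bmod q$, one gets
\begin{equation*}
\sum_{\substack{n\le x\\ n\equiv c\,(q)}}G(n)=\sum_{\substack{a\bmod q}}\sum_{\substack{n\le x\\ n\equiv c\,(q)}}\sum_{\substack{\ell+m=n\\ \ell\equiv a,\ m\equiv c-a\,(q)}}\Lambda(\ell)\Lambda(m)=\sum_{a\bmod q}S(x;q,a,c-a),
\end{equation*}
where the genuinely contributing terms are those with $(a(c-a),q)=1$. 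The contribution of pairs $(a,m)$ in which one of $\ell,m$ is a prime power dividing $q$ (so not coprime to $q$) is negligible: it is bounded by $\sum_{p^k\mid q}\log p\cdot\psi(x)\ll d(q)\log q\cdot\psi(x)$, which is of smaller order than $x^{2B_q^*(x)}(\log x)^5$ once $x\ge 572$ and $x\ge q$, so it can be absorbed. Applying Proposition~\ref{prop:Sxqab} (or equivalently the more precise version) to each of the $\varphi(q)$ surviving terms $S(x;q,a,c-a)$, and noting that the main terms $x^2/(2\varphi(q)^2)$ sum to $x^2/(2\varphi(q))$ while the explicit-zero sums reassemble the orthogonality-weighted sum over characters, yields
\begin{equation*}
\sum_{\substack{n\le x\\ n\equiv c\,(q)}}G(n)=\frac{x^2}{2\varphi(q)}+(\text{a sum over zeros})+O\!\left(d_8(q)d(q)\varphi(q)^{-1}x^{B_q^*(x)+1}+f_8(q,T_1,x_0,c)(\log x)^5x^{2B_q^*(x)}\right),
\end{equation*}
the precise constants being packaged into $f_8$ via the $f_1,f_7$ appearing in Proposition~\ref{prop:Sxqab} summed over $a$.

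Next I would handle $\sum_{n\le x,\,n\equiv c\,(q)}J(n)$. Since $J(n)=nC_2\prod_{p\mid n,\,p>2}\frac{p-1}{p-2}$ is $n$ times a multiplicative, bounded-on-average function, its partial sum over a progression has the shape $\frac{C(q,c)}{2}x^2+O(x^{1+\varepsilon})$; more carefully, writing $\prod_{p\mid n,p>2}\frac{p-1}{p-2}=\sum_{d\mid n}h(d)$ for a multiplicative $h$ supported on squarefree odd $d$ with $h(d)\ll d^{-1}d(d)$, one gets $\sum_{n\le x,\,n\equiv c\,(q)}J(n)=C_2\sum_{d}h(d)\sum_{\substack{m\le x/d\\ dm\equiv c\,(q)}}dm$, and the inner sum is evaluated by the usual arithmetic-progression formula for $\sum m$. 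The $d$-sum converges, and collecting the leading terms one finds that the quadratic main term of $\sum J(n)$ is \emph{exactly} $x^2/(2\varphi(q))$ — this matching is the content of Theorem~3 of \cite{BHMS2019} and is why the statement of Proposition~\ref{prop:JG} has no $x^2$ term. The error in this step is $O(x\,d(q)q^{\varepsilon})$ or better, again dominated by the stated error terms for $x\ge\max\{q,572\}$.

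Subtracting, the two $x^2/(2\varphi(q))$ terms cancel and one is left with the character sum over zeros plus the error terms; bounding the zero sum trivially by $\sum_{|\rho_\chi|<x}\frac{x^{|\rho_\chi|+1}}{|\rho_\chi||\rho_\chi+1|}\ll (\log x)x^{B_q^*(x)+1}$, which is absorbed, or more sharply by $x^{2B_q^*(x)}(\log x)^5$ via the argument behind Proposition~\ref{prop:Sxqab}, gives the claimed bound; the factor $d(q)$ enters because the number of residue classes $a$ with $(a(c-a),q)=1$ and the divisor structure of $q$ both contribute a $d(q)$, and $f_8$ is defined (in Appendix~\ref{appendix:f}) precisely to collect $\sum_{a}f_{7}$-type contributions together with the $J(n)$-tail. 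The main obstacle I expect is not any single estimate but the bookkeeping: tracking the dependence on $c$ (which only enters through which pairs $(a,c-a)$ are coprime to $q$, hence mildly), verifying that the elementary $J(n)$ main term is \emph{identically} $x^2/(2\varphi(q))$ rather than merely asymptotically so, and checking that every error term genuinely fits under $d_8(q)d(q)\varphi(q)^{-1}x^{B_q^*+1}+f_8(\log x)^5x^{2B_q^*(x)}$ for all $x\ge\max\{q,T_0,x_0,572\}$ — in particular that the crossover threshold $572$ and the lower bounds on $\log x$ inherited from Proposition~\ref{prop:firstMain} are compatible with the cruder trivial bounds used for the small auxiliary sums.
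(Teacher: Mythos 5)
Your high-level strategy mirrors the paper's (split $G(n)$ by the residue of $\ell$, handle $\sum J(n)$ by an elementary divisor-sum argument, then cancel the main terms), but there are two concrete flaws that would sink the execution.

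First, the main-term bookkeeping is wrong. You claim both $\sum_{n\le x,\,n\equiv c}G(n)$ and $\sum_{n\le x,\,n\equiv c}J(n)$ have main term $x^2/(2\varphi(q))$, justified by saying that $\varphi(q)$ residues $a$ survive and each contributes $x^2/(2\varphi(q)^2)$. But the number of $a\bmod q$ with $(a(c-a),q)=1$ is $\varphi(q)\prod_{p\mid q,\,p\nmid c}\frac{p-2}{p-1}=\varphi(q)^2\mathfrak{S}_q(c)$, not $\varphi(q)$; the actual main term on both sides is $\tfrac{1}{2}\mathfrak{S}_q(c)x^2$ with $\mathfrak{S}_q(c)=\frac{1}{\varphi(q)}\prod_{p\mid q,\,p\nmid c}\frac{p-2}{p-1}$. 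The cancellation still occurs, but you never actually track the singular series, and your assertion about ``exactly $x^2/(2\varphi(q))$'' is what the paper's Lemma~\ref{lem:versionlemma13} must (and does) prove correctly, with explicit constants $\frac{27}{4}e^{C_0}$, etc. Related to this, the factor $d(q)$ in the stated bound does not come from ``the number of residue classes $a$'' or ``the divisor structure'' as you say; it arises from bounding the zero sum $\sum_{\chi\pmod q}\mathfrak{S}_q(c,\chi)H(x,\chi)$ by $\frac{d_8(q)x^{B_q^*+1}}{\varphi(q)}\sum_{\chi\pmod q}\frac{1}{\varphi(q^*)}\le\frac{d_8(q)\,d(q)\,x^{B_q^*+1}}{\varphi(q)}$, i.e.\ from summing $1/\varphi(q^*)$ over conductors dividing $q$.

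Second, and more seriously, applying Proposition~\ref{prop:Sxqab} to each surviving $S(x;q,a,c-a)$ and summing over $a$ cannot give the stated result: Proposition~\ref{prop:Sxqab}'s error term is already $\left(f_1(\log x)^5+f_7(\log x)^4\right)x^{2B_q^*(x)}$ \emph{per choice of $(a,b)$}, so summing over roughly $\varphi(q)^2\mathfrak{S}_q(c)\asymp\varphi(q)$ values of $a$ introduces an extra factor of order $\varphi(q)$ in the error, whereas the target bound $f_8(q,T_1,x_0,c)(\log x)^5x^{2B_q^*(x)}$ has no such factor. The paper sidesteps this in Theorem~\ref{thm:versiothm4} by \emph{not} treating each $a$ separately: it keeps the $a$-sum inside and applies Cauchy--Schwarz across the $a$-sum, using $\sum_{a\bmod q}\bigl|\sum_\chi\overline{\chi(a)}W(\alpha,\chi)\bigr|^2\le\varphi(q)\sum_\chi|W(\alpha,\chi)|^2$, which reduces the double character sum over $R(x;\chi_1,\chi_2)$ to $\frac{1}{\varphi(q)}\sum_\chi K(\chi)$. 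That orthogonality-based saving is the key idea missing from your proposal, and it is essential: without it the error explodes by a factor of $\varphi(q)$.
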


 We also note that in the previous proposition we can estimate  $\log{d(n)} \leq \frac{1.5379\log{n} \log{2}}{\log\log{n}}$ for $n \geq 2$ by \cite{NR1983}.

\section{Preliminary lemmas for the proofs of Propositions \ref{prop:SxSecond} and \ref{prop:Sxqab}}
\label{sec:preliminarylemmas}

In this section, we prove preliminary lemmas for Propositions \ref{prop:SxSecond} and \ref{prop:Sxqab}. Essentially, we derive estimates for the more general case, $S(x;q,a,b)$ since the estimates for the case $S(x)$ follow similarly. We follow closely the structure and ideas of the proofs from \cite[Section 5]{BHMS2019}. 

 We introduce some shortened notation. Let $\chi$ be a character modulo $q$. Now we
set $\delta_0(\chi)=1$ if $\chi$ is a principal character $\chi_0$, and $0$ otherwise.
We let $\delta_1(\chi)=1$ if there exists an exceptional non-trivial zero of 
$L(s,\chi)$ in the set \eqref{set:zerofree}, and $\delta_1(\chi)=0$ otherwise. Further,
\begin{equation}
\label{eq:defpisuchiC}
    \psi(x,\chi):=\sum_{n \leq x} \chi(n)\Lambda(n), \quad C\left(\chi\right):=\begin{cases}
     \frac{L'}{L}(1,\overline{\chi})+\log{\frac{q}{2\pi}}-C_0 & \text{if } \chi\neq \chi_0 \\
     -\log{(2\pi)} &\text{if } \chi= \chi_0 
    \end{cases},
\end{equation}
where $C_0$ is the is the Euler–Mascheroni constant. In addition, we introduce the notation
\begin{equation}
\label{def:Gnchi2Schi2}
    G(n; \chi_1,\chi_2):=\sum_{l+m=n}\chi_1(l)\Lambda(l)\chi_2(m)\Lambda(m), \quad S(x;\chi_1,\chi_2):=\sum_{n \leq x} G(n;\chi_1,\chi_2),
\end{equation}
\begin{equation}
\label{def:TSalphachiW}
    T(\alpha):=\sum_{n\leq x} e\left(n\alpha\right), \quad S(\alpha,\chi):=\sum_{n \leq x} \chi(n)\Lambda(n)e\left(n\alpha\right), \quad W(\alpha,\chi):=S(\alpha,\chi)-\delta_0(\chi)T(\alpha),
\end{equation}
\begin{equation}
\label{eq:defHR}
        H(x,\chi)=\sum_{|\rho_\chi| <x} \frac{x^{\rho_\chi+1}}{\rho_\chi(\rho_\chi+1)}, \quad R(x; \chi_1, \chi_2)=\int_{0}^1 W(\alpha, \chi_1)W(\alpha,\chi_2)T(-\alpha)\, d\alpha
    \end{equation}
\begin{equation}
\label{def:J}
    \text{and}\quad K(\chi):=\int_{-1/2}^{1/2} \left|W(\alpha,\chi)\right|^2\left|T(-\alpha)\right|\, d\alpha.
\end{equation}
Let also $N(T, \chi)$ denote the number of zeros $\varrho$ of of $L(s, \chi)$ with 
$0 < \Re(\varrho) < 1,|\Im (\varrho)| < T$. Finally recall  the notation from the 
beginning of Section \ref{sec:mainresults}.

The idea to prove Proposition \ref{prop:Sxqab} is to write the function 
$S(x;q,a,b)$ 
as a sum of the functions $S(x;\chi_1,\chi_2)$. Hence, we estimate the 
terms 
$S(x;\chi_1,\chi_2)$ first.

\begin{lemma}
\label{lemma:Schi12}
    Let $q \geq 83$, $\chi_1, \chi_2 \pmod q$ and let $\chi_1^*$ and 
    $\chi_2^*$ be primitive characters that induce $\chi_1$ and $\chi_2$,
    respectively. In the case of principal character let 
    $\chi_i=\chi_i^*$. Assume \ref{assumptionProductExceptional} for 
    $|\Im(s)| \geq T_1$. Let also $T_0$ be a positive real number such 
    that for $\chi\in \{\chi_1,\chi_2\}$, the assumptions 
    \ref{assumption:psiuchi} and \ref{eq:assumptionrhorho1} hold. Let now 
    $x \geq \max\{q,T_0,x_0\}$, where $x_0 \geq e^e$. Assume also 
    \ref{eq:assumptionLogDer} for $\chi_1^*, \chi_2^*$ and $T \geq T_1$. 
   
    Then, we have   
\begin{multline*}
        \left|S(x;\chi_1,\chi_2)-\frac{\delta_0(\chi_1)\delta_0(\chi_2)}{2}x^2+\delta_0(\chi_2)H(x,\chi_1)+\delta_0(\chi_1)H(x,\chi_2)-R(x; \chi_1, \chi_2)\right|\\
        \leq x(\log{x})(\log\log{x})\left(\frac{c_{q,3}}{(\log{x_0})(\log\log{x_0})}\left(\delta_0(\chi_1)\delta_1(\chi_2^*)+\delta_0(\chi_2)\delta_1(\chi_1^*)\right) \right. \\
        \left.+\frac{3}{2(\log{x_0})(\log\log{x_0})}\delta_0(\chi_1)\delta_0(\chi_2)+\left(\delta_0(\chi_1)+\delta_0(\chi_2)\right)f_6(q,x_0) \right),
    \end{multline*}
    where $S(x;\chi_1,\chi_2)$, $H(x,\chi)$ and $R(x; \chi_1, \chi_2)$ 
    are defined as in \eqref{def:Gnchi2Schi2} and \eqref{eq:defHR}, 
    respectively.
\end{lemma}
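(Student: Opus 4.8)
The plan is to prove Lemma~\ref{lemma:Schi12} by expressing $S(x;\chi_1,\chi_2)$ through the circle-method identity $S(x;\chi_1,\chi_2)=\int_0^1 S(\alpha,\chi_1)S(\alpha,\chi_2)T(-\alpha)\,d\alpha$, which follows immediately from the definitions in \eqref{def:TSalphachiW} and \eqref{def:Gnchi2Schi2} by expanding the exponential sums and using orthogonality $\int_0^1 e(n\alpha)\,d\alpha=\delta_{n=0}$. First I would substitute $S(\alpha,\chi_i)=W(\alpha,\chi_i)+\delta_0(\chi_i)T(\alpha)$ and multiply out the product, obtaining four pieces: the main term $\delta_0(\chi_1)\delta_0(\chi_2)\int_0^1 T(\alpha)^2T(-\alpha)\,d\alpha$, two cross terms $\delta_0(\chi_i)\int_0^1 W(\alpha,\chi_{3-i})T(\alpha)T(-\alpha)\,d\alpha$, and the remainder $R(x;\chi_1,\chi_2)$ defined in \eqref{eq:defHR}. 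The first integral evaluates exactly to a quantity that, after the standard computation $\int_0^1 T(\alpha)^2T(-\alpha)\,d\alpha=\sum_{m,n\le x,\ m+n\le x}1$, contributes the $\frac{1}{2}x^2$ term up to a controlled linear-in-$x$ error that I would absorb into the right-hand side (this uses $\sum_{m+n\le N}1=\binom{\lfloor N\rfloor}{2}+\lfloor N\rfloor$ type identities, routine).

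The heart of the argument is the treatment of the cross terms. Here I would use a truncated explicit-formula-type identity for $\psi(u,\chi)$ (equivalently for the partial sums of $\chi(n)\Lambda(n)$), namely the expansion involving the sum over zeros $\sum_{|\rho_\chi|<x}$ together with the $C(\chi)$ term from \eqref{eq:defpisuchiC}, which is precisely what assumption~\ref{assumption:psiuchi} packages explicitly. Applying Abel summation (or directly integrating the exponential-sum identity $\int_0^1 W(\alpha,\chi)T(\alpha)T(-\alpha)\,d\alpha=\sum_{\substack{m+n\le x\\ m\ge 1}}(\chi\Lambda)(m)$-type expressions) converts the cross integral into $H(x,\chi_{3-i})$ plus error terms. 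The error terms split into: the contribution of the possible exceptional zero, which produces the $\delta_0(\chi_1)\delta_1(\chi_2^*)+\delta_0(\chi_2)\delta_1(\chi_1^*)$ piece with coefficient $c_{q,3}$; a "diagonal" $\delta_0(\chi_1)\delta_0(\chi_2)$ piece with the $3/2$ coefficient coming from the trivial-character main-term correction; and a term gathering the remaining zeros, the $C(\chi)$ contribution (controlled via~\ref{eq:assumptionLogDer} bounding $\frac{L'}{L}(1,\overline\chi)$), and the tail of zeros beyond height $x$, all of which I would bundle into the function $f_6(q,x_0)$ defined in Appendix~\ref{appendix:f} using assumptions~\ref{eq:assumptionrhorho1} and~\ref{assumptionProductExceptional}. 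Throughout, the normalization $x\ge x_0\ge e^e$ lets me replace quantities like $\log x\log\log x$ in denominators by $\log x_0\log\log x_0$ to get clean monotone coefficients.

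The main obstacle I anticipate is bookkeeping the error terms so that they genuinely fit the shape $x(\log x)(\log\log x)\times(\text{coefficients depending only on }q,x_0)$: one must verify that every error arising from truncating the zero sum at $|\rho_\chi|<x$, from the size of $C(\chi)$, and from the exceptional zero is dominated by $x(\log x)(\log\log x)$ and not something larger like $x(\log x)^2$. This requires careful use of the explicit bound in~\ref{eq:assumptionrhorho1} for $\sum_{|\rho_\chi|\ge T}\frac{1}{|\rho_\chi(\rho_\chi+1)|}$ and of~\ref{assumption:psiuchi}, and the key structural point is that the only terms surviving without a $\delta_0(\chi_i)$ factor cancel — i.e. when neither character is principal, $W(\alpha,\chi_i)=S(\alpha,\chi_i)$ and the integral is already exactly $R(x;\chi_1,\chi_2)$, so the estimate is trivially zero on the left and the right-hand side is vacuously nonnegative. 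I would organize the proof by first disposing of the case $\delta_0(\chi_1)=\delta_0(\chi_2)=0$, then the two symmetric mixed cases, and finally the fully principal case, reusing the cross-term estimate in each.
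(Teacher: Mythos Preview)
Your proposal is correct and follows essentially the same approach as the paper: the circle-method decomposition via $S=W+\delta_0 T$, the evaluation $\int_0^1 T(\alpha)^2T(-\alpha)\,d\alpha=x^2/2+O(x)$, and the treatment of the cross terms by rewriting them as $\int_2^x\psi(u,\chi)\,du$ and invoking~\ref{assumption:psiuchi} with $T=x$ all match the paper's argument. Two minor points you will discover in execution: assumption~\ref{eq:assumptionrhorho1} is used not for a tail beyond height $x$ but to bound the boundary term $\bigl|\sum_{|\rho_\chi|<x}2^{\rho_\chi+1}/(\rho_\chi(\rho_\chi+1))\bigr|\le 4d_8(q)$ arising from the lower integration limit $u=2$, and you will also need~\ref{eq:assumptionLambda} to control the fractional-part piece $\{x\}\sum_{n\le x}\chi(n)\Lambda(n)$.
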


\begin{proof}
From the first paragraph of the proof of \cite[Lemma 11]{BHMS2019}, we have
\begin{equation*}
    S(x,\chi_1,\chi_2)=\delta_0(\chi_2)I(\chi_1)+\delta_0(\chi_1)I(\chi_2)-\delta_0(\chi_1)\delta_0(\chi_2)I+R(x;\chi_1,\chi_2),
\end{equation*}
where
\begin{equation*}
    I:=\int_0^1 T(\alpha)^2T(-\alpha) \, d\alpha \quad\text{and}\quad I(\chi):=\int_0^1 S(\alpha,\chi)T(\alpha)T(-\alpha)\, d\alpha.
\end{equation*}
Hence, we want to estimate the terms
\begin{equation*}
    I-\frac{x^2}{2} \quad\text{and}\quad I(\chi)-\frac{\delta_0(\chi)x^2}{2}+H(x,\chi).
\end{equation*}

Let us start with the integral $I$. By orthogonality, the only terms that are left are the terms $e(0)$ and hence we have
\begin{equation*}
    \left|I-\frac{x^2}{2}\right|=\left|\sum_{m+n\leq x} 1-\frac{x^2}{2}\right|=\left|\sum_{n\leq x} (n-1)-\frac{x^2}{2}\right|\leq \frac{3x}{2}.
\end{equation*}
Let us estimate the integral $I(\chi)$ next.

Similar to the previous case, we can deduce that
\begin{equation*}
    I(\chi)=\sum_{m+n\leq x} \chi(n)\Lambda(n)=\sum_{n \leq x} \left(x-n\right)\chi(n)\Lambda(n)-\sum_{n \leq x} \{x\}\chi(n)\Lambda(n).
\end{equation*}
Due to assumption \ref{eq:assumptionLambda}, the absolute value of the last term in the right-hand side is at most $d_{11}x$. Changing the order of the integration and the summation, the first term in the right-hand side can be written as
\begin{equation*}
    =\sum_{2\leq n \leq x} \chi(n)\Lambda(n)\int_{n}^x 1 \, du=\int_2^x \psi(u,\chi) \, du.
\end{equation*}
Because of the assumption \ref{assumption:psiuchi} with $T=x$, the right-hand side can be estimated as 
\begin{multline*}
    \left|\int_2^x \psi(u,\chi) \, du-\frac{\delta_0(\chi) x^2}{2}+\sum_{|\rho_\chi|<x} \frac{x^{\rho_\chi+1}}{(\rho_\chi+1)\rho_\chi}\right| \leq  d_1(q)(x-2)\log{x}\log\log{x}+\frac{d_2(q)x(2\log{x}-1)}{4}\\
    +d_3(q)\left(x\log{x}-x-2\log{2}+2\right)+d_4(q)(x-2)\log{x}+2\delta_0(\chi)+\left|C(\chi^*)\right|(x-2)+\left|\sum_{|\Im(\rho_\chi)|<x} \frac{2^{\rho_\chi+1}}{(\rho_\chi+1)\rho_\chi}\right|.
\end{multline*}
Using assumptions \ref{eq:assumptionLogDer} and \ref{eq:assumptionrhorho1}, the last three terms are
\begin{equation*}
    \leq x(c_{q,4}+1)\log{q}+\frac{x\delta_1(\chi^*)}{1-\beta_1}+4d_8(q)
\end{equation*}
if $\chi \neq \chi_0$. Note that since $c_{q,4}$ is non-negative and $q \geq 83$, the estimate holds also when $\chi$ is a principal character. Combining the estimates for $I$ and $I(\chi)$, simplifying them, using assumption \ref{assumptionProductExceptional} and keeping in mind $x \geq q \geq 83$, we obtain the result.
\end{proof}

\begin{remark}
\label{rmk:SFormula}
    In the case of $S(x)$, we have $q=1$, $\chi_1=\chi_2=\chi_0$ and 
    hence $S(x; \chi_1,\chi_2)=S(x)$ in the previous lemma. Similarly as
    before, for $x\geq \{x_0,T_0\}$ and $x_0 \geq 83$ we have
    \begin{equation*}
        \left|S(x)-\frac{x^2}{2}+2H(x,\chi_0)-R(x;\chi_0,\chi_0)\right| \\
        \leq x(\log{x})(\log\log{x})f_{6, \zeta}(1,x_0).
    \end{equation*}
\end{remark}

Now the essential part is to estimate the term $R(x;\chi_1,\chi_2)$.
The next five  estimates provide the desired bound.

\begin{lemma}
\label{lemma:gammachi}
    Let $q \geq 1$ and assume that \ref{eq:defIntervalZeroGeneral} holds for $T_1$. Let $T \geq T_1$, and $y$, $|y| \leq T$, be real numbers. 
    
    Then we have
    \begin{equation*}
        \sum_{\substack{\rho_\chi \\ |\gamma_\chi| \leq T}} \frac{1}{1+\left|\gamma_\chi-y\right|} \leq N(T_1,\chi)+d_{9}\log{\left(q(3T+1)\right)}\left(\log{(2T)}+1+C_0+\frac{1}{4T}\right).
    \end{equation*}
\end{lemma}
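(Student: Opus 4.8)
The plan is to split the sum over nontrivial zeros $\rho_\chi$ with $|\gamma_\chi| \leq T$ into the zeros with small height and those with large height, and to estimate each block by comparing it to a counting argument. First I would isolate the contribution of the zeros with $|\gamma_\chi| \leq T_1$: for each such zero the summand $1/(1+|\gamma_\chi - y|)$ is at most $1$, so this block contributes at most $N(T_1,\chi)$, which is exactly the first term on the right-hand side. This is the trivial part.

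For the zeros with $T_1 < |\gamma_\chi| \leq T$, the idea is to group them according to the integer part of $\gamma_\chi - y$. Writing the integers $k$ with $|k| \lesssim 2T$ (since $|y| \leq T$ and $|\gamma_\chi| \leq T$), the zeros with $\gamma_\chi$ in the unit interval $[y+k, y+k+1)$ all have $1/(1+|\gamma_\chi - y|) \leq 1/(1+|k|)$ (up to a harmless adjustment of the endpoints, so that one should really use $1/(1+\max(|k|-1,0))$ or just dyadic-block it). The number of zeros of $L(s,\chi)$ in any interval of ordinates of bounded length near height $t$ is controlled by assumption \ref{eq:defIntervalZeroGeneral}, which gives a bound of the shape $d_9 \log(q(|t|+2))$ (or the precise form stated there) for the count in a unit window; I would bound this uniformly over the relevant range by $d_9 \log(q(3T+1))$, since the largest window sits near height at most $|y|+|k|+1 \leq 2T+1$, and $3T+1$ comfortably dominates the argument of the logarithm there. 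Pulling this uniform factor out, the remaining sum is $\sum_{|k| \lesssim 2T} \frac{1}{1+|k|}$, a harmonic-type sum.

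The main work — and the place where the stated constants $\log(2T)+1+C_0+\tfrac{1}{4T}$ come from — is estimating $\sum_{1 \leq k \leq 2T} \frac{1}{k}$ (the $k=0$ term and the two symmetric halves being handled by the $+1$ and the doubling). By comparison with $\int_1^{2T} \frac{dt}{t}$ and the standard bound $\sum_{k\le N}\frac1k \le \log N + C_0 + \frac{1}{2N}$ (Euler--Maclaurin), one gets something of the form $\log(2T) + C_0 + \frac{1}{4T}$; the extra $+1$ absorbs the central term and the off-by-one slack from replacing half-open intervals $[y+k,y+k+1)$ by the comparison with $1/(1+|k|)$. I would be slightly careful here that the window count in \ref{eq:defIntervalZeroGeneral} really does cover an interval long enough to contain each $[y+k, y+k+1)$, and that the logarithmic argument is monotone so the supremum over the range is attained at the top; these are the only genuine points to check. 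I expect the main obstacle to be purely bookkeeping: matching the off-by-one in the interval decomposition so that the clean bound $\sum \frac1{1+|k|}$ does not lose more than the advertised additive constant, and confirming that $\log(q(3T+1))$ (rather than a slightly larger argument) is a legitimate uniform upper bound for the zero-counting factor across all the windows in play. Combining the two blocks gives exactly the claimed inequality.
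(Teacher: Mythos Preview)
Your proposal is correct and follows essentially the same approach as the paper: isolate the zeros with $|\gamma_\chi|\le T_1$ trivially as $N(T_1,\chi)$, then group the remaining zeros by $\lfloor|\gamma_\chi-y|\rfloor$, bound each unit-window count uniformly by $d_9\log(q(3T+1))$ via \ref{eq:defIntervalZeroGeneral}, and finish with the harmonic bound $\sum_{n=1}^{2T}\tfrac{1}{n}\le \log(2T)+C_0+\tfrac{1}{4T}$ (the paper cites \cite{Young} for this), the $+1$ absorbing the central block $|\gamma_\chi-y|\le 1$. One small slip: your computation $|y|+|k|+1\le 2T+1$ should read $3T+1$ (since $|k|$ can be as large as $2T$), though in fact any zero actually present has $|\gamma_\chi|\le T$, so even $T+1$ would suffice for the height of an occupied window; the $3T+1$ in the statement is just a comfortable uniform choice.
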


\begin{proof} 
    Using 
    \ref{eq:defIntervalZeroGeneral} and \cite{Young} we can deduce that
    \begin{multline*}
        \sum_{\substack{\rho_\chi \\ |\gamma_\chi| \leq T}} \frac{1}{1+\left|\gamma_\chi-y\right|} \leq \sum_{\substack{\rho_\chi \\ |\gamma_\chi|< T_1}} 1+ \sum_{\substack{\rho_\chi \\ |\gamma_\chi-y|\leq 1 \\ |\gamma_\chi|\geq T_1}} 1+\sum_{\substack{1 \leq n \leq 2T}} \frac{1}{n} \sum_{\substack{\rho_\chi \\ n<|\gamma_\chi-y| \leq n+1 \\ |\gamma_\chi|\geq T_1}} 1 \\
        \leq N(T_1,\chi)+d_{9}\log{\left(q(3T+1)\right)}\left(1+\log{(2T)}+C_0+\frac{1}{4T}\right).
    \end{multline*}
\end{proof}

\begin{lemma}
\label{lemma:psilambdadelta}
    Let $q\geq 83$ and consider $\chi \pmod q$. Assume that \ref{assumption:psiuchi} holds for $T_0$ and \ref{eq:rhoSecondUpper}, \ref{eq:defIntervalZeroGeneral} and \ref{eq:defzerosGeneral} hold for $T_0$. In addition, let us also assume that $h, x$ are real numbers with $1 \leq h \leq x/x_0$, $x \geq \max\{T_0, q\}$ and $x_0 \geq \max\{e^e, T_1\}$.

Now we have
\begin{equation}
\label{eq:chiLambdaInterval}
    \int_x^{2x} \left|\sum_{t<n\leq t+h} \chi(n)\Lambda(n)-\delta_0(\chi)h\right|^2 \, dt
    \leq 3f_3(q,T_1,x_0)h x^{2B_q^*(x)}\left(\log{x}\right)^3+\frac{3f_2(q, x_0) }{\pi}\left(\log{x}\right)^2(\log\log{x})^2x.
\end{equation}
\end{lemma}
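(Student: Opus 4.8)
The plan is to express the inner sum in~\eqref{eq:chiLambdaInterval} through the truncated explicit formula for $\psi(\,\cdot\,,\chi)$ supplied by~\ref{assumption:psiuchi}, and then to bound its mean square over $[x,2x]$ by the classical diagonal/off-diagonal method, with Lemma~\ref{lemma:gammachi} controlling the sum over pairs of zeros. Writing $E(u):=\psi(u,\chi)-\delta_0(\chi)u$, the inner sum equals $E(t+h)-E(t)$. First I would apply~\ref{assumption:psiuchi} at $u=t$ and $u=t+h$ with the truncation height $T$ taken comparable to the range of $u$, i.e.\ $T\asymp x$ (as in the proof of Lemma~\ref{lemma:Schi12}); all hypotheses invoked are in range since $x\ge\max\{T_0,q\}$ and $x_0\ge\max\{e^e,T_1\}$. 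Taking the difference, the constant terms $C(\chi^{*})$ cancel and $\delta_0(\chi)(t+h)-\delta_0(\chi)t=\delta_0(\chi)h$, so
\[
E(t+h)-E(t)=-\sum_{|\gamma_\chi|<T}\frac{(t+h)^{\rho_\chi}-t^{\rho_\chi}}{\rho_\chi}+\mathcal R(t),
\]
where the sum already includes a possible exceptional zero $\beta_1$ (which has $\gamma_\chi=0$), and $\mathcal R(t)$ collects the error terms of~\ref{assumption:psiuchi} (the $d_j(q)$-quantities, carrying the $\log\log$ factors, as in Lemma~\ref{lemma:Schi12}) at $t$ and $t+h$. Since $T\asymp x\asymp u$ the $u/T$-parts of those errors are $O(1)$, so $\mathcal R(t)\ll(\log qx)(\log\log x)$. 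Now $|a+b|^2\le\frac32|a|^2+3|b|^2$ (more precisely $|a+b+c|^2\le 3(|a|^2+|b|^2+|c|^2)$, once a possible mismatch in truncation heights is peeled off as a third piece) reduces everything to the squared contributions, and is the source of the factor $3$ in the statement.

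The remainder integrates to $\int_x^{2x}|\mathcal R(t)|^2\,dt\ll x\bigl((\log qx)(\log\log x)\bigr)^2$, and propagating the numerical constants of~\ref{assumption:psiuchi} through this produces the second term $\tfrac{3}{\pi}f_2(q,x_0)(\log x)^2(\log\log x)^2x$ (with $f_2$ and the displayed $\tfrac1\pi$ absorbing that bookkeeping). Any contribution coming from a discrepancy $\{|\gamma_\chi|<t\}$ versus $\{|\gamma_\chi|<t+h\}$ in the two truncations is handled the same way: there are $\ll 1+h\log(qx)$ such zeros, each contributing $\ll x^{B_q^{*}(x)-1}$, so its square integrates to $\ll h^2x^{2B_q^{*}(x)-1}\le h\,x^{2B_q^{*}(x)}$, which I would absorb into the first term.

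The heart of the matter is the mean square of the sum over zeros, which after expanding the square is
\[
\sum_{\substack{\rho_\chi,\rho_\chi'\\ |\gamma_\chi|,|\gamma_\chi'|<T}}\ \int_x^{2x}\frac{(t+h)^{\rho_\chi}-t^{\rho_\chi}}{\rho_\chi}\cdot\overline{\frac{(t+h)^{\rho_\chi'}-t^{\rho_\chi'}}{\rho_\chi'}}\,dt .
\]
Here the two elementary bounds $\bigl|\tfrac{(t+h)^{\rho_\chi}-t^{\rho_\chi}}{\rho_\chi}\bigr|=\bigl|\int_t^{t+h}u^{\rho_\chi-1}\,du\bigr|\le\min\bigl(h\,t^{\beta_\chi-1},\ 2(2x)^{\beta_\chi}/|\gamma_\chi|\bigr)$ do the work. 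The first bound, together with $B_q^{*}(x)<1$ and the uniform estimate $\beta_\chi\le B_q^{*}(x)$ (which is where~\ref{eq:rhoSecondUpper} enters), means that only zeros with $|\gamma_\chi|\ll x/h$ contribute at full strength; on the diagonal $\rho_\chi=\rho_\chi'$, splitting the sum at height $x/h$ and invoking the zero-count~\ref{eq:defzerosGeneral} gives $\ll h\,x^{2B_q^{*}(x)}\log(qx)$. For the off-diagonal terms I would integrate by parts in $t$ to exploit the oscillation $e^{i(\gamma_\chi-\gamma_\chi')\log t}$; after the same height restriction this bounds each term by $\ll x^{2B_q^{*}(x)-1}/(1+|\gamma_\chi-\gamma_\chi'|)$ (up to $h$-powers that collect correctly), and $\sum_{|\gamma_\chi|,|\gamma_\chi'|\ll x/h}1/(1+|\gamma_\chi-\gamma_\chi'|)\ll (x/h)(\log x)^3$ by Lemma~\ref{lemma:gammachi} (applied with $y=\gamma_\chi$) together with~\ref{eq:defIntervalZeroGeneral} and~\ref{eq:defzerosGeneral}. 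Altogether the zero sum contributes $\ll h\,x^{2B_q^{*}(x)}(\log x)^3$, i.e.\ $3f_3(q,T_1,x_0)h\,x^{2B_q^{*}(x)}(\log x)^3$ once all constants are collected; the $T_1$-dependence of $f_3$ comes from the term $N(T_1,\chi)$ in Lemma~\ref{lemma:gammachi}.

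The genuine obstacle is precisely this off-diagonal estimate: one has to make the integration by parts uniform — in particular handling the shift when $\tfrac{(t+h)^{\rho_\chi}-t^{\rho_\chi}}{\rho_\chi}=\int_0^h(t+s)^{\rho_\chi-1}\,ds$ is written out, and the near-diagonal pairs where $|\gamma_\chi-\gamma_\chi'|$ is small (for which one reverts to the trivial bound on the $t$-integral) — and one must check that only zeros up to height $\asymp x/h$ contribute at full strength, so the final bound is linear in $h$ rather than quadratic. The remaining work — feeding the numerical constants of~\ref{assumption:psiuchi}, \ref{eq:rhoSecondUpper}, \ref{eq:defIntervalZeroGeneral}, \ref{eq:defzerosGeneral} and Lemma~\ref{lemma:gammachi} into $f_2$ and $f_3$, and tracking the factor $3$ — is routine but lengthy.
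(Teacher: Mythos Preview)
Your overall strategy---explicit formula with fixed truncation $T=x$, then mean square of the zero sum plus the $\mathcal R(t)$ error---is the right one, and your $f_2$-part matches the paper. But the paper's treatment of the zero sum is different and noticeably simpler than what you outline. Instead of a diagonal/off-diagonal split with integration by parts, the paper splits the zero sum at height $x/h$ into $\psi_1(t)=\sum_{|\gamma_\chi|\le x/h}\int_t^{t+h}u^{\rho_\chi-1}\,du$ and $\psi_2(t)=\sum_{x/h<|\gamma_\chi|<x}\bigl((t+h)^{\rho_\chi}-t^{\rho_\chi}\bigr)/\rho_\chi$. For $\psi_1$ it applies Cauchy--Schwarz to the inner integral, $|\psi_1(t)|^2\le h\int_t^{t+h}\bigl|\sum u^{\rho_\chi-1}\bigr|^2\,du$, then Fubini in $(t,u)$, and finally integrates $u^{\rho_1+\overline{\rho_2}}$ \emph{directly}: since $|\rho_1+\overline{\rho_2}+1|\ge(1+|\gamma_1-\gamma_2|)/\sqrt2$, the factor $1/(1+|\gamma_1-\gamma_2|)$ drops out with no need to separate diagonal from off-diagonal or to integrate by parts. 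Lemma~\ref{lemma:gammachi} and~\ref{eq:defzerosGeneral} then finish $\psi_1$. For $\psi_2$ the paper simply bounds $|(t+h)^{\rho}-t^{\rho}|\le 2(2x+h)^{\beta}$, expands the square, integrates directly again, and controls $\sum_{|\gamma|>x/h}|\gamma|^{-2}$ via~\ref{eq:rhoSecondUpper}. This is where~\ref{eq:rhoSecondUpper} actually enters---not, as you wrote, to assert $\beta_\chi\le B_q^{*}(x)$ (that bound comes from the zero-free region \ref{eq:ZeroFree}/\ref{assumptionProductExceptional}).

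Two smaller points. First, with $T=x$ fixed there is no ``discrepancy $\{|\gamma_\chi|<t\}$ versus $\{|\gamma_\chi|<t+h\}$'' to handle; that paragraph of your sketch is unnecessary. Second, your integration-by-parts route would work in principle, but making it uniform near the diagonal and tracking the explicit constants into the specific shape of $f_3$ (the $\sqrt2\bigl((2+1/x_0)^3-1\bigr)$ factor, the $d_7(q)$ and $d_{10}(q)$ combination) is considerably more painful than the paper's direct integration. The Cauchy--Schwarz/Fubini trick on $\psi_1$ is the main simplification you are missing.
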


\begin{proof}
    With the assumption \ref{assumption:psiuchi} with $T=x$, we can write
    \begin{multline}
    \label{eq:zerosOther}
       \left| \sum_{t<n\leq t+h} \chi(n)\Lambda(n)-\delta_0(\chi)h\right|\leq \left|\sum_{|\Im(\rho_\chi)|<x} \frac{(t+h)^{\rho_\chi}-t^{{\rho_\chi}}}{{\rho_\chi}}\right| \\
       +d_1(q)\frac{(t+h)(\log{(t+h)})(\log{\log{(t+h)}})+t(\log{t})(\log\log{t})}{x} \\
      +d_2(q)\frac{(t+h)\log{(t+h)}+t\log{t}}{x}+d_3(q)\left(\log{(t+h)}+\log{t}\right)+2d_4(q)\log{x}. 
    \end{multline}
Since $x \geq x_0$, $x \geq q\geq 83$, $h \leq x/x_0$ and $\log{(2x)}<\log{\left(x\left(2+\frac{1}{x_0}\right)\right)} <1.2\log{x}$, for $t \leq 2x$ the last four terms are
    \begin{multline*}
    \leq \left(\log{x}\right)(\log\log{x})\cdot\left(1.2\cdot\left(4+\frac{1}{x_0}\right)\left(1+\frac{\log{1.2}}{\log\log{x_0}}\right)d_1(q)+\frac{1.2d_2(q)}{\log\log{x_0}}\left(4+\frac{1}{x_0}\right) \right. \\
    \left.+\frac{d_3(q)}{\log\log{x_0}}\left(2+\frac{\log{(2(2+x_0^{-1}))}}{\log{x_0}}\right)+\frac{2d_4(q)}{\log\log{x_0}}\right).
    \end{multline*}
    Now, when we integrate the second power of the right-hand side from $t=x$ to $t=2x$, we get
    \begin{equation}
    \label{eq:easyfinal}
        =\frac{1}{\pi}\left(\log{x}\right)^2(\log{\log{x}})^2xf_2(q,x_0).
    \end{equation}
    Hence, we have estimated the last four terms coming from estimate \eqref{eq:zerosOther}.
    Let us next consider the sum over non-trivial zeros.

    We have
    \begin{equation*}
        \sum_{|\Im({\rho_\chi})|<x} \frac{(t+h)^{\rho_\chi}-t^{{\rho_\chi}}}{{\rho_\chi}}=\sum_{\substack{{\rho_\chi} \\ |\Im({\rho_\chi})| \leq x/h}} \int_{t}^{t+h} u^{{\rho_\chi}-1} \, du+\sum_{x/h<|\Im({\rho_\chi})|<x} \frac{(t+h)^{\rho_\chi}-t^{{\rho_\chi}}}{{\rho_\chi}}=: \psi_1(t)+\psi_2(t).
    \end{equation*}
   In order to estimate the left-hand side of \eqref{eq:chiLambdaInterval}, we want to estimate the integrals
    \begin{equation*}
        \int_{x}^{2x} \left|\psi_1(t)\right|^2 \, dt \quad\text{and}\quad  \int_{x}^{2x} \left|\psi_2(t)\right|^2 \, dt.
    \end{equation*}

    Let us first consider the integral $\int_{x}^{2x} \left|\psi_1(t)\right|^2 \, dt$. By the Cauchy-Schwarz inequality, we have
    \begin{equation*}
        \int_{x}^{2x} \left|\sum_{\substack{{\rho_\chi} \\ |\Im({\rho_\chi})| \leq x/h}} \int_{t}^{t+h} u^{{\rho_\chi}-1} \, du \right|^2 \, dt \leq h  \int_{x}^{2x} \int_{t}^{t+h} \left|\sum_{\substack{{\rho_\chi} \\ |\Im({\rho_\chi})| \leq x/h}}  u^{{\rho_\chi}-1}\right|^2 \, du \, dt.
    \end{equation*}
    When we change the order of the integration and note $h \leq x/x_0$, the right-hand side is
    \begin{equation*}
        = h\int_{x}^{2x+h} \left|\sum_{\substack{{\rho_\chi} \\ |\Im({\rho_\chi})| \leq x/h}}  u^{{\rho_\chi}-1}\right|^2 \left(\int_{\max\{u-h,x\}}^{\min\{u,2x\}} 1 \, dt\right)\, du \leq \left(\frac{h}{x}\right)^2 \int_{x}^{x(2+1/x_0)} \left|\sum_{\substack{{\rho_\chi} \\ |\Im({\rho_\chi})| \leq x/h}}  u^{{\rho_\chi}}\right|^2 \, du.
    \end{equation*}
    Let us denote sums over zeros of $L(s,\chi)$ by $\rho_1=\beta_1+\gamma_1i$ and $\rho_2=\beta_2+\gamma_2i$. Integrating first, then using the arithmetic-geometric inequality and noting that $\max\{\beta_1,\beta_2\} \leq B_q^*(x)$, the right-hand side is
    \begin{multline*}
        =\left(\frac{h}{x}\right)^2 \int_{x}^{2+1/x_0} \sum_{\substack{\rho_1 \\ |\gamma_1| \leq x/h}}\sum_{\substack{\rho_2 \\ |\gamma_2| \leq x/h}} \frac{u^{\rho_1+\overline{\rho_2}}+u^{\overline{\rho_1}+\rho_2}}{2} \, du \\
        \leq \left(\frac{h}{x}\right)^2\sum_{\substack{\rho_1 \\ |\gamma_1| \leq x/h}}\sum_{\substack{\rho_2 \\ |\gamma_2| \leq x/h}} \frac{\sqrt{2}\left(\left(2+\frac{1}{x_0}\right)^{\beta_1+\beta_2+1}-1\right)x^{\beta_1+\beta_2+1}}{1+\left|\gamma_1-\gamma_2\right|} \\
        \leq \sqrt{2}\left(\left(2+\frac{1}{x_0}\right)^{2B_q^*(x)+1}-1\right)h^2 x^{2B_q^*(x)-1}\sum_{\substack{\rho_1 \\ |\gamma_1| \leq x/h}}\sum_{\substack{\rho_2 \\ |\gamma_2| \leq x/h}} \frac{1}{1+\left|\gamma_1-\gamma_2\right|}.
    \end{multline*}

    Noting that $B_q^*(x) \leq 1$ and applying Lemma \ref{lemma:gammachi} and assumption \ref{eq:defzerosGeneral}, the right-hand side is
    \begin{multline*}
        \leq \sqrt{2}\left(\left(2+\frac{1}{x_0}\right)^{3}-1\right)h x^{2B_q^*(x)}\left(N(T_1,\chi) \vphantom{\log{\left(\frac{4qx}{h}\right)}} \right. \\
        \left.+d_{9}\log{\left(\frac{4qx}{h}\right)}\left(\log{\frac{x}{h}}+\log{2}+1+C_0+\frac{h}{4x}\right)\right)\cdot d_{10}(q) \log{\left(\frac{qx}{2\pi e h}\right)}.
    \end{multline*}

   When we combine the terms and remember that $x/h \geq x_0$, we get
    \begin{multline}
      \label{eq:psi1final}
        \leq  \sqrt{2}\left(\left(2+\frac{1}{x_0}\right)^{3}-1\right)d_{10}(q)h x^{2B_q^*(x)}\left(\log{x}\right)^3\left(\left(4+\frac{11.9}{\log{x_0}}+\frac{6.3}{(\log{x_0})^2}+\frac{\log{x_0}+\log{2}}{x_0(\log{x_0})^2}\right)d_{9} \right. \\
        \left.+\frac{2N(T_1,\chi)}{(\log{x_0})^2}\right).
    \end{multline}

    Let us now consider the integral $\int_{x}^{2x} \left|\psi_2(t)\right|^2 \, dt$. The upper bound follows similarly as in the case of $\psi_1(t)$. Noting that $x \leq t \leq t+h\leq 2+ 1/x_0$ and ignoring the difference, the integral can be estimated as
    \begin{multline*}
        \int_{x}^{2x} \left|\psi_2(t)\right|^2 \, dt
        \leq 2\int_{x}^{2x+h}\left|\sum_{x/h<|\Im({\rho_\chi})|<x} \frac{t^{{\rho_\chi}}}{{\rho_\chi}}\right|^2 \, dt \\
        =2 \sum_{x/h<|\Im(\rho_1)|<x}\sum_{x/h<|\Im(\rho_2)|<x}\int_{x}^{2x+h} \frac{t^{\beta_1+\beta_2}\left(t^{(\gamma_1-\gamma_2)i}\overline{\rho_1}\rho_2+t^{(\gamma_2-\gamma_1)i}\rho_1\overline{\rho_2}\right)}{2\left|\rho_1\rho_2\right|^2} \, dt \\
        \leq \sum_{x/h<|\gamma_1|<x}\sum_{x/h<|\gamma_2|<x} \frac{2\sqrt{2}\left(\left(2+\frac{1}{x_0}\right)^{\beta_1+\beta_2+1}-1\right)x^{\beta_1+\beta_2+1}}{\left|\rho_1\rho_2\right|\left(1+\left|\gamma_1-\gamma_2\right|\right)}.
    \end{multline*}
    Again, using arithmetic-geometric inequality and noting that $\max\{\beta_1,\beta_2\} \leq B_q^*(x)$, the right-hand side of the inequality is
    \begin{equation*}
        \leq 2\sqrt{2}\left(\left(2+\frac{1}{x_0}\right)^{2B_q^*(x)+1}-1\right)x^{2B_q^*(x)+1}\sum_{x/h<|\gamma_1|<x}\frac{1}{\left|\gamma_1\right|^2}\sum_{x/h<|\gamma_2|<x} \frac{1}{1+\left|\gamma_1-\gamma_2\right|}.
    \end{equation*}
    Noting that $B_q^*(x) \leq 1$, using Lemma \ref{lemma:gammachi}, assumption \ref{eq:rhoSecondUpper} and the fact $x\geq x_0$, the right-hand side is
    \begin{multline}
       \leq  2 \sqrt{2}\left(\left(2+\frac{1}{x_0}\right)^{3}-1\right) d_{7}(q)hx^{2B_q^*(x)}\log^3{x}\cdot \left(\left(4+\frac{11.9}{\log{x_0}}+\frac{6.3}{(\log{x_0})^2}+\frac{\log{x_0}+\log{2}}{x_0(\log{x_0})^2}\right)d_{9} \right. \\
        \left.+\frac{2N(T_1,\chi)}{(\log{x_0})^2}\right). \label{eq:psi2final}
    \end{multline}
    The result follows when we combine estimates \eqref{eq:easyfinal}, \eqref{eq:psi1final} and \eqref{eq:psi2final} and use the assumption \ref{eq:defzerosGeneral}.
\end{proof}

\begin{remark}
    If $q=1$, $x_0 \geq \{e^e, T_1\}$, $1 \leq h \leq x/x_0$, $x_0 \geq \max\{83, T_0\}$. Otherwise the same assumptions as in Lemma \ref{lemma:psilambdadelta} and then
    the estimate \eqref{eq:chiLambdaInterval} holds, too. 
\end{remark}

\begin{lemma}
\label{lemma:Gallagher}
    Let $\xi>0$, $\delta=(2\xi)^{-1}$ and
    \begin{equation*}
        S(t):=\sum c(v)\cdot e(vt)
    \end{equation*}
    be an absolutely convergent exponential sum where the frequencies $v$ 
    run over arbitrary sequence of real numbers and the coefficients 
    $c(v)$ are complex. Then
    \begin{equation*}
        \int_{-\xi}^\xi \left|S(t)\right|^2 \, dt \leq \left(\frac{\pi}{2}\right)^2 \int_{-\infty}^\infty \left|\delta^{-1}\sum_{x}^{x+\delta} c(v)\right|^2 \, dx.
    \end{equation*}
\end{lemma}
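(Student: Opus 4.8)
This is Gallagher's lemma, and the plan is to prove it by the classical Fourier route: compare $S$ on the short interval $[-\xi,\xi]$ with the Fourier transform of the ``box average'' of the coefficients, and then invoke Plancherel's identity, for which the passage from $[-\xi,\xi]$ to all of $\mathbb{R}$ is free.

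First I would introduce the function
\[
B(x):=\sum_{x\le v<x+\delta}c(v)=\sum_{v}c(v)\,\mathbf{1}_{(v-\delta,\,v]}(x).
\]
Since the sum is absolutely convergent, say $\sum_v|c(v)|=:A<\infty$, the triangle inequality in $L^1$ and $L^2$ gives $\|B\|_1\le\delta A$ and $\|B\|_2\le\sqrt\delta\,A$, so $B\in L^1(\mathbb{R})\cap L^2(\mathbb{R})$; and interchanging sum and integral (again legitimate by absolute convergence) one computes its Fourier transform
\[
\widehat B(t):=\int_{-\infty}^{\infty}B(x)\,e(xt)\,dx=\sum_v c(v)\,e(vt)\int_{-\delta}^{0}e(ut)\,du=S(t)\cdot\frac{1-e(-\delta t)}{2\pi i t},
\]
whence $|\widehat B(t)|=|S(t)|\cdot\dfrac{|\sin(\pi\delta t)|}{\pi|t|}$.

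The substantive step is the pointwise lower bound for this multiplier on $|t|\le\xi$, and this is exactly where the relation $\delta=(2\xi)^{-1}$ is used: it forces $\pi\delta|t|\le\pi/2$, and since $u\mapsto(\sin u)/u$ is decreasing on $[0,\pi/2]$ we get $\dfrac{|\sin(\pi\delta t)|}{\pi|t|}=\delta\cdot\dfrac{\sin(\pi\delta|t|)}{\pi\delta|t|}\ge\delta\cdot\dfrac{2}{\pi}$ for every $|t|\le\xi$. Hence $|S(t)|\le\dfrac{\pi}{2\delta}|\widehat B(t)|$ on $[-\xi,\xi]$, and therefore
\[
\int_{-\xi}^{\xi}|S(t)|^2\,dt\le\frac{\pi^2}{4\delta^2}\int_{-\xi}^{\xi}|\widehat B(t)|^2\,dt\le\frac{\pi^2}{4\delta^2}\int_{-\infty}^{\infty}|\widehat B(t)|^2\,dt=\frac{\pi^2}{4\delta^2}\int_{-\infty}^{\infty}|B(x)|^2\,dx,
\]
the last step being Plancherel's identity. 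Absorbing $\delta^{-2}$ into the integrand rewrites the right-hand side as $\left(\tfrac{\pi}{2}\right)^2\int_{-\infty}^{\infty}\bigl|\delta^{-1}\sum_{x}^{x+\delta}c(v)\bigr|^2\,dx$, which is the assertion.

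I do not expect any serious obstacle. The only point needing a little care is the measure-theoretic bookkeeping — that $B\in L^2$ and that the sum–integral interchange defining $\widehat B$ is valid for an arbitrary, possibly infinite and irregularly spaced frequency sequence — which, if one wants to be scrupulous, is handled by first truncating to finitely many frequencies (so $B$ is a compactly supported step function, and everything is elementary) and then letting the truncation exhaust the sequence, all bounds being uniform. The real content is just the elementary inequality $(\sin u)/u\ge 2/\pi$ on $[0,\pi/2]$ together with the normalisation $\delta\xi=1/2$; the rest is Plancherel.
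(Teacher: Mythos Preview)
Your proof is correct and is exactly the argument the paper has in mind: the paper's own proof simply cites Gallagher's original lemma and records the key pointwise bound $|\sin(\pi\delta t)|/|\pi\delta t|\ge 2/\pi$ for $|t|\le\xi$, which is precisely the substantive step you isolate. Your write-up just spells out the Fourier/Plancherel mechanics that the citation leaves implicit.
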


\begin{proof}
    This follows immediately from the proof of \cite[Lemma 1]{Gallagher1970} by noting that $\left|\sin(\pi\delta t)\right|/\left|\pi \delta t\right| \geq 2/\pi$ if $|t| \leq \xi$.
\end{proof}

\begin{lemma}
\label{lemma:EstW}
    Let $q \geq 83$, $\chi \pmod q$ and let $\chi^*$ be a primitive character that induces $\chi$.
    Assume \ref{eq:ZeroFree3}, \ref{eq:assumptionLogDer} for $\chi^*$ and $T\geq T_1$,  \ref{eq:rhoSecondUpper}, \ref{eq:defIntervalZeroGeneral} and \ref{eq:defzerosGeneral} for $T \geq T_1$ and $\chi$ and \ref{assumption:psiuchi} and \ref{eq:assumption1OverRhoSum} for $T \geq T_0$ and $\chi$. Finally, let $x$ and $\xi$ be a real numbers such that $x \geq \max\{q,T_0\}$, $x_0 \geq \max\{e^e,T_1\}$ and $x_0x^{-1}/2 \leq \xi \leq 1/2$.
    
    Then, we have
    \begin{multline*}
        \int_{-\xi}^\xi \left|W(\alpha,\chi)\right|^2 d\alpha \leq  \xi f_1(q,x_0)\log{2}\left(\log{x}\right)^4x^{2B_q^*(x)}+\left(f_4(q,T_1,x_0)+\delta_0(\chi)f_5(q,T_1,x_0)\right)\xi\left(\log{x}\right)^3x^{2B_q^*(x)},
    \end{multline*}
    where $W(\alpha,\chi)$ is defined as in \eqref{def:TSalphachiW}.
\end{lemma}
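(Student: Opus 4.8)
The plan is to reduce the integral to a mean square of short von Mangoldt sums by Gallagher's lemma (Lemma~\ref{lemma:Gallagher}) and then to estimate that mean square by the device used in the proof of Lemma~\ref{lemma:psilambdadelta}, but carried out over the whole relevant range of the new variable rather than over a single dyadic block. First I would write $W(\alpha,\chi)=\sum_{1\le n\le x}\bigl(\chi(n)\Lambda(n)-\delta_0(\chi)\bigr)e(n\alpha)$, a finite exponential sum with integer frequencies, and set $\delta:=(2\xi)^{-1}$, so that the hypothesis $x_0x^{-1}/2\le\xi\le 1/2$ is exactly $1\le\delta\le x/x_0$. Lemma~\ref{lemma:Gallagher} then gives
\[
\int_{-\xi}^{\xi}|W(\alpha,\chi)|^2\,d\alpha\le\Bigl(\frac{\pi}{2}\Bigr)^{2}\delta^{-2}\int_{-\infty}^{\infty}\Bigl|\sum_{\substack{t<n\le t+\delta\\1\le n\le x}}\bigl(\chi(n)\Lambda(n)-\delta_0(\chi)\bigr)\Bigr|^{2}\,dt.
\]
The integrand vanishes off $t\in[1-\delta,x]$; on $[1,x-\delta]$ the inner sum equals $\psi(t+\delta,\chi)-\psi(t,\chi)-\delta_0(\chi)\delta$ up to an additive $O(\delta_0(\chi))$ coming from comparing the number of integers in a window of length $\delta$ with $\delta$, and on the two boundary sub-intervals of lengths $O(1)$ and $O(\delta)$, where the window $(t,t+\delta]$ is truncated by $[1,x]$, one replaces $t+\delta$ by $x$. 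By $(a+b)^2\le2a^2+2b^2$ all these auxiliary pieces are, after the factor $\delta^{-2}$, dominated by the main term, so it remains to bound $\int_{1}^{x}\bigl|\psi(t+\delta,\chi)-\psi(t,\chi)-\delta_0(\chi)\delta\bigr|^2\,dt$ together with the analogous integral over a range of length $\delta$ near $t=x$ in which $t+\delta$ is replaced by $x$.

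To estimate this mean square I would insert the explicit formula, assumption~\ref{assumption:psiuchi} with truncation height $T=x$, and split the sum over zeros at $|\Im\rho_\chi|\le x/\delta$ versus $x/\delta<|\Im\rho_\chi|<x$, exactly as in the proof of Lemma~\ref{lemma:psilambdadelta}. Writing the low block as $\sum_{|\Im\rho_\chi|\le x/\delta}\int_{t}^{t+\delta}u^{\rho_\chi-1}\,du$, applying the Cauchy--Schwarz inequality in $u$ and interchanging the order of integration bounds its mean square by $\delta^{2}\int_{1}^{x}u^{-2}\bigl|\sum_{|\Im\rho_\chi|\le x/\delta}u^{\rho_\chi}\bigr|^{2}\,du$. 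The essential difference from Lemma~\ref{lemma:psilambdadelta}, where the range of $u$ has length $\asymp x$ so that $u^{-2}$ may be replaced by $x^{-2}$, is that now the factor $u^{\Re\rho_1+\Re\rho_2-2}$ must be kept and integrated over $[1,x]$; since $\int_{1}^{x}u^{\Re\rho_1+\Re\rho_2-2}\,du\ll x^{\Re\rho_1+\Re\rho_2-1}\log x$, this contributes one extra power of $\log x$ (for zeros near the line $\Re s=\tfrac12$ it is in fact the whole of the estimate), and this is what turns the $(\log x)^3$ of Lemma~\ref{lemma:psilambdadelta} into the $(\log x)^4$ of the present statement, the constant $\log2$ in front of $f_1$ emerging when this integral is estimated scale by scale. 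Summing the resulting per-pair bound $\ll x^{2B_q^*(x)-1}(\log x)\bigl(1+|\Im\rho_1-\Im\rho_2|\bigr)^{-1}$ over pairs of zeros by Lemma~\ref{lemma:gammachi} (which uses~\ref{eq:defIntervalZeroGeneral}) and assumption~\ref{eq:defzerosGeneral} for $N(T_1,\chi)$, and then applying the prefactor $(\tfrac{\pi}{2})^{2}\delta^{-2}$ and $\delta^{-1}=2\xi$, yields a term of the shape $\xi f_1(q,x_0)(\log2)(\log x)^4x^{2B_q^*(x)}$. The high block is handled the same way, but keeping the denominators $|\rho_1\rho_2|$ rather than integrating and using assumption~\ref{eq:rhoSecondUpper} for $\sum_{x/\delta<|\Im\rho_\chi|<x}|\Im\rho_\chi|^{-2}$; it contributes a term of the same or smaller order. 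Finally, the error terms $d_1(q),\dots,d_4(q)$ of~\ref{assumption:psiuchi}, the term $2\delta_0(\chi)$, the term $C(\chi^*)$ (bounded via~\ref{eq:assumptionLogDer}), and the exceptional-zero term $\delta_1(\chi^*)/(1-\beta_1)$ (separated off because $\beta_1$ may exceed $B_q^*(x)$, then controlled by the zero-free region~\ref{eq:ZeroFree3} together with~\ref{eq:assumption1OverRhoSum}) have mean square over $[1,x]$ of size $\ll x(\log x)^2(\log\log x)^2+\delta_0(\chi)x$; using $x^{1-2B_q^*(x)}(\log\log x)^2\le(\log x)^2$ these normalise to $\bigl(f_4(q,T_1,x_0)+\delta_0(\chi)f_5(q,T_1,x_0)\bigr)\xi(\log x)^3x^{2B_q^*(x)}$. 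Collecting the contributions and matching the constants to the definitions in Appendix~\ref{appendix:f} gives the claim.

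The step I expect to be the main obstacle, beyond the bookkeeping needed to package everything into the exact coefficients $f_1(q,x_0)\log2$ and $f_4(q,T_1,x_0)+\delta_0(\chi)f_5(q,T_1,x_0)$, is precisely this passage from a single dyadic block (Lemma~\ref{lemma:psilambdadelta}) to the whole range: since $\delta$ can be as large as $x/x_0$, a trivial bound of the shape $\psi(u)-\psi(v)\ll u$ is far too lossy, so one must retain the explicit formula and genuinely exploit cancellation among the zeros over all of $[1,x]$, as well as over the short truncated window near $t=x$, while checking that $\int_{1}^{x}u^{\Re\rho_1+\Re\rho_2-2}\,du$ remains uniformly controlled for zeros arbitrarily close to $\Re s=\tfrac12$, where it is what upgrades the third power of the logarithm to the fourth.
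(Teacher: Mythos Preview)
Your overall architecture (Gallagher, then explicit formula, then split zeros at height $x/\delta$) is sound, but the route you sketch is not the one the paper takes, and it will not reproduce the specific constants $f_1(q,x_0)\log 2$, $f_4(q,T_1,x_0)$, $f_5(q,T_1,x_0)$ as defined in Appendix~\ref{appendix:f}.

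The paper does \emph{not} integrate the explicit formula over the full range $[1,x]$. After Gallagher it splits the $t$-integral into three pieces,
\[
I_-=\int_{-(2\xi)^{-1}}^{x_0(2\xi)^{-1}},\qquad I=\int_{x_0(2\xi)^{-1}}^{x-x_0(2\xi)^{-1}},\qquad I_+=\int_{x-x_0(2\xi)^{-1}}^{x},
\]
so the ``boundary'' pieces have total length $(2x_0+1)(2\xi)^{-1}$, not $O(\delta)$ as you assume. On $I_\pm$ one uses a \emph{pointwise} bound for $|\psi(u,\chi)-\delta_0(\chi)u|$ coming from~\ref{assumption:psiuchi} together with~\ref{eq:assumption1OverRhoSum}: the term $x^{B_q^*(x)}\sum_{|\Im\rho_\chi|<x}|\rho_\chi|^{-1}\le d_5(q)(\log x)^2 x^{B_q^*(x)}$ dominates, and squaring it over an interval of length $(2x_0+1)/(2\xi)$ is exactly what produces $f_1(q,x_0)\log 2\cdot\xi(\log x)^4x^{2B_q^*(x)}$ (note $f_1(q,x_0)\log 2=\tfrac{\pi^2}{2}d_5(q)^2(2x_0+1)$). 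The middle piece $I$ is handled by a dyadic decomposition of $[x_0/(2\xi),x]$ and a direct application of Lemma~\ref{lemma:psilambdadelta} on each block $[x/2^{k+1},x/2^k]$; summing over $k$ gives only $(\log x)^3$, and these contributions are what is packaged into $f_4$ and $f_5$ (the $\delta_0(\chi)$-dependent cross terms in $I$ account for $f_5$).

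Your mechanism for the extra logarithm---the integral $\int_1^x u^{\beta_1+\beta_2-2}\,du$ near $\beta_1+\beta_2=1$---is a legitimate source of a $(\log x)^4$, but it would lead to a constant built from $d_9$ and $d_{10}(q)$ (zero-counting), not from $d_5(q)^2(2x_0+1)$. So while a bound of the right order might be extractable your way, the claim ``matching the constants to the definitions in Appendix~\ref{appendix:f} gives the claim'' cannot be right: those constants encode the paper's specific $I_\pm/I$ split, with the $(\log x)^4$ coming from the short boundary pieces and Lemma~\ref{lemma:psilambdadelta} used only dyadically on the bulk.
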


\begin{proof}
Notice that
\begin{equation*}
    W(\alpha,\chi)=\sum_{0<n\leq x} \left(\chi(n)\Lambda(n)-\delta_0(\chi)\right)e(n\alpha).
\end{equation*}
Hence, using Lemma \ref{lemma:Gallagher}, we have
\begin{multline*}
    \int_{-\xi}^\xi \left|W(\alpha,\chi)\right|^2 d\alpha =\int_{-\xi}^\xi \left|\sum_{0<n\leq x} \left(\chi(n)\Lambda(n)-\delta_0(\chi)\right)e(n\alpha)\right|^2 \, d\alpha \\
    \leq \pi^2\xi^2\int_{-(2\xi)^{-1}}^x \left|\sum_{a(t)<n\leq b(t)}\left(\chi(n)\Lambda(n)-\delta_0(\chi)\right)\right|^2 \, dt,
\end{multline*}
where $a(t):=\max\{0,t\}$ and $b(t):=\min\{x,t+(2\xi)^{-1}\}$. The integral can be divided to the following three parts:
\begin{equation}
\label{eq:div3Integrals}
    \pi^2\xi^2 \int_{-(2\xi)^{-1}}^{x_0(2\xi)^{-1}}+\pi^2\xi^2 \int_{x_0(2\xi)^{-1}}^{x-x_0(2\xi)^{-1}}+\pi^2\xi^2 \int_{x-x_0(2\xi)^{-1}}^x=: \pi^2\xi^2 I_{-}+\pi^2\xi^2 I+\pi^2\xi^2 I_{+}. 
\end{equation}
We estimate each of the integrals. Let us start with integrals $I_{\pm}$.

By assumptions \ref{eq:ZeroFree3} and \ref{assumption:psiuchi}, we have
\begin{multline}
\label{eq:Lambdadelta0I-+}
    \left|\sum_{n \leq t} \chi(n)\Lambda(n)-\delta_0(\chi)t\right| \leq x^{B_q^*(x)}\sum_{\substack{|\Im({\rho_\chi})|<x \\ {\rho_\chi} \neq 1-\beta_1}}\frac{1}{\left|{\rho_\chi}\right|}+\left|C(\chi^*)-\delta_1(\chi^*)\frac{t^{1-\beta_1}}{1-\beta_1}\right| \\
    +d_1(q)(\log{x})(\log\log{x})+\left(d_2(q)+d_3(q)+d_4(q)\right)\log{x}
\end{multline}
for $2 \leq t \leq x$. Under assumption \ref{eq:assumptionLogDer}, the second term in the right-hand side of the inequality 
\eqref{eq:Lambdadelta0I-+} is
\begin{multline}
\label{eq:83}
    \leq \delta_1(\chi)\frac{t^{1-\beta_1}-1}{1-\beta_1}+(c_{q,4}+1)\log{q}-\log{2\pi}-C_0+\leq \delta_1(\chi)\frac{\log{t}}{1-\beta_1}\int_0^{1-\beta_1} t^\sigma \, d\sigma+(c_{q,4}+1)\log{q}
\end{multline}
if $\chi \neq \chi_0$. Clearly, the estimate holds also in the case $\chi=\chi_0$. Using this, the assumption \ref{eq:assumption1OverRhoSum} with $x=T$ and keeping in mind $B_q^*(x)\geq 1/2$, the right-hand side of \eqref{eq:Lambdadelta0I-+} is
\begin{equation*}
 \leq x^{B_q^*(x)}\log{x}\cdot\left(d_{5}(q)\log{x}+d_{6}(q)+\delta_1(\chi)+d_1(q)\frac{\log\log{x}}{\sqrt{x}}+\frac{d_2(q)+d_3(q)+d_4(q)}{\sqrt{x}}+\frac{c_{q,4}+1}{\sqrt{x}}\right)
\end{equation*}
for $2 \leq t \leq x$.
If $0 \leq t <2$, then 
\begin{equation*}
    \left|\sum_{n \leq t} \chi(n)\Lambda(n)-\delta_0(\chi)t\right| \leq 2\delta_0(\chi).
\end{equation*}
Hence, we can estimate
\begin{multline*}
    \left|\sum_{a(t)<n\leq b(t)}\left(\chi(n)\Lambda(n)-\delta_0(\chi)\right)\right|^2\\
    \leq 4\delta_0(\chi) +4\delta_0(\chi)x^{B_q^*(x)}\log{x}\cdot\left(d_{5}(q)\log{x}+d_{6}(q)+\delta_1(\chi)+d_1(q)\frac{\log\log{x}}{\sqrt{x}}+\frac{d_2(q)+d_3(q)+d_4(q)}{\sqrt{x}}+\frac{c_{q,4}+1}{\sqrt{x}}\right)\\
    +x^{2B_q^*(x)}(\log{x})^2\left(d_{5}(q)\log{x}+d_{6}(q)+\delta_1(\chi)+d_1(q)\frac{\log\log{x}}{\sqrt{x}}+\frac{d_2(q)+d_3(q)+d_4(q)}{\sqrt{x}}+\frac{c_{q,4}+1}{\sqrt{x}}\right)^2.
\end{multline*}
We obtain the estimates for $I_{-}$ and $I_{+}$ when we multiply by
\begin{equation*}
    \frac{x_0+1}{2\xi}\quad\text{and}\quad \frac{x_0}{2\xi},
\end{equation*}
respectively.

Let us now estimate the integral $I$ in \eqref{eq:div3Integrals}. Clearly, by adding non-negative terms and dividing the integral to different parts, we have
\begin{multline*}
    I=\int_{x_0(2\xi)^{-1}}^{x-x_0(2\xi)^{-1}} \left|\sum_{t<n\leq t+(2\xi)^{-1}} \left(\chi(n)\Lambda(n)-\delta_0(\chi)\right)\right|^2 \, dt \\
    \leq \int_{x_0(2\xi)^{-1}}^{x} \left|\sum_{t<n\leq t+(2\xi)^{-1}} \chi(\chi)\Lambda(n)-\delta_0(\chi)\left(2\xi\right)^{-1}\right|^2 \, dt \\
    +2\delta_0\int_{x_0(2\xi)^{-1}}^{x} \left|\sum_{t<n\leq t+(2\xi)^{-1}} \chi(n)\Lambda(n)-\delta_0(\chi)\left(2\xi\right)^{-1}\right| \, dt+\delta_0x.
\end{multline*}
Using dyadic sums and the Cauchy-Schwarz inequality, the right-hand side is
\begin{multline*}
     \leq \sum_{k=0}^{\log_2{\left(\frac{2\xi x}{x_0}\right)}-1}\left( \int_{x/2^{k+1}}^{x/2^k} \left|\sum_{t<n\leq t+(2\xi)^{-1}} \chi(n)\Lambda(n)-\delta_0(\chi)\left(2\xi\right)^{-1}\right|^2 \, dt\right. \\
     \left.+2\delta_0\sqrt{\frac{x}{2^{k+1}}\int_{x/2^{k+1}}^{x/2^k} \left|\sum_{t<n\leq t+(2\xi)^{-1}} \chi(n)\Lambda(n)-\delta_0(\chi)\left(2\xi\right)^{-1}\right|^2 \, dt}\right)+\delta_0x.
\end{multline*}
By Lemma \ref{lemma:psilambdadelta}, the right-hand side is
\begin{multline*}
    \leq 6\frac{f_2(q,x_0)}{\pi}\left(\log{\frac{x}{2}}\right)^2\left(\log\log{\frac{x}{2}}\right)^2x+4\delta_0\sqrt{\frac{3f_2(q, x_0)}{\pi}}\left(\log{\frac{x}{2}}\right)\left(\log\log{\frac{x}{2}}\right)x  \\
    +6\frac{f_3(q,T_1,x_0)}{2\xi}\left(\log{\frac{x}{2}}\right)^3\cdot x^{2B_q^*\left(\frac{x}{2}\right)}+4\delta_0\sqrt{\frac{3f_3(q,T_1,x_0)}{2\xi}}\left(\log{\frac{x}{2}}\right)^{1.5}\cdot x^{B_q^*\left(\frac{x}{2}\right)+1/2}+\delta_0x.
\end{multline*}
The result follows when we combine the previous estimates and keep in mind that $x \geq q \geq 83$ and $0 \leq \delta_1(\chi)\leq 1$.
\end{proof}

\begin{remark}
    If needed, the assumption $q \geq 83$ can be easily changed to the assumption $q \geq 1$ in Lemma \ref{lemma:EstW}. However, then we would need the term $\log{q}+\log{2\pi}+C_0$ in the right-hand side of \eqref{eq:83} and the sum $\sum_{a(t)<n\leq b(t)}$ would not be estimated as easily as above. It should be noted that due to this fact,  most of the lemmas in this section have the assumption $q \geq 83$.
\end{remark}

\begin{remark}
    Under the assumptions $q=1$, $x \geq \max\{83,T_0\}$, $x_0 \geq \max\{e^e,T_1\}$ and $x_0x^{-1}/2 \leq \xi \leq \frac{1}{2}$, Lemma \ref{lemma:EstW} gets the form
    \begin{multline*}
        \int_{-\xi}^\xi \left|W(\alpha,\chi_0)\right|^2 d\alpha \leq  \xi f_1(q,x_0)\log{2}\left(\log{x}\right)^4x^{2B_q^*(x)} +\left(f_{4,\zeta}(q,T_1,x_0)+f_{5,\zeta}(q,T_1,x_0)\right)\xi\left(\log{x}\right)^3x^{2B_q^*(x)}.
    \end{multline*}
\end{remark}

Now we are ready to state the main lemma that gives us an estimate for the term $R(x;\chi_1,\chi_2)$.

\begin{lemma}
\label{lemma:EstJ}
  Let $q, \chi, T_0$, $x_0$ and $x$ be defined as in Lemma \ref{lemma:EstW} and assume that the same hypothesis hold for them as in that lemma. Moreover, let $K(\chi)$ be defined as in \eqref{def:J}. Then
    \begin{multline*}
        \left|K(\chi)\right| \leq f_1(q,x_0)(\log{x})^5x^{2B_q^*(x)}\\
        +\left(0.35x_0f_1(q,x_0)+\left(1.443+\frac{0.5x_0}{\log{x_0}}\right)\left(f_4(q,T_1,x_0)+\delta_0(\chi)f_5(q,T_1,x_0)\right)\right)(\log{x})^4x^{2B_q^*(x)}.
    \end{multline*}
\end{lemma}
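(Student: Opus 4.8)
The plan is to bound $K(\chi)=\int_{-1/2}^{1/2}|W(\alpha,\chi)|^2|T(-\alpha)|\,d\alpha$ by splitting the range of integration according to the size of $|\alpha|$ and combining the elementary estimate $|T(-\alpha)|\le\min\{x,(2|\alpha|)^{-1}\}$ (valid for $|\alpha|\le 1/2$, since $T(\alpha)$ is a partial geometric sum) with the bound for $\int_{-\xi}^{\xi}|W(\alpha,\chi)|^2\,d\alpha$ furnished by Lemma \ref{lemma:EstW}. Set $\xi_0:=x_0x^{-1}/2$, the smallest admissible value of $\xi$ in that lemma; note $\xi_0\le 1/2$ because $x\ge x_0$.

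On the central piece $|\alpha|\le\xi_0$ I would use $|T(-\alpha)|\le x$ and Lemma \ref{lemma:EstW} with $\xi=\xi_0$. Since $x\xi_0=x_0/2$, since $\tfrac12\log 2\le 0.35$, and since $(\log x)^3\le(\log x)^4/\log x_0$ (because $x\ge x_0$ and $\log x_0\ge e>0$), this piece contributes at most
\begin{equation*}
0.35\,x_0 f_1(q,x_0)(\log x)^4x^{2B_q^*(x)}+\frac{0.5\,x_0}{\log x_0}\bigl(f_4(q,T_1,x_0)+\delta_0(\chi)f_5(q,T_1,x_0)\bigr)(\log x)^4x^{2B_q^*(x)}.
\end{equation*}

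For the remaining range I would decompose $\xi_0<|\alpha|\le 1/2$ dyadically into annuli $A_k:=\{\alpha:\xi_0 2^k<|\alpha|\le\min(\xi_0 2^{k+1},1/2)\}$ for $k=0,\dots,K-1$, where $K:=\lceil\log_2(1/(2\xi_0))\rceil=\lceil\log_2(x/x_0)\rceil$. On $A_k$ one has $|T(-\alpha)|\le(2\xi_0 2^k)^{-1}$; enlarging the integration domain (legitimate as $|W|^2\ge 0$) and applying Lemma \ref{lemma:EstW} with $\xi=\min(\xi_0 2^{k+1},1/2)\in[\xi_0,1/2]$ shows that $\int_{A_k}|W(\alpha,\chi)|^2|T(-\alpha)|\,d\alpha$ is at most $\dfrac{\min(\xi_0 2^{k+1},1/2)}{2\xi_0 2^k}$ times
\begin{equation*}
f_1(q,x_0)\log 2\,(\log x)^4x^{2B_q^*(x)}+\bigl(f_4(q,T_1,x_0)+\delta_0(\chi)f_5(q,T_1,x_0)\bigr)(\log x)^3x^{2B_q^*(x)}.
\end{equation*}
The prefactor equals $1$ for $k<K-1$ and is $\le 1$ for the truncated outermost annulus (there one uses $\xi_0 2^{K-1}\ge 1/4$, which follows from $K\ge\log_2(1/(2\xi_0))$), so each $A_k$ contributes at most the displayed expression. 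Summing over the $K$ annuli and using $K\le\log_2 x=(\log x)/\log 2$ — valid because $x_0\ge e^e$ gives $\log_2 x_0\ge 1$ — bounds this range by $f_1(q,x_0)(\log x)^5x^{2B_q^*(x)}+\tfrac{1}{\log 2}\bigl(f_4(q,T_1,x_0)+\delta_0(\chi)f_5(q,T_1,x_0)\bigr)(\log x)^4x^{2B_q^*(x)}$. Adding the central contribution, collecting the coefficients of $(\log x)^4x^{2B_q^*(x)}$, and using $\tfrac{1}{\log 2}<1.443$ yields exactly the claimed inequality.

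There is no substantive obstacle here: the whole argument is a weighted Gallagher-type split already prepared by Lemmas \ref{lemma:Gallagher} and \ref{lemma:EstW}. The only delicate part is the bookkeeping needed to make the numerical constants come out to the stated $0.35$, $1.443$ and $\tfrac{0.5\,x_0}{\log x_0}$ — in particular verifying that every $\xi$ used lies in $[x_0x^{-1}/2,1/2]$, that the number of dyadic annuli is at most $(\log x)/\log 2$ rather than one more, and that truncating the outermost annulus at $|\alpha|=1/2$ loses nothing.
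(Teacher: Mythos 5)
Your argument is correct and follows essentially the same route as the paper: the identical pointwise bound $|T(-\alpha)|\le\min\{x,(2|\alpha|)^{-1}\}$, the same split into a central interval $|\alpha|\le x_0/(2x)$ plus a dyadic decomposition of the remainder, and repeated application of Lemma \ref{lemma:EstW} on each piece. The only cosmetic difference is that you index the dyadic annuli outward from $\xi_0$ while the paper indexes them inward from $1/2$; the bookkeeping for the constants $0.35$, $1.443=\lceil 1/\log 2\rceil_{3}$, and $0.5x_0/\log x_0$ comes out the same either way.
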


\begin{proof}
Let us first bound the term $\left|T(-\alpha)\right|$. First we note that we clearly have $\left|T(-\alpha)\right| \leq x$. Even more, we also have
\begin{equation*}
    \left|T(-\alpha)\right| \leq \left|\frac{2}{e\left(\alpha\right)-1}\right|\leq 
    \begin{cases}
    \frac{2}{|\alpha|\cdot\left|\frac{\sin(-2\alpha \pi)}{\alpha}\right|}, &\text{if } |\alpha|\leq 1/4 \\
    \frac{2}{|\alpha|\cdot\left|\frac{1-\cos(-2\alpha \pi)}{\alpha}\right|} &\text{if } 1/4<|\alpha|\leq 1/2
    \end{cases}
    \leq \frac{1}{2|\alpha|}.
\end{equation*}
Hence, $\left|T(-\alpha)\right|\leq \min\{x,1/\left(2|\alpha|\right)\}$.

Let us now consider $K(\chi)$ with the help of the result above. We have
\begin{equation*}
    \left|K(\chi)\right| \leq x\int_{|\alpha|\leq \frac{x_0}{2x}} \left|W(\alpha,\chi)\right|^2 \, d\alpha+\sum_{k=1}^{\log_2{\left(\frac{2x}{x_0}\right)}-1} 2^{k}\int_{\frac{1}{2^{k+1}}<|\alpha|\leq \frac{1}{2^k}} \left|W(\alpha,\chi)\right|^2 \, d\alpha.
\end{equation*}
By Lemma \ref{lemma:EstW}, the right-hand side is
\begin{multline*}
    \leq \left(\log_2{\left(\frac{2x}{x_0}\right)}+\frac{x_0}{2}\right)\left(f_1(q,x_0)(\log{2})(\log{x})^4x^{2B_q^*(x)}+\left(f_4(q,T_0,x_0)+\delta_0f_5(q,T_0,x_0)\right)\left(\log{x}\right)^3x^{2B_q^*(x)}\right),
\end{multline*}
and the result follows.
\end{proof}

\begin{remark}
\label{rmk:J}
    If we again have $q=1$, $x \geq \max\{83, T_0\}$ and otherwise the same assumptions as in Lemma \ref{lemma:EstW}, then  as in the proof of the previous lemma, we obtain
       \begin{multline*}
        \left|K(\chi_0)\right| \leq f_1(1,x_0)(\log{x})^5x^{2B_1^*(x)}\\
        +\left(0.35x_0f_1(1,x_0)+\left(1.433+\frac{0.5x_0}{\log{x_0}}\right)\left(f_{4,\zeta}(1,T_0,x_0)+f_{5,\zeta}(1,T_0,x_0)\right)\right)(\log{x})^4x^{2B_1^*(x)}.
    \end{multline*}
\end{remark}

\section{Proofs of Propositions \ref{prop:SxSecond} and \ref{prop:Sxqab}}
\label{sec:FirstGeneral}
First we prove Proposition \ref{prop:Sxqab}.

\begin{proof}[Proof of Proposition \ref{prop:Sxqab}]
    By the orthogonality of characters, we have
    \begin{equation*}
        S(x;q,a,b)=\frac{1}{\varphi(q)^2}\sum_{\chi_1,\chi_2 \pmod q} \overline{\chi_1(a)\chi_2(b)} S(x; \chi_1,\chi_2).
    \end{equation*}
    Hence, by Lemma \ref{lemma:Schi12} and assumption \ref{assumptionProductExceptional}, the function $S(x;q,a,b)$ can be estimated as
    \begin{multline*}
        \left|S(x;q,a,b)-\frac{x^2}{2\varphi(q)^2}+\frac{1}{\varphi(q)^2}\sum_{\chi \pmod q}\left(\overline{\chi}(a)+\overline{\chi}(b)\right)\sum_{|\rho_\chi|<x}\frac{x^{\rho_\chi+1}}{\rho_\chi(\rho_\chi+1)}\right| \\
        \leq \frac{1}{\varphi(q)^2}\sum_{\chi_1,\chi_2 \pmod q} \left|R(x;\chi_1,\chi_2)\right|+\frac{x(\log{x})(\log\log{x})}{\varphi(q)}\left(\frac{2c_{q,3}+1.5}{\varphi(q)(\log{x_0})(\log\log{x_0})}+2f_6(q, x_0)\right).
    \end{multline*}
    Thus, we want to estimate the terms $R(x;\chi_1,\chi_2)$.

    By Cauchy-Schwarz inequality
    \begin{equation*}
       \frac{1}{\varphi(q)^2}\sum_{\chi_1,\chi_2 \pmod q} \left|R(x;\chi_1,\chi_2)\right|\leq \frac{1}{\varphi(q)^2}\sum_{\chi_1,\chi_2 \pmod q} K(\chi_1)^{1/2}K(\chi_2)^{1/2}.
    \end{equation*}
    By Lemma \ref{lemma:EstJ}, the right-hand side is
    \begin{multline*}
        \leq f_1(q,x_0)(\log{x})^5x^{2B_q^*(x)}\\
        +\left(0.35x_0f_1(q,x_0)+\left(1.443+\frac{0.5x_0}{\log{x_0}}\right)\left(f_4(q,T_1,x_0)+\frac{f_5(q,T_1,x_0)}{\varphi(q)}\right)\right)(\log{x})^4x^{2B_q^*(x)}.
    \end{multline*}
    Keeping in mind the lower bounds for $x$ and $B_q^*(x) \geq 1/2$, the result follows.
\end{proof}

The proof of Proposition \ref{prop:SxSecond} follows now easily from the previous proof.
\begin{proof}[Proof of Proposition \ref{prop:SxSecond}]
The claim follows directly from the fact that $\left|R(x;\chi_0,\chi_0)\right| \leq K(\chi_0)$, and Remarks \ref{rmk:SFormula} and \ref{rmk:J}. 
\end{proof}

\section{Proofs of Propositions \ref{prop:firstMainS(x)} and \ref{prop:firstMain} and Remark \ref{rmk:primitiveS}}
\label{sec:SecondGeneral}
\begin{proof}[Proof of Proposition \ref{prop:firstMain}]
    Let us first consider the case $q>x^{\frac{1-B_q^*(x)}{2}}\log{x}$. Using the trivial estimates that $\Lambda(m) \leq \log{x}$ if $m \leq x$ and there are at most $x/q+1$ such numbers $m$ that satisfy $m \equiv b \pmod q$, we obtain
    \begin{equation*}
         0 \leq S(x; q,a,b) \leq \left(\sum_{\substack{l \leq x \\ l \equiv a\pmod q}} \Lambda(l)\right)\left(\sum_{\substack{l \leq x \\ l \equiv a\pmod q}} \Lambda(l)\right) \leq \frac{(x\log{x})^2}{q^2}+\frac{2x\log^2{x}}{q}+\log^2{x}.
    \end{equation*}
    Since $q>x^{\frac{1-B_q^*(x)}{2}}\log{x}$, $B_q^*(x) \geq 1/2$ and $x \geq 4 \cdot 10^{15}$, the right-hand side is
    \begin{equation*}
    < x^{1+B_q^*(x)}+2.1x^{\frac{1+B_q^*(x)}{2}}\log{x}.
    \end{equation*}
    Moreover, by \cite[Theorem 15]{RS1962} and noting that $2.50637/\log\log{q} \leq e^{C_0}\log\log{q}$, we have
    \begin{equation}
    \label{eq:psiEstx}
        \frac{x^2}{2\varphi(q)^2} \leq 2e^{2C_0}{x^{1+B_q^*(x)}}\left(\frac{\log\log{x}}{\log{x}}\right)^2.
    \end{equation}

    This proves the case $q>x^{\frac{1-B_q^*(x)}{2}}\log{x}$.

    Let us now consider the case $q\leq x^{\frac{1-B_q^*(x)}{2}}\log{x}$. The idea is to estimate the terms in Proposition \ref{prop:Sxqab}. First we notice that by assumption the \ref{eq:assumptionrhorho1}, we have
    \begin{equation}
    \label{eq:estSumZeroschi}
        \frac{1}{\varphi(q)^2}\left|\sum_{\chi \pmod q}\left(\overline{\chi}(a)+\overline{\chi}(b)\right)\sum_{|\rho_\chi|<x}\frac{x^{\rho_\chi+1}}{\rho_\chi(\rho_\chi+1)}\right| \leq \frac{2d_{8}(q)}{\varphi(q)} x^{B_q^*(x)+1}.
    \end{equation}
    
    Let us consider the term $x^{2B_q^*(x)}(\log{x})^5$ next. As in the first paragraph of the proof of \cite[Theorem 1]{BHMS2019}, in the case $1-B_q^*(x)\geq 5\log{\log{x}}/\log{x}$, we have $x^{2B_q^*(x)}(\log{x})^5 \leq x^{1+B_q^*(x)}$. Hence, let us consider the case $1-B_q^*(x)<5\log{\log{x}}/\log{x}$. Now
    \begin{equation}
    \label{eq:upperq}
        \log{q} \leq \frac{1-B_q^*(x)}{2}\log{x}+\log\log{x}< 3.5\log{\log{x}}.
    \end{equation}
    Moreover, since $B_q^*(x) \leq 1-\eta_q(x)$, we can deduce
    \begin{equation}
    \label{eq:xBqlogmain}
        x^{2B_q^*(x)}(\log{x})^5=x^{1+B_q^*(x)}e^{-\eta_q(x)\log{x}+5\log\log{x}}.
    \end{equation}
    Combining estimates \eqref{eq:upperq} and \eqref{eq:xBqlogmain} with the definition \eqref{eq:defBq} of $\eta_q(x)$ and the assumptions for $x$, we get
     \begin{equation}
     \label{eq:5loglog}
        -\eta_q(x)\log{x} \leq -\frac{(\log{x})^{1/3}}{(\log\log{x})^{1/3} \left(3.5c_1\left(\frac{\log{\log{x_0}}}{\log{x_0}}\right)^{2/3}+c_2\right)} \leq -5\log\log{x}.
    \end{equation}
    Hence, also in this case the term $x^{2B_q^*(x)}(\log{x})^5$ is at most $x^{1+B_q^*(x)}$.

    From the previous paragraph, we have $x^{2B_q^*(x)}(\log{x})^4 \leq x^{1+B_q^*(x)}/\log{x}$. The result follows when we use Proposition \ref{prop:Sxqab}.  
\end{proof}

\begin{proof}[Proof of Proposition \ref{prop:firstMainS(x)}] Because of 
Proposition\ref{prop:SxSecond}, we would like to estimate the sum over zeros and the terms $x^{2B_q^*(x)}(\log{x})^5$ and $x^{2B_q^*(x)}(\log{x})^4$. As in \eqref{eq:estSumZeroschi}, the sum over zeros multiplied by two is $\leq 2d_8(q)x^{1+B_q^*(x)}$. The rest of the terms can be estimated as in the last paragraph of the proof of Proposition \ref{prop:firstMain}. 
\end{proof}

We  would now like to present a result where the sum is limited to run over principal and primitive characters. Hence, let us first present a remark that describes what we should change if our sums are over primitive and principal characters and then provide an estimate for the $\varphi_1^*(q)$ since it is needed to derive the result in Remark \ref{rmk:primitiveS}.

\begin{remark}
    We could also apply Remark \ref{rmk:primitiveprincipalMainLemma} instead of Proposition \ref{prop:Sxqab} and obtain the result in Remark \ref{rmk:primitiveS}. In the case $q>x^{\frac{1-B_q^*}{2}}\log{x}$, the only change we need to make is to replace the estimate \eqref{eq:psiEstx} with
    \begin{equation*}
        \frac{x^2}{2\varphi_1^*(q)^2}<82.366x^{1+B_q^*(x)}\left(\frac{(\log\log{x})^2}{\log{x}}\right)^2.
    \end{equation*}
    Here we used Lemma \ref{lemma:psi1Estimate} and the fact that for $q \geq 83$ we have $1+2.8/(\log\log{q})^2+1.96/(\log\log{q})^4<2.671$. In the case $q\leq x^{\frac{1-B_q^*(x)}{2}}\log{x}$, in addition of using Remark \ref{rmk:primitiveS}, we need the assumption \ref{eq:assumptionrhorho1} only for the some over primitive and principal characters. Hence, we change the term $\varphi(q)$ to $\varphi_1^*(q)$. 
\end{remark}

The next two results provide an estimates $q/\varphi_1^*(q)$. Let $\varphi^*(q)$ denote here the number of primitive characters modulo $q$.

\begin{lemma}
\label{lemma:prodpp2}
Assume that $x \geq 286$. Then 
    \begin{equation}
    \label{eq:p2Prod}
        \prod\limits_{2<p\leq x} \frac{p}{p-2} \leq \frac{e^{2C_0}}{4C_2}(\log{x})^2 \left(1+\frac{1}{2(\log{x})^2}\right)^2,
    \end{equation}
    where $C_2$ denotes the Twin Primes Constant.
\end{lemma}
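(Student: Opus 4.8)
The plan is to compare the finite product $P(x):=\prod_{2<p\le x}\frac{p}{p-2}$ against the infinite product defining the twin primes constant. Recall $C_2=2\prod_{p>2}\bigl(1-\tfrac{1}{(p-1)^2}\bigr)=2\prod_{p>2}\frac{p(p-2)}{(p-1)^2}$. Writing $\frac{p}{p-2}=\frac{p^2}{(p-1)^2}\cdot\frac{(p-1)^2}{p(p-2)}$, one gets the exact factorization
\[
P(x)=\Bigl(\prod_{2<p\le x}\frac{p^2}{(p-1)^2}\Bigr)\cdot\frac{2}{C_2}\cdot\prod_{p>x}\frac{p(p-2)}{(p-1)^2}.
\]
The tail product $\prod_{p>x}\frac{p(p-2)}{(p-1)^2}=\prod_{p>x}\bigl(1-\tfrac{1}{(p-1)^2}\bigr)$ is $\le 1$, so it only helps, and I would discard it (or, if a sharper constant is needed, bound $1-\tfrac{1}{(p-1)^2}\ge 1$ trivially from above and possibly keep a lower bound for a two-sided estimate — but for the stated one-sided inequality discarding it suffices). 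Thus
\[
P(x)\le \frac{2}{C_2}\prod_{2<p\le x}\frac{p^2}{(p-1)^2}=\frac{2}{C_2}\Bigl(\prod_{2<p\le x}\frac{p}{p-1}\Bigr)^2=\frac{2}{C_2}\cdot\frac{1}{4}\Bigl(\prod_{p\le x}\frac{p}{p-1}\Bigr)^2\cdot\,?
\]
— more carefully, $\prod_{p\le x}\frac{p}{p-1}=\frac{2}{1}\prod_{2<p\le x}\frac{p}{p-1}$, so $\prod_{2<p\le x}\frac{p}{p-1}=\tfrac12\prod_{p\le x}\frac{p}{p-1}$, giving $P(x)\le \frac{2}{C_2}\cdot\frac14\bigl(\prod_{p\le x}\frac{p}{p-1}\bigr)^2=\frac{1}{2C_2}\bigl(\prod_{p\le x}\frac{p}{p-1}\bigr)^2$.

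Next I would invoke an explicit Mertens-type estimate for $\prod_{p\le x}\frac{p}{p-1}=\frac{1}{\prod_{p\le x}(1-1/p)}$. By Rosser–Schoenfeld (\cite{RS1962}, the standard explicit Mertens bounds), for $x\ge 286$ one has an inequality of the shape $\prod_{p\le x}(1-1/p)^{-1}\le e^{C_0}\log x\bigl(1+\tfrac{1}{2(\log x)^2}\bigr)$, i.e. $\prod_{p\le x}\frac{p}{p-1}\le e^{C_0}(\log x)\bigl(1+\tfrac{1}{2(\log x)^2}\bigr)$. Squaring this and substituting into the previous display yields
\[
P(x)\le \frac{1}{2C_2}\,e^{2C_0}(\log x)^2\Bigl(1+\frac{1}{2(\log x)^2}\Bigr)^2 .
\]
This is off by a factor of $2$ from the claimed bound $\frac{e^{2C_0}}{4C_2}(\log x)^2(1+\cdots)^2$, so I must have been slightly wasteful; the resolution is that the tail product $\prod_{p>x}(1-\tfrac1{(p-1)^2})$ should be kept rather than bounded by $1$ only when needed, OR — more likely — the correct bookkeeping is $P(x)=\frac{2}{C_2}\prod_{2<p\le x}\frac{p^2}{(p-1)^2}\cdot(\text{tail})$ together with a factor coming from the prime $2$ that I double-counted. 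The key step is to do this factor accounting correctly so that the constant comes out exactly $\frac{e^{2C_0}}{4C_2}$; concretely I expect $\prod_{2<p\le x}\frac{p}{p-1}=\tfrac12 e^{C_0}\log x(1+\cdots)$ feeds in as a square, and the overall prefactor $\frac{2}{C_2}\cdot\frac14=\frac{1}{2C_2}$ must instead be $\frac{1}{4C_2}$, which happens if one uses $C_2=2\prod_{p>2}(1-\tfrac1{(p-1)^2})$ so that $\frac{1}{C_2}=\frac12\prod_{p>2}(1-\tfrac1{(p-1)^2})^{-1}$ and the discarded-tail step actually removes exactly the piece that turns $\frac{2}{C_2}$ into $\frac{1}{C_2}\cdot\frac{1}{\text{(partial product)}}$.

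The main obstacle is therefore purely the careful tracking of the constant through the prime-$2$ normalization and the infinite-vs-finite product split: every step is an identity or an explicit Mertens inequality, so there is no analytic difficulty, but one must be meticulous to land on $\frac{e^{2C_0}}{4C_2}$ rather than $\frac{e^{2C_0}}{2C_2}$, and to verify that the chosen explicit Mertens bound from \cite{RS1962} is valid precisely on the range $x\ge 286$ (which is presumably where the Rosser–Schoenfeld inequality with the stated shape $(1+\tfrac1{2(\log x)^2})$ becomes sharp enough). I would finish by stating the identity, quoting the explicit Mertens bound from \cite{RS1962}, squaring, and collecting constants, with a one-line check that $286$ is the correct threshold for the Mertens estimate in the form used.
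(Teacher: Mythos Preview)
Your approach is exactly the paper's: factor $\frac{p}{p-2}=\bigl(\frac{p}{p-1}\bigr)^2\cdot\frac{(p-1)^2}{(p-1)^2-1}$, bound the second (finite) product by the corresponding infinite product, then apply Rosser--Schoenfeld's explicit Mertens estimate (valid for $x\ge 286$) to $\prod_{2<p\le x}\frac{p}{p-1}=\tfrac12\prod_{p\le x}\frac{p}{p-1}$ and square. Your bookkeeping is not wrong; the factor-of-$2$ discrepancy you are chasing is an inconsistency in the paper itself. The introduction defines $C_2=2\prod_{p>2}\bigl(1-\tfrac{1}{(p-1)^2}\bigr)$, but in this lemma (and in the subsequent Lemma where the authors write ``we have estimated $C_2>0.66$'') the symbol $C_2$ denotes the standard twin-prime constant $\prod_{p>2}\bigl(1-\tfrac{1}{(p-1)^2}\bigr)\approx 0.66016$, without the leading $2$. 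With that normalization the infinite product equals $1/C_2$ (not $2/C_2$), and your computation gives the stated $\frac{e^{2C_0}}{4C_2}(\log x)^2\bigl(1+\tfrac{1}{2(\log x)^2}\bigr)^2$ on the nose. There is no need to retain the tail product or to look for a hidden factor at $p=2$.
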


\begin{proof}
    Let us first notice that 
    \begin{equation*}
        \prod\limits_{2<p\leq x} \frac{p}{p-2}=\left(\prod\limits_{2<p\leq x} \frac{p}{p-1}\right)^2\cdot \prod\limits_{2<p\leq x} \frac{(p-1)^2}{(p-1)^2-1} < \frac{1}{C_2} \left(\prod\limits_{2<p\leq x} \frac{p}{p-1}\right)^2.
    \end{equation*}
    By \cite[Estimate (3.29)]{RS1962}, the right-hand side is at most the right hand side of \eqref{eq:p2Prod}. We divide by $4$ since in the above sum we do not have $p=2$. 
\end{proof}

\begin{lemma}
\label{lemma:psi1Estimate}
    Let $q \not\equiv 2 \pmod{4}$ and $q \geq 3$ be an integer. Then
    \begin{equation}
    \label{eq:primitiveprincipal}
        \frac{q}{\varphi_1^*(q)} \leq \frac{q}{\varphi^*(q)}\leq \frac{e^{2C_0}}{C_2}\left((\log\log{q})^2+2.8+\frac{1.96}{(\log\log{q})^2}\right),
    \end{equation}
    where $C_2$ denotes the Twin Primes Constant.
\end{lemma}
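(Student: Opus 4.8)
The statement to be shown is that for $q \not\equiv 2 \pmod 4$ with $q \geq 3$,
\[
\frac{q}{\varphi_1^*(q)} \leq \frac{q}{\varphi^*(q)} \leq \frac{e^{2C_0}}{C_2}\left((\log\log q)^2 + 2.8 + \frac{1.96}{(\log\log q)^2}\right).
\]
The first inequality is immediate, since $\varphi^*(q) \leq \varphi_1^*(q)$ (the principal character is primitive only for $q=1$, so $\varphi_1^*(q) = \varphi^*(q)$ or $\varphi^*(q)+1$; in either case $\varphi^*(q)\le\varphi_1^*(q)$ and the bound transfers). So the real content is the second inequality.

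The key algebraic input is the explicit formula for the number of primitive characters: $\varphi^*$ is multiplicative and $\varphi^*(q) = \sum_{d \mid q} \mu(d)\,\varphi(q/d)$, which evaluates on prime powers to $\varphi^*(p) = p-2$ and $\varphi^*(p^k) = p^k - 2p^{k-1} + p^{k-2} = p^{k-2}(p-1)^2$ for $k \geq 2$ (and $\varphi^*(1)=1$). First I would write $q = 2^{a} m$ with $m$ odd (so $a=0$ or $a\ge 2$ by the hypothesis $q\not\equiv 2\pmod 4$) and observe that $\frac{2^a}{\varphi^*(2^a)} \le 2$ in all allowed cases ($a=0$ gives $1$; $a\ge 2$ gives $2^a/(2^{a-2}) = 4$ — wait, one must be careful: $\varphi^*(4) = 1$, giving ratio $4$, so actually $\frac{2^a}{\varphi^*(2^a)} \le 4$). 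Since the ratio $q/\varphi^*(q)$ is multiplicative, it suffices to bound $\prod_{p \mid q} \frac{p^{v_p(q)}}{\varphi^*(p^{v_p(q)})}$. For an odd prime $p$, $\frac{p}{\varphi^*(p)} = \frac{p}{p-2}$, while for $k\ge 2$, $\frac{p^k}{\varphi^*(p^k)} = \frac{p^2}{(p-1)^2} \le \frac{p}{p-2}$ (one checks $p^2(p-2) \le p(p-1)^2 \iff p-2 \le (p-1)^2/p$, true). So in every case the $p$-contribution is at most $\frac{p}{p-2}$, giving
\[
\frac{q}{\varphi^*(q)} \le (\text{const for the 2-part}) \cdot \prod_{2 < p \mid q} \frac{p}{p-2} \le C \prod_{2 < p \le P^+(q)} \frac{p}{p-2},
\]
where the 2-part constant needs to be tracked carefully and absorbed. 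The largest prime factor $P^+(q)$ of $q$ satisfies $P^+(q) \le q$, but to get the $\log\log q$ scaling one bounds the product over \emph{all} primes up to the number of prime factors' worth of room — actually the standard trick is: if $\omega(q)=k$ then $\prod_{p\mid q, p>2} \frac{p}{p-2}$ is largest when the prime factors are the smallest odd primes $3,5,7,\dots$, and then one uses the bound that the $k$-th prime $p_k \ll k\log k$, so the product is at most $\prod_{2<p\le Y}\frac{p}{p-2}$ for a suitable $Y$ related to $\log q$.

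The clean route is to note $\log q \ge \sum_{p\mid q}\log p \ge \vartheta\text{-type lower bound}$, hence $P^+(q)$ or the relevant cutoff is controlled, and then invoke Lemma \ref{lemma:prodpp2}, which gives $\prod_{2<p\le x}\frac{p}{p-2} \le \frac{e^{2C_0}}{4C_2}(\log x)^2(1+\tfrac{1}{2(\log x)^2})^2$. Substituting $x$ = the cutoff ($\approx \log q$, or more precisely one wants $(\log x)^2 \approx (\log\log q)^2$), expanding the square $(1 + \tfrac{1}{2(\log x)^2})^2 = 1 + \frac{1}{(\log x)^2} + \frac{1}{4(\log x)^4}$, and combining with the factor-$4$ from dividing out $p=2$ and the 2-part constant, one should land on exactly $\frac{e^{2C_0}}{C_2}\big((\log\log q)^2 + 2.8 + \frac{1.96}{(\log\log q)^2}\big)$, where the $2.8$ absorbs both the cross term and the slack from $P^+(q)$ versus $\log q$, and $1.96 = (1.4)^2$ handles the last term. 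The main obstacle will be the bookkeeping: getting the cutoff $x$ precisely enough in terms of $\log\log q$ (one needs an explicit lower bound like $\vartheta(x)\ge$ something, valid for small $x$ too, e.g. from Rosser–Schoenfeld), handling small $q$ (the primorial cases $q = 3, 3\cdot 5, 3\cdot 5\cdot 7,\dots$ where the bound is tightest) possibly by direct numerical check, and ensuring the constants $2.8$ and $1.96$ genuinely cover the 2-part factor of $4$ and all rounding. I would verify the inequality numerically at the primorial values $q = \prod_{p \le Y, p>2} p$ for small $Y$ to confirm the stated constants are not too optimistic before committing to the analytic derivation.
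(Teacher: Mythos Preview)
Your plan is essentially the same as the paper's: reduce via multiplicativity of $\varphi^*$ to $\frac{q}{\varphi^*(q)}\le 4\prod_{2<p\mid q}\frac{p}{p-2}$, replace the primes dividing $q$ by the smallest odd primes, bound the resulting product via Lemma~\ref{lemma:prodpp2} at a cutoff $x\approx\log q$, and check small $q$ numerically. The paper sharpens exactly the step you flag as ``bookkeeping'': it invokes \cite[Theorem 33 and Lemma 15]{RS1962} to choose the cutoff as $x=\log q+y-2$ with an explicit $y$ satisfying $\log q<\theta(\log q+y)$, after which the constants fall out by \emph{squaring} the Rosser--Schoenfeld Lemma~15 bound (of shape $\log\log q+\tfrac{1.4}{\log\log q}$), giving $2.8=2\cdot 1.4$ and $1.96=1.4^{2}$ on the nose---your guess $1.96=(1.4)^2$ is right, but note that $2.8$ is the cross term of the same square, not slack absorbed from elsewhere, and the factor $4$ from the $2$-part cancels exactly against the $\tfrac{1}{4}$ in Lemma~\ref{lemma:prodpp2}, so no additional absorption is needed there.
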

\begin{proof}
     By \cite[Theorem 8]{S1969} $\varphi^*(p)=p-2$ and $\varphi^*(p^\alpha)=(p-1)^2p^{\alpha-2}$ when $p$ is a prime number and $\alpha \geq 2$ on integer. Notice that $\varphi^*(mn)=\varphi^*(m)\varphi^*(n)$ if $\gcd(m,n)=1$, $\varphi^*(q) \neq 0$ if $q \not\equiv 2 \pmod{4}$ and $\varphi_1^*(q)=\varphi^*(q)+1$. Hence, the first inequality in \eqref{eq:primitiveprincipal} is proved and we can also conclude that for all $q \not\equiv 2 \pmod{4}$ we have
     \begin{equation}
     \label{eq:prodStart}
         \frac{q}{\varphi^*(q)}=\frac{\prod_i^r q_i^{\alpha_i}}{\varphi^*(q)}\leq 4 \prod_{q_i \neq 2} \frac{q_i}{q_i-2},
         \end{equation}
         where $q=\prod_i^r q_i^{\alpha_i}$ is the prime factorization of $q$. Hence, we consider the product $\prod_{q_i \neq 2} \frac{q_i}{q_i-2}$.
         Since the function $x\mapsto 1+\frac{2}{x-2}$ is decreasing for $x>2$, the right-hand side of \eqref{eq:prodStart}  is at most
         \begin{equation}
         \label{eq:prodPrimes}
             \frac{q}{\varphi^*(q)} \leq 4\prod_{i=2}^{m} \frac{p_i}{p_i-2},
         \end{equation}
          where the numbers $p_2,\ p_2,\ \dots \ p_m$ denote the first $m-1$ odd primes.
         
         Let now $\theta(n):=\sum_{p \leq n} \log{p}$.
          Similarly as in the proof of Theorem 33 in \cite{RS1962}, if $q<\exp(\theta(x))$ for some real number $x \geq 5$, then the last product in \eqref{eq:prodPrimes} can be replaced with
         \begin{equation}
         \label{eq:estFinite}
             \frac{q}{\varphi^*(q)}\leq 4\prod\limits_{2<p\leq x-2} \frac{p}{p-2}.
         \end{equation}
         Using estimate \eqref{eq:estFinite} and Lemma \ref{lemma:prodpp2}, we can divide the proof to different cases based on the size of $q$.

         Similarly as in the proof of Lemma 15 in \cite{RS1962}, we obtain that if $y \in \mathbb{R}_+$ such that $288 \leq \log{q}+y$, $\log{q}<\theta(\log{q}+y)$ and
         \begin{equation*}
             0 \leq y-2 \leq \frac{0.9\log{q}}{\log{\log{q}}},
         \end{equation*}
         then
         \begin{equation*}
             \frac{e^{-2C_0}C_2q}{\varphi^*(q)}\leq 
             \left(\log\left(q+y-2\right)+\frac{0.5}{\log\left(\log{q}+y-2\right)}\right)^2
             \leq  (\log\log{q})^2+2.8+\frac{1.96}{(\log\log{q})^2}.
         \end{equation*}
         Here we used inequality \eqref{eq:estFinite} and Lemma \ref{lemma:prodpp2}, and the last inequality is the second power of the inequality in \cite[Lemma 15]{RS1962}. Now, as in the proof of Theorem 15 in \cite{RS1962}, we can choose $y=2+2\sqrt{1+\log{q}}$ if $286 \leq \log{q} \leq 1340$ and
         $y=0.9\log{q}/\log\log{q}$ if $\log{q} \geq 1340$. Hence, we have proved the cases where $\log{q} \geq 286$.

         We can check the cases $3 \leq q \leq 10^{7}$ by computing the exact values using Mathematica version 14.0.0.0. The desired estimate holds in this case. Hence, we can assume that the number $m$ in \eqref{eq:prodPrimes} is between $8$ and $64$. We go through each of these cases by using Mathematica again but now applying formula \eqref{eq:prodPrimes}. In the computations we have estimated $C_2>0.66$. The claim holds in these cases, too.
\end{proof}

\section{Proof of Proposition \ref{prop:JG}}

We follow the method of the proof of Theorem 3 \cite{BHMS2019}.

Again, $\chi^*$ denotes the inducing character and by $q^*$ the inducing modulus.
Moreover, denote by 
\begin{equation}
\label{def:Gq}
\mathfrak{S}_q(c,\chi)=\frac{\mu(q^{*})\chi^{*}(c)}{\varphi(q)\varphi(q^{*})}\prod_{\substack{p\mid q\\ p\nmid q^{*}c}}\frac{p-2}{p-1}
\end{equation}
and
\begin{equation}
\label{def:Gqc}
\mathfrak{S}_q(c)=\mathfrak{S}_q(c,\chi_0),
\end{equation}
 where $\mu$ is the M\"{o}bius function.

\begin{remark}
The reader is requested to note that there is a misprint in Theorem 4 of \cite{BHMS2019} where an extra 
$\mu(q^{*})$ occurs in the definition of the singular series. The singular series occurs in Lemma 12 of the same paper, whose correct formulation should be
\[
\sum_{\substack{
a=1\\
(a(c-a),q)=1
}}^{q}
\chi(a)
=
\mu(q^{\ast})\chi^{\ast}(c)\frac{\varphi(q)}{\varphi(q^{\ast})}
\prod_{\substack{
p\mid q\\
p\nmid q^{\ast}c
}}
\frac{p-2}{p-1}
=
\varphi(q)^{2}\mathfrak{S}_{q}(c,\chi),
\]
\end{remark}

We need the following version of Theorem 4 of that article:

\begin{theorem}\label{thm:versiothm4} 
Let $\mathfrak{S}_q(c,\chi)$ and $\mathfrak{S}_q(c)$ be as in \eqref{def:Gq} and \eqref{def:Gqc}, respectively. Let $q \geq 83$. Assume that all characters $\chi \pmod q$ and all primitive characters $\chi^* \pmod q$ that induce any $\chi \pmod q$ satisfy the following conditions: assume \ref{assumptionProductExceptional} for $|\Im(s)| \geq T_1$ and assume that \ref{eq:assumptionLogDer} holds for $\chi^*$ and $T\geq T_1$,  \ref{eq:rhoSecondUpper}, \ref{eq:defIntervalZeroGeneral} and \ref{eq:defzerosGeneral} hold for $T \geq T_1$ and $\chi$ and \ref{assumption:psiuchi}, \ref{eq:assumption1OverRhoSum} and \ref{eq:assumptionrhorho1} hold for $T \geq T_0$ and $\chi$. Assume also that \ref{eq:assumptionLambda} holds for $T_0$, and let $x_0 \geq \max\{e^e,T_1\}$.  Then, for $x\geq \max\{q,T_0, x_0\}$ and for any positive integer $c$ we have 
\begin{multline*}
\left|\sum_{\substack{n\leq x\\ n\equiv c\pmod{q}}}G(n)-\frac{\mathfrak{S}_q(c)}{2}x^2-2\sum_{\chi\pmod{q}} \mathfrak{S}_q(c,\overline{\chi})\sum_{|\rho_{\chi}|<x}\frac{x^{\rho_{\chi}+1}}{\rho_{\chi}(\rho_{\chi}+1)}\right|\leq \frac{d_{11}x(\log x)(\log q)}{\log 2}\\+\frac{(4c_{q,3}+3)x(\log x)(\log\log x)}{2(\log x_0)(\log\log x_0)\varphi(q)}+\frac{2x(\log x)(\log\log x)f_6(q,x_0)}{\varphi(q)}+f_1(q,x_0)(\log{x})^5x^{2B_q^*(x)}\\
        +\left(0.35x_0f_1(q,x_0)+\left(1.443+\frac{0.5x_0}{\log{x_0}}\right)\left(f_4(q,T_1,x_0)+\frac{f_5(q,T_1,x_0)}{\varphi(q)}\right)\right)(\log{x})^4x^{2B_q^*(x)}.
\end{multline*}
\end{theorem}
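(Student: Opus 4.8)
The plan is to reduce the statement to a combination of the results already proved in the excerpt, specifically Lemma \ref{lemma:Schi12}, Lemma \ref{lemma:EstJ} (together with its Cauchy--Schwarz consequence used in the proof of Proposition \ref{prop:Sxqab}), and the corrected version of Lemma 12 of \cite{BHMS2019} recorded in the remark just above. First I would open by writing the restriction of $G$ to the residue class $n\equiv c\pmod q$ in terms of the functions $S(x;\chi_1,\chi_2)$. Expanding each $\Lambda(\ell)$ and $\Lambda(m)$ via the orthogonality relation $\mathbf 1_{\ell\equiv a}=\varphi(q)^{-1}\sum_{\chi}\overline\chi(a)\chi(\ell)$ and summing over $a,b$ with $a+b\equiv c\pmod q$ produces a Gauss-type sum $\sum_{a+b\equiv c}\overline{\chi_1}(a)\overline{\chi_2}(b)$; this is exactly where the singular series enters, and I would invoke the corrected Lemma 12 formula to identify the diagonal contribution. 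The upshot should be an identity of the shape
\[
\sum_{\substack{n\le x\\ n\equiv c\,(\md q)}}G(n)=\sum_{\chi_1,\chi_2\,(\md q)}\kappa_q(c,\chi_1,\chi_2)\,S(x;\chi_1,\chi_2)
\]
for suitable coefficients $\kappa_q(c,\chi_1,\chi_2)$ bounded by $1/\varphi(q)^2$ in modulus, whose diagonal ($\chi_1=\chi_2=\chi_0$ or one of them principal) pieces collapse to $\mathfrak S_q(c)$ and $\mathfrak S_q(c,\overline\chi)$ after using \eqref{def:Gq}--\eqref{def:Gqc}.

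Next I would substitute the estimate of Lemma \ref{lemma:Schi12} for each $S(x;\chi_1,\chi_2)$. The main term $\tfrac12\delta_0(\chi_1)\delta_0(\chi_2)x^2$ reassembles, after weighting by $\kappa_q$, into $\tfrac12\mathfrak S_q(c)x^2$; the linear-in-zeros terms $\delta_0(\chi_2)H(x,\chi_1)+\delta_0(\chi_1)H(x,\chi_2)$ reassemble into $2\sum_{\chi}\mathfrak S_q(c,\overline\chi)\sum_{|\rho_\chi|<x}\frac{x^{\rho_\chi+1}}{\rho_\chi(\rho_\chi+1)}$, again using the singular-series identity. What is left is, on one hand, the error term from Lemma \ref{lemma:Schi12}, which only involves characters for which $\delta_0(\chi_1)$ or $\delta_0(\chi_2)$ is nonzero; summing those over the (at most $\varphi(q)$) choices of the non-principal partner and weighting by $\kappa_q$ yields the three $x(\log x)(\cdots)$ terms on the right-hand side, the first coming from $d_{11}$ via the $\frac{d_{11}x(\log x)(\log q)}{\log 2}$ bookkeeping on $\{x\}\chi(n)\Lambda(n)$ (note $\log d(q)\le \frac{\log q\log 2}{\log\log q}$ is used to absorb a $d(q)$-type count here, hence the $\log q/\log 2$ shape), and the other two from the $c_{q,3}$ and $f_6$ contributions divided by $\varphi(q)$.

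On the other hand there remain the terms $R(x;\chi_1,\chi_2)$ coming out of Lemma \ref{lemma:Schi12}. Here I would repeat verbatim the argument in the proof of Proposition \ref{prop:Sxqab}: by the Cauchy--Schwarz bound $|R(x;\chi_1,\chi_2)|\le K(\chi_1)^{1/2}K(\chi_2)^{1/2}$ and Lemma \ref{lemma:EstJ}, the weighted double sum $\sum_{\chi_1,\chi_2}|\kappa_q(c,\chi_1,\chi_2)||R(x;\chi_1,\chi_2)|\le \varphi(q)^{-2}\big(\sum_\chi K(\chi)^{1/2}\big)^2$ is exactly what gave the $f_1$ and $(f_4,f_5)$ lines in Proposition \ref{prop:Sxqab}, so those two displayed lines transfer unchanged, with the $f_5/\varphi(q)$ reflecting that $f_5$ only enters for the single principal character. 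Assembling the four pieces — reconstructed main term, reconstructed zero sum, the $x(\log x)$-errors, and the $R$-contribution — gives the claimed inequality.

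The main obstacle I expect is the combinatorial/arithmetic bookkeeping in the first step: correctly evaluating $\sum_{a+b\equiv c\,(\md q),\,(ab,q)=1}\overline{\chi_1}(a)\overline{\chi_2}(b)$ in terms of $\mathfrak S_q$. This is delicate because of the two coprimality constraints simultaneously, because of the induced-character/conductor interplay ($q^*$ versus $q$, $\mu(q^*)$, the Euler product over $p\mid q,\ p\nmid q^*c$), and precisely because of the misprint flagged in the remark — one must use the corrected identity and track the factor $\varphi(q)^2$ carefully so that the diagonal terms land on $\tfrac12\mathfrak S_q(c)x^2$ and $2\mathfrak S_q(c,\overline\chi)$ with no stray constants. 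Everything after that identity is a routine, if lengthy, re-run of the estimates in Proposition \ref{prop:Sxqab} with the extra $d_{11}$-term carried along, and the constants on the right-hand side are then just the sum of the corresponding constants from Lemmas \ref{lemma:Schi12} and \ref{lemma:EstJ}.
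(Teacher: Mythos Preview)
Your overall architecture is right and matches the paper: decompose into $S(x;\chi_1,\chi_2)$ via orthogonality, feed in Lemma~\ref{lemma:Schi12}, reconstruct the singular-series main term and the zero sum from the $\delta_0$-terms using the corrected Lemma~12, and bound the $R$-contribution via Cauchy--Schwarz and Lemma~\ref{lemma:EstJ}. Two specific points, however, are not handled correctly.

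First, the $\frac{d_{11}x(\log x)(\log q)}{\log 2}$ term does not come from the $\{x\}\chi(n)\Lambda(n)$ piece inside Lemma~\ref{lemma:Schi12}, nor from any divisor-count absorption. In the paper it arises \emph{before} any character expansion, from stripping off the pairs $(\ell,m)$ with $(\ell m,q)>1$: one bounds
\[
\Bigl|\sum_{\substack{n\le x\\ n\equiv c\,(q)}}G(n)-\sum_{\substack{\ell+m\le x\\ \ell+m\equiv c\,(q)\\ (\ell m,q)=1}}\Lambda(\ell)\Lambda(m)\Bigr|\le 2\sum_{\ell\le x}\Lambda(\ell)\sum_{\substack{m\le x\\(m,q)>1}}\Lambda(m)\le \frac{d_{11}x(\log x)(\log q)}{\log 2}.
\]
Your orthogonality expansion $\mathbf 1_{\ell\equiv a}=\varphi(q)^{-1}\sum_\chi\overline\chi(a)\chi(\ell)$ only detects $\ell$ with $(\ell,q)=1$, so without this preliminary excision you are not expanding $G(n)$ but only its coprime part; the discrepancy is exactly this term, and your proposed derivation of it is incorrect.

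Second, your bound $|\kappa_q(c,\chi_1,\chi_2)|\le \varphi(q)^{-2}$ is false: $\kappa_q=\varphi(q)^{-2}\sum_{a:(a(c-a),q)=1}\overline{\chi_1}(a)\overline{\chi_2}(c-a)$ has up to $\varphi(q)$ summands, so only $|\kappa_q|\le \varphi(q)^{-1}$ holds in general, and repeating the Proposition~\ref{prop:Sxqab} estimate verbatim would then overshoot by a factor $\varphi(q)$. The paper instead keeps the $a$-sum inside the integral defining $R$, applies Cauchy--Schwarz \emph{in $a$}, and then uses orthogonality to get
\[
\sum_{\substack{a=1\\(a,q)=1}}^{q}\Bigl|\sum_{\chi}\overline\chi(a)W(\alpha,\chi)\Bigr|^{2}=\varphi(q)\sum_{\chi}|W(\alpha,\chi)|^{2},
\]
which yields the bound $\varphi(q)^{-1}\sum_\chi K(\chi)$ and hence exactly the $f_1,f_4,f_5$ lines claimed. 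This is the step you need to modify; everything else in your outline goes through as written.
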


\begin{proof}
We start similarly as in \cite{BHMS2019}. We have
\[
\left|\sum_{\substack{n\leq x\\ n\equiv c\pmod{q}}}G(n)-\sum_{\substack{l+m\leq x\\ l+m\equiv c\pmod{q}\\ (lm,q)=1}}\Lambda(m)\Lambda(l)\right|\leq 2\sum_{\substack{l+m\leq x\\ l+m\equiv c\pmod{q}\\ (m,q)>1}}\Lambda(m)\Lambda(l).
\]
Now
\[
2\sum_{\substack{l+m\leq x\\ l+m\equiv c\pmod{q}\\ (m,q)>1}}\Lambda(m)\Lambda(l)\leq \sum_{l\leq x}\Lambda(l)\sum_{\substack{m\leq x\\(m,q)>1}}\Lambda(m)\leq d_{11} x\sum_{\substack{m\leq x\\(m,q)>1}}\Lambda(m)\leq \frac{d_{11}x(\log x)(\log q)}{\log 2},
\]
where the last bound is based on the estimate (2.3) in \cite{BHMS2019}, and $d_{11}$ corresponds to \ref{eq:assumptionLambda}. Next we write the other sum similarly as in the original article
\[
\sum_{\substack{l+m\leq x\\l+m\equiv c\pmod{q}\\(lm,q)>1}}\Lambda(l)\Lambda(m)=\frac{1}{\varphi(q)^2}\sum_{\substack{a=1\\(a(c-a),q)=1}}^q\sum_{\chi_1,\chi_2\pmod{q}} \overline{\chi_1(a)\chi_2(c-a)}S(x;\chi_1,\chi_2).
\] 
We may now replace the term $S(x;\chi_1,\chi_2)$ with the expression coming from Lemma \ref{lemma:Schi12}. The first three terms in the right hand side of that lemma yield
\[
\leq \frac{1}{\varphi(q)^2}\sum_{\substack{a=1\\(a(c-a),q)=1}}^q\frac{(4c_{q,3}+3)x(\log x)(\log\log x)}{2(\log x_0)(\log\log x_0)}\leq \frac{(4c_{q,3}+3)x(\log x)(\log\log x)}{2(\log x_0)(\log\log x_0)\varphi(q)}.
\]
The last term yields 
\begin{multline*}
    \frac{x(\log x)(\log\log x)f_6(q,x_0)}{\varphi(q)^2}\sum_{\substack{a=1\\(a(c-a),q)=1}}^q\sum_{\chi_1,\chi_2\pmod{q}} \overline{\chi_1(a)\chi_2(c-a)}(\delta_0(\chi_1)+\delta_0(\chi_2))\\
    =\frac{x(\log x)(\log\log x)f_6(q,x_0)}{\varphi(q)^2}\sum_{\substack{a=1\\(a(c-a),q)=1}}^q \left(\sum_{\chi_1}\sum_{\chi_2} \chi_1(a)\chi_2(c-a)\delta_0(\chi_1)+\sum_{\chi_1}\chi_1(a)\right)\\
=\frac{x(\log x)(\log\log x)f_6(q,x_0)}{\varphi(q)^2}\sum_{\substack{a=1\\(a(c-a),q)=1}}^q \left(\sum_{\chi_1}\sum_{\chi_2} \chi_1(a)\chi_2(c-a)\delta_0(\chi_1)+
\begin{cases}
0 &\text{if } a \not\equiv 1 \pmod q \\
\varphi(q) &\text{if } a \equiv 1 \pmod q
\end{cases}\right)\\
=\frac{2x(\log x)(\log\log x)f_6(q,x_0)}{\varphi(q)}
\end{multline*}
by (6.5) in \cite{BHMS2019}. We may now move to the main terms arising from Lemma \ref{lemma:Schi12}. We have
\[
\frac{1}{\varphi(q)^2}\sum_{\substack{a=1\\(a(c-a),q)=1}}^q\sum_{\chi_1,\chi_2\pmod{q}} \overline{\chi_1(a)\chi_2(c-a)}\frac{\delta_0(\chi_1)\delta_0(\chi_2)}{2}x^2=\frac{\mathfrak{S}_q(c)x^2}{2}
\]
We may now move to the $R$ term in the same lemma. We must estimate
\begin{multline*}
\frac{1}{\varphi(q)^2}\sum_{\substack{a=1\\(a(c-a),q)=1}}^q\sum_{\chi_1,\chi_2\pmod{q}} \overline{\chi_1(a)\chi_2(c-a)}R(x;\chi_1,\chi_2)\\
=\frac{1}{\varphi(q)^2}\sum_{\substack{a=1\\(a(c-a),q)=1}}^q\sum_{\chi_1,\chi_2\pmod{q}}\overline{\chi_1(a)\chi_2(c-a)}\int_{0}^1 W(\alpha,\chi_1)W(\alpha,\chi_2)T(-\alpha)d\alpha\\ \leq \frac{1}{\varphi(q)^2}\int_0^1|T(\alpha)|\left|\sum_{\substack{a=1\\(a(c-a),q)=1}}^q\sum_{\chi_1\pmod{q}}\overline{\chi_1(a)}W(\alpha,\chi_1)\sum_{\chi_2\pmod{q}}\overline{\chi_2(c-a)}W(\alpha,\chi_2)\right| d\alpha.
\end{multline*}
Using Cauchy-Schwarz inequality, we obtain
\begin{multline*}
\left|\sum_{\substack{a=1\\(a(c-a),q)=1}}^q\sum_{\chi_1\pmod{q}}\overline{\chi_1(a)}W(\alpha,\chi_1)\sum_{\chi_2\pmod{q}}\overline{\chi_2(c-a)}W(\alpha,\chi_2)\right| \\ \leq \sqrt{\sum_{\substack{a=1\\(a(c-a),q)=1}}^q\left|\sum_{\chi_1\pmod{q}}\overline{\chi_1(a)}W(\alpha,\chi_1)\right|^2}\sqrt{\sum_{\substack{a=1\\(a(c-a),q)=1}}^q\left|\sum_{\chi_2\pmod{q}}\overline{\chi_2(c-a)}W(\alpha,\chi_2)\right|^2}.
\end{multline*}
Since
\[
\sum_{\substack{a=1\\(a(c-a),q)=1}}^q\left|\sum_{\chi_1\pmod{q}}\overline{\chi_1(a)}W(\alpha,\chi_1)\right|^2\leq \sum_{\substack{a=1\\(a,q)=1}}^q\left|\sum_{\chi_1\pmod{q}}\overline{\chi_1(a)}W(\alpha,\chi_1)\right|^2,
\]
and
\[
\sum_{\substack{a=1\\(a,q)=1}}^q\left|\sum_{\chi_1\pmod{q}}\overline{\chi_1(a)}W(\alpha,\chi_1)\right|^2\leq \varphi(q)\sum_{\chi_1\pmod{q}}|W(\alpha,\chi_1)|^2
\]
and a similar bound holds for the other term, by Lemma \ref{lemma:EstJ} we obtain the bound
\begin{multline*}
\frac{1}{\varphi(q)}\int_0^1|T(\alpha)|\sum_{\chi\pmod{q}}|W(\alpha,\chi)|^2d\alpha=\frac{1}{\varphi(q)}\sum_{\chi\pmod{q}}K(\chi) \leq f_1(q,x_0)(\log{x})^5x^{2B_q^*(x)}\\
        +\left(0.35x_0f_1(q,x_0)+\left(1.443+\frac{0.5x_0}{\log{x_0}}\right)\left(f_4(q,T_1,x_0)+\frac{f_5(q,T_1,x_0)}{\varphi(q)}\right)\right)(\log{x})^4x^{2B_q^*(x)}.
\end{multline*}
Finally we observe
\begin{multline*}
\frac{1}{\varphi(q)^2}\sum_{\substack{a=1\\(a(c-a),q)=1}}^q\sum_{\chi_1,\chi_2\pmod{q}} \overline{\chi_1(a)\chi_2(c-a)}(\delta_0(\chi_2)H(x,\chi_1)+\delta_0(\chi_1)H(x,\chi_2))\\ =\frac{1}{\varphi(q)^2}\sum_{\chi\pmod{q}}H(x,\chi)\sum_{\substack{a=1\\(a(c-a),q)=1}}^q(\overline{\chi(a)}+\overline{\chi(c-a)})=2\sum_{\chi\pmod{q}}H(x,\chi)\mathfrak{S}_q(c,\chi)\\=2\mathfrak{S}_q(c,\chi)\sum_{\chi\pmod{q}}\sum_{|\rho_{\chi}|<x}\frac{x^{\rho_{\chi}+1}}{\rho_{\chi}(\rho_{\chi}+1)}.
\end{multline*}
The proof is complete.
\end{proof}

Now we also need the following explicit version of Lemma 13 \cite{BHMS2019}.

\begin{lemma}\label{lem:versionlemma13}
For $x\geq 572$ and positive integers $c$ ja $q$, we have
\[
\left|\sum_{\substack{n\leq x\\ n\equiv c\pmod{q}}}J(n)-\frac{\mathfrak{S}_q(c)}{2}x^2\right|\leq \frac{27}{4}e^{C_0}x\log x+\frac{9C_2e^{C_0}}{2}x(\log x+1-\log(2)).
\]
\end{lemma}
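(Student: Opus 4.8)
The plan is to split $J(n) = nC_2\prod_{p\mid n,\,p>2}\frac{p-1}{p-2}$ into a "main" multiplicative piece plus a remainder, sum over $n\le x$ in the arithmetic progression $n\equiv c\pmod q$, and identify the main term with $\frac{\mathfrak{S}_q(c)}{2}x^2$. The natural first step is to expand the Euler-type product: writing $\frac{p-1}{p-2} = 1 + \frac{1}{p-2}$ for odd primes dividing $n$, one obtains $J(n) = nC_2\sum_{d\mid n,\, d \text{ odd}} h(d)$ for a suitable nonnegative multiplicative function $h$ supported on squarefree odd integers with $h(p) = \frac{1}{p-2}$. Then
\[
\sum_{\substack{n\le x\\ n\equiv c\,(q)}} J(n) = C_2\sum_{\substack{d\le x\\ d \text{ odd}}} h(d)\sum_{\substack{n\le x,\ d\mid n\\ n\equiv c\,(q)}} n.
\]
The inner sum over $n$ divisible by $d$ and lying in a fixed residue class mod $q$ is, up to an error $O(x)$ coming from the fractional part, equal to $\frac{x^2}{2dq}$ times the indicator that the congruence conditions are compatible (i.e. $\gcd(d,q)\mid$ something, and then there is a unique class mod $\mathrm{lcm}(d,q)$). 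Carrying out the main term carefully and using multiplicativity to recombine the sum over all odd $d$ (not just $d\le x$) should reproduce exactly $\frac{\mathfrak{S}_q(c)}{2}x^2$; the definitions \eqref{def:Gq} and \eqref{def:Gqc} of the singular series are set up precisely so that this identification works, so I would follow the computation in Lemma 13 of \cite{BHMS2019} and simply track constants.

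The error terms are then of two kinds. The first is the fractional-part error: replacing the inner sum $\sum n$ by $\frac{x^2}{2dq}$ costs at most roughly $\frac{x}{d}$ per value of $d$ (more precisely $\frac{x}{d}+O(1)$, but the $O(1)$ is absorbed since there are at most $x/d$ relevant $n$), so summing against $C_2 h(d)$ over odd $d\le x$ gives a bound of the shape $C_2 x\sum_{d\le x, \text{ odd}} \frac{h(d)}{d}$. Since $\sum_{d\le x,\text{ odd}}\frac{h(d)}{d}\le \prod_{2<p\le x}\bigl(1+\frac{1}{p(p-2)}\bigr)$ converges, but to get the explicit constants in the statement one uses the sharper bound $\sum_{d\le x,\text{ odd}}\frac{h(d)}{d}\le \prod_{2<p\le x}\frac{p-1}{p-2}$ and then invokes Lemma \ref{lemma:prodpp2}, which gives $\prod_{2<p\le x}\frac{p}{p-2}\le \frac{e^{2C_0}}{4C_2}(\log x)^2(1+\tfrac{1}{2(\log x)^2})^2$; combined with $C_2$ this produces the $\frac{27}{4}e^{C_0}x\log x$-type term after bounding the correction factor for $x\ge 572$. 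The second kind of error is the tail: extending the sum over $d$ from $d\le x$ to all odd $d$ introduces $C_2 x^2/(2q)\sum_{d>x,\text{ odd}} h(d)/d$, and since $h(d)/d$ decays fast (it is supported on squarefree $d$ with $h(d)\ll d^{\varepsilon}$, so $h(d)/d^2$ is summable and the tail past $x$ is $O(x^{-1+\varepsilon})$), this contributes only an $O(x^{1+\varepsilon})$ term, comfortably absorbed into the stated bound; here I would again follow \cite{BHMS2019} and use their estimate (3.29) of \cite{RS1962} or the argument behind Lemma \ref{lemma:prodpp2} to make the tail explicit.

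The main obstacle I anticipate is purely bookkeeping: getting the precise numerical constants $\frac{27}{4}e^{C_0}$ and $\frac{9C_2 e^{C_0}}{2}$ and verifying that the threshold $x\ge 572$ suffices for all the "$\le$" estimates (in particular that the correction factor $(1+\tfrac{1}{2(\log x)^2})^2$ and the tail are small enough to be swallowed at $x=572$). There is also a small technical point in the main-term identification: the product over $p\mid q$ with $p\nmid q^* c$ in \eqref{def:Gq} must emerge naturally when one restricts the recombined multiplicative sum to $d$ coprime to $q$ versus $d$ sharing factors with $q$ — this is exactly where the $\prod_{p\mid q,\,p\nmid q^* c}\frac{p-2}{p-1}$ comes from, and one must be careful about the interplay with the condition $(lm,q)=1$ implicit in the setup. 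I would handle this by mirroring Lemma 12 and Lemma 13 of \cite{BHMS2019} step by step, using the corrected singular-series normalization noted in the Remark above, and only deviate to insert explicit constants via Lemma \ref{lemma:prodpp2}.
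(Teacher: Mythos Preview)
Your approach is essentially the paper's: expand $J(n)$ multiplicatively via $h(d)=\mu(d)^2/\varphi_2(d)$, swap sums, identify the main term $\tfrac{\mathfrak{S}_q(c)}{2}x^2$ (following Lemma~13 of \cite{BHMS2019}), bound the secondary terms using Lemma~\ref{lemma:prodpp2} combined with the Mertens bound (3.29) of \cite{RS1962}, and control the tail $d>x/2$ by partial summation against $\sum_{d\le u}\mu(d)^2/\varphi_2(d)\ll \log u$. One bookkeeping slip to fix: the error in replacing $\sum_{n\le x,\,d\mid n,\,n\equiv c}n$ by $x^2/(2dq_1)$ is $O(x)$ per $d$, not $O(x/d)$ (writing $n=dm$ reinstates a factor of $d$), so the sum you must control is $\sum_{d\le x}h(d)=\prod_{2<p\le x}\tfrac{p-1}{p-2}\asymp\log x$, not the convergent $\sum h(d)/d$ --- this is precisely what yields the $x\log x$ shape you already anticipated.
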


\begin{proof}
The proof is similar in the case with $q$ even and $c$ odd.

Similarly to the original proof, let us now consider the case with $\mathrm{gcd}(2,q)\mid c$, and start by writing
\[
\sum_{\substack{n\leq x\\ n\equiv c\pmod{q})}}J(n)=2C_2\sum_{\substack{d\leq x\\ d\, \textrm{odd}}}\frac{\mu(d)^2d}{\varphi_2(d)}\sum_{\substack{2dn\leq x\\ 2dn\equiv c\pmod{q}}} n=2C_2\sum_{\substack{d\leq x/2\\ d\, \textrm{odd}\\(d,q)\mid c}}\frac{\mu(d)^2d}{\varphi_2(d)}\sum_{\substack{n\leq x/2d\\ n\equiv c_1\pmod{q_1}}} n,
\]
where $c_1$ is a solution modulo $q_1=\frac{q}{(2d,q)}$ to the equation $2dn\equiv c$ if $(2d,q)\mid c$. Now we need a more precise bound for the inner sum as in the original paper. Write first $n=hq_1+c_1$. Now
\[
\sum_{\substack{n\leq x/2d\\ n\equiv c_1\pmod{q_1}}} n=\sum_{\substack{hq_1+c_1\leq x/2d}} (hq_1+c_1)=\frac{xc_1}{2dq_1}
+\frac{x^2}{8d^2q_1}+\frac{x}{4d}+\theta \frac{x}{2d},
\]
where $-1\leq \theta \leq 0$. Hence,
\begin{equation}\label{mu-sumsplit}
2C_2\sum_{\substack{d\leq x/2\\ d\, \textrm{odd}\\(d,q)\mid c}}\frac{\mu(d)^2d}{\varphi_2(d)}\sum_{\substack{n\leq x/2d\\ n\equiv c_1\pmod{q_1}}} n=\frac{C_2x^2}{4q}\sum_{\substack{d\leq x/2\\ d\, \textrm{odd}\\(d,q)\mid c}}\frac{\mu(d)^2(2d,q)}{d\varphi_2(d)}+2C_2\sum_{\substack{d\leq x/2\\ d\, \textrm{odd}\\(d,q)\mid c}}\frac{\mu(d)^2d}{\varphi_2(d)}\left(\theta \frac{x}{2d}+\frac{xc_1}{2dq_1}+\frac{x}{4d}\right).
\end{equation}
Similarly as in the proof of the original, and by \cite{RS1962} and Lemma \ref{lemma:prodpp2}, we have
\[
2C_2\sum_{\substack{d\leq x/2\\ d\, \textrm{odd}\\(d,q)\mid c}}\frac{\mu(d)^2}{\varphi_2(d)}\leq 2C_2\frac{2e^{2C_0}(\log x)^2\left(1+\frac{1}{2(\log x)^2}\right)^2}{4C_2e^{C_0}\log x\left(1-\frac{1}{2(\log x)^2}\right)^2}=e^{C_0}(\log x)\frac{\left(1+\frac{1}{2(\log x)^2}\right)^2}{\left(1-\frac{1}{2(\log x)^2}\right)^2}.
\]
 Thus, the absolute value of the latter term in \eqref{mu-sumsplit} is at most $\frac{27}{4}e^{C_0}x\log x$. Now we wish to extend the first sum in \eqref{mu-sumsplit}, so we estimate the remainder similarly as in the original proof:
\begin{multline*}
\frac{C_2x^2}{4q}\sum_{\substack{d>x/2\\ d\, \textrm{odd}\\(d,q)\mid c}}\frac{\mu(d)^2(2d,q)}{d\varphi_2(d)}<\frac{C_2x^2}{4}\sum_{\substack{d>x/2\\ d\, \textrm{odd}}}\frac{\mu(d)^2}{d\varphi_2(d)}\leq \frac{C_2x^2}{4}\int_{x/2}^{\infty}\left(\sum_{\substack{d\leq u\\ d\, \textrm{odd}}}\frac{\mu(d)^2}{\varphi_2(d)}\right)\frac{du}{u^2}\\ \leq \frac{C_2x^2}{4}\int_{x/2}^{\infty}9e^{C_0}\frac{\log u}{u^2}du= \frac{9C_2e^{C_0}}{2}x(\log x+1-\log(2)).
\end{multline*}
To get the main term to be $\frac{\mathfrak{S}_q(c)}{2}x^2$, one may use precisely the same manipulation as in the original proof. Hence, putting everything together, yields the lemma.
\end{proof}

Let us now move to the proof of Proposition \ref{prop:JG}. 
\begin{proof}[Proof of Proposition \ref{prop:JG}]
As in the proof of \cite[Theorem 3]{BHMS2019}, we start with bounding
\begin{multline*}
\sum_{\chi \pmod{q}}\mathfrak{S}_q(c,\chi)\sum_{|\rho_{\chi}|\leq x}\frac{x^{\rho_{\chi}+1}}{\rho_{\chi}(\rho_{\chi}+1)}\leq \frac{x^{B_q^{*}+1}}{\varphi(q)}\sum_{\chi\pmod{q}}\frac{1}{\varphi(q^{*})}\sum_{|\rho_{\chi}|\leq x}\frac{1}{|\rho_{\chi}(\rho_{\chi}+1)|}\\ \leq \frac{d_8(q)x^{B_q^{*}+1}}{\varphi(q)}\sum_{\chi\pmod{q}}\frac{1}{\varphi(q^{*})}\leq \frac{d_8(q)x^{B_q^{*}+1}d(q)}{\varphi(q)},
\end{multline*}
where $d(q)$ is the number of positive divisors of $q$.

Hence, by Theorem \ref{thm:versiothm4}, we have
\begin{multline*}
\left|\sum_{\substack{n\leq x\\ n\equiv c\pmod{q}}}G(n)-\frac{\mathfrak{S}_q(c)}{2}x^2\right|\leq 2\frac{d_8(q)x^{B_q^{*}+1}d(q)}{\varphi(q)}+\frac{d_{11}x(\log x)(\log q)}{\log 2}\\
+\frac{(4c_{q,3}+3)x(\log x)(\log\log x)}{2(\log x_0)(\log\log x_0)\varphi(q)}+\frac{2x(\log x)(\log\log x)f_6(q,x_0)}{\varphi(q)}+f_1(q,x_0)(\log{x})^5x^{2B_q^*(x)}\\
        +\left(0.35x_0f_1(q,x_0)+\left(1.443+\frac{0.5x_0}{\log{x_0}}\right)\left(f_4(q,T_1,x_0)+\frac{f_5(q,T_1,x_0)}{\varphi(q)}\right)\right)(\log{x})^4x^{2B_q^*(x)}
\end{multline*}

Finally, the proof of Proposition \ref{prop:JG} follows by using Lemma \ref{lem:versionlemma13} and putting everything together.
\end{proof}

\section{Explicit versions of all assumptions except \ref{assumption:psiuchi}}
\label{sec:ExplicitAssumptions}
In this section, we formulate the required explicit estimates to prove
the  main results in Section \ref{sec:resultsExplicit}. Essentially, we 
provide explicit versions of the assumptions described in Appendix 
\ref{appendix:assumptions} except for  \ref{assumption:psiuchi}.
This estimate is given in Section \ref{sec:explicitA5}. In particular in
Sections \ref{subsec:EcplicitNumbers} and \ref{sec:sumsoverzeros} we 
present some of the results in more general settings than is required 
here. The reason is that the more general results follow easily using
similar methods that we apply to prove the cases that are essential for 
our main results.

\subsection{On zero-free regions and exceptional zeros}
In this section, we describe useful known results about zero-free regions and possible Siegel zeros, and derive explicit estimates for the assumptions \ref{eq:ZeroFree}---\ref{eq:assumptionLogDer}.

The following three results tell us that for $q\leq 4\cdot10^5$ and $L$-functions associated to principal characters, the Generalized Riemann Hypothesis holds up to a certain height.
\begin{lemma}
\label{lemma:GRHHolds}
\cite[Theorem 10.1]{P2016}
If $L$ is a Dirichlet $L$-function associated with a primitive character modulo $q\leq 4\cdot10^5$, then GRH holds up to height 
$$
\max\left\{\frac{10^8}{q}, \frac{7.5\cdot10^7}{q}+200\right\}
$$
for even $q$ and up to height
$$
\max\left\{\frac{10^8}{q}, \frac{3.75\cdot10^7}{q}+200\right\}
$$
for odd $q$.
\end{lemma}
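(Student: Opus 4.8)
The plan is to invoke the result directly: the statement is, word for word, \cite[Theorem 10.1]{P2016}, so there is nothing new for us to prove here, and its sole purpose in this section is to supply, for every primitive character of conductor $q\le 4\cdot 10^{5}$, an explicit height below which the associated $L$-function may be treated as satisfying the GRH (in particular this applies to $\zeta(s)$, the case $q=1$, hence to all principal characters, whose non-trivial zeros coincide with those of $\zeta$). For completeness I would recall the structure of Platt's argument. First one fixes a rigorous numerical scheme for evaluating $L(1/2+it,\chi)$ on the critical line, based on the approximate functional equation together with certified truncation bounds and interval (ball) arithmetic, so that the sign of a suitable real-valued function $Z(t,\chi)$ built from $L(1/2+it,\chi)$ is provably correct at each sampled point $t$.

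Next, counting the sign changes of $Z(t,\chi)$ on $0<t\le H(q)$ yields a rigorous lower bound for the number of zeros of $L(s,\chi)$ on the segment $\Re(s)=1/2$, $0<\Im(s)\le H(q)$, while Turing's method — bounding $\int S(t,\chi)\,dt$ over a short interval beyond $H(q)$ via the argument principle and explicit bounds for $L$ on nearby vertical lines — yields a rigorous upper bound for the total number of non-trivial zeros with $0<\Im(s)\le H(q)$, counted with multiplicity. When the lower and upper counts agree, every non-trivial zero up to height $H(q)$ is forced onto the critical line, and each such zero is simple; this is exactly the GRH up to height $H(q)$ for that character. Running the procedure over all primitive characters of conductor $q\le 4\cdot 10^{5}$ produces the stated heights, and the appearance of $\max$ together with the two distinct expressions $\max\{10^{8}/q,\,7.5\cdot 10^{7}/q+200\}$ (even $q$) and $\max\{10^{8}/q,\,3.75\cdot 10^{7}/q+200\}$ (odd $q$) simply records the different zero densities, hence the different attainable heights, for even versus odd conductors, as well as the two regimes $q$ small (where $10^{8}/q$ dominates) and $q$ close to $4\cdot 10^{5}$.

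The hard part is computational rather than mathematical, and it is already carried out in \cite{P2016}: the entire verification must be performed rigorously — certified errors throughout the approximate functional equation, interval arithmetic for all evaluations, and a correct implementation of Turing's method — across the full family of characters of conductor at most $4\cdot 10^{5}$. We therefore do nothing beyond quoting the conclusion.
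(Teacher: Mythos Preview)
Your proposal is correct and matches the paper's treatment exactly: Lemma~\ref{lemma:GRHHolds} is stated in the paper purely as a citation of \cite[Theorem~10.1]{P2016} with no proof given, so there is nothing to prove beyond quoting the result. Your optional sketch of Platt's rigorous computation (interval arithmetic, sign changes of $Z(t,\chi)$, and Turing's method) is accurate and goes beyond what the paper itself provides, but is not required.
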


\begin{lemma}
\label{lemma:RH}
\cite[Theorem 1]{PlattTrudgian2021}
    The Riemann Hypothesis is true up to height $3\cdot 10^{12}$.
\end{lemma}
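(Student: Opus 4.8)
The plan is to treat this as what it is — a fully rigorous numerical verification — and sketch the scheme by which one establishes that every zero $\rho$ of $\zeta(s)$ with $0<\Im(\rho)\le 3\cdot 10^{12}$ satisfies $\Re(\rho)=1/2$. The core idea is a counting argument: one produces a \emph{lower} bound for the number of zeros lying \emph{on} the critical line up to height $T=3\cdot 10^{12}$ by locating sign changes of a real-valued proxy for $\zeta(1/2+it)$; one independently produces the \emph{exact} number $N(T)$ of zeros of $\zeta$ in the critical strip up to that height; and one checks that the two quantities agree. Since every critical-line zero contributes to the first count and every strip zero (with multiplicity) to the second, agreement forces all zeros below height $T$ to be simple and to lie on the line.

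First I would work with the Hardy $Z$-function $Z(t)=e^{i\vartheta(t)}\zeta(1/2+it)$, which is real-valued and whose real zeros coincide, with multiplicity, with the ordinates of the zeros of $\zeta$ on the critical line. Using the Riemann–Siegel formula together with an explicit, rigorously validated bound on its remainder term, one evaluates $Z(t)$ in interval (ball) arithmetic on a mesh fine enough to isolate each sign change; the number of sign changes found is then a guaranteed lower bound for the number of critical-line zeros up to $T$.

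Next I would pin down the total count $N(T)$ in the strip. By the Riemann–von Mangoldt formula, $N(T)=\tfrac{T}{2\pi}\log\tfrac{T}{2\pi}-\tfrac{T}{2\pi}+\tfrac{7}{8}+S(T)+O(1/T)$ with $S(T)=\pi^{-1}\arg\zeta(1/2+iT)$, so the only obstruction to an exact value is control of $S(T)$. Here one invokes Turing's method: averaging the explicit form of the bound $\int_{t_1}^{t_2}S(t)\,dt=O(\log t_2)$ over a short interval forces $N(t)$ to equal the obvious rounded main-term value at suitably chosen heights, and one transports this exact count to $T=3\cdot 10^{12}$. Matching $N(T)$ against the sign-change count — handling the finitely many local violations of Rosser's rule / Gram-point heuristics by further local applications of Turing's method — completes the verification.

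The hard part is not any single inequality but making the whole computation simultaneously rigorous and feasible at this scale: there are roughly $1.3\cdot 10^{13}$ zeros below height $3\cdot 10^{12}$, so naive pointwise evaluation of $Z$ is hopeless. The decisive ingredient of \cite{PlattTrudgian2021} is an efficient multi-evaluation scheme — a band-limited, Gaussian-smoothed representation of $\zeta$ combined with Poisson summation and the FFT, in the spirit of Odlyzko–Schönhage and of Platt's earlier verifications — carried out end to end in validated interval arithmetic so that every intermediate rounding error is tracked. Assembling these pieces, together with the explicit Turing-method estimates, yields the stated height $3\cdot 10^{12}$.
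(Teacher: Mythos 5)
This lemma is not proved in the paper at all; it is stated purely as a citation to Platt and Trudgian (2021), so there is no internal proof to compare against. Your sketch is an accurate high-level account of how that numerical verification is actually carried out — Hardy's $Z$-function with a rigorously bounded Riemann--Siegel remainder, sign-change counting on the critical line, Turing's method to pin down $N(T)$ via explicit bounds on $\int S(t)\,dt$, and an Odlyzko--Sch\"onhage--style band-limited multi-evaluation in interval arithmetic to make the computation feasible and validated — and it correctly identifies the real difficulty (scale plus full rigor) rather than any single analytic estimate. The only caveat worth noting is that \cite{PlattTrudgian2021} is largely a certification and consolidation of earlier large-scale computations (notably Platt's), so the ``decisive ingredient'' you attribute to that paper is more precisely the accumulated computational infrastructure it draws on and verifies; but as a description of the underlying mathematics your account is sound and matches what the cited theorem rests on.
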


Note that if $\chi$ is a non-principal, imprimitive character modulo $q$, then there exists a Dirichlet $L$-function $L(s,\chi^*)$, where $\chi^*$ is a primitive character modulo $q^* \leq q$ that induces $\chi$. The functions $L(s,\chi)$ and $L(s,\chi^*)$ share the same non-trivial zeros. Moreover, we also recall that if $\chi$ is a principal character modulo $q$, then the non-trivial zeros of the function $L(s,\chi)$ are the same as the non-trivial zeros of the Riemann zeta function. Hence, the next corollary follows easily from Lemmas \ref{lemma:GRHHolds} and \ref{lemma:RH}.

\begin{corollary}
\label{corollary:GRH}
If $L(s,\chi)$ is a Dirichlet $L$-function associated with a character modulo $q\leq 4\cdot10^5$, then GRH holds up to height $293.75$. Moreover, if $L(s,\chi)$ is associated with a principal character modulo $q$, then RH holds up to height $3\cdot 10^{12}$.
\end{corollary}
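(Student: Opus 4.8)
The plan is to combine the two preceding results, Lemma \ref{lemma:GRHHolds} (Platt's verification of GRH for primitive characters of small modulus) and Lemma \ref{lemma:RH} (the Platt--Trudgian verification of RH to height $3\cdot10^{12}$), using the standard reduction of zeros of $L(s,\chi)$ to zeros of the inducing primitive character's $L$-function. First I would recall the two facts stated in the paragraph preceding the corollary: if $\chi \pmod q$ is non-principal but imprimitive, then it is induced by some primitive $\chi^* \pmod{q^*}$ with $q^* \mid q$ and $q^* \leq q \leq 4\cdot 10^5$, and $L(s,\chi)$ and $L(s,\chi^*)$ have exactly the same non-trivial zeros (they differ only by finitely many Euler factors $\prod_{p\mid q}(1-\chi^*(p)p^{-s})$, which vanish only on the line $\Re s = 0$); whereas if $\chi$ is principal mod $q$, then $L(s,\chi) = \zeta(s)\prod_{p\mid q}(1-p^{-s})$, so its non-trivial zeros coincide with those of $\zeta(s)$.

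The main body of the argument is then a short case analysis. For the first assertion, take any character $\chi \pmod q$ with $q \leq 4\cdot 10^5$. If $\chi$ is principal, its non-trivial zeros are those of $\zeta(s)$, and by Lemma \ref{lemma:RH} these lie on $\Re s = 1/2$ up to height $3\cdot10^{12}$, which certainly exceeds $293.75$. If $\chi$ is primitive, Lemma \ref{lemma:GRHHolds} applies directly: the height guaranteed is at least $\max\{10^8/q,\ 3.75\cdot10^7/q + 200\}$, and since $q \leq 4\cdot 10^5$ this is at least $3.75\cdot10^7/(4\cdot10^5) + 200 = 93.75 + 200 = 293.75$ (the even case gives a larger bound, so the odd case is the worst). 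If $\chi$ is imprimitive and non-principal, its zeros are those of $L(s,\chi^*)$ for a primitive $\chi^*$ of conductor $q^* \leq 4\cdot10^5$, so the previous case again yields GRH up to height $293.75$. In every case the bound $293.75$ holds, establishing the first claim; the second claim is simply the restatement of Lemma \ref{lemma:RH} via the identification of the non-trivial zeros of $L(s,\chi_0)$ with those of $\zeta(s)$.

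There is essentially no obstacle here: the corollary is a bookkeeping consequence of the two cited verifications together with the classical primitive-reduction of Dirichlet $L$-function zeros. The only point requiring a moment's care is to confirm that the minimum over $q \leq 4\cdot 10^5$ of the height bounds in Lemma \ref{lemma:GRHHolds} is indeed attained at $q = 4\cdot 10^5$ in the odd case (giving exactly $293.75$), and to note that the extra Euler factors removed or added when passing between $\chi$, $\chi^*$, and $\zeta$ contribute no zeros in the region $0 < \Re s < 1$, so the heights of GRH/RH verification transfer unchanged. This is why I would spell out explicitly, as the excerpt does, that imprimitive non-principal $\chi$ reduces to a primitive $\chi^*$ with $q^* \leq q$, and principal $\chi$ reduces to $\zeta$.
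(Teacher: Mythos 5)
Your proof is correct and follows exactly the route the paper indicates in the paragraph preceding the corollary: reduce to primitive characters (or to $\zeta$ in the principal case) and then read off the heights from Lemmas \ref{lemma:GRHHolds} and \ref{lemma:RH}, with the minimum $293.75$ occurring for odd $q=4\cdot10^5$. No substantive difference.
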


However, the previous results do not say anything about the cases $q>4\cdot10^5$
or when the imaginary parts are large enough and the next lemmas describe what 
we do know in those cases.

\begin{lemma}\cite[Theorem 1.2]{Bellotti2024}
\label{lem:BellottiZeros}
    There are no zeros of $\zeta(\sigma+it)$ for $|t| \geq 3$ and 
    \begin{equation*}
        \sigma \geq 1-\frac{1}{53.989 (\log{|t|})^{2/3}(\log{\log{|t|}})^{1/3}}.
    \end{equation*}
\end{lemma}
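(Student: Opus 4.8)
This lemma is quoted verbatim from \cite[Theorem 1.2]{Bellotti2024}, so no independent argument is needed here; we only indicate the shape of the proof in that reference, since it is the source of the constant $53.989$ that propagates through \eqref{eq:defBq} and the numerical theorems. The strategy is an explicit incarnation of the Vinogradov--Korobov method. The first ingredient is an explicit upper bound for $|\zeta(\sigma+it)|$ in the range where $\sigma$ is close to $1$ and $|t|$ is large, of the form
\begin{equation*}
    |\zeta(\sigma+it)| \leq C\,|t|^{B(1-\sigma)^{3/2}}(\log|t|)^{2/3}
\end{equation*}
for suitable explicit $B,C$. This in turn rests on explicit estimates for the exponential sums $\sum_{N<n\leq 2N} n^{-it}$, obtained from an explicit form of Vinogradov's mean value theorem (now available through efficient congruencing or $\ell^2$-decoupling) combined with the van der Corput process; carrying the constants through these steps without loss is the technically heaviest part.

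Given such a growth bound, the zero-free region follows from the classical three-term trick. One applies the inequality $3+4\cos\theta+\cos 2\theta\geq 0$ to $\log\bigl|\zeta(\sigma)^3\zeta(\sigma+it)^4\zeta(\sigma+2it)\bigr|$, bounds $\log\zeta(\sigma)$ from above using the simple pole at $s=1$ (so $\log\zeta(\sigma)\leq\log\tfrac{1}{\sigma-1}+O(1)$ as $\sigma\to 1^+$), and controls $\log|\zeta(\sigma+it)|$ and $\log|\zeta(\sigma+2it)|$ via the growth bound above. If $\beta+i\gamma$ were a nontrivial zero with $\beta$ very close to $1$, one inserts the term $\Re\frac{1}{\sigma-\beta-i\gamma}$ coming from the standard partial-fraction expansion of $-\zeta'/\zeta$, and the resulting combination of inequalities becomes contradictory unless $1-\beta \gg (\log|\gamma|)^{-2/3}(\log\log|\gamma|)^{-1/3}$, which is exactly the claimed shape.

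The final step is the numerical optimization of the free parameters — the distance $\sigma-1$ of the comparison point to the line, the precise exponents in the exponential-sum estimates, and the threshold on $|t|$ — so as to minimize the implied constant, which in \cite{Bellotti2024} yields the value $53.989$ valid for all $|t|\geq 3$. The main obstacle, and the reason this is genuine recent work rather than a routine computation, is the need to keep the explicit exponential-sum constants sharp \emph{and} to propagate them through the zero-free region derivation without accumulating slack; inefficiencies at either stage enlarge the final constant substantially. For our purposes we simply take the stated bound as given.
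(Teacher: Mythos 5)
The paper offers no proof of this lemma; it is quoted directly from Bellotti's Theorem~1.2, exactly as you note, so your observation that nothing beyond the citation is required matches what the paper does. Your outline of the Vinogradov--Korobov machinery in the cited source (explicit exponential-sum estimates feeding a growth bound of the shape $|\zeta(\sigma+it)|\le C|t|^{B(1-\sigma)^{3/2}}(\log|t|)^{2/3}$, then a trigonometric-inequality argument with $-\Re(\zeta'/\zeta)$ and the partial-fraction expansion near a hypothetical zero, followed by optimization of the free parameters) is a fair high-level summary and consistent with how that result is actually obtained.
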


\begin{lemma}
\cite[Theorem 1.1]{K2024}
\label{lemma:zeroFreeAll}
Let $q \geq 3$ be a positive integer, and let $\chi \pmod q$ be a Dirichlet character. The Dirichlet $L$-function $L(s,\chi)$ does not vanish in the region
\begin{equation*}
    \Re(s) \geq 1-\frac{1}{10.5\log q+61.5(\log {|\Im(s)|})^{2/3}\left(\log\log{|\Im(s)|}\right)^{1/3}}, \quad |\Im(s)| \geq 10.
\end{equation*}
\end{lemma}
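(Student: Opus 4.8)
The statement is quoted verbatim from \cite[Theorem 1.1]{K2024}, so the honest ``proof'' is a one-line pointer to that reference, and that is what I would write: \emph{This is \cite[Theorem 1.1]{K2024}.} The only thing to verify is that the two free constants have been transcribed correctly into the Vinogradov--Korobov-type inequality \eqref{eq:VinogradovZeroFree}: one reads off $c_{q,1}=10.5$ (the coefficient of $\log q$) and $c=61.5$ (the coefficient of $(\log|t|)^{2/3}(\log\log|t|)^{1/3}$), and checks that the range of validity in Khale's theorem, $|\Im(s)|\ge 10$, is exactly the range asserted in the lemma. No rescaling, and no combination with the Riemann-zeta zero-free region of Lemma \ref{lem:BellottiZeros} or with the Platt/Platt--Trudgian GRH verifications, is required here.

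If one instead wanted a self-contained derivation rather than a citation, the route would be the classical one adapted to Dirichlet $L$-functions. First one bounds $|L(\sigma+it,\chi)|$ for $\sigma$ near $1$ by means of a Vinogradov-type estimate for the exponential sums $\sum_{n} \chi(n) n^{-it}$, which is precisely where the conductor $q$ enters: the modulus of the character contributes a term proportional to $\log q$ in the final region. This bound is then fed, together with the explicit Hadamard factorisation of $L(s,\chi)$, into a Borel--Carath\'eodory plus three-circles argument to produce an inequality that, upon optimisation of the free parameters, isolates a zero-free region of the shape $\sigma>1-1/\bigl(c_1\log q+c_2(\log|t|)^{2/3}(\log\log|t|)^{1/3}\bigr)$.

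The main obstacle in such a reproof — and, I suspect, the reason the authors prefer to cite \cite{K2024} rather than reconstruct it — is the simultaneous control of all implied constants: a naive bookkeeping through the exponential-sum input and the factorisation produces values much larger than $10.5$ and $61.5$, and getting numerically clean constants demands the careful joint optimisation carried out in that paper. Since only the stated inequality is used in what follows, I would keep the proof to the one-line citation and move on.
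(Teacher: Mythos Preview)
Your proposal is correct and matches the paper's approach exactly: the lemma is stated with the citation \cite[Theorem 1.1]{K2024} in its header and is given no proof in the paper, so the one-line pointer you suggest is precisely what the authors do. Your additional discussion of how one would reprove the result is informative but unnecessary here.
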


\begin{lemma}
\cite[Theorem 1]{M1984}
\label{lemma:zeroFreeLarge}
Let $q$ be a positive integer. Let $L_q(s)$ be the product of the $\varphi(q)$ Dirichlet $L$-functions formed with characters modulo $q$. The function $L_q(s)$ has at most one single zero in the region
\begin{equation}
\label{eq:setMCCuurleryFree}
    \Re(s) \geq 1-\frac{1}{9.646\log\left(\max\{10,q,q|\Im(s)|\}\right)}.
\end{equation}
The possible zero must be simple and real and it must arise from an $L$-function formed with a real non-principal character modulo $q$.
\end{lemma}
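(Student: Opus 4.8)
Since Lemma~\ref{lemma:zeroFreeLarge} is quoted verbatim from \cite[Theorem 1]{M1984}, strictly speaking nothing needs to be proved here beyond recording the reference; what follows is instead a sketch of how one would establish such a statement, i.e.\ an outline of the argument behind \cite{M1984}.

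The proof is an explicit incarnation of the classical Landau--Page zero-free-region method applied to $L_q(s)=\prod_{\chi\bmod q}L(s,\chi)$, which has a simple pole at $s=1$ coming from the principal character and whose zeros are the union, with multiplicity, of the non-trivial zeros of the individual $L(s,\chi)$. The engine is the non-negativity of $3+4\cos\theta+\cos2\theta$: applied to a suitable combination of $-\Re\frac{L'}{L}(\sigma,\chi_0)$, $-\Re\frac{L'}{L}(\sigma+it,\chi)$ and $-\Re\frac{L'}{L}(\sigma+2it,\chi^2)$ with $\sigma$ slightly larger than $1$, together with the Hadamard / partial-fraction expansion
\[
-\Re\frac{L'}{L}(s,\chi)=\Re\frac{\delta_0(\chi)}{s-1}-\sum_{\rho_\chi}\Re\frac{1}{s-\rho_\chi}+(\text{explicit lower-order terms}),
\]
this forces a positive lower bound on the distance from any zero $\rho=\beta+i\gamma$ of some $L(s,\chi)$ to the line $\Re(s)=1$, in terms of $\log\bigl(\max\{10,q,q|\gamma|\}\bigr)$.

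The steps I would carry out are: (i) record explicit upper bounds for $-\Re\frac{L'}{L}(\sigma+it,\chi)$ for $\sigma$ close to $1$, obtained from an explicit Hadamard product, Stirling's formula for the gamma factors, and estimates depending on $\log q$ --- this is where the conductor enters; (ii) in the $3$-$4$-$1$ inequality discard every non-negative term $\Re\frac{1}{s-\rho_\chi}$ except the single one attached to the putative zero $\rho$ (keeping the pole only on the $\cos 2t$ line), reducing everything to a numerical inequality between $\beta$ and $\gamma$; (iii) optimise the free parameter $\sigma-1$ to extract the region \eqref{eq:setMCCuurleryFree} with constant $9.646$ and cutoff $\max\{10,q,q|\Im(s)|\}$; (iv) deduce from the same inequality that at most one zero can be this close to the line, that a complex zero (whose conjugate would be a second zero) or a multiple zero would overload the $4\cos$ term and is impossible, and that, since a complex character $\chi$ contributes a conjugate pair of zeros, the surviving exceptional zero must arise from a real, necessarily non-principal, character.

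The real obstacle is not the trigonometric identity but the explicit, $q$-uniform bookkeeping: one needs explicit convexity (Phragm\'en--Lindel\"of) bounds for $L(s,\chi)$ on vertical lines to control $\log L_q$, an effective Riemann--von Mangoldt formula to bound the number of zeros being discarded, and careful tracking of the gamma-factor and $\log q$ contributions. Balancing the losses from all of these against the gain from $3+4\cos\theta+\cos2\theta$ is precisely what pins down the constant $9.646$; all of this is carried out in \cite{M1984}.
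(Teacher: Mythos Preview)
Your assessment is correct: the paper does not prove this lemma at all, it simply quotes it from \cite[Theorem 1]{M1984} and moves on. Your opening sentence already handles the statement exactly as the paper does, and the additional sketch of the Landau--Page $3$-$4$-$1$ argument is accurate background but strictly supererogatory here.
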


We call the possible zero that is in the set \eqref{eq:setMCCuurleryFree} a \textit{Siegel zero}. According to the previous results, there may be a Siegel zero when $\chi$ is a real non-principal character. The following lemma describes the bounds known for the Siegel zero.

\begin{lemma}
(\cite[Theorem 1.3]{B2019} and \cite[Theorem 1.3]{B2020})
\label{lemma:SiegelBounds}
Assume that $\chi$ is a non-principal real character modulo $q$. If $q>4\cdot10^5$, then
\begin{equation*}
    \beta_1 \leq 
    \begin{cases}
        1-\frac{800}{\sqrt{q}\log^2 q}, &\text{ if } \chi \text{ is odd} \\
        1-\frac{100}{\sqrt{q}\log^2 q}, &\text{ if } \chi \text{ is even}.
    \end{cases}
\end{equation*}
\end{lemma}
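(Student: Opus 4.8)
The plan is to reduce to the primitive case and then run the classical Landau-type argument that ties a real zero near $s=1$ to the size of $L(1,\chi)$, keeping every ingredient explicit. First I would observe that a real non-principal $\chi \pmod q$ is induced by a primitive real (hence quadratic) character $\chi^*$ of conductor $q^* \mid q$, that $L(s,\chi)$ and $L(s,\chi^*)$ have the same non-trivial zeros, and that $q^* \le q$ makes the claimed bound in $q$ a consequence of the same bound in $q^*$; so it suffices to take $\chi$ to be the Kronecker symbol of a fundamental discriminant of absolute value $q$, with the sign --- equivalently the parity of $\chi$ --- distinguishing the imaginary and the real quadratic field and hence the two cases. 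By Lemma~\ref{lemma:zeroFreeLarge} there is at most one such zero $\beta_1$, so the statement is unambiguous.

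Second, assume $\beta_1 \in (0,1)$ is real with $L(\beta_1,\chi)=0$. By the mean value theorem applied to $\sigma \mapsto L(\sigma,\chi)$ on $[\beta_1,1]$ we get $L(1,\chi) = L(1,\chi) - L(\beta_1,\chi) = (1-\beta_1)\,L'(\sigma_0,\chi)$ for some $\sigma_0 \in (\beta_1,1)$, hence
\[
1-\beta_1 \ \ge\ \frac{L(1,\chi)}{\sup_{\beta_1 \le \sigma \le 1}\bigl|L'(\sigma,\chi)\bigr|}.
\]
The two inputs are then: (i) an explicit upper bound of the shape $|L'(\sigma,\chi)| \le C_1\log^2 q$ for $\sigma$ in a neighbourhood of $1$ --- this is exactly the type of estimate that Bordignon makes explicit in \cite{B2019,B2020}, obtained by truncating the Dirichlet series for $L'(s,\chi)$ and controlling the tail by partial summation and the P\'olya--Vinogradov inequality; and (ii) a lower bound for $L(1,\chi)$ from the class number formula: for odd $\chi$ one has $L(1,\chi)=\tfrac{2\pi h}{w\sqrt q}$ with $h\ge 1$ and, since $q>4\cdot10^5$, $w=2$, giving $L(1,\chi)\ge \pi/\sqrt q$; for even $\chi$ one has $L(1,\chi)=\tfrac{2h\log\varepsilon}{\sqrt q}$ with $h\log\varepsilon \ge \log\tfrac{1+\sqrt5}{2}$, which is the origin of the smaller constant. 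Combining (i) and (ii) yields $1-\beta_1 \ge c/(\sqrt q\log^2 q)$, and a careful accounting of the constants --- the content of the cited theorems --- pins down $c=800$ for odd $\chi$ and $c=100$ for even $\chi$ once $q>4\cdot10^5$; the complementary range $q\le 4\cdot10^5$ needs no argument since there is then no exceptional zero at all by Corollary~\ref{corollary:GRH}.

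The main obstacle is item (i): a crude bound on $L'(\sigma,\chi)$ near $\sigma=1$ loses a factor that is fatal for obtaining a usable numerical constant, so one needs Bordignon's finer truncation-and-smoothing argument together with a case split on how close $\sigma$ stays to $1$ relative to $\log q$. A secondary difficulty is keeping the lower bound for $L(1,\chi)$ genuinely effective in the even case, where the class-number route produces a weaker numerical constant than in the odd case and one must settle for the $100$ of \cite{B2020}; the hypothesis $q>4\cdot10^5$ is precisely what allows the whole statement to remain explicit. In practice, rather than re-deriving any of this here, the lemma is quoted verbatim from \cite[Theorem~1.3]{B2019} and \cite[Theorem~1.3]{B2020}.
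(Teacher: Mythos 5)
Your proposal ultimately does exactly what the paper does: it quotes the result verbatim from Bordignon's two theorems, the paper offering no proof of its own. The explanatory material you add — reduction to the primitive case, the mean-value-theorem link between $1-\beta_1$, $L(1,\chi)$, and $\sup|L'(\sigma,\chi)|$, and the class-number lower bound for $L(1,\chi)$ — is a correct sketch of the strategy behind the cited theorems, but it is not part of the paper's argument, which treats the lemma as an external input.
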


Combining the previous results, we are able to provide an estimate for the term $B_q^*(x)$.
\begin{corollary}
\label{corollary:ZeroFree}
Let $q \geq 3$ be a positive integer, let $x\geq e^e$ be a positive real number
and let $L_q(s)$ be the product of the $\varphi(q)$ Dirichlet $L$-functions 
formed with characters modulo $q$. The function $L_q(s)$ has at most one
zero inside the region
\begin{equation}
\label{eq:explicitZeroFree}
     \Re(s) \geq 1-\frac{1}{10.5\log q+61.5(\log {x})^{2/3}\left(\log\log{x}\right)^{1/3}}.
\end{equation}
If the zero exists, then it  must be simple and real and it must arise from an
$L$-function formed with a real non-principal character modulo $q$.

Moreover, if
\begin{equation}
\label{eq:lowerXecplicit}
    x \geq \exp\left(\exp\left(0.5W\left(\frac{q^{3/2}\log^6{q}}{6150^3}\right)\right)\right),
\end{equation}
then the function $L_q(s)$ does not vanish in the set \eqref{eq:explicitZeroFree}.
\end{corollary}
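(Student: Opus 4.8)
The plan is to stitch together the non-vanishing inputs listed above into a single region for the product $L_q(s)$, and then to verify that under the stated lower bound on $x$ the only candidate for an exceptional zero — the Siegel zero attached to a real non-principal character — is pushed outside the region \eqref{eq:explicitZeroFree}. First I would split the imaginary part: for $|\Im(s)| \geq 10$ the region \eqref{eq:explicitZeroFree} is contained in the zero-free region of Lemma \ref{lemma:zeroFreeAll} applied to each $\chi \pmod q$, since $1-\tfrac{1}{10.5\log q+61.5(\log x)^{2/3}(\log\log x)^{1/3}}$ is an increasing function of $x$ and $x \geq |\Im(s)|$ whenever $|\Im(s)|$ lies in the relevant range (here one uses $x \geq e^e > 10$, so the comparison $\log |\Im(s)| \leq \log x$ holds); hence no zero of any $L(s,\chi)$, and therefore of $L_q(s)$, can occur there. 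For $|\Im(s)| < 10$ I would invoke Lemma \ref{lemma:zeroFreeLarge} of McCurley: in that strip $\max\{10,q,q|\Im(s)|\} \leq 10q$, and a short check that $9.646\log(10q) \leq 10.5\log q + 61.5(\log x)^{2/3}(\log\log x)^{1/3}$ for $q \geq 3$ and $x \geq e^e$ shows that \eqref{eq:explicitZeroFree} sits inside \eqref{eq:setMCCuurleryFree}; McCurley then gives at most one zero in that strip, necessarily simple, real, and arising from a real non-principal character. Combining the two ranges yields the first assertion of the corollary.

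For the second assertion I would assume $x$ satisfies \eqref{eq:lowerXecplicit} and argue that the lone possible exceptional zero is already ruled out. The Siegel zero $\beta_1$, if it exists, is attached to a real non-principal $\chi \pmod q$; since the cases $q \leq 4\cdot 10^5$ are covered by Corollary \ref{corollary:GRH} (GRH holds well beyond height $10$, so no real zero off the critical line up to that height, and the Vinogradov–Korobov-type region handles the rest), I may assume $q > 4\cdot 10^5$. Then Lemma \ref{lemma:SiegelBounds} gives $\beta_1 \leq 1 - \tfrac{100}{\sqrt q \log^2 q}$ (the even case being the weaker of the two, hence the one to use). To conclude that $\beta_1$ lies outside \eqref{eq:explicitZeroFree}, it suffices that
\[
\frac{100}{\sqrt q \log^2 q} \;\geq\; \frac{1}{10.5\log q + 61.5 (\log x)^{2/3}(\log\log x)^{1/3}},
\]
i.e. that $61.5(\log x)^{2/3}(\log\log x)^{1/3} \geq \tfrac{\sqrt q \log^2 q}{100} - 10.5\log q$. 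Writing $u = \log\log x$, the left side is $61.5\, e^{2u/3} u^{1/3}$, which one bounds below by a clean power of $e^{u}$; requiring $6150\, e^{u} \geq \sqrt q \log^2 q$ (a safe over-estimate that absorbs the dropped $-10.5\log q$ term and the exponent discrepancy) and solving $e^{u} = e^{\log\log x} = \log x$ for $x$ gives exactly a condition of the shape $\log x \geq \tfrac{\sqrt q \log^2 q}{6150}$, and unwinding through $u = \log\log x$ together with the substitution that turns the transcendental inequality into a Lambert-$W$ expression produces \eqref{eq:lowerXecplicit}, with the factor $6150^3$ and the $3/2$ power on $q$ coming from cubing to clear the $2/3$ exponent. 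I would present this as: set $v = W\!\left(\tfrac{q^{3/2}\log^6 q}{6150^3}\right)$, so $v e^{v} = \tfrac{q^{3/2}\log^6 q}{6150^3}$; then $x \geq \exp(\exp(v/2))$ gives $(\log x)^{2/3}(\log\log x)^{1/3} = e^{v/3}(v/2)^{1/3} \geq \tfrac{1}{6150}(v e^v)^{1/3} = \tfrac{\sqrt q \log^2 q}{6150\cdot 6150} \cdot 6150 \cdot (\text{constant})$, which is the desired inequality after reconciling constants.

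The main obstacle I anticipate is not conceptual but the bookkeeping in the last paragraph: getting the chain of inequalities to produce \emph{exactly} the constant $6150 = 10.5$-free combination of $61.5$ and $100$ (note $61.5 \cdot 100 = 6150$) requires carefully discarding the $10.5\log q$ term and the $(\log\log x)^{1/3}$ and $(v/2)^{1/3}$ factors in a way that is simultaneously valid (an over-estimate in the right direction) and tight enough that the claimed bound \eqref{eq:lowerXecplicit} actually suffices. A secondary technical point is the verification $9.646\log(10q) \leq 10.5\log q + 61.5(\log x)^{2/3}(\log\log x)^{1/3}$ uniformly for $q \geq 3$, $x \geq e^e$: since $9.646\log 10 \approx 22.2$ while $61.5(\log e^e)^{2/3}(\log\log e^e)^{1/3} = 61.5$ and $9.646\log q < 10.5 \log q$, this is comfortable, but it must be stated. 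Once these two elementary estimates are nailed down, the corollary follows by assembling Lemmas \ref{lemma:zeroFreeAll}, \ref{lemma:zeroFreeLarge}, \ref{lemma:SiegelBounds} and Corollary \ref{corollary:GRH}.
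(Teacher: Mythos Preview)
Your overall strategy matches the paper's exactly: split on $|\Im(s)|\geq 10$ versus $|\Im(s)|<10$, invoke Lemma~\ref{lemma:zeroFreeAll} (Khale) and Lemma~\ref{lemma:zeroFreeLarge} (McCurley) respectively, verify the elementary comparison $9.646\log(10q)\leq 10.5\log q + 61.5(\log x)^{2/3}(\log\log x)^{1/3}$, and then for the second assertion use Lemma~\ref{lemma:SiegelBounds} with the weaker (even) bound $\beta_1\leq 1-100/(\sqrt{q}\log^2 q)$ and check that this forces $\beta_1$ out of the region. The paper's proof is a good deal terser than yours on both points but follows the same line.

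There are, however, two places where your execution slips. First, the parenthetical ``here one uses $x\geq e^e>10$, so the comparison $\log|\Im(s)|\leq\log x$ holds'' is a non sequitur: $x>10$ says nothing about $|\Im(s)|$, which is an unrestricted variable in the region~\eqref{eq:explicitZeroFree}. You correctly sensed that one needs $|\Im(s)|\leq x$ for Khale's region to contain~\eqref{eq:explicitZeroFree}; the paper's proof is silent on this, and in practice the corollary is only ever applied to zeros with $|\Im(\rho)|<x$ (this is how $B_q^*(x)$ is used throughout), so the height restriction is implicit. You should either state that restriction explicitly or drop the faulty justification.

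Second, your Lambert-$W$ bookkeeping is garbled. With $v=W(A)$, $A=q^{3/2}\log^6 q/6150^3$, and $x=\exp(\exp(v/2))$, one has exactly
\[
(\log x)^{2/3}(\log\log x)^{1/3}=e^{v/3}(v/2)^{1/3}=2^{-1/3}(ve^v)^{1/3}=2^{-1/3}\,\frac{\sqrt{q}\log^2 q}{6150},
\]
not the much weaker $\geq \tfrac{1}{6150}(ve^v)^{1/3}$ you wrote, which loses a factor of roughly $6150/2^{1/3}$ and would never close. The paper itself simply asserts that the required inequality
\[
\frac{1}{10.5\log q+61.5(\log x)^{2/3}(\log\log x)^{1/3}}\leq\frac{100}{\sqrt{q}\log^2 q}
\]
follows from~\eqref{eq:lowerXecplicit} without working through the algebra; if you supply the details, use the exact identity above and keep the $10.5\log q$ term, rather than estimating it away.
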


\begin{proof}
    Due to Lemmas \ref{lemma:zeroFreeAll}, \ref{lemma:zeroFreeLarge} and \ref{lemma:SiegelBounds} and Corollary \ref{corollary:GRH}, it is sufficient to show
    \begin{equation*}
        9.646\log\left(\max\{10,q,q|\Im(s)|\}\right)\leq 10.5\log q+61.5(\log {10})^{2/3}\left(\log\log{10}\right)^{1/3}, \quad \text{if } |\Im(s)|<10,
    \end{equation*}
    and
    \begin{equation*}
       \frac{1}{10.5\log q+61.5(\log {x})^{2/3}\left(\log\log{x}\right)^{1/3}} \le \frac{100}{\sqrt{q}\log^2{q}}.
    \end{equation*}
    The first bound clearly holds since $9.646<10.5$ and $9.646\log{10}<61.5(\log {10})^{2/3}\left(\log\log{10}\right)^{1/3}$, and the second one holds because of the assumption \eqref{eq:lowerXecplicit}.
\end{proof}

Lastly, we describe a connection between a possible Siegel zero and the logarithmic derivative of a Dirichlet $L$-function.

\begin{lemma}
\label{lemma:explicitLogDerivativeDistance}
    Let $q \geq 4\cdot 10^5$ be an integer, $\chi$ be a primitive character $\pmod q$ and $\beta_1\neq 1$ a possible exceptional zero inside the set \eqref{eq:explicitZeroFree}. Let $\delta_1(\chi)=1$ if there exists an exceptional non-trivial zero in of set \eqref{eq:explicitZeroFree} for $L(s,\chi)$ and otherwise let $\delta_1=0$. Then
     \begin{equation*}
        \left|\frac{L'}{L}\left(1,\overline{\chi}\right)-\frac{\delta_1(\chi)}{1-\beta_1}\right|\leq 
        \begin{cases}
            2.651(\log{q})^3 &\text{if } \chi \text{ is non-real} \\
            0.025\sqrt{q}(\log{q})^2 &\text{if } \chi \text{ is real, odd and } 4\cdot10^5 \leq q <e^{24}-2 \\
            0.033\sqrt{q}(\log{q})^2 &\text{if } \chi \text{ is real, even and } 4\cdot10^5 \leq q <e^{24}-2 \\
            \frac{\sqrt{q}(\log{q})^2}{800}+0.548\sqrt{q}\log{q}&\text{if } \chi \text{ is real, odd and } e^{24}-2 < q < e^{92\pi}-2 \\
             \frac{\sqrt{q}(\log{q})^2}{100}+0.548\sqrt{q}\log{q}&\text{if } \chi \text{ is real, even and } e^{24}-2 < q < e^{92\pi}-2 \\
            0.004\sqrt{q}(\log{q})^2 &\text{if } \chi \text{ is real, odd and } e^{92\pi}-2<q \\
             0.012\sqrt{q}(\log{q})^2 &\text{if } \chi \text{ is real, even and } e^{92\pi}-2<q.
        \end{cases}.
    \end{equation*}
\end{lemma}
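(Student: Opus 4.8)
The plan is to write $\tfrac{L'}{L}(1,\overline\chi)=L'(1,\overline\chi)/L(1,\overline\chi)$ and estimate it as a ratio, treating the non-real and real characters separately, since by Lemma~\ref{lemma:zeroFreeLarge} (and Corollary~\ref{corollary:ZeroFree}) an exceptional zero in the set \eqref{eq:explicitZeroFree} can only come from a real non-principal $\chi$, so $\delta_1(\chi)=0$ in the non-real case. The inputs I would use are: Bordignon's explicit bounds for $L(\sigma,\chi)$, $L'(\sigma,\chi)$ and $L''(\sigma,\chi)$ for real $\sigma$ near $1$ from \cite{B2019,B2020} (which the paper cites precisely for this); the zero-free information of Lemmas~\ref{lemma:zeroFreeAll}, \ref{lemma:zeroFreeLarge} and \ref{lemma:SiegelBounds} and Corollary~\ref{corollary:ZeroFree}; and, for a real primitive $\chi$, the unconditional Dirichlet-type lower bound $L(1,\chi)\ge c\,q^{-1/2}$ with explicit $c$ (which one gets from the class number formula, $h\ge 1$).

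For non-real $\chi$: since $L(s,\overline\chi)$ carries no exceptional zero, Lemma~\ref{lemma:zeroFreeLarge} furnishes a genuine zero-free region for it around the segment $\sigma>1-\tfrac{1}{9.646\log q}$, $|\Im s|\le 1$, and a standard explicit argument from that region (partial-fraction expansion of $L'/L$, or a Borel--Carath\'eodory estimate on a disc of radius $\asymp 1/\log q$) yields $|L(1,\overline\chi)|\ge c'/\log q$. Combined with Bordignon's $|L'(1,\overline\chi)|\le C(\log q)^2$, and using $|\tfrac{L'}{L}(1,\overline\chi)|=|\tfrac{L'}{L}(1,\chi)|$, this gives $|\tfrac{L'}{L}(1,\overline\chi)|\le 2.651(\log q)^3$ once the constants are inserted.

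For real $\chi$ (so $\overline\chi=\chi$): if there is no exceptional zero, the crude estimate $|\tfrac{L'}{L}(1,\chi)|\le|L'(1,\chi)|/|L(1,\chi)|\le C(\log q)^2\cdot\tfrac1c\sqrt q$ already has the required shape. If there is an exceptional zero $\beta_1<1$, the key step is the mean value theorem applied to $t\mapsto L(t,\chi)$ on $[\beta_1,1]$: since $L(\beta_1,\chi)=0$ there is $\xi\in(\beta_1,1)$ with $L(1,\chi)=(1-\beta_1)L'(\xi,\chi)$, whence
\[
\frac{L'}{L}(1,\chi)-\frac{1}{1-\beta_1}=\frac{L'(1,\chi)-L'(\xi,\chi)}{L(1,\chi)}=\frac{1}{L(1,\chi)}\int_{\xi}^{1}L''(t,\chi)\,dt ,
\]
so that $\bigl|\tfrac{L'}{L}(1,\chi)-\tfrac{1}{1-\beta_1}\bigr|\le\dfrac{(1-\beta_1)\,\sup_{[\beta_1,1]}|L''(t,\chi)|}{|L(1,\chi)|}$. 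Bounding $\sup_{[\beta_1,1]}|L''|$ by Bordignon's explicit second-derivative estimate, $1-\beta_1$ by the zero-free region of Corollary~\ref{corollary:ZeroFree}, and $|L(1,\chi)|$ from below by $c\,q^{-1/2}$ produces a bound of shape $c''\sqrt q(\log q)^2$; the three $q$-ranges and the even/odd split merely record which of Bordignon's estimates is available and sharpest there and which branch of the Siegel bound of Lemma~\ref{lemma:SiegelBounds} applies, and in the middle range $e^{24}-2<q<e^{92\pi}-2$ one keeps the term $\tfrac{1}{1-\beta_1}\le\tfrac{\sqrt q(\log q)^2}{800}$ (resp.\ $\tfrac{\sqrt q(\log q)^2}{100}$) separately, which is where the extra additive $0.548\sqrt q\log q$ summand comes from.

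The hard part will be the real-character case with an exceptional zero: the $\sqrt q$ factor is essentially forced, since the only unconditional lower bound for $L(1,\chi)$ at hand is the weak $q^{-1/2}$ one, and the delicate work is in choosing the cleanest explicit input for $\sup_{[\beta_1,1]}|L''(t,\chi)|$ — likely via a Cauchy estimate on a small circle around $s=1$ fed by an explicit bound for $|L(s,\chi)|$ in a short vertical strip — and then carrying the numerical bookkeeping across the several $q$-ranges so that the stated coefficients come out. A lesser nuisance is confirming that the archimedean factors and the functional-equation partner $1-\beta_1$ of the exceptional zero contribute only $O(1)$; the ratio formulation above sidesteps this, whereas a Hadamard/partial-fraction route would force tracking them explicitly.
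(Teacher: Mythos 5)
Your proposal takes a genuinely different route from the paper. The paper's proof is almost entirely a citation argument: it invokes \cite[Lemmas 6.3 and 6.5]{BMOR2018} and \cite[Theorem 1.1]{MST2022} to obtain the displayed bounds on $\left|\tfrac{L'}{L}(1,\overline\chi)\right|$ directly (no ratio decomposition, no zero-free-region lower bound on $L(1,\overline\chi)$, no mean-value argument), observes via Corollary \ref{corollary:ZeroFree} that a Siegel zero forces $\chi$ real (so $\overline\chi=\chi$), and then in the Siegel-zero case simply adds $\tfrac{1}{1-\beta_1}$ by the triangle inequality, using Lemma \ref{lemma:SiegelBounds} to bound that term by $\tfrac{\sqrt q(\log q)^2}{800}$ (odd) or $\tfrac{\sqrt q(\log q)^2}{100}$ (even). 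This explains exactly where the coefficients $0.025,\ 0.033,\ 0.004,\ 0.012$ come from (the base constants $0.023,\ 0.002$ plus $1/800$ or $1/100$), and why the middle range keeps the Siegel-zero term as a separate summand. Your ratio decomposition of $L'/L$, zero-free-region lower bound on $|L(1,\overline\chi)|$, and (in the Siegel-zero case) mean-value-theorem argument through $L(1,\chi)=(1-\beta_1)L'(\xi,\chi)$ are all ideas that could be made to work in outline, and your MVT identity is correct and genuinely elegant — but it is also internally inconsistent with your own explanation of the middle range: the MVT route produces a single bound on $\left|\tfrac{L'}{L}(1,\chi)-\tfrac{1}{1-\beta_1}\right|$ with no leftover $\tfrac{1}{1-\beta_1}$ to ``keep separately,'' whereas the separated $\tfrac{\sqrt q(\log q)^2}{800}+0.548\sqrt q\log q$ form in the statement is precisely the signature of the triangle-inequality addition the paper performs, not of an MVT bound. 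Two smaller issues: the non-real bound $2.651(\log q)^3$ comes from MST2022, not from Bordignon (whose explicit $L',L''$ estimates near $\sigma=1$ are for real characters); and you never pin down the numerical constants, so the proposal is a plan rather than a proof. The main value added by the paper's route is that it is short and reuses work that has already been numerically optimized; the main value of your route would be self-containedness, at the cost of redoing that optimization.
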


\begin{proof}
    Let us first assume  that $L(s,\chi)$ does not have any zero inside the region \eqref{eq:explicitZeroFree}. Since $\chi$ is a primitive character,  $\overline{\chi}$ is also a primitive character. Hence, by \cite[Lemmas 6.3 and 6.5]{BMOR2018} and \cite[Theorem 1.1]{MST2022}
    \begin{equation*}
        \left|\frac{L'}{L}\left(1,\overline{\chi}\right)\right|\leq 
        \begin{cases}
            2.651(\log{q})^3 &\text{if } \chi \text{ is non-real} \\
            0.023\sqrt{q}(\log{q})^2 &\text{if } \chi \text{ is real and } 4\cdot10^5 \leq q <e^{24}-2 \\
            0.548\sqrt{q}\log{q}&\text{if } \chi \text{ is real and } e^{24}-2 < q < e^{92\pi}-2 \\
            0.002\sqrt{q}(\log{q})^2 &\text{if } \chi \text{ is real and } e^{92\pi}-2<q.
        \end{cases}
    \end{equation*}
    
    Let us first assume that there is a Siegel zero $\beta_1$ of $L(s,\chi)$. By Corollary \ref{corollary:ZeroFree} the character $\chi$ must be real and hence $\chi=\overline{\chi}$. Hence, we must add the term $1/(1-\beta_1)$ to the previous results, and that follows from Lemma \ref{lemma:SiegelBounds}.
\end{proof}

Immediately, from the previous theorem, we get the following corollary.

\begin{corollary}
\label{cor:LDerivative}
    Assume the same hypothesis as in Lemma \ref{lemma:explicitLogDerivativeDistance}. Then
    \begin{equation*}
        \left|\frac{L'}{L}\left(1,\overline{\chi}\right)-\frac{\delta_1(\chi)}{1-\beta_1}\right|\leq 0.055\sqrt{q}(\log{q})^2.
    \end{equation*}
\end{corollary}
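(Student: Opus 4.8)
The plan is to dispose of the corollary by checking, case by case, that each of the seven bounds furnished by Lemma~\ref{lemma:explicitLogDerivativeDistance} is at most $0.055\sqrt q(\log q)^2$ under the standing hypothesis $q\geq 4\cdot10^5$. Four of the real-character cases already have the shape $c\,\sqrt q(\log q)^2$ with $c\in\{0.004,0.012,0.025,0.033\}$, and each of these constants is smaller than $0.055$, so nothing is needed there. For the two remaining real-character cases (with $e^{24}-2<q<e^{92\pi}-2$), the bound is $c\,\sqrt q(\log q)^2+0.548\sqrt q\log q$ with $c\in\{1/800,1/100\}$; here I would divide through by $\sqrt q(\log q)^2$ and use $\log q>23$ to get $c+0.548/\log q<c+0.548/23$, which is $<0.055$ in both instances.

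The only case requiring more than a glance is that of a non-real character, where the bound is $2.651(\log q)^3$. The inequality $2.651(\log q)^3\leq 0.055\sqrt q(\log q)^2$ is, after cancelling $(\log q)^2>0$, the same as $(\log q)/\sqrt q\leq 0.055/2.651$. Since $q\mapsto(\log q)/\sqrt q$ is decreasing for $q>e^2$, it suffices to verify this at the smallest admissible modulus $q=4\cdot10^5$, where $(\log q)/\sqrt q\approx 12.90/632.46\approx 0.0204$ while $0.055/2.651\approx 0.0207$. Thus the inequality holds, albeit narrowly: this is precisely the step that pins down the constant $0.055$, and hence the (very mild) main obstacle. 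Taking the maximum over the seven verified estimates yields the asserted uniform bound and completes the proof.
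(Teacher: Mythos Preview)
Your proof is correct and follows the same approach as the paper, which simply states that the corollary follows ``immediately'' from Lemma~\ref{lemma:explicitLogDerivativeDistance}; you have merely spelled out the case-by-case verification that the paper leaves to the reader. In particular, your observation that the non-real case at $q=4\cdot10^5$ is the one that determines the constant $0.055$ is exactly right.
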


\subsection{On the number of zeros and the second Chebyshev function}
\label{subsec:EcplicitNumbers}
In Section \ref{sec:sumsoverzeros}, we are going to apply some estimates of the number of zeros of Dirichlet $L$-functions and those estimates are also needed to derive our main results. In addition, we need a bound for the second Chebyshev function. Hence, in this section we describe explicit estimates for assumptions \ref{eq:defIntervalZeroGeneral}---\ref{eq:assumptionLambda}.

Let us first consider the number of zeros. 

\begin{lemma}
\label{lemma:NumberOfZeros}
(\cite[Theorem 1.1 and Corollary 1.2]{BMOR2021} and \cite[Proposition 2.3]{BMOR2018})
Let $\chi$ be a character with conductor $q>1$ and let $T \geq 5/7$. Then
$$
\left|N(T,\chi)-\frac{T}{\pi}\log{\frac{qT}{2\pi e}}\right| \leq \min\{0.247\log{(qT)+6.894}, 0.298\log{(qT)+4.358}\}.
$$

Even more, let $\chi$ be a primitive character modulo $q$ and let us set $l:=\log{\frac{q(T+2)}{2\pi}}$. If $l \leq 1.567$, then $N(T,\chi)=0$. If $l>1.567$, then
$$
\left|N(T,\chi)-\left(\frac{T}{\pi}\log{\frac{qT}{2\pi e}}-\frac{\chi(-1)}{4}\right)\right| \leq 0.22737l+2\log{(1+l)}-0.5.
$$
If $\chi$ is a principal character and $T >e$, then
\begin{equation}
\label{eq:numberofPrincipalZeros}
    \left|N(T,\chi)-\left(\frac{T}{\pi}\log{\frac{T}{2\pi e}}+\frac{7}{4}\right)\right| \leq 0.34\log{T}+3.996+\frac{25}{24\pi T}
\end{equation}
\end{lemma}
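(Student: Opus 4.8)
The plan is to assemble the statement from the cited sources rather than reprove anything from scratch. The lemma is a compilation of three different explicit counting results for zeros of Dirichlet $L$-functions, so the proof will be short and will mostly consist of quoting and, where necessary, lightly massaging the results of \cite{BMOR2021} and \cite{BMOR2018}. First I would recall the standard Riemann–von Mangoldt-type formula for $N(T,\chi)$ and observe that the three regimes in the statement (general character with conductor $q>1$, primitive character, principal character) correspond to exactly the three explicit bounds proved in those two papers: the first inequality is \cite[Corollary 1.2]{BMOR2021} together with \cite[Theorem 1.1]{BMOR2021}, taking the minimum of the two forms of the error term that are available there; the primitive-character refinement with the parameter $l = \log\frac{q(T+2)}{2\pi}$ is \cite[Proposition 2.3]{BMOR2018} (including the vanishing statement $N(T,\chi)=0$ when $l \le 1.567$, which is just the observation that an $L$-function of small analytic conductor has no zeros below the first one); and the principal-character estimate \eqref{eq:numberofPrincipalZeros} follows from the corresponding result for $\zeta(s)$, since the non-trivial zeros of $L(s,\chi_0)$ coincide with those of $\zeta(s)$, after accounting for the finitely many Euler factors removed (these contribute only to the trivial zeros) and the shift by $7/4$ coming from the known count of low-lying zeros of $\zeta$.

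The main steps, in order, are: (i) state the hypothesis $T \ge 5/7$ and note it is exactly the range in which \cite[Corollary 1.2]{BMOR2021} is valid; (ii) quote the two explicit error bounds $0.247\log(qT)+6.894$ and $0.298\log(qT)+4.358$ and take their minimum, which is what the displayed inequality asserts; (iii) for a primitive character, set $l := \log\frac{q(T+2)}{2\pi}$, and split into the case $l \le 1.567$, where one checks that the analytic conductor is too small for any zero with $|\Im(\rho)| < T$ to exist, and the case $l > 1.567$, where one reads off the bound $0.22737\, l + 2\log(1+l) - 0.5$ directly from \cite[Proposition 2.3]{BMOR2018}, being careful that the main term there is written with the parity correction $-\chi(-1)/4$; (iv) for the principal character with $T > e$, invoke the explicit count of zeros of $\zeta(s)$ up to height $T$ and translate it into the stated form with main term $\frac{T}{\pi}\log\frac{T}{2\pi e} + \frac{7}{4}$ and error $0.34\log T + 3.996 + \frac{25}{24\pi T}$.

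The one place requiring a little genuine care — and the step I expect to be the main obstacle — is reconciling the precise normalizations across the three sources: the definition of $N(T,\chi)$ used here counts zeros with $0 < \Re(\varrho) < 1$ and $|\Im(\varrho)| < T$ (a two-sided count), whereas some of the cited papers count zeros with $|\Im(\varrho)| \le T$ or only in the upper half-plane, and the placement of the parity term $\chi(-1)/4$ and the additive constant $7/4$ depends on this convention. I would therefore spend a sentence or two verifying that, with the convention fixed at the start of this section, each quoted inequality is transcribed with the correct main term and that no factor of $2$ or sign in the parity correction has been dropped; once that bookkeeping is done, the lemma follows immediately by concatenating the three cited estimates. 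No new analysis is needed beyond this consistency check, so the proof can be kept to a few lines per case.
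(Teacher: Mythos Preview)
Your proposal is correct and matches the paper's treatment: the lemma is stated there purely as a citation of \cite[Theorem~1.1 and Corollary~1.2]{BMOR2021} and \cite[Proposition~2.3]{BMOR2018}, with no proof given, so your plan to assemble the three quoted bounds and check the two-sided zero-counting convention is exactly what is required.
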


Using the previous estimate, we provide explicit versions of assumptions \ref{eq:defIntervalZeroGeneral} and \ref{eq:defzerosGeneral}.

\begin{corollary}
\label{cor:numberOfZeros}
Let $q \geq 4\cdot 10^5$, $T \geq e^e$ and $\chi$ be a primitive or principal character modulo $q$. Then 
\begin{equation*}
    N(T,\chi) \leq 0.364T\log{\frac{qT}{2\pi e}}.
\end{equation*}
If we have $q=1$ instead, for $T \geq 2\pi e+1$ we have
\begin{equation*}
    N(T,\chi) \leq 6.879T\log{\frac{T}{2\pi e}}.
\end{equation*}
\end{corollary}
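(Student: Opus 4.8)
The plan is to feed the appropriate estimate from Lemma~\ref{lemma:NumberOfZeros} into each of the three situations—primitive $\chi$ with $q\ge 4\cdot10^5$, principal $\chi$ with $q\ge 4\cdot10^5$, and $q=1$—and check in each case that the error term is absorbed by the gap between the target coefficient and $1/\pi$.

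For a primitive character $\chi$ modulo $q\ge 4\cdot10^5$ with $T\ge e^e$, the quantity $l:=\log\frac{q(T+2)}{2\pi}$ far exceeds $1.567$, so the primitive‑character estimate of Lemma~\ref{lemma:NumberOfZeros} gives
\[
N(T,\chi)\le \frac{T}{\pi}\log\frac{qT}{2\pi e}+\frac14+0.22737\,l+2\log(1+l)-\frac12 .
\]
Writing $L:=\log\frac{qT}{2\pi e}$ and using $l=L+1+\log(1+2/T)\le L+1+\log(1+2/e^e)$ for $T\ge e^e$, the quantity to be absorbed is at most a function of $L$ alone that is $0.22737\,L+2\log(1+L)+O(1)$, whereas $\left(0.364-\tfrac1\pi\right)TL\ge\left(0.364-\tfrac1\pi\right)e^e\,L$ since $0.364>1/\pi$. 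Since $L\ge\log\frac{4\cdot10^5 e^e}{2\pi e}\approx 12.78$ and the resulting one‑variable inequality has a linear left side and a logarithmic right side, it suffices to verify it at this lower endpoint (where it holds with a comfortable margin) and to note that the difference of the two sides is increasing for $L$ in this range; this yields $N(T,\chi)\le 0.364\,T\log\frac{qT}{2\pi e}$.

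For a principal character modulo $q\ge 4\cdot10^5$, the estimate \eqref{eq:numberofPrincipalZeros} applies directly and its right‑hand side does not involve $q$, while the target $0.364\,T\log\frac{qT}{2\pi e}=0.364\,T\log q+0.364\,T\log\frac{T}{2\pi e}$ carries the extra factor $\log q\ge\log(4\cdot10^5)$. That factor alone dominates everything on the right of \eqref{eq:numberofPrincipalZeros} apart from its main term $\tfrac{T}{\pi}\log\frac{T}{2\pi e}\le 0.364\,T\log\frac{T}{2\pi e}$ (the remaining contributions being $O(\log T)$), so the bound holds comfortably for all $T\ge e^e$, even though $\log\frac{T}{2\pi e}$ may be slightly negative there.

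For $q=1$ one again uses \eqref{eq:numberofPrincipalZeros}, so it remains to show
\[
\frac74+0.34\log T+3.996+\frac{25}{24\pi T}\le\left(6.879-\frac1\pi\right)T\log\frac{T}{2\pi e}\qquad (T\ge 2\pi e+1).
\]
The right side vanishes at $T=2\pi e$ and has derivative $\left(6.879-\tfrac1\pi\right)\log\frac{T}{2\pi}$, already exceeding $6$ at $T=2\pi e+1$, whereas the left side increases at rate at most $0.34/T<0.02$; hence it is enough to verify the inequality at the single point $T=2\pi e+1$, where both sides are close to $6.75$ and the inequality holds—only just, which is precisely why the constant is $6.879$. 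This endpoint check for $q=1$ is the main obstacle: near $T=2\pi e+1$ the main term $\tfrac{T}{\pi}\log\frac{T}{2\pi e}$ is negligible, so the whole constant is forced by the lower‑order terms and the numerics close with essentially no room. Away from that endpoint, and throughout the cases $q\ge 4\cdot10^5$, the estimate is comfortable because the main term grows linearly in $T$ while the error is only logarithmic.
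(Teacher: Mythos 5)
Your proof is correct and follows exactly the route the paper takes (the paper's proof is a one-liner deferring to the second and third parts of Lemma~\ref{lemma:NumberOfZeros}, which you apply with worst-case lower bounds on $q$ and $T$ and verify the monotonicity away from the endpoint). One small wording slip: after rearranging the primitive-character inequality, the side that is linear in $L$ is the target $(0.364-\tfrac1\pi)TL$ and the side that is essentially logarithmic is the residual error $2\log(1+l)+O(1)$, so "linear left side and logarithmic right side" is reversed — but your actual conclusion (that the gap widens as $L$ grows) is the correct one.
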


\begin{proof}
The claim follows when we apply the second and third results of Lemma \ref{lemma:NumberOfZeros} and keep in mind the lower bounds for $q$ and $T$.
\end{proof}

\begin{lemma}
\label{lemma:ExplicitZeroInterval}
Let $\chi$ be a primitive character with conductor $q \geq 3$ and let $T \geq 9$ be a real number. Then we have 
\begin{equation*}
    \sum_{\substack{{\rho_\chi} \\ T < |\gamma_{\chi}| \leq T+1}} 1<\left(\frac{1}{\pi}+0.45474\right)\log{(q(T+1))}+4\log{\log{(q(T+1))}}-2.357,
\end{equation*}
where the sum runs over non-trivial zeros of the function $L(s,\chi)$. If $\chi$ is a principal character, then
\begin{equation*}
    \sum_{\substack{{\rho_\chi} \\ T < |\gamma_{\chi}| \leq T+1}} 1<\left(\frac{1}{\pi}+0.68\right)\log{(T+1)}+7.407+\frac{25}{12\pi T}.
\end{equation*}
\end{lemma}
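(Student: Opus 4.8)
The plan is to realise the count as a difference of the zero-counting function and feed in Lemma~\ref{lemma:NumberOfZeros}. For every $\epsilon\in(0,1)$ the set $\{T<|\gamma_\chi|\le T+1\}$ is contained in $\{T\le|\gamma_\chi|<T+1+\epsilon\}$, so
\[
\sum_{\substack{{\rho_\chi}\\ T<|\gamma_\chi|\le T+1}}1\ \le\ N(T+1+\epsilon,\chi)-N(T,\chi).
\]
I would bound $N(T+1+\epsilon,\chi)$ from above and $N(T,\chi)$ from below by the relevant branch of Lemma~\ref{lemma:NumberOfZeros} and then let $\epsilon\to0^{+}$; this is legitimate since those bounds are continuous in the height, and it disposes of the harmless discrepancy between the half-open interval in the sum and the convention $|\Im(\varrho)|<T$ in the definition of $N(T,\chi)$. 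For a primitive character of conductor $q\ge3$, set $l_U:=\log\frac{q(U+2)}{2\pi}$. Since $q\ge3$ and $T\ge9$ we have $l_T\ge\log\frac{33}{2\pi}>1.567$ and $l_{T+1}\ge\log\frac{36}{2\pi}>1.567$, so the second branch of Lemma~\ref{lemma:NumberOfZeros} applies at both heights; the offsets $-\chi(-1)/4$ cancel in the difference, leaving
\[
\sum_{\substack{{\rho_\chi}\\ T<|\gamma_\chi|\le T+1}}1\ \le\ \Big(\frac{T+1}{\pi}\log\frac{q(T+1)}{2\pi e}-\frac{T}{\pi}\log\frac{qT}{2\pi e}\Big)+\sum_{U\in\{T,\,T+1\}}\big(0.22737\,l_U+2\log(1+l_U)-0.5\big).
\]

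Next I would simplify this. The main-term difference equals $\frac1\pi\int_T^{T+1}\log\frac{qu}{2\pi}\,du$ and, the integrand being strictly increasing, is strictly less than $\frac1\pi\log\frac{q(T+1)}{2\pi}=\frac1\pi\log(q(T+1))-\frac{\log 2\pi}{\pi}$; this strictness already yields the strict inequality in the statement. For the error sum I would use, for $T\ge9$, the elementary bounds $\frac{T+2}{T+1}\le\frac{11}{10}$ and $\frac{T+3}{T+1}\le\frac{12}{10}$, which give $l_T\le\log(q(T+1))-\log 2\pi+\log\frac{11}{10}$ and $l_{T+1}\le\log(q(T+1))-\log 2\pi+\log\frac{12}{10}$, so that $0.22737\,(l_T+l_{T+1})\le0.45474\log(q(T+1))+0.22737\big(\log\frac{11}{10}+\log\frac{12}{10}-2\log 2\pi\big)$; moreover $1+l_T\le1+l_{T+1}\le\log(q(T+1))$ because $1+\log\frac{12}{10}<\log 2\pi$, hence $2\log(1+l_T)+2\log(1+l_{T+1})\le4\log\log(q(T+1))$. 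Collecting terms, the coefficient of $\log(q(T+1))$ is $\frac1\pi+0.45474$, that of $\log\log(q(T+1))$ is $4$, and the additive constant is $-\frac{\log 2\pi}{\pi}+0.22737\big(\log\frac{11}{10}+\log\frac{12}{10}-2\log 2\pi\big)-1$, which a direct numerical check shows to be less than $-2.357$; this is the first bound.

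For a principal character $\chi$ modulo $q$ the function $L(s,\chi)$ has the same non-trivial zeros as $\zeta$, so I would run the same argument using \eqref{eq:numberofPrincipalZeros} at heights $T$ and $T+1$ (valid since $T\ge9>e$): the constant offsets $7/4$ cancel, the main-term difference is again $<\frac1\pi\log\frac{T+1}{2\pi}=\frac1\pi\log(T+1)-\frac{\log 2\pi}{\pi}$, the two $0.34\log$ contributions combine to at most $0.68\log(T+1)$, the two $\frac{25}{24\pi U}$ terms (at $U=T$ and $U=T+1$) sum to less than $\frac{25}{12\pi T}$, and the constant becomes $2\cdot3.996-\frac{\log 2\pi}{\pi}<7.407$, giving the second bound. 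I expect no conceptual obstacle anywhere; the only real work is the bookkeeping in the primitive case — choosing exactly the elementary estimates ($\frac{T+3}{T+1}\le1.2$ and $\frac{T+2}{T+1}\le1.1$, which is precisely where $T\ge9$ is used) that absorb the shifted arguments $T+2,\,T+3$ inside $\log(q(T+1))$ and $\log\log(q(T+1))$ so as to land on the stated constants, and then checking the final numerical value of the additive constant.
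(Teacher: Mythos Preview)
Your proposal is correct and follows essentially the same approach as the paper: difference the zero-counting formula of Lemma~\ref{lemma:NumberOfZeros} at heights $T+1$ and $T$, then absorb the shifts $T+2,T+3$ into $q(T+1)$ using the same elementary inequalities (your $\frac{T+2}{T+1}\le\frac{11}{10}$, $\frac{T+3}{T+1}\le\frac{12}{10}$ are exactly the paper's $(T+2)(T+3)\le\frac{33}{25}(T+1)^2$, and your $1+\log\frac{12}{10}<\log 2\pi$ is the paper's $\frac{e(T+3)}{2\pi}<T+1$). Your integral treatment of the main-term difference and your $\epsilon$-limit to reconcile the half-open interval with the strict-inequality convention for $N(T,\chi)$ are minor presentational refinements over the paper, which simply writes $N(T+1,\chi)-N(T,\chi)$ and bounds $\frac{T}{\pi}\log(1+\frac1T)\le\frac1\pi$; the resulting constants are identical.
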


\begin{proof}
Let us first consider the case of primitive characters. Similarly as in the proof of Lemma 22 in \cite{EHP2022}, using Lemma \ref{lemma:NumberOfZeros} and bounds
\begin{equation*}
    \frac{T}{\pi}\log\left(1+\frac{1}{T}\right) \leq \frac{1}{\pi},
\end{equation*}
\begin{equation*}
    \log{(T+2)(T+3)} \leq \log \frac{33(T+1)^2}{25}
\end{equation*}
and
\begin{equation*}
    \log\log\left(\frac{qe(T+3)}{2\pi}\right)+\log\log\left(\frac{qe(T+2)}{2\pi}\right)<2\log\log\left(\frac{qe(T+3)}{2\pi}\right)<2\log\log\left(q(T+1)\right),
\end{equation*}
we get
\begin{align*}
\sum_{\substack{{\rho_\chi} \\ T < |\gamma_{\chi}| \leq T+1}} 1 \leq & \frac{T+1}{\pi}\log{\frac{q(T+1)}{2\pi e}}-\frac{\chi(-1)}{4}+0.22737\log{\frac{q(T+3)}{2\pi}} +2\log{\left(1+\log{\frac{q(T+3)}{2\pi}}\right)}-0.5\\
&\quad-\frac{T}{\pi}\log{\frac{qT}{2\pi e}}+\frac{\chi(-1)}{4} +0.22737\log{\frac{q(T+2)}{2\pi}}+2\log{\left(1+\log{\frac{q(T+2)}{2\pi}}\right)}-0.5 \\
&\quad<\left(\frac{1}{\pi}+2\cdot0.22737\right)\log{(q(T+1))}+4\log{\log{(q(T+1))}} \\
&\quad+\frac{1}{\pi}-\frac{\log(2\pi e)}{\pi}+0.22737\log\left(\frac{33}{25\cdot(2\pi)^2}\right)-1 \\
& \quad <\left(\frac{1}{\pi}+0.45474\right)\log{(q(T+1))}+4\log{\log{(q(T+1))}}-2.357.
\end{align*}

The case of the principal character follows similarly when we use the estimate \eqref{eq:numberofPrincipalZeros}.
\end{proof}

Immediately from Lemma \ref{lemma:ExplicitZeroInterval} we obtain the following corollary.

\begin{corollary}
\label{cor:zerosBetween}
    Let $\chi$ be a primitive or a principal character with a conductor $q \geq 4\cdot10^5$, and let $T \geq e^e$. Then 
    \begin{equation*}
    \sum_{\substack{{\rho_\chi} \\ T < |\gamma_{\chi}| \leq T+1}} 1<1.325\log{(q(T+1))}.
\end{equation*}
Moreover, in the case $q=1$ and $T \geq 2 \pi e+1$ we have
$$
\sum_{\substack{{\rho_\zeta} \\ T < |\gamma_{\zeta}| \leq T+1}} 1<3.523\log{(T+1)}.
$$
\end{corollary}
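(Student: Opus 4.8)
The plan is to deduce Corollary \ref{cor:zerosBetween} directly from Lemma \ref{lemma:ExplicitZeroInterval} by absorbing the lower-order terms into the main logarithmic term, using the stated lower bounds on $q$ and $T$. The work is entirely a matter of elementary monotonicity estimates, so I will organize it as two short computations, one for each of the two displayed inequalities.

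For the first inequality I would start from the primitive/principal bound in Lemma \ref{lemma:ExplicitZeroInterval}. For a primitive character with $q \geq 4\cdot 10^5$ and $T \geq e^e$, I would set $L := \log(q(T+1))$, so that $L \geq \log(4\cdot10^5 \cdot (e^e+1)) > 15$, and bound
\[
\left(\tfrac{1}{\pi}+0.45474\right)L + 4\log L - 2.357 \leq \left(\tfrac{1}{\pi}+0.45474 + \tfrac{4\log L - 2.357}{L}\right)L.
\]
Since $x \mapsto (4\log x - 2.357)/x$ is decreasing for $x$ large, its value at $x = 15$ (or whatever the exact lower bound for $L$ turns out to be) gives a numerical constant; adding it to $\tfrac{1}{\pi}+0.45474 \approx 0.773$ must come out below $1.325$, which it comfortably does. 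The principal-character bound is handled the same way: with $q=1$ replaced by $q \geq 4\cdot10^5$ the relevant inequality is still the first one of Lemma \ref{lemma:ExplicitZeroInterval} (a principal character modulo $q\geq 4\cdot10^5$ is not the trivial character modulo $1$), so the same estimate applies and one just checks that $1.325$ is a valid ceiling. One should double-check whether the ``$q=1$'' clause of the lemma is ever invoked here; I believe for $q \geq 4\cdot10^5$ only the primitive-character bound is needed, since any principal character modulo such $q$ shares its nontrivial zeros with $\zeta$ but the lemma's principal-character statement is indexed by the same $q$.

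For the second inequality, with $q=1$ and $T \geq 2\pi e + 1$, I would use the principal-character bound of Lemma \ref{lemma:ExplicitZeroInterval} (which in the $q=1$ case is the Riemann zeta estimate \eqref{eq:numberofPrincipalZeros} feeding through). Writing $M := \log(T+1)$ with $M \geq \log(2\pi e + 2) > 2.9$, I would bound
\[
\left(\tfrac{1}{\pi}+0.68\right)M + 7.407 + \tfrac{25}{12\pi T} \leq \left(\tfrac{1}{\pi}+0.68+\tfrac{7.407}{M}+\tfrac{25}{12\pi T M}\right)M,
\]
evaluate the bracket at the smallest admissible $T$ (and hence smallest $M$), and check the result is below $3.523$. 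Here $\tfrac{1}{\pi}+0.68\approx 0.998$ and $7.407/2.9\approx 2.55$, so the sum is around $3.5$, just under the claimed $3.523$ — this is the tightest of the two cases and the one deserving a careful numerical check.

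The main obstacle, such as it is, is purely bookkeeping: making sure the numerical constant one extracts from $4\log L/L$ (resp.\ $7.407/M$) at the endpoint is genuinely below the target coefficient, and double-checking the $25/(12\pi TM)$ correction term is negligible at $T = 2\pi e+1$. There is no conceptual difficulty; the proof is a two-line invocation of Lemma \ref{lemma:ExplicitZeroInterval} followed by a monotonicity remark, exactly as the ``\textbf{Proof.} Immediately from Lemma \ref{lemma:ExplicitZeroInterval}\dots'' framing suggests. I would write it as: apply Lemma \ref{lemma:ExplicitZeroInterval}; note the auxiliary functions $(4\log x - 2.357)/x$ and $(7.407 + \text{small})/x$ are decreasing on the relevant ranges; substitute the lower bounds for $q$ and $T$; conclude.
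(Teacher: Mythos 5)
Your overall strategy — apply Lemma \ref{lemma:ExplicitZeroInterval}, fold the lower-order terms into the main logarithm, check monotonicity of the resulting ratio, and substitute the endpoint — is exactly the paper's (the paper states the corollary ``immediately'' follows), and your computations for the primitive case and the $q=1$ case are sound.

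Two corrections, though. First, your handling of a \emph{principal} character modulo $q\geq 4\cdot10^5$ is confused: you propose invoking the lemma's first (primitive) estimate on the grounds that such a character ``is not the trivial character modulo~$1$,'' but that bound requires $\chi$ to be primitive, and a principal character modulo $q\geq 2$ is not primitive. The correct move is the lemma's \emph{principal-character} bound, $(\tfrac{1}{\pi}+0.68)\log(T+1)+7.407+\tfrac{25}{12\pi T}$, which involves only $T$ (since the nontrivial zeros are those of $\zeta$). Comparing against the target $1.325\log(q(T+1))\geq 1.325\log q+1.325\log(T+1)\geq 17.09+1.325\log(T+1)$ for $q\geq4\cdot10^5$, the principal case is then much easier than the primitive one — there is no need to check anything tight.

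Second, the primitive case is not ``comfortable'': at $L=\log\bigl(4\cdot10^5\cdot(e^e+1)\bigr)\approx15.68$, one has $(4\log L-2.357)/L\approx0.5518$ and $\tfrac1\pi+0.45474\approx0.77305$, so the sum is about $1.3249$, squeaking under $1.325$ with almost no room. The $q=1$ case (which you correctly flag as tight) has the same character: $7.407+25/(12\pi T)\approx7.4437$ versus $(3.523-(\tfrac1\pi+0.68))\log(2\pi e+2)\approx7.4446$. In both cases the monotonicity of the auxiliary ratio that you appeal to (decreasing in $L$ resp.\ $T$) is what lets the endpoint evaluation suffice, and that monotonicity should be stated explicitly (e.g.\ $(4\log x-2.357)/x$ is decreasing for $x>e^{6.357/4}\approx4.9$, which covers the range).
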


Lastly, we provide the following result for the sum of von Mangoldt functions.

\begin{lemma} \cite[Theorem 12]{RS1962}
\label{lemma:explicitMangoldt}
Let $x > 0$. Then
\begin{equation*}
    \sum_{n \leq x} \Lambda(n) \leq 1.03883x.
\end{equation*}
\end{lemma}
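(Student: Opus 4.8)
The plan is to invoke the quoted result directly: the inequality $\psi(x)\le 1.03883\,x$ is precisely \cite[Theorem 12]{RS1962}, so no independent argument is required. For orientation I sketch the structure behind it. One writes $\psi(x)=\sum_{n\le x}\Lambda(n)=\theta(x)+\theta(x^{1/2})+\theta(x^{1/3})+\cdots$, a finite sum for each fixed $x$, and establishes the bound by splitting the range at an explicit threshold $x_0$. For $x\le x_0$ the inequality is verified by direct computation: since $\psi$ increases only at prime powers, the ratio $\psi(x)/x$ is locally largest immediately after a jump and decreases between consecutive jumps, so it suffices to check $\psi(p^k)\le 1.03883\,p^k$ at every prime power $p^k\le x_0$. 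The global maximum of $\psi(x)/x$ is attained at a moderate value of $x$, where the clustering of small prime powers ($2,4,8,16,\dots$; $3,9,27,\dots$; $5,25,\dots$; and so on) pushes the ratio above $1$, after which the primes thin out and $\psi(x)/x\to1$.

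For $x\ge x_0$ I would use an effective form of the Prime Number Theorem, namely a bound $|\psi(x)-x|\le\varepsilon(x)\,x$ with $\varepsilon(x)\to0$, obtained from the explicit (truncated Perron / Riemann--von Mangoldt) formula for $\psi(x)$ together with a numerically verified zero-free strip for $\zeta(s)$ up to a finite height and a classical zero-free region beyond it. Choosing $x_0$ large enough forces $\varepsilon(x)<0.03883$ for every $x\ge x_0$, which gives the claim.

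The main difficulty in carrying this out from scratch will be reconciling the two ranges: the analytic estimate wants $x_0$ large, whereas the direct check must then sweep every prime power below $x_0$; the argument is feasible only because the true maximum of $\psi(x)/x$ occurs at a genuinely small $x$, comfortably below the height at which the analytic bound takes over, and isolating the sharp constant $1.03883$ rather than a cruder one (such as $1.04$) is the delicate numerical point. Here, however, we simply cite \cite{RS1962}.
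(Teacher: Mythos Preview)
Your proposal is correct and matches the paper's approach exactly: the paper gives no proof of this lemma, simply citing \cite[Theorem 12]{RS1962} as you do. Your sketch of the underlying argument is a helpful bonus but not required.
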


\subsection{Sums over zeros}
\label{sec:sumsoverzeros}

In this section, we provide several explicit results concerning to sums over zeros. They are later used to prove our main explicit results.

The first results describe the sum of $1/{\rho_\chi}$ over non-trivial zeros. 

\begin{lemma}
\label{lemma:principalrhoT}
    \cite[Lemma 8]{BPT2022} If $T \geq 4\pi e$, then
    \begin{equation*}
        \sum_{0<\Im(\rho_\zeta) \leq T} \frac{1}{|{\rho_\zeta}|} \leq \frac{1}{4\pi}\left(\log{\frac{T}{2\pi}}\right)^2,
    \end{equation*}
    where the sum runs over the non-trivial zeros of the Riemann zeta function.
\end{lemma}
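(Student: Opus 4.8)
The plan is to estimate the partial sum $\sum_{0<\Im(\rho_\zeta)\le T}\frac{1}{|\rho_\zeta|}$ by Riemann--Stieltjes integration against the counting function $N(t,\chi_0)$ for the trivial modulus, which counts zeros up to height $t$. First I would write, with $\gamma$ denoting ordinates of non-trivial zeros in the upper half-plane and using $|\rho_\zeta|\ge\gamma$,
\[
\sum_{0<\gamma\le T}\frac{1}{|\rho_\zeta|}\le\sum_{0<\gamma\le T}\frac{1}{\gamma}=\int_{\gamma_1}^{T}\frac{dN(t)}{t},
\]
where $\gamma_1\approx 14.13$ is the lowest ordinate and $N(t)$ here counts zeros with ordinate in $(0,t]$, so $N(t)=\tfrac12 N(t,\chi_0)$ in the notation of Lemma~\ref{lemma:NumberOfZeros}. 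Integrating by parts turns this into $\frac{N(T)}{T}+\int_{\gamma_1}^{T}\frac{N(t)}{t^2}\,dt$, and the estimate \eqref{eq:numberofPrincipalZeros} (equivalently the standard Riemann--von Mangoldt formula with explicit error) gives $N(t)=\frac{t}{2\pi}\log\frac{t}{2\pi e}+\frac{7}{8}+O^*\!\big(\tfrac12(0.34\log t+3.996+\tfrac{25}{24\pi t})\big)$.

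Next I would substitute the main term $\frac{t}{2\pi}\log\frac{t}{2\pi e}$ and carry out the elementary integration: $\int_{\gamma_1}^{T}\frac{1}{2\pi t}\log\frac{t}{2\pi e}\,dt=\frac{1}{4\pi}\big[(\log\frac{t}{2\pi})^2-2\log\frac{t}{2\pi}\big]_{\gamma_1}^{T}$ after the substitution $u=\log\frac{t}{2\pi}$, which produces the target shape $\frac{1}{4\pi}(\log\frac{T}{2\pi})^2$ together with a negative $-\frac{1}{2\pi}\log\frac{T}{2\pi}$ correction and a bounded constant coming from the lower limit. The boundary term $\frac{N(T)}{T}$ contributes $\frac{1}{2\pi}\log\frac{T}{2\pi e}+o(1)$, which is of lower order than the squared logarithm, and the error terms from $N(t)$ contribute $O(\log^2 T)$-type pieces with small constants via $\int\frac{\log t}{t^2}\,dt$ and $\int\frac{dt}{t^2}$, all convergent. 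The point of the hypothesis $T\ge 4\pi e$ is precisely to make the negative linear term $-\frac{1}{2\pi}\log\frac{T}{2\pi}$ together with the constants dominate the small positive error contributions, so that everything is absorbed cleanly into $\frac{1}{4\pi}(\log\frac{T}{2\pi})^2$ with no leftover.

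The main obstacle is purely the bookkeeping of the constants: one must check that at $T=4\pi e$ (and monotonically beyond) the sum of the boundary term, the lower-limit contribution, and the explicit error integrals is nonpositive after the $-\frac{1}{2\pi}\log\frac{T}{2\pi}$ saving is applied. Since this is quoted verbatim from \cite[Lemma~8]{BPT2022}, I would simply cite that reference rather than redo the constant-chasing; the role of this lemma in the present paper is only as an input to the later sum-over-zeros estimates (e.g.\ for bounding $\sum 1/|\rho_\chi|$ and hence the quantities $d_5$, $d_8$ appearing in the assumptions of Appendix~\ref{appendix:assumptions}), so no new argument beyond the citation is needed here.
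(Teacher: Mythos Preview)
Your proposal is correct and matches the paper exactly: the paper gives no proof at all for this lemma, simply citing \cite[Lemma~8]{BPT2022}, which is precisely what you conclude you would do. Your partial-summation sketch is a faithful outline of how that cited result is obtained, but for the purposes of this paper no argument beyond the citation is required.
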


\begin{lemma}
\label{lemma:ZeroSums}
Let $q\geq 3$ be an integer, $T \geq 5/7$ be a real number and $\chi$ be a primitive character modulo $q$. 

Then
\begin{align*}
 &\sum_{\substack{ |\gamma_{\chi}| \leq T}} \frac{1}{|{\rho_\chi}|}=\sum_{\substack{{\rho_\chi} \neq 1-\beta_ 1 \\ |\gamma_{\chi}| \leq T}} \frac{1}{|{\rho_\chi}|} \leq \frac{\log^2{T}}{2\pi}+\frac{\log T}{\pi} \cdot\log{\frac{q}{2\pi}}-\frac{0.019\log{q}}{T} \\
 &+\frac{0.22737\log{(T+2)}+2\log\log\frac{eq(2+T)}{2\pi}-7.954}{T}+
 \begin{cases}
0.843 \log q+ 5.153 & \text{if } 3 \leq q \leq 11 \\
 1.116\log{q}+1.2\log\log(1.175q)+4.492 & \text{if } 12 \leq q \leq 4\cdot10^5
 \end{cases},
\end{align*}
if $3 \leq q \leq 4\cdot10^5$. 
Moreover, if $q>4\cdot10^5$, then
\begin{multline}
\label{eq:sum1RhoqLarge}
 \sum_{\substack{{\rho_\chi} \neq 1-\beta_ 1 \\ |\gamma_{\chi}| \leq T}} \frac{1}{|{\rho_\chi}|} \leq  \frac{\log^2{T}}{2\pi}+\frac{\log T}{\pi} \cdot\log{\frac{q}{2\pi}}+ 2.194\log^2{q}+9.646\log{(q)}\cdot\log\log{\left(\frac{19eq}{14\pi}\right)}-5.017 \log{q} \\
 -0.8\log\log{\left(\frac{19eq}{14\pi}\right)}+9.121+\frac{0.22737\log{(T+2)}-0.019\log{q}+2\log{\log{\left(\frac{eq(T+2)}{2\pi}\right)}}-7.953}{T}
\end{multline}
and if we include a Siegel zero, then we add
\begin{equation*}
    \begin{cases}
        0 & \text{if } \chi \text{ is not real} \\
        \frac{\sqrt{q}\log^2{q}}{800}, & \text{if } \chi \text{ is real and odd} \\
        \frac{\sqrt{q}\log^2{q}}{100}, & \text{if } \chi \text{ is real and even}
    \end{cases}
\end{equation*}
The sums run over non-trivial zeros of the function $L(s,\chi)$.
\end{lemma}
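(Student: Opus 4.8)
The plan is to reduce the sum to the zero-counting function $N(t,\chi)$ via partial summation, isolating the zeros of small height and bounding them through the available zero-free region. First note that throughout the paper a sum over $\rho_\chi$ with $|\gamma_\chi|\le T$ is taken to omit the reflected exceptional zero $1-\beta_1$; the opening equality of the statement merely records this, and the quantity to be restored when a Siegel zero is present is exactly $1/|1-\beta_1| = 1/(1-\beta_1)$.

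I would split the zeros into a low range $|\gamma_\chi|\le 5/7$ and a high range $5/7 < |\gamma_\chi|\le T$, the value $5/7$ being the smallest height at which the estimate of Lemma \ref{lemma:NumberOfZeros} is available (if $T\le 5/7$ only the low range occurs). For the high range I use $1/|\rho_\chi|\le 1/|\gamma_\chi|$ and partial summation:
\[
\sum_{5/7 < |\gamma_\chi|\le T}\frac{1}{|\gamma_\chi|} = \frac{N(T,\chi)}{T} - \frac{7\,N(5/7,\chi)}{5} + \int_{5/7}^{T}\frac{N(t,\chi)}{t^2}\,dt,
\]
and then insert the bounds for $N(t,\chi)$ from Lemma \ref{lemma:NumberOfZeros}. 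The main term $\frac{t}{\pi}\log\frac{qt}{2\pi e}$ integrates --- after it is combined with the boundary contribution at $t=T$ --- to precisely $\frac{\log^2 T}{2\pi} + \frac{\log T}{\pi}\log\frac{q}{2\pi}$, while the explicit error term $\min\{0.247\log(qt)+6.894,\,0.298\log(qt)+4.358\}$ (and, near $t=T$, the sharper $0.22737\,l + 2\log(1+l) - 0.5$ with $l=\log\frac{q(T+2)}{2\pi}$) yields the $O((\log(qT))/T)$ tail --- where the $\log\log$ argument $\frac{eq(T+2)}{2\pi}$ appears as $1+l$ --- together with the constant- and $\log q$-sized contributions.

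For the low range I bound the number of zeros with $|\gamma_\chi|\le 5/7$ by $N(5/7,\chi)$ and each $1/|\rho_\chi|$ by a lower bound for $\beta_\chi$. If $q\le 11$ then $N(5/7,\chi)=0$ by the second part of Lemma \ref{lemma:NumberOfZeros}, so this range is empty and the bound comes entirely from the high-range computation; this is why the case $3\le q\le 11$ carries no $\log\log$ term. If $12\le q\le 4\cdot10^5$ then GRH holds up to height $293.75$ by Corollary \ref{corollary:GRH}, so $\beta_\chi=1/2$ and $1/|\rho_\chi|\le 2$ for every low zero, and multiplying by the explicit value of $N(5/7,\chi)$ gives the stated bound (here $\frac{19eq}{14\pi}=\frac{eq(2+5/7)}{2\pi}$ is again the $\log\log$ argument from $N(5/7,\chi)$). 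If $q>4\cdot10^5$, GRH is unavailable, so I instead apply Lemma \ref{lemma:zeroFreeLarge} to the reflected zero $1-\rho_\chi$: for any $\rho_\chi$ with $|\gamma_\chi|\le 5/7$ and $\rho_\chi\ne 1-\beta_1$ the point $1-\rho_\chi$ is a zero of $L_q$ different from the exceptional zero $\beta_1$, hence $\beta_\chi > 1/\bigl(9.646\log\max\{10,q,q|\gamma_\chi|\}\bigr)=1/(9.646\log q)$, so $1/|\rho_\chi|\le 9.646\log q$; multiplying by $N(5/7,\chi)$ produces the $\log^2 q$ and $\log q\,\log\log\frac{19eq}{14\pi}$ terms.

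Finally, when a Siegel zero exists one restores $1/(1-\beta_1)$, which by Lemma \ref{lemma:SiegelBounds} is at most $\sqrt q(\log q)^2/800$ for odd $\chi$ and $\sqrt q(\log q)^2/100$ for even $\chi$, and equals $0$ when $\chi$ is not real since then there is no Siegel zero by Corollary \ref{corollary:ZeroFree}; this accounts for the last display. I expect the main obstacle to be the explicit bookkeeping: pushing the error term of $N(t,\chi)$ through the Stieltjes integration and the low/high split while keeping every numerical constant under control in each of the $q$-ranges and for each parity of $\chi$, and checking that all the lower-order pieces genuinely fit into the claimed $O(1/T)$ tails and constants.
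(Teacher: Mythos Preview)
Your approach is essentially the paper's: split at height $T_0=5/7$, handle the high range by partial summation against $N(t,\chi)$ from Lemma~\ref{lemma:NumberOfZeros}, and treat the low range through the zero-free information (GRH up to height $293.75$ when $q\le 4\cdot10^5$, Lemma~\ref{lemma:zeroFreeLarge} when $q>4\cdot10^5$). Two points to correct.

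First, the opening equality for $3\le q\le 4\cdot10^5$ is not a notational convention: it is a genuine fact, following from Corollary~\ref{corollary:GRH} and Lemma~\ref{lemma:zeroFreeLarge}, that no Siegel zero exists in this range, so the excluded term $1-\beta_1$ simply does not occur.

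Second, and more substantively, your low-range bound for $q>4\cdot10^5$ loses a factor of about $2$ and therefore does not reach the stated constant $2.194$. You bound each $1/|\rho_\chi|$ by $9.646\log q$ and multiply by $N(5/7,\chi)$, which gives roughly $9.646\log q\cdot N(5/7,\chi)\approx 4.39\log^2 q$. The paper instead exploits the reflection symmetry $\rho\leftrightarrow 1-\bar\rho$: after the change of variable it bounds $\sum_{\rho\ne\beta_1}1/(1-\beta)$, then pairs each zero with $\beta>1/2$ with its reflection, so that a pair contributes $1/(1-\beta)+1/\beta\le 9.646\log q+2$ while a zero on the critical line contributes $2$. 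Since the number $N_1$ of off-line pairs satisfies $N_1\le N(5/7,\chi)/2$, this yields $\bigl(\tfrac{9.646}{2}\log q+1\bigr)N(5/7,\chi)$, and it is this halving that produces $2.194\log^2 q$. Without the pairing your argument proves only a weaker inequality.
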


\begin{proof}
First,  by Lemma \ref{lemma:zeroFreeLarge} and Corollary \ref{corollary:GRH}, there is no Siegel zero when $q>4\cdot10^5$ and $\chi$ is a non-real or a principal character modulo $q$ or when $3 \leq q \leq 4\cdot10^5$.  In these cases we have
\begin{equation}
\label{eq:SumEquality}
     \sum_{\substack{ |\gamma_{\chi}| \leq T}} \frac{1}{|{\rho_\chi}|}=\sum_{\substack{{\rho_\chi} \neq 1-\beta_ 1 \\ |\gamma_{\chi}| \leq T}} \frac{1}{|{\rho_\chi}|}.
\end{equation}
Further, in general, we have
\begin{equation}
\label{eq:sumsGamma}
    \sum_{\substack{ |\gamma_{\chi}| \leq T}} \frac{1}{|{\rho_\chi}|}=\sum_{\substack{{\rho_\chi} \neq 1-\beta_ 1 \\ |\gamma_{\chi}| \leq T}} \frac{1}{|{\rho_\chi}|}+\frac{1}{1-\beta_1}.
\end{equation}
Now, keeping the previous facts in mind, we consider the sums in two cases: 
depending on whether or not  $|\gamma_{\chi}| \leq T_0$  for some $T_0 \in 
[5/7,293.75]$. In practice, $T_0$ will be chosen to be small.

Let us start with the case $|\gamma_{\chi}| \leq T_0$.
By Corollary \ref{corollary:GRH}, the GRH holds up to height $293.75$ if $q \leq 4\cdot10^5$. Hence, we even have
\begin{equation}
\label{eq:GRHsum}
    \sum_{\substack{ |\gamma_{\chi}| \leq T_0}} \frac{1}{|{\rho_\chi}|}=\sum_{\substack{\beta=1/2 \\ |\gamma_{\chi}| \leq T_0}} \frac{1}{|{\rho_\chi}|}\leq 2N(T_0, \chi), \quad \text{if } q \leq 4\cdot10^5.
\end{equation}
Further, if $q > 4\cdot10^5$ and the Siegel zero exists, then by Lemmas \ref{lemma:zeroFreeLarge} and \ref{lemma:SiegelBounds} we get
\begin{equation}
\label{eq:beta1Est}
    \frac{1}{1-\beta_1} \leq 
    \begin{cases}
        \frac{\sqrt{q}\log^2{q}}{800}, & \text{if } \chi \text{ is real and odd} \\
        \frac{\sqrt{q}\log^2{q}}{100}, & \text{if } \chi \text{ is real and even}.
    \end{cases}
\end{equation}
Note now that since zeros lie symmetrically with  respect to the line $s=1/2$, we have
\begin{equation*}
    N_1\left(9.646\log{q}+2\right)+2N_2\leq 2N_1+N_2+9.646\log{q}\cdot\left(\frac{N_2}{2}+ N_1\right)\leq N(T_0,\chi)+9.646\left(\log{q}\right)\frac{N(T_0,\chi)}{2},
\end{equation*}
where $N_1$ is the number of  zeros with $\Re({\rho_\chi}) <1/2$ and $|\Im({\rho_\chi})| \leq T_0$, and $N_2$ is the number of the zeros with $\Re({\rho_\chi})=1/2$ and  $|\Im({\rho_\chi})| \leq T_0$. 
Moreover, by Lemma \ref{lemma:zeroFreeLarge}, we have
\begin{equation}
\label{eq:minusSum}
    \sum_{\substack{{\rho_\chi} \neq 1-\beta_ 1 \\ |\gamma_{\chi}| \leq T_0}} \frac{1}{|{\rho_\chi}|}\leq \sum_{\substack{{\rho_\chi} \neq \beta_ 1 \\ |\gamma_{\chi}| \leq T_0}} \frac{1}{1-\beta}= \sum_{\substack{{\rho_\chi} \neq \beta_ 1 \\ 1-\beta < 1/2 \\ |\gamma_{\chi}| \leq T_0}}\left(\frac{1}{1-\beta}+\frac{1}{\beta}\right)+\sum_{\substack{{\rho_\chi} \neq \beta_ 1 \\ 1-\beta = 1/2 \\ |\gamma_{\chi}| \leq T_0}} 2\leq 9.646\left(\log{q}\right)\frac{N(T_0,\chi)}{2}+N(T_0,\chi) 
\end{equation}
for $q >4\cdot 10^5$ and $T_0 \leq 1$ (which will be seen to be a good choice later). Hence, we are ready with the case $|\gamma_{\chi}| \leq T_0$. 

Let us now consider the case $|\gamma_{\chi}|>T_0$. Now we have
\begin{equation}
\label{eq:sumT0T}
    \sum_{\substack{ T_0<|\gamma_{\chi}| \leq T}} \frac{1}{|{\rho_\chi}|} < \sum_{T_0<\substack{ |\gamma_{\chi}| \leq T}} \frac{1}{|\gamma_{\chi}|},
\end{equation}
and by the second paragraph of the proof of Lemma 30 in \cite{EHP2022} which applies the first result mentioned in Lemma \ref{lemma:NumberOfZeros}, the right-hand side is
\begin{multline}
\label{eq:estT0T}
    \leq \frac{N(T,\chi)}{T}-\frac{N(T_0, \chi)}{T_0}+\frac{\log^2{T}}{2\pi}+\frac{\log T}{\pi} \cdot\log{\frac{q}{2\pi e}}-\frac{\log^2{T_0}}{2\pi}-\frac{\log T_0}{\pi} \cdot\log{\frac{q}{2\pi e}}\\
    +\int_{T_0}^T \frac{\alpha_1(q,t)\log{(qt)}+\alpha_2(q,t)}{t^2}\, dt,
\end{multline}
where
\begin{equation}
\label{eq:defalpha1alpha2}
\alpha_1(q,t)=
\begin{cases}
0.298 & \text{if } 3\leq q \leq 4\cdot10^5 \text{ and } t\leq 10^{16} \\
0.247 & \text{if } 3\leq q \leq 4\cdot10^5 \text{ and } t> 10^{16} \\
0.247 & \text{if } q>4\cdot10^5
\end{cases},
\quad
\alpha_2(q,t)=
\begin{cases}
4.358 & \text{if } 3\leq q \leq 4\cdot10^5 \text{ and } t\leq 10^{16} \\
6.894 & \text{if } 3\leq q \leq 4\cdot10^5 \text{ and } t> 10^{16} \\
6.894 & \text{if } q>4\cdot10^5
\end{cases}.
\end{equation}

Now we put the previous estimates together. Indeed, in the case $3\leq q \leq 4\cdot10^5$, we apply estimates \eqref{eq:SumEquality}, \eqref{eq:GRHsum}, \eqref{eq:sumT0T} and \eqref{eq:estT0T}, and in the case $q>4\cdot10^5$, we apply estimates \eqref{eq:SumEquality}, \eqref{eq:sumsGamma}, \eqref{eq:beta1Est}, \eqref{eq:minusSum}, \eqref{eq:sumT0T} and \eqref{eq:estT0T}. Moreover, we use Lemma \ref{lemma:NumberOfZeros} to estimate the number of zeros. Further, let us set $T_0=5/7$. Comparing the results for $T \leq 10^{16}$ and $T>10^{16}$, we obtain the desired result.
\end{proof}

\begin{corollary}
\label{cor:RhoNoChi}
Let $T \geq q >4\cdot10^5$ and let $\chi$ be a primitive or a principal character modulo $q$. Then
    \begin{equation*}
        \sum_{\substack{ |\gamma_{\chi}| \leq T \\ \rho_\chi \neq 1-\beta_1}} \frac{1}{|{\rho_\chi}|} \leq \frac{\log^2{T}}{2\pi}+4.434\log{q}\log{T}.
        \end{equation*}
\end{corollary}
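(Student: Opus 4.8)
The plan is to specialize the estimate \eqref{eq:sum1RhoqLarge} of Lemma \ref{lemma:ZeroSums} to the case $T \geq q > 4\cdot10^5$ and absorb all lower-order terms into the clean bound $\frac{\log^2 T}{2\pi} + 4.434\log q\log T$. First I would recall that for a primitive character the bound \eqref{eq:sum1RhoqLarge} already gives an upper bound for $\sum_{|\gamma_\chi|\le T,\ \rho_\chi\ne 1-\beta_1} 1/|\rho_\chi|$; and for a principal character, the non-trivial zeros coincide with those of $\zeta(s)$, which is the $q=1$ situation where there is no Siegel zero, so one can either apply Lemma \ref{lemma:principalrhoT} directly or note that the principal-character sum is dominated by the $q>4\cdot10^5$ primitive bound with $q$ replaced by its conductor — in any case the inequality to be proved is weaker, so it suffices to handle the primitive case. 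Since the statement omits the Siegel-zero contribution (the sum explicitly excludes $\rho_\chi = 1-\beta_1$), I do not need to add the $\sqrt q\log^2 q$ terms from Lemma \ref{lemma:ZeroSums}.

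The main work is then purely an exercise in monotonicity: starting from \eqref{eq:sum1RhoqLarge}, I would use $q \leq T$ to replace every occurrence of $\log q$ by $\log T$ in the terms I wish to discard, and use $T \geq q > 4\cdot10^5$ (so $\log T > \log q > \log(4\cdot10^5) \approx 12.9$) to control the various $\log\log$ terms and the constant. Concretely, the dominant terms in \eqref{eq:sum1RhoqLarge} beyond $\frac{\log^2 T}{2\pi}$ are $\frac{\log T}{\pi}\log\frac{q}{2\pi}$, which is at most $\frac{1}{\pi}\log q\log T < 0.3184\log q\log T$, and $2.194\log^2 q + 9.646\log q\log\log\!\big(\tfrac{19eq}{14\pi}\big)$, both of which are $\leq C\log q\log T$ for suitable constants once one uses $\log q \leq \log T$ and bounds $\log\log(19eq/(14\pi))$ by a multiple of $\log q$ (in fact $\log\log(19eq/(14\pi)) \leq \log\log q + O(1) \leq \varepsilon \log q$ for $q$ large). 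The remaining pieces — $-5.017\log q$, $-0.8\log\log(\cdot)$, the constant $9.121$, and the entire $1/T$-bracket — are either negative or $O(\log q \log T)$ with a tiny coefficient; I would bound the $1/T$-bracket by using $T \geq q > 4\cdot 10^5$ to make it at most, say, $\frac{1}{T}\big(0.228\log(T+2) + 2\log\log(eq(T+2)/(2\pi))\big) \leq \frac{0.5\log T}{T} \leq \frac{\log T}{10^6}$, which is negligible against $\log q\log T$.

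Summing the coefficients: $\frac{1}{\pi} + (\text{coefficient of }\log q\log T\text{ from the }2.194\log^2 q\text{ term, namely }2.194) + (\text{from }9.646\log q\log\log(\cdot)\text{, bounded by }9.646\cdot\frac{\log\log q + c}{\log q})$; the third of these shrinks as $q$ grows, so I would verify at $q = 4\cdot10^5$ that the total of all the kept coefficients is below $4.434$, and argue by monotonicity that it stays below $4.434$ for all larger $q$ (the $\log^2 q/\log q$-type ratios are increasing but the $\log\log q/\log q$ ratios decreasing, so a single numerical check at the left endpoint together with an asymptotic comparison suffices). The main obstacle is therefore bookkeeping rather than conceptual: making sure that the coefficient $2.194$ from $\log^2 q \leq \log q\log T$, plus $\frac{1}{\pi} \approx 0.318$, plus whatever is contributed by the $9.646\log q\log\log$ term and the stray constants, genuinely sums to at most $4.434$ — this forces a careful choice of how aggressively one trades $\log\log q$ against $\log q$ at the threshold $q = 4\cdot10^5$, and it is the one place where the inequality is tight enough that a sloppy estimate would fail. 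Everything else is routine.
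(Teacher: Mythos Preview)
Your proposal is correct and follows essentially the same route as the paper: for primitive $\chi$ you specialise \eqref{eq:sum1RhoqLarge}, use $\log q\le\log T$, and check the coefficient at $q=4\cdot10^5$ with a monotonicity argument; for principal $\chi$ you invoke Lemma \ref{lemma:principalrhoT} and symmetry. The one simplification you miss is that the paper handles the ``remaining pieces'' in a single stroke by observing that the last four terms of \eqref{eq:sum1RhoqLarge} (namely $-5.017\log q$, $-0.8\log\log(\cdot)$, $9.121$, and the entire $1/T$-bracket) sum to a negative number for $q>4\cdot10^5$, so they can simply be dropped rather than individually bounded and absorbed---this cleans up the bookkeeping you flagged as the main obstacle.
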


\begin{proof}
    Note that the sum of the last four terms in \eqref{eq:sum1RhoqLarge} is negative. Using that estimate, we get an estimate for primitive characters. Since Dirichlet $L$-functions associated with principal characters share the non-trivial zeros with the Riemann zeta function and the Riemann zeta function does not have any non-trivial real zeros, the claim for principal characters follows using Lemma \ref{lemma:principalrhoT} and symmetry of the non-trivial zeros of the Riemann zeta function with respect to the real line.
\end{proof}

The next results are in similar style as the previous ones but we have a different sum over zeros.

\begin{lemma}
\label{lemma:rho2Zeta}
\cite[Lemma 5]{BPT2022}, \cite[Lemma 2.9]{STD2015}
Let $T \geq 2\pi e$. Then
\begin{equation*}
     \sum_{\Im(\rho_{\zeta})>T} \frac{1}{\Im(\rho_\zeta)^2} \leq \frac{\log{T}}{2\pi T} \quad \text{and}\quad \sum_{0<\Im(\rho_\zeta)} \frac{1}{\Im(\rho_\zeta)^2}<0.023105,
\end{equation*}
where the sums run over non-trivial zeros of the Riemann zeta function.
\end{lemma}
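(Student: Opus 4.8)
The statement to be proved is Lemma~\ref{lemma:rho2Zeta}, which collects two bounds on sums of $\Im(\rho_\zeta)^{-2}$ over non-trivial zeros of $\zeta$: a tail bound $\sum_{\Im(\rho_\zeta)>T}\Im(\rho_\zeta)^{-2}\leq \frac{\log T}{2\pi T}$ for $T\geq 2\pi e$, and the absolute bound $\sum_{0<\Im(\rho_\zeta)}\Im(\rho_\zeta)^{-2}<0.023105$. Since the excerpt cites \cite[Lemma 5]{BPT2022} and \cite[Lemma 2.9]{STD2015}, the cleanest route is simply to quote those references; but I would also want to sketch the underlying argument for completeness.

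The plan for the tail bound is standard partial summation against the zero-counting function $N(T) = N(T,\chi_0)$. Writing $\gamma$ for the positive ordinates and using symmetry (so I may restrict to $\gamma>0$ throughout and note the negative ones contribute equally, already absorbed into the normalization being used), I would start from
\[
\sum_{\gamma>T}\frac{1}{\gamma^2} = \int_T^\infty \frac{dN(t)}{t^2} = \left[\frac{N(t)}{t^2}\right]_T^\infty + 2\int_T^\infty \frac{N(t)}{t^3}\,dt = -\frac{N(T)}{T^2} + 2\int_T^\infty \frac{N(t)}{t^3}\,dt,
\]
and then feed in an explicit upper bound $N(t)\leq \frac{t}{2\pi}\log\frac{t}{2\pi e} + \frac{7}{4} + (\text{small error})$ of the type recorded in Lemma~\ref{lemma:NumberOfZeros} (estimate \eqref{eq:numberofPrincipalZeros}). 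The main term $\frac{t}{2\pi}\log\frac{t}{2\pi}$ integrated against $t^{-3}$ produces, after one integration by parts, a quantity of size $\frac{1}{4\pi T}\log\frac{T}{2\pi} + O(1/T)$, which must then be shown to be $\leq \frac{\log T}{2\pi T}$; the negative term $-N(T)/T^2$ helps here, and the inequality should hold comfortably once $T\geq 2\pi e$ because then $\log\frac{T}{2\pi}\leq \log T - 1 < 2\log T$ with room to spare to kill the lower-order pieces. The delicate point is bookkeeping the explicit constants in the error term of $N(t)$ so that everything nets out below $\frac{\log T}{2\pi T}$ for \emph{all} $t\geq 2\pi e$, not just asymptotically — this is exactly the kind of verification done in \cite{BPT2022, STD2015}, so I would lean on their statement rather than redo it.

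For the absolute constant $0.023105$, the idea is to split $\sum_{0<\gamma}\gamma^{-2}$ at a convenient height $T_0$ above which the verified region / the tail estimate applies — e.g. take $T_0$ somewhere in the range covered by the RH verification of Lemma~\ref{lemma:RH} (height $3\cdot10^{12}$) or simply a modest height like $T_0 = 2\pi e$ or larger. For $\gamma > T_0$ one applies the tail bound just proved, giving $\leq \frac{\log T_0}{2\pi T_0}$. For $0<\gamma\leq T_0$ one evaluates $\sum \gamma^{-2}$ numerically using the known list of low-lying zeros (the first few ordinates $14.1347\ldots, 21.0220\ldots, 25.0108\ldots,\ldots$ dominate, with $14.1347^{-2}\approx 0.005$ already being the largest single term), and the partial sum over all zeros up to a large height converges rapidly to a value just under $0.023105$. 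Concretely $\sum_\gamma \gamma^{-2} = 2 + C_0 - \log 4\pi \approx 0.046\ldots$ if one counts both signs, or half that for one sign; matching the normalization in the cited lemmas gives the stated $0.023105$. The only real obstacle is matching conventions — whether the sum is over positive ordinates only or over all non-trivial zeros — but comparing against the exact closed form $\sum_{\rho}\rho^{-2} = 1 + \frac{C_0^2}{2} - \frac{\pi^2}{8} - \ldots$ (or more simply against the value $b = \sum_\rho \rho^{-1}$ relations) pins it down, and in any case the bound is quoted verbatim from \cite{BPT2022} and \cite{STD2015}, so the proof can legitimately consist of that citation together with the remark that the tail bound follows by partial summation from Lemma~\ref{lemma:NumberOfZeros}.
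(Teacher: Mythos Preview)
Your proposal is correct: the paper gives no proof at all for this lemma, simply stating it with the citations to \cite{BPT2022} and \cite{STD2015}, so your observation that ``the proof can legitimately consist of that citation'' is exactly what the paper does. Your additional sketch of the partial-summation argument and the numerical evaluation of low-lying zeros is a reasonable and correct outline of how those cited results are obtained, though it goes beyond what the paper itself provides.
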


\begin{lemma}
\label{lemma:rhosquares}
Assume that $T \geq 5/7$ is a real number, $q \geq 3$ is an integer and $\chi$ is a primitive character modulo $q$. Then, we have
\begin{equation}
\label{eq:SquaresAndProduct}
\sum_{\substack{{\rho_\chi } \\ |\Im({\rho_\chi})|>T}} \frac{1}{|{\rho_\chi}|^2}\leq \frac{1}{\pi T}\log{\frac{qT}{2\pi}}+\frac{0.494\log{(qT)}+13.912}{T^2}
\end{equation}
and 
\begin{equation*}
    \sum_{{\rho_\chi \neq 1-\beta_1}} \frac{1}{|{\rho_\chi}({\rho_\chi}+1)|}
    <
\begin{cases}
0.894 \log{q}+3.107 &\text{if } 3 \leq q \leq 9 \\
1.116 \log{q}+4.637 &\text{if } 10 \leq q \leq 4\cdot10^5 \\
2.194 \log^2{q}+9.646\log{q}\log{\log{q}} &\text{if } q>4\cdot10^5.
\end{cases},
\end{equation*}
where the sums run over non-trivial zeros of the function $L(s,\chi)$. 
If there exists a  Siegel zero and we include the zero $1-\beta_1$, then first two estimates do not change and the last estimate changes if $\chi$ is real. If $\chi$ is real, then we add $\frac{\sqrt{q}(\log{q})^2}{800}$ if $\chi$ is odd and $\frac{\sqrt{q}(\log{q})^2}{100}$ if $\chi$ is even, to the last line.
\end{lemma}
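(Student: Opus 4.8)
The plan is to reduce \eqref{eq:SquaresAndProduct} to the explicit zero--count of Lemma \ref{lemma:NumberOfZeros} by partial summation, and to reduce the second estimate to \eqref{eq:SquaresAndProduct} together with the bound for $\sum 1/|\rho_\chi|$ already obtained in Lemma \ref{lemma:ZeroSums}.

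For \eqref{eq:SquaresAndProduct}: since every non--trivial zero has $0<\Re(\rho_\chi)<1$ we have $|\rho_\chi|\ge|\Im(\rho_\chi)|=|\gamma_\chi|$, so it suffices to bound $\sum_{|\gamma_\chi|>T}\gamma_\chi^{-2}$. Abel summation against the counting function gives
\[
\sum_{|\gamma_\chi|>T}\frac{1}{\gamma_\chi^{2}}=2\int_T^\infty\frac{N(t,\chi)}{t^{3}}\,dt-\frac{N(T,\chi)}{T^{2}},
\]
and it is essential to retain the boundary term $-N(T,\chi)/T^{2}$, which is what turns the coefficient of the main term from $\tfrac2\pi$ into $\tfrac1\pi$. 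I would insert the upper bound $N(t,\chi)\le\frac{t}{\pi}\log\frac{qt}{2\pi e}+0.247\log(qt)+6.894$ of Lemma \ref{lemma:NumberOfZeros} in the integral and the matching lower bound in the boundary term; using $\int_T^\infty t^{-2}\log\frac{qt}{2\pi e}\,dt=T^{-1}\log\frac{qT}{2\pi}$ the leading parts combine to a main term of the shape $\frac{1}{\pi T}\log\frac{qT}{2\pi}$, and the two copies of the error term $0.247\log(qt)+6.894$ (one integrated against $2t^{-3}$, one evaluated at $T$) assemble into $\frac{0.494\log(qT)+13.912}{T^{2}}$. A Siegel zero is real and so never satisfies $|\Im(\rho_\chi)|>T$, whence \eqref{eq:SquaresAndProduct} is unaffected by it.

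For the second estimate I would first observe that $\Re(\rho_\chi)>0>-\tfrac12$ forces $|\rho_\chi+1|\ge|\rho_\chi|$, so $\sum_{\rho_\chi\ne1-\beta_1}|\rho_\chi(\rho_\chi+1)|^{-1}\le\sum_{\rho_\chi\ne1-\beta_1}|\rho_\chi|^{-2}$, and then split the zeros at a height $T_0$, handling those with $|\gamma_\chi|>T_0$ by \eqref{eq:SquaresAndProduct} with $T=T_0$. For the low--lying zeros I would distinguish according to the conductor. If $3\le q\le4\cdot10^{5}$, take $T_0=293.75$: by Corollary \ref{corollary:GRH} every zero with $|\gamma_\chi|\le T_0$ lies on the critical line and there is no exceptional zero, so $|\rho_\chi|^{2}=\tfrac14+\gamma_\chi^{2}$, and $\sum_{|\gamma_\chi|\le293.75}(\tfrac14+\gamma_\chi^{2})^{-1}$ is again estimated by partial summation against the refined count for primitive characters in Lemma \ref{lemma:NumberOfZeros}, using that $N(t,\chi)=0$ whenever $\log\frac{q(t+2)}{2\pi}\le1.567$, so there are no zeros below a conductor--dependent height; collecting the constants (the tail from \eqref{eq:SquaresAndProduct} at $T=293.75$ being negligible) yields $0.894\log q+3.107$ for $3\le q\le9$ and $1.116\log q+4.637$ for $10\le q\le4\cdot10^{5}$, the break reflecting the thresholds in Lemma \ref{lemma:NumberOfZeros}. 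If $q>4\cdot10^{5}$, take $T_0=1$ and bound $\sum_{|\gamma_\chi|\le1,\ \rho_\chi\ne1-\beta_1}|\rho_\chi|^{-1}$ directly by Lemma \ref{lemma:ZeroSums} with $T=1$, discarding its negative lower--order terms as in Corollary \ref{cor:RhoNoChi}; adding the tail $\sum_{|\gamma_\chi|>1}\gamma_\chi^{-2}$ from \eqref{eq:SquaresAndProduct} with $T=1$ gives $2.194\log^{2}q+9.646\log q\log\log q$ after absorbing the $\log q$--terms, which are negative in this range. Finally, if a Siegel zero is kept, $1-\beta_1$ lies in the head with $\gamma_\chi=0$ and contributes $\frac{1}{(1-\beta_1)(2-\beta_1)}\le\frac{1}{1-\beta_1}$, bounded by $\frac{\sqrt q(\log q)^{2}}{800}$ or $\frac{\sqrt q(\log q)^{2}}{100}$ via Lemma \ref{lemma:SiegelBounds} according to the parity of $\chi$, while the first two bounds are untouched because there is no exceptional zero for $q\le4\cdot10^{5}$.

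The main obstacle is the explicit constant bookkeeping. In \eqref{eq:SquaresAndProduct} one must keep the boundary term and, at each step, choose the sharper of the two error terms offered by Lemma \ref{lemma:NumberOfZeros}. In the second estimate the delicate point is the choice of the splitting height $T_0$ in each range of $q$ so that the contribution of the low--lying zeros — small for small $q$ because the first zero sits relatively high, and governed by Lemma \ref{lemma:ZeroSums} for large $q$ — is balanced against the tail of \eqref{eq:SquaresAndProduct}, which decreases in $T_0$; one also has to track the principal versus non--principal characters and, for $q>4\cdot10^{5}$, the exceptional zero carefully right at the boundaries between the three $q$--ranges.
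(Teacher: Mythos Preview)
Your treatment of \eqref{eq:SquaresAndProduct} is exactly what the paper does: partial summation against $N(t,\chi)$, keep the boundary term $-N(T,\chi)/T^2$, use the upper bound from Lemma~\ref{lemma:NumberOfZeros} in the integral and the matching lower bound in the boundary term so that the main coefficient drops from $\tfrac{2}{\pi}$ to $\tfrac{1}{\pi}$.

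For the second sum your route diverges from the paper's. The paper does \emph{not} pass through Lemma~\ref{lemma:ZeroSums}; instead it bounds $|\rho_\chi(\rho_\chi+1)|^{-1}$ pointwise for the low-lying zeros. For $q\le 4\cdot10^5$ one has $\Re(\rho_\chi)=\tfrac12$ up to height $293.75$, whence $|\rho_\chi(\rho_\chi+1)|\ge \tfrac34+\gamma_\chi^2$, and the paper simply uses $\tfrac{1}{3/4+\gamma_\chi^2}\le\tfrac43$ and multiplies by $N(T_0,\chi)$; for $q>4\cdot10^5$ it uses McCurley's zero-free region (Lemma~\ref{lemma:zeroFreeLarge}) in place of GRH. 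Crucially the paper chooses a \emph{small} splitting height --- $T_0\approx1.035$ for $q\le4\cdot10^5$ and $T_0=5/7$ for $q>4\cdot10^5$ --- so that the head is controlled by a bare $N(T_0,\chi)$ bound rather than by a partial summation up to $293.75$. Your choice $T_0=293.75$ for small $q$ would work in principle, but the exact constants $0.894,\,3.107,\,1.116,\,4.637$ come from the paper's specific optimisation at $T_0\approx1.035$ and would need to be re-derived along your route.

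There is also a slip in your large-$q$ argument: you reduce globally to $\sum|\rho_\chi|^{-2}$ via $|\rho_\chi+1|\ge|\rho_\chi|$, but then for the head you invoke Lemma~\ref{lemma:ZeroSums}, which controls $\sum|\rho_\chi|^{-1}$, not $\sum|\rho_\chi|^{-2}$. The fix is immediate --- since $|\rho_\chi+1|>1$ one has $|\rho_\chi(\rho_\chi+1)|^{-1}\le|\rho_\chi|^{-1}$ directly for the head --- but you should state this second inequality rather than the first when feeding into Lemma~\ref{lemma:ZeroSums}. With that correction your route does recover the leading $2.194\log^2 q+9.646\log q\log\log q$, since that term in Lemma~\ref{lemma:ZeroSums} has the same provenance ($\tfrac{9.646}{2}\log q\cdot N(5/7,\chi)$) as in the paper's direct computation; the lower-order $\log q$ terms would again need checking.
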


\begin{proof}
Let us start with the first sum in \eqref{eq:SquaresAndProduct}. By partial summation and Lemma \ref{lemma:NumberOfZeros} we have
\begin{multline*}
\sum_{\substack{{\rho_\chi} \\ |\Im({\rho_\chi})|>T}} \frac{1}{|{\rho_\chi}|^2}\leq \sum_{\substack{\gamma_{\chi} \\ |\gamma_{\chi}|>T}} \frac{1}{|\gamma_{\chi}|^2} \leq -\frac{N(T,\chi)}{T^2}+\int_{T}^\infty \frac{2N(t,\chi)}{t^3} \, dt \leq -\frac{N(T,\chi)}{T^2} \\
+\int_{T}^\infty \frac{2}{t^3}\left(\frac{t}{\pi}\log{\frac{qt}{2\pi e}}+\alpha_1(q,t)\log{(qt)}+\alpha_2(q,t)\right) \, dt = \frac{2}{\pi T}\log{\frac{qT}{2\pi}}-\frac{N(T,\chi)}{T^2}+\int_{T}^\infty \frac{2}{t^3}\left(\alpha_1(q,t)\log{(qt)}+\alpha_2(q,t)\right) \, dt,
\end{multline*}
where $\alpha_1(q,t)$ and $\alpha_2(q,t)$ are defined as in \eqref{eq:defalpha1alpha2}.

Let us now move on to estimate the second sum in \eqref{eq:SquaresAndProduct}. Let us first notice that when $|\Im({\rho_\chi})|\leq 293.75$, then 
\begin{equation}
\label{eq:ZerosSmall}
\frac{1}{|{\rho_\chi}(1+{\rho_\chi})|} \leq 
\begin{cases}
\frac{1}{\frac{3}{4}+\gamma_{\chi}^2} &\text{if } 3 \leq q \leq 4\cdot 10^5 \\
\frac{1}{(9.646\log{(\max\{q, q|\Im({\rho_\chi})|\}))^{-1}}+\gamma_{\chi}^2} &\text{if } q>4\cdot10^5 \text{ and } \chi \text{ is non-real}\\
\frac{\sqrt{q}(\log{q})^2}{800} &\text{if } q>4\cdot10^5 \text{ and } \chi \text{ is real and odd} \\
\frac{\sqrt{q}(\log{q})^2}{100} &\text{if } q>4\cdot10^5 \text{ and } \chi \text{ is real and even} 
\end{cases}.
\end{equation}
by Corollary \ref{corollary:GRH} and Lemmas \ref{lemma:zeroFreeLarge} and \ref{lemma:SiegelBounds}.

Hence, using the first paragraph, Lemma \ref{lemma:zeroFreeLarge} and recalling that the zeros lie symmetrically with  respect to the line $\Re(s)=1/2$, we have
\begin{multline}
\label{eq:ZerosExp}
\sum_{\substack{{\rho_\chi}}} \frac{1}{|{\rho_\chi}(1+{\rho_\chi})|}=\left(\sum_{\substack{{\rho_\chi} \\ |\Im({\rho_\chi})| \leq T}}+\sum_{\substack{{\rho_\chi} \\ |\Im({\rho_\chi})| >T}}\right) \frac{1}{|{\rho_\chi}(1+{\rho_\chi})|} \\
\leq \frac{2}{\pi T}\log{\frac{qT}{2\pi}}-\frac{N(T,\chi)}{T^2}+\int_{T}^\infty \frac{2}{t^3}\left(\alpha_1(qt)\log{(qt)}+\alpha_2(qt)\right) \, dt\\+
\begin{cases}
\frac{4N(T,\chi)}{3} &\text{if } 3 \leq q \leq 4\cdot10^5 \\
\frac{9.646}{2}\log{(\max\{q, qT\})}N(T,\chi)+\frac{4N(T,\chi)}{6} &\text{if } q>4\cdot10^5 \text{ and } \chi \text{ is non-real} \\
\frac{\sqrt{q}(\log{q})^2}{800}+9.646\log{(\max\{q, qT\})}(\frac{N(T,\chi)}{2}-1)+\frac{4N(T,\chi)}{6}&\text{if } q>4\cdot10^5 \text{ and } \chi \text{ is real and odd} \\
\frac{\sqrt{q}(\log{q})^2}{100}+9.646\log{(\max\{q, qT\})}(\frac{N(T,\chi)}{2}-1)+\frac{4N(T,\chi)}{6}&\text{if } q>4\cdot10^5 \text{ and } \chi \text{ is real and even}
\end{cases}
.
\end{multline}
The second estimate follows when we use the second result in Lemma \ref{lemma:NumberOfZeros}, and set
$T=1.035149976$
if $3\leq q \leq 4\cdot10^5$ and $T=5/7$ if $q>4\cdot10^5$. In addition we recognize that if we do not include the zero $1-\beta_1$ to the sum, then the second estimate holds for all $q>4\cdot10^5$.

In the case $q\leq 4\cdot 10^5$, we get 
\[
0.2782 \log{q} + 4.2158
\]
from the integral. Then we still have to bound
\[
\frac{2}{\pi T}\log{\frac{qT}{2\pi}}+\left(\frac{4}{3}-\frac{1}{T^2}\right)N(T,\chi)
\]
If $3\leq q\leq 9$, then $l=\log\frac{q(T+2)}{2\pi}<1.5$, so $N(T,\chi)=0$, and hence, this reduces to the term $\frac{2}{\pi T}\log{\frac{qT}{2\pi}}<0.61501 \log{q} - 1.109$. This means, that in total, for $3\leq q\leq 9$, we get
\[
0.27811 \log{q} + 4.2158+0.61501 \log(q) - 1.109<0.894 \log{q} + 3.107.
\]
In the case of $10\leq q\leq 4\cdot 10^{5}$, we proceed similarly except in the end when we need to estimate the term $\frac{2}{\pi T}\log{\frac{qT}{2\pi}}+\left(\frac{4}{3}-\frac{t}{T^2}\right)N(T,\chi)$. We have
\begin{multline*}
\frac{2}{\pi T}\log{\frac{qT}{2\pi}}+\left(\frac{4}{3}-\frac{1}{T^2}\right)N(T,\chi)\\
<\frac{2}{\pi T}\log{\frac{qT}{2\pi}}+\left(\frac{4}{3}-\frac{1}{T^2}\right)\left(\frac{T}{\pi}\log \frac{qT}{2\pi e}+\frac{1}{4}+0.22737 \log\frac{q(T+2)}{2\pi}+2\log(1+\log\frac{q(T+2)}{2\pi})-0.5\right)\\
<0.837802 \log{q} + 0.800186 \log(\log{q} + 0.272384) - 1.64483
\end{multline*}
Now we have the final estimate
\begin{multline*}
0.837802 \log{q} + 0.800186 \log(\log{q} + 0.272384) - 1.64483+0.27811 \log{q} + 4.2158\\
<1.116 \log{q} + 0.801 \log(\log{q} + 0.273) + 2.571< 1.116 \log{q}+4.637.
\end{multline*}

Let us now move to the case with $q> 4\cdot 10^5$. Let us start with evaluating the integral. We get
\begin{equation}
\label{eq:FirstIntegral}
    0.48412 \log{q} + 13.5914.
\end{equation}
Now $\max\{q,qT\}=q$. Hence we need to bound the expression
\begin{multline*}
\frac{2}{\pi T}\log{\frac{qT}{2\pi}}-\frac{N(T,\chi)}{T^2}+9.646\log{(\max\{q, qT\})}\frac{N(T,\chi)}{2}+\frac{4N(T,\chi)}{6}+0.48412 \log{q} + 13.5914\\
=\frac{2}{\pi T}\log{\frac{qT}{2\pi}}+N(T,\chi)\left(-\frac{1}{T^2}+\frac{9.646}{2}\log{q}+\frac{2}{3}\right)+0.48412 \log{q} + 13.5914\\
<\frac{2}{\pi T}\log{\frac{qT}{2\pi}}+0.48412 \log{q} + 13.5914\\
+\left(-\frac{1}{T^2}+\frac{9.646}{2}\log{q}+\frac{2}{3}\right)\left(\frac{T}{\pi}\log{\frac{qT}{2\pi e}}+0.22737\log{\frac{q(T+2)}{2\pi}}+2\log{\left(\log{\frac{q e(T+2)}{2\pi}}\right)}-0.25\right)\\ <2.194 \log^2{q}+9.646\log{q}\log{\log{q}}.
\end{multline*}
\end{proof}

\begin{remark}
    Even though we have slightly different formulas for $q>4\cdot10^5$ in \eqref{eq:ZerosExp} for real and non-real characters, we state the same results for them when $\rho \neq 1-\beta_1$. The reason is that both of the estimates give the same simplified upper bound.
\end{remark}

Immediately from the previous lemma we get the following corollary.
\begin{corollary}
\label{cor:sumRhoRhoGen}
    Assume that $\chi$ is a primitive or principal character modulo $q\geq 4\cdot10^5$ and $T \geq e^e$. Then
    \begin{equation}
\label{eq:SquaresAndProduct}
\sum_{\substack{{\rho_\chi } \\ |\Im({\rho_\chi})|>T}} \frac{1}{|{\rho_\chi}|^2}\leq \frac{0.430}{T}\log{\frac{qT}{2\pi}}.
\end{equation}
\end{corollary}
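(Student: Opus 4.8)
The plan is to deduce the corollary from the first estimate of Lemma~\ref{lemma:rhosquares} by absorbing its secondary term into the coefficient of the main term. For a primitive character $\chi \pmod q$ with $q \geq 4\cdot10^5$ and $T \geq e^e$ that lemma gives
\[
\sum_{\substack{{\rho_\chi } \\ |\Im({\rho_\chi})|>T}} \frac{1}{|{\rho_\chi}|^2}\leq \frac{1}{\pi T}\log{\frac{qT}{2\pi}}+\frac{0.494\log{(qT)}+13.912}{T^2},
\]
so, since $\tfrac1\pi < 0.430$, it suffices to show $\frac{0.494\log(qT)+13.912}{T}\leq \bigl(0.430-\tfrac1\pi\bigr)\log\frac{qT}{2\pi}$.

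To prove this last inequality I would reduce to the endpoint $T = e^e$. Its left-hand side is a decreasing function of $T$ for $T\geq e^e$ (its derivative has the sign of $0.494-0.494\log(qT)-13.912<0$), while the right-hand side is increasing in $T$; hence it is enough to check $T = e^e$. There, writing $\log(qT)=\log q + e$ and $\log\tfrac{qT}{2\pi}=\log q + e - \log 2\pi$, the inequality becomes a linear inequality in $\log q$ which reduces to $\log q$ exceeding an explicit constant about $11.5$, and this holds because $q\geq 4\cdot10^5$ forces $\log q > 12.9$. This settles the primitive case.

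If $\chi$ is principal then the non-trivial zeros of $L(s,\chi)$ are exactly those of $\zeta$, and I would instead invoke Lemma~\ref{lemma:rho2Zeta}. For $T\geq 2\pi e$ it yields $\sum_{|\Im(\rho_\zeta)|>T}\frac{1}{|\rho_\zeta|^2}\leq \frac{\log T}{\pi T}$ after using $|\rho_\zeta|\geq|\Im(\rho_\zeta)|$ and the symmetry of the zeros about the real axis, and $\frac{\log T}{\pi T}\leq \frac{0.430}{T}\log\frac{qT}{2\pi}$ since $q>2\pi$ and $\tfrac1\pi<0.430$; for the short remaining range $e^e\leq T<2\pi e$ I would use the uniform bound $\sum_{|\Im(\rho_\zeta)|>T}\frac{1}{|\rho_\zeta|^2}<2\cdot0.023105$ from the same lemma together with $\frac{0.430}{T}\log\frac{qT}{2\pi}>\frac{0.430}{2\pi e}\log\frac{4\cdot10^5 e^e}{2\pi}>0.04$. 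The only genuine computation is the inequality in the middle paragraph, and the mild obstacle there is justifying the monotonicity reduction to $T=e^e$; once that is done, the lower bound $q\geq 4\cdot10^5$ makes $\log q$ dominate the additive constant $13.912$ with room to spare.
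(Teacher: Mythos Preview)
Your approach is correct and matches what the paper does: the paper simply states that the corollary follows ``immediately from the previous lemma,'' and you have supplied exactly the absorption argument that makes this work for primitive $\chi$, together with the separate treatment via Lemma~\ref{lemma:rho2Zeta} for principal $\chi$. One minor slip: in the range $e^e\le T<2\pi e$ you write that the right-hand side exceeds $0.04$, but you need it to exceed $2\cdot0.023105=0.04621$; the actual value $\tfrac{0.430}{2\pi e}\log\tfrac{4\cdot10^5 e^e}{2\pi}\approx0.347$ gives plenty of room, so just state the sharper number.
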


\section{Explicit estimates for the function $\psi(u,\chi)$}
\label{sec:explicitA5}

 In this section, we provide explicit version for the assumption \ref{assumption:psiuchi}. We would like to note to the reader that these generalize the explicit estimates in \cite{CH2023} to cover cases $q \neq 1$, $x$ is non-half odd integer and $x \geq T$.

First we provide a useful lemma that allows us to estimate the function $\psi(u,\chi)$ with the function $I(u,T,\chi)$ defined as
\begin{equation}
\label{eq:defJ}
I(u,T,\chi):=\frac{1}{2\pi i}\int_{c-iT}^{c+iT} \left(-\frac{L'(s,\chi)}{L(s,\chi)}\right)\frac{u^s}{s} ds,
\end{equation}
with $c=1+1/\log u$.

\begin{lemma}
\label{lemma:principal1}
    Let $q \geq 7$ or $q=1$, $u \geq 2$ , $T\geq 4\cdot10^5$ and $\chi$ be a primitive (possibly principal) character modulo $q$. Then
    \begin{multline*}
          \left|\psi(u,\chi)-I(u,T,\chi)\right|< \log u(\log q+1)+\frac{\log u+1}{T}+\frac{eu\log{u}}{T\log{(5/4)}}+\left(2e+\frac{\log{3}}{\log{2}}\right)\log{u}\\+\left(\left(\frac{5}{4}\right)^{1+\frac{1}{\log{2}}}+1\right)\frac{2u}{T}\log{\left(u\right)}\log\log{\left(u\right)}+\frac{19.498u}{T}\log{\left(u\right)}\\
          +\frac{0.645\sqrt{u}\log^2u}{T}+\frac{1.156\sqrt{u}\log u}{T},
    \end{multline*}
    where $I(u,T,\chi)$ is as in \eqref{eq:defJ}, with $\zeta$ in the place of the $L$-function in the case of the principal character and where  
    the term $\log q \log u$ can be omitted in the case of a non-principal character.
\end{lemma}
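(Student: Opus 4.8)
The statement is an effective truncated Perron formula for $\psi(u,\chi)$, so the plan is to run the classical argument along the lines of the truncated Perron formula with explicit constants (cf.\ \cite{CH2023}), keeping every constant explicit and uniform in $u\geq 2$ and $T\geq 4\cdot10^{5}$. The first step is the effective Perron inequality: with $c=1+1/\log u$,
\[
\left|\psi(u,\chi)-I(u,T,\chi)\right|\ \leq\ \sum_{\substack{n\geq 1\\ n\neq u}}\Lambda(n)\left(\frac{u}{n}\right)^{c}\min\!\left(1,\frac{1}{T\left|\log(u/n)\right|}\right)+\frac{c\,\Lambda(u)}{T}+\frac{\Lambda(u)}{2},
\]
where the last two terms appear only when $u$ is a prime power; since $c\,\Lambda(u)\leq(1+1/\log u)\log u$ the middle term yields the $\tfrac{\log u+1}{T}$ contribution, and $\tfrac{\Lambda(u)}{2}$ is absorbed into the $\log u$ terms below. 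When $\chi=\chi_{0}$ and $I(u,T,\chi)$ is formed with $\zeta$, one first replaces $\psi(u,\chi_{0})$ by $\psi(u)$ at the cost of $\sum_{p\mid q}\sum_{p^{k}\leq u}\log p\leq\omega(q)\log u\leq(\log q)(\log u)/\log 2$, which is the source of the term $\log u(\log q+1)$; for non-principal $\chi$ this step is vacuous, so that term may be dropped.

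The second step is to split the sum at $n=\tfrac45 u$ and $n=\tfrac54 u$. On the far part ($n\leq\tfrac45 u$ or $n\geq\tfrac54 u$) one has $|\log(u/n)|\geq\log(5/4)$, so the minimum is at most $1/(T\log(5/4))$, while the full sum $\sum_{n}\Lambda(n)(u/n)^{c}$ equals $u^{c}\,|{-L'/L}(c,\chi)|=e\,u\,|{-L'/L}(c,\chi)|\leq e\,u\,(-\zeta'/\zeta)(c)\leq e\,u\log u$, using $u^{c}=\exp(\log u+1)=eu$ and $(-\zeta'/\zeta)(\sigma)<1/(\sigma-1)$ for $\sigma>1$; together these give the term $\tfrac{eu\log u}{T\log(5/4)}$.

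The crux is the near part $\tfrac45 u<n<\tfrac54 u$, $n\neq u$, on which $(u/n)^{c}\leq(5/4)^{1+1/\log 2}$. Here I would first peel off the boundedly many prime powers closest to $u$, bounding each by $\log(\tfrac54 u)$ (this produces the $\bigl(2e+\tfrac{\log 3}{\log 2}\bigr)\log u$ term); for the remaining $n$, the elementary inequality $|\log(u/n)|\geq\tfrac{4}{5u}|u-n|$ on this range converts the minimum into $\leq\tfrac{5u}{4T|u-n|}$, leaving one to estimate $\sum_{1\leq|u-n|<u/4}\Lambda(n)/|u-n|$ by partial summation against the explicit Chebyshev bound $\sum_{n\leq t}\Lambda(n)\leq 1.03883\,t$ of Lemma~\ref{lemma:explicitMangoldt}. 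Separating inside that partial summation the contribution of genuine prime powers $p^{k}$ with $k\geq 2$ (which, through $\psi(t)-\theta(t)\ll\sqrt t$, is responsible for the $\sqrt u$ terms with the small constants $0.645$ and $1.156$) from that of the primes (responsible for the $\tfrac{u\log u}{T}$ and $\tfrac{u\log u\log\log u}{T}$ terms, with coefficients like $(5/4)^{1+1/\log 2}+1$ and $19.498$) yields the remaining four terms. I expect this near-$u$ bookkeeping, rather than any conceptual difficulty, to be the main obstacle, since the stated constants only emerge after combining several such uniform estimates; the proof then finishes by collecting the boundary, far, and near contributions and simplifying.
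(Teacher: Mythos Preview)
Your overall skeleton (Perron formula, split at $\tfrac45 u$ and $\tfrac54 u$, peel off the two integers adjacent to $u$) matches the paper's. The genuine gap is in your treatment of the near-$u$ sum over primes. You propose to bound $\sum_{1\leq |u-n|<u/4}\Lambda(n)/|u-n|$ ``by partial summation against the explicit Chebyshev bound $\psi(t)\leq 1.03883\,t$'', but a one-sided global bound on $\psi$ gives no control on the short-interval increment $\psi(u)-\psi(u-t)$. Even with two-sided Chebyshev bounds $c_{1}t\leq\psi(t)\leq c_{2}t$ you only get $\psi(u)-\psi(u-t)\leq(c_{2}-c_{1})u+c_{2}t$, and the $(c_{2}-c_{1})u$ term makes $\int_{1}^{u/4}(\psi(u)-\psi(u-t))t^{-2}\,dt$ of order $u$, not $\log\log u$. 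So Lemma~\ref{lemma:explicitMangoldt} is the wrong input here and cannot deliver the claimed $\frac{u\log u\log\log u}{T}$ term.

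What actually produces the $\log\log u$ is a short-interval prime bound. The paper (following \cite{CH2023}) first factors out $\Lambda(p)\leq\log u$ and then bounds $\sum_{p}1/|u-p|$ by Abel summation against the Montgomery--Vaughan inequality $P(x,y)\leq 2y/\log y$ from \cite{MontgomeryVaughan} for the number of primes in $(x-y,x]$; this turns the sum into essentially $\sum_{2\leq n\lesssim u}\frac{2}{n\log n}\sim 2\log\log u$, which is the sole source of the $\log\log u$. Higher prime powers $p^{k}$ with $k\geq 2$ are handled separately by elementary counting (there are $O(u^{1/k})$ of them in the window), and this is what yields the two $\sqrt{u}$ terms. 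A minor secondary point: your principal-character reduction $\omega(q)\log u\leq(\log q/\log 2)\log u$ overshoots the stated $\log q\,\log u$; the paper invokes a sharper inequality from \cite{EHP2022}, valid for $q\geq 7$.
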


First we describe the result for the principal character.
   
\begin{proposition}
\label{lemma:principal2}
    Let $q \geq 7$ or $q=1$, $u \geq 2$, $T\geq 4\cdot10^5$ and $\chi_0$ be the principal character modulo $q$. Then 
    \begin{align*}
          &\left|\psi(u,\chi_0)-u+\log{(2\pi)}+\sum_{|\gamma_\zeta|<T}\frac{u^{\rho_\zeta}}{\rho_\zeta}\right|<\left(\left(\frac{5}{4}\right)^{1+\frac{1}{\log{2}}}+1\right)\frac{2u}{T}\log{\left(u\right)}\log\log{\left(u\right)}+\log u(\log q+1) \\
          &\quad+\frac{\log u+1}{T}+\frac{eu\log{u}}{T\log{(5/4)}}+\left(2e+\frac{\log{3}}{\log{2}}\right)\log{u}+\frac{19.498u}{T}\log{\left(u\right)}+\frac{0.645\sqrt{u}\log^2u}{T}\\
          &\quad+\frac{1.156\sqrt{u}\log u}{T}+\frac{4u\log{T}}{T-1}+\frac{2eu\log{u}}{\pi(T-1)}
        +\frac{e u}{\pi(T-1) \log{u}}\left(\log^2{(T+1)}+20\log{(T+1)}\right)\\
        &\quad+\frac{18+2\log{\sqrt{(u+1)^2+T^2}}}{2\pi u^2}+\frac{18+2\log{\sqrt{(u+1)^2+(T+1)^2}}}{2\pi u^2(T-1)}+\frac{9+\frac{1}{2}\log{\left((u+1)^2+(T+1)^2\right)}}{\pi u (T-1)} +\frac{1}{6}.
    \end{align*}
    Here the sum runs over non-trivial zeros of the Riemann zeta function.
\end{proposition}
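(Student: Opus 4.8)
The plan is to combine the already-proved Lemma \ref{lemma:principal1}, which reduces $\psi(u,\chi_0)$ to the truncated Perron integral $I(u,T,\chi_0)$, with the residue calculus / contour-shift argument that expresses $I(u,T,\chi_0)$ in terms of the sum over the non-trivial zeros together with the explicitly computable contributions of the pole at $s=1$ (giving the main term $u$), the pole at $s=0$ (giving $-L'/L(0,\chi_0)$-type terms, which for $\zeta$ produces $\log(2\pi)$), and the trivial zeros (producing the tail that will be absorbed into the $1/6$ and the terms involving $\log\sqrt{(u+1)^2+T^2}$). Since $\chi_0\pmod q$ has the same non-trivial zeros as $\zeta$, after accounting for the finitely many Euler factors removed at primes dividing $q$ (whose effect is already inside the $\log u(\log q+1)$ term of Lemma \ref{lemma:principal1}), the whole analytic input is really that for $\zeta(s)$.

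First I would recall the classical identity: shifting the contour in \eqref{eq:defJ} from $\Re s=c=1+1/\log u$ leftward to $\Re s=-U$ for a large parameter $U\to\infty$ (staying away from trivial zeros, e.g. along $\Re s = -(2N+1)$), one picks up the residue $u$ at $s=1$, the residue at $s=0$ equal to $-\frac{\zeta'}{\zeta}(0)=\log(2\pi)$, a sum $-\sum_{|\gamma_\zeta|<T}\frac{u^{\rho_\zeta}}{\rho_\zeta}$ over the non-trivial zeros with $|\Im\rho_\zeta|<T$ inside the box, and $\sum_{k\ge 1}\frac{u^{-2k}}{-2k}$ over the trivial zeros, the latter summing to $\tfrac12\log(1-u^{-2})$, bounded crudely by a small constant that contributes to the $1/6$. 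The genuinely explicit work is bounding the three line integrals along the horizontal segments $\Im s=\pm T$ from $\Re s=-U$ to $\Re s=c$ and along the left vertical segment; one lets $U\to\infty$ so the left vertical piece vanishes, and uses explicit bounds for $|\zeta'/\zeta(\sigma\pm iT)|$ — in the region $\Re s\le -1$ via the functional equation (this is where the $\log\sqrt{(u+1)^2+(T\pm 1)^2}$ and $u^{-2}$ denominators come from, evaluating $\int u^\sigma\,d\sigma$ over $\sigma\in(-\infty,-1)$ against a $\log$-type bound on $\zeta'/\zeta$), and in the strip $-1\le\Re s\le c$ via an explicit bound of the form $|\zeta'/\zeta(s)|\ll \log^2(|t|+2)+\log(|t|+2)$ that holds for $T$ not too close to an ordinate of a zero — producing the terms $\frac{4u\log T}{T-1}$, $\frac{2eu\log u}{\pi(T-1)}$ and $\frac{eu}{\pi(T-1)\log u}(\log^2(T+1)+20\log(T+1))$. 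Throughout I would use $c-1=1/\log u$ to control $u^c=eu$ and the length $c+1$ of the relevant $\sigma$-range, exactly as the shapes of the stated terms suggest.

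Then I would add the bound of Lemma \ref{lemma:principal1} for $|\psi(u,\chi_0)-I(u,T,\chi_0)|$ to the bound just obtained for $|I(u,T,\chi_0)-u+\log(2\pi)+\sum_{|\gamma_\zeta|<T}u^{\rho_\zeta}/\rho_\zeta|$, collect like terms, and verify under $q\ge 7$ (or $q=1$), $u\ge 2$, $T\ge 4\cdot 10^5$ that the crude constants from the trivial-zero tail and from $\tfrac12\log(1-u^{-2})$ fit inside the displayed $\tfrac16$. The main obstacle will be obtaining clean, fully explicit bounds on $|\zeta'/\zeta(s)|$ on the horizontal segment $\Im s=\pm T$ uniformly for $-1\le\Re s\le c$ — the region where there is no zero-free region to exploit — so one must invoke (or re-derive) an explicit version of the classical $\log^2(t)$ bound for $\zeta'/\zeta$ near the $1$-line obtained from the partial-fraction/Borel–Carathéodory machinery, and one must also be careful that choosing a fixed $T\ge 4\cdot 10^5$ rather than a value spaced away from zero ordinates is legitimate; this is handled because the RH is verified to height $3\cdot 10^{12}$ (Lemma \ref{lemma:RH}), so for the relevant $T$ the zeros are on the critical line and the horizontal line $\Im s=T$ can be taken to avoid them, or the $+1$'s appearing in $T\pm 1$ absorb a small shift. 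Everything else is routine estimation of elementary integrals of the shape $\int u^\sigma\,d\sigma$.
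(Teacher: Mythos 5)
Your plan is structurally the right one and matches the skeleton of the paper's argument: add the bound of Lemma \ref{lemma:principal1} for $|\psi_0(u)-I(u,T,\chi_0)|$ to an explicit bound for $|I(u,T,\chi_0)-u+\sum_{|\gamma_\zeta|<T}u^{\rho_\zeta}/\rho_\zeta+\zeta'(0)/\zeta(0)|$, and use $\zeta'(0)/\zeta(0)=\log(2\pi)$. The real difference is that the paper does not re-derive the truncated explicit formula at all: it simply cites Dudek's fully explicit version (his paper, p.~184 and Section~2.1) together with Lemma~18 of \cite{EHP2022} and the observation from \cite{CH2023} bounding the relevant horizontal-integral term, and then specializes by choosing $U$ to be the even integer nearest $u$, whence $2\le U\le u+1$, $u^U\ge u^2$, and $\log\sqrt{U^2+T^2}\le\log\sqrt{(u+1)^2+T^2}$. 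Your proposal to redo the contour shift and re-prove all the explicit constants is far more work and not what the paper does; that is fine, but it would need to be carried out carefully to actually reproduce those constants.

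Two concrete errors in your outline should be fixed before such a re-derivation would succeed. First, you say to let $U\to\infty$ so the left vertical piece vanishes. If you genuinely take $U\to\infty$, the terms of the shape $\bigl(18+2\log\sqrt{(u+1)^2+T^2}\bigr)/(2\pi u^2)$ disappear (they have a $u^{-U}$ factor), and you would be claiming something inconsistent with the stated bound; the correct move, as in Dudek and in the paper, is to keep $U$ finite and equal to the even integer nearest $u$, which is precisely what makes $u^U\ge u^2$ and $\log\sqrt{U^2+T^2}\le\log\sqrt{(u+1)^2+T^2}$ appear. Second, your appeal to RH being verified up to height $3\cdot10^{12}$ to justify that a given $T$ is safe for the horizontal segments is not the right mechanism: RH says nothing about the density of ordinates near a given height. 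The standard device (already built into Dudek's $T\pm1$ and into the paper's later Lemma \ref{lemmaDifference} for the non-principal case) is to move the horizontal line within a window of width $O(1)$ to a height $T'$ whose distance to every ordinate is $\gg 1/\log T$, using the zero-counting bound $N(T+1,\chi)-N(T-1,\chi)\ll\log T$; the $T-1$ and $T+1$ in the stated bound are exactly the traces of this adjustment, not of RH verification.
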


The following corollary provides a simplified version of the result in the case of principal character.

\begin{corollary}
\label{cor:psiPrincipal}
    Let $q\geq 7$ or $q=1$, $T \geq 4\cdot10^5$, $2 \leq u \leq 3T$ and $\chi_0$ be the principal character modulo $q$. Then
    \begin{multline*}
        \left|\psi(u,\chi_0)-u+\log{(2\pi)}+\sum_{|\gamma_\zeta|<T}\frac{u^{\rho_\zeta}}{\rho_\zeta}\right|<2\left(\left(\frac{5}{4}\right)^{1+\frac{1}{\log{2}}}+1\right)\frac{u\log{u}\log\log{u}}{T} \\
        +\frac{34.544 u\log u}{T}+ 18.249\log(T)+(\log{q}+8.022)\log{u}.
    \end{multline*}
\end{corollary}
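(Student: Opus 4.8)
The plan is to start from the full estimate in Proposition~\ref{lemma:principal2} and simplify each term under the extra hypotheses $T\geq 4\cdot 10^5$ and $2\leq u\leq 3T$. The leading behaviour is already displayed in the first term of Proposition~\ref{lemma:principal2}, namely $2\left(\left(5/4\right)^{1+1/\log 2}+1\right)\frac{u\log u\log\log u}{T}$, so nothing needs to be done there except to carry it over verbatim. Every remaining term on the right-hand side of Proposition~\ref{lemma:principal2} must be bounded by one of the three allowed shapes in the corollary: a constant multiple of $\frac{u\log u}{T}$, a constant multiple of $\log T$, or a constant multiple of $\log u$ (the last of these absorbing the $\log q\log u$ piece since $q$ is fixed and we only claim an additive $8.022\log u$ overall, i.e. the $q$-dependence is kept explicit as $(\log q+8.022)\log u$).

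First I would collect the terms that are manifestly $O(u\log u/T)$: the piece $\frac{eu\log u}{T\log(5/4)}$, the piece $\frac{19.498u}{T}\log u$, and the piece $\frac{2eu\log u}{\pi(T-1)}$; using $T-1\geq T(1-1/(4\cdot10^5))$ one replaces $T-1$ by a constant multiple of $T$ and sums the coefficients. The term $\frac{4u\log T}{T-1}$ needs $\log T\leq\log(3T)\cdot\frac{\log T}{\log(3T)}$ — actually more simply, since $u\leq 3T$ one can write $\frac{4u\log T}{T-1}\leq \frac{12T\log T}{T-1}\leq 12.001\log T$ for $T\geq 4\cdot 10^5$, which lands in the $\log T$ bucket; alternatively bound $u\log T\leq u\log u$ when $u\geq T$ and $u\log T\leq 3T\log T$ when $u<T$, but the cleaner route is $u\leq 3T$. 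The delicate middle block of Proposition~\ref{lemma:principal2}, namely $\frac{eu}{\pi(T-1)\log u}\left(\log^2(T+1)+20\log(T+1)\right)$, is where the hypothesis $u\leq 3T$ is genuinely used: writing $u/(T-1)\leq 3T/(T-1)\leq 3.001$ and $\log^2(T+1)\leq (\log(T+1))\cdot\log(T+1)$, and then using $\log u\geq\log 2$ in the denominator, this term is $\leq\frac{3.001e}{\pi\log 2}\left(\log^2(T+1)+20\log(T+1)\right)$; but that still has a $\log^2 T$ which is not among the allowed shapes, so instead one must keep the factor $u$ and bound $\frac{u}{\log u}\log^2(T+1)$ against $u\log u$ — this works precisely when $\log(T+1)\leq\log u$, i.e. $u\geq T+1$, and against $\log T$ (times a constant) when $u< T+1$ by using $u/\log u\leq (T+1)/\log 2$. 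So this term splits into a $\frac{u\log u}{T}$ contribution and a $\log T$ contribution via a case distinction on whether $u\geq T+1$.

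Finally the three tail terms involving $u^{-2}$ and $u^{-1}$ — namely $\frac{18+2\log\sqrt{(u+1)^2+T^2}}{2\pi u^2}$, its $(T-1)$-damped analogue, and $\frac{9+\tfrac12\log((u+1)^2+(T+1)^2)}{\pi u(T-1)}$, together with the harmless $\tfrac16$, the $\frac{\log u+1}{T}$, the $\left(2e+\frac{\log 3}{\log 2}\right)\log u$, the $\frac{0.645\sqrt u\log^2 u}{T}+\frac{1.156\sqrt u\log u}{T}$, and the $\log u(\log q+1)$ — are all bounded crudely: for the $u^{-2}$ and $u^{-1}$ terms one uses $\sqrt{(u+1)^2+T^2}\leq\sqrt{(u+1)^2+(3u)^2}\ll u$ (since $T\leq$ does not bound $T$ above, but here $u\leq 3T$ gives $T\geq u/3$, not $T\leq$ anything; instead use $(u+1)^2+T^2\leq ((u+1)+T)^2$ and $\log$ of that is $O(\log(u+T))=O(\log u+\log T)$, then divide by $u^2$ or $u(T-1)$ to get something $o(1)$ hence absorbed into the constant in the $\log T$ term). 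The $\sqrt u\log^2 u/T$ terms satisfy $\sqrt u\leq u/\sqrt 2\leq u$ so they fold into the $u\log u/T$ bucket (after noting $\log^2 u\le \log u\cdot\log u$ and that one extra $\log u$ against $1/T$ is harmless because... actually one needs $\sqrt u\log^2 u\le C\sqrt{u}\log u\cdot\log u$, and $\sqrt u\log u\le u$ for $u\ge 2$, giving $\le C u\log u/T$). The main obstacle, and the only place requiring care rather than brute force, is the $\log^2(T+1)$ appearing in the middle block: getting it to fit the permitted shapes forces the case split $u\gtrless T+1$ described above, and one must check the arithmetic so that the two resulting coefficients, combined with all the other collected coefficients, total at most $34.544$ in front of $u\log u/T$ and at most $18.249$ in front of $\log T$. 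I would do these numerical sums last, once every term has been routed into one of the three buckets.
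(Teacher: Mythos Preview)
Your overall strategy---start from Proposition~\ref{lemma:principal2} and sort every term into one of three buckets $\frac{u\log u}{T}$, $\log T$, $\log u$---is exactly what the paper does. Most of your routing is fine. The genuine gap is in your treatment of the ``middle block''
\[
\frac{eu}{\pi(T-1)\log u}\bigl(\log^2(T+1)+20\log(T+1)\bigr).
\]
In your case $u<T+1$ you write $u/\log u\le (T+1)/\log 2$. Substituting this gives
\[
\frac{e(T+1)}{\pi(T-1)\log 2}\log^{2}(T+1),
\]
which is of order $\log^{2}T$, not $\log T$. So that branch does not land in the allowed $\log T$ bucket at all. The bound you want is the monotonicity of $x/\log x$ on $[e,\infty)$: since $u\le 3T$, one has $u/\log u\le 3T/\log(3T)$ uniformly for $e\le u\le 3T$ (and $u/\log u\le 2/\log 2<3T/\log(3T)$ for $2\le u<e$). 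With this, the whole middle block is at most
\[
\frac{3eT}{\pi(T-1)\log(3T)}\,\log(T+1)\bigl(\log(T+1)+20\bigr)\;\le\;6.101\log T,
\]
no case split needed. This is exactly how the paper handles it, and the three pieces $0.013+12.135+6.101$ then sum to the stated $18.249$.

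There is a second, quantitative problem with your case split even after the slip above is repaired. In your branch $u\ge T+1$ you route the middle block to $\frac{u\log u}{T}$, picking up roughly an extra $\frac{e}{\pi}+\frac{20e}{\pi\log(T+1)}\approx 2.2$ in that coefficient. But the five terms the paper collects there already saturate $34.544$ (namely $e/\log(5/4)+19.498+1.134+2e/\pi\cdot T/(T-1)$, the $1.134$ coming from evaluating $0.645\log u/\sqrt u+1.156/\sqrt u$ at its maximum $u=2$). So your $u\ge T+1$ branch would force a coefficient near $36.7$ in front of $\frac{u\log u}{T}$; one cannot trade this back against the $\log T$ savings, because at $u=3T$ one has $\frac{u\log u}{T}=3\log(3T)>2.77\log T$. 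In short: drop the case split, use $u/\log u\le 3T/\log(3T)$ once, and the constants close.
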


Lastly, we provide the result for non-principal, primitive characters. In the result, we use the definition
\begin{equation}
\label{def:Bchi}
    B(\chi)=-\sum_{\rho} \frac{1}{\rho}.
\end{equation}
\begin{proposition}
\label{lemma:mainLemmaPsiuchi}
Assume $q\geq 7$, $\chi$ is a non-principal, primitive character modulo $q$, $u\geq 2$ and $T \geq 4\cdot10^5$. Then, we have
\begin{align*}
    &\left|\psi(u,\chi)-C(\chi)+\sum_{|\Im\rho|<T} \frac{u^\rho}{\rho}\right|<+\left(\left(\frac{5}{4}\right)^{1+\frac{1}{\log{2}}}+1\right)\frac{2u}{T}\log{\left(u\right)}\log\log{\left(u\right)}+ \log u+\frac{\log u+1}{T}\\
   &\quad+\frac{eu\log{u}}{T\log{(5/4)}}+\left(2e+\frac{\log{3}}{\log{2}}\right)\log{u}+\frac{19.498u\log(u)}{T}+\frac{0.645\sqrt{u}\log^2u}{T}\\
   &\quad +\frac{1.156\sqrt{u}\log u}{T}+\left(\log{u}+C_0+\frac{0.478}{\log{u}}\right)\frac{2ue}{\pi(T-1)}+\frac{u(1.092\log{(qT)}+4\log{\log{(qT)}}-3.005)}{T-1}\\
   &\quad +\frac{ue}{\pi (T-1)\log{u}}\left(\vphantom{\frac{15}{2}\Re B(\chi)}3.578 \log^2(q T) + 48 \log^2(\log(q T)) + 26.208 \log(\log(q T)) \log(q T) 
    \right.\\ 
   &\quad \left.+  3\left(\frac{1}{(T-1)^2}+\frac{\pi}{4(T-1)}\right)-16.412\log(q T) -60.12 \log(\log(q T)) + 22.921+\frac{15}{8}\log\left(\frac{T^2}{4}+\frac{T}{2}+\frac{5}{2}\right)  \right.\\ 
   &\quad\left.+\frac{15}{4}\log \frac{q}{\pi}-\frac{15}{2}\Re B(\chi)\right)+\frac{3\log{\left(\frac{T+1}{2}\right)}+\log{(q\pi)}+3.570+C_0+\frac{8}{T-1}}{\pi (T-1)u\log{u}}+\frac{1}{6},
\end{align*}
where the term $\Re B(\chi)$ is given in \eqref{def:Bchi} and bounded as in Lemma \ref{lem:Bchi}. 
\end{proposition}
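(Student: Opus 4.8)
The plan is to follow the classical truncated‑Perron route, but starting from the preparatory estimate of Lemma~\ref{lemma:principal1}. The first step is simply to invoke that lemma, which already controls the difference between $\psi(u,\chi)$ and the truncated contour integral $I(u,T,\chi)$ of \eqref{eq:defJ}; in the non‑principal case (where the $\log q\log u$ term is dropped) this accounts verbatim for the terms $\log u$, $\tfrac{\log u+1}{T}$, $\tfrac{eu\log u}{T\log(5/4)}$, $(2e+\tfrac{\log 3}{\log 2})\log u$, $\tfrac{19.498u\log u}{T}$, $\tfrac{0.645\sqrt u\log^2 u}{T}$, $\tfrac{1.156\sqrt u\log u}{T}$ and $\bigl((5/4)^{1+1/\log 2}+1\bigr)\tfrac{2u}{T}\log u\log\log u$ of the asserted bound. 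It then remains to evaluate $I(u,T,\chi)$ and to show that, up to the remaining error, it equals $C(\chi)-\sum_{|\Im\rho|<T}u^{\rho}/\rho$.

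The second step is a contour shift: move the segment $\Re s=c=1+1/\log u$ to a vertical line $\Re s=\sigma_{0}$ with $\sigma_{0}\in(-1,0)$, closing the box with the horizontal segments at height $\pm T$. Since $\chi$ is primitive and non‑principal, $-\tfrac{L'}{L}(s,\chi)\tfrac{u^{s}}{s}$ is holomorphic inside the rectangle except for a simple pole at each non‑trivial zero $\rho$ with $|\Im\rho|<T$, of residue $-u^{\rho}/\rho$, and a pole at $s=0$. By the residue theorem $I(u,T,\chi)$ is the sum of these residues minus the integrals over the remaining three sides. For odd $\chi$ one has $L(0,\chi)\neq 0$, so the residue at $s=0$ is $-\tfrac{L'}{L}(0,\chi)$, which the functional equation identifies with $C(\chi)=\tfrac{L'}{L}(1,\overline\chi)+\log\tfrac{q}{2\pi}-C_{0}$ after inserting $\tfrac{\Gamma'}{\Gamma}(1)=-C_{0}$ and $\tfrac{\Gamma'}{\Gamma}(\tfrac12)=-C_{0}-2\log 2$; for even $\chi$ there is a double pole and the residue is $-\log u+C(\chi)$, the extra $\log u$ being absorbed into the error term (this is the lone ``$\log u$'' in the statement). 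Choosing $\sigma_{0}\in(-1,0)$ leaves no trivial zero inside the box for odd $\chi$, while for even $\chi$ the trivial zeros ever relevant lie at $s=-2n$ and contribute at most $\tfrac12|\log(1-u^{-2})|<\tfrac16$ for $u\ge 2$; this is the ``$+\tfrac16$''.

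The third and technically heaviest step is to bound the three remaining sides. On the left segment $\Re s=\sigma_{0}$ I would use the functional equation once more to write $\tfrac{L'}{L}(s,\chi)$ in terms of $\tfrac{L'}{L}(1-s,\overline\chi)$ (bounded since $\Re(1-s)>1$), of $\log(q/\pi)$ and of $\Gamma'/\Gamma$‑values; here $u^{s}=u^{\sigma_{0}}$, so every term carries a $u^{-|\sigma_{0}|}$, which is the source of the $1/(u\log u)$ decay and of the appearance of $\Re B(\chi)$ through the Hadamard/partial‑fraction expansion of $L'/L$, its size being furnished by Lemma~\ref{lem:Bchi}. On the two horizontal segments $\Im s=\pm T$ I would use the standard partial‑fraction identity for $\tfrac{L'}{L}(s,\chi)$ together with the explicit count of zeros with $|\gamma\mp T|\le 1$ from Lemma~\ref{lemma:ExplicitZeroInterval} (equivalently Corollary~\ref{cor:zerosBetween}); this yields a bound for $|\tfrac{L'}{L}(\sigma\pm iT,\chi)|$ that is polynomial in $\log(qT)$ and $\log\log(qT)$, and integrating it against $u^{\sigma}/|s|$ over $\sigma\in[\sigma_{0},c]$ produces the factor $\tfrac{ue}{\pi(T-1)\log u}$ times that polynomial; the part of the $\sigma$‑range near $c$, where $L'/L$ is bounded crudely by $-\tfrac{\zeta'}{\zeta}(\sigma)=\tfrac{1}{\sigma-1}-C_{0}+O(\sigma-1)\approx\log u$, contributes the $\bigl(\log u+C_{0}+0.478/\log u\bigr)\tfrac{2ue}{\pi(T-1)}$ term, and the zero count near $c$ gives $\tfrac{u(1.092\log(qT)+4\log\log(qT)-3.005)}{T-1}$. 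Assembling the residues with these three bounds, using $u^{c}=eu$ and the hypotheses $q\ge 7$, $u\ge 2$, $T\ge 4\cdot 10^{5}$ to absorb lower‑order pieces, and collecting terms gives the stated inequality.

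The main obstacle will be obtaining, with the precise numerical constants, the uniform bound for $|\tfrac{L'}{L}(\sigma\pm iT,\chi)|$ across the full range $\sigma\in[\sigma_{0},c]$: this requires simultaneously combining the partial‑fraction expansion, the explicit near‑$T$ zero count, the functional equation on the left half of the $\sigma$‑range, and the estimate for $\Re B(\chi)$, and then carrying out the $\sigma$‑integration carefully enough that the output constants are exactly the $15/8$, $15/4$, $15/2$, $3.578$, $48$, $26.208$, $16.412$, $60.12$, $22.921$ appearing in the statement. A secondary but more conceptual point is checking that the $s=0$ residue collapses to the single constant $C(\chi)$ for both parities of $\chi$ once the functional‑equation constants are inserted, and that the stray $\log u$ from the even case is indeed dominated by the error as claimed.
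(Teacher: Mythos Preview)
Your overall structure---invoke Lemma~\ref{lemma:principal1}, then evaluate the contour integral $I(u,T,\chi)$ by a residue computation and bound the remaining sides---is correct and matches the paper. But several of the attributions are wrong, and one genuinely essential device is missing.

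The missing device is the pigeonhole choice of horizontal heights. You propose to integrate along $\Im s=\pm T$ directly and then bound $\tfrac{L'}{L}(\sigma\pm iT,\chi)$ via partial fractions plus the zero count with $|\gamma\mp T|\le 1$. That fails: if a zero $\rho$ has $\Im\rho$ within (say) $10^{-6}$ of $T$, the term $1/(s-\rho)$ is of size $10^{6}$ and no explicit constant survives. The paper handles this (Lemma~\ref{lemmaDifference}) by choosing $T_{1}\in(T-1,T+1]$ and $T_{2}\in[-T-1,-T+1)$ for which every zero satisfies $|\Im\rho-T_{j}|>(1.092\log(qT)+4\log\log(qT)-2.005)^{-1}$; this pigeonhole step is precisely what produces the constants $3.578$, $48$, $26.208$, etc.\ in Lemma~\ref{smalldifference}, which is just the square of the zero count times that reciprocal gap. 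Without this, your horizontal bound does not close.

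Two of your term attributions are also off. First, the factor $\bigl(\log u+C_{0}+0.478/\log u\bigr)\tfrac{2ue}{\pi(T-1)}$ does \emph{not} come from the $\sigma$-range near $c$ on the horizontal segments; it comes from the two short \emph{vertical} pieces from $c+iT_{1}$ to $c+i(T+1)$ and from $c-i(T+1)$ to $c+iT_{2}$, bounded using $|L'/L(c+it,\chi)|\le -\zeta'/\zeta(c)$ (Lemma~16 of \cite{EHP2022}). Second, the $\Re B(\chi)$ term does not arise from the left vertical segment: it appears in the bound for the \emph{horizontal} integrals, specifically in Lemma~\ref{largedifference}, where the sum $\sum_{|\Im\rho-\Im s|\ge 1}3|\Im s-\Im\rho|^{-2}$ is dominated by $\tfrac{15}{2}\sum_{\rho}\Re\bigl(\tfrac{1}{2+i\Im s-\rho}+\tfrac{1}{\rho}\bigr)$ and then evaluated via the Hadamard identity; that is where the $15/8$, $15/4$, $15/2$ come from. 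Finally, the paper does not stop the contour at some $\sigma_{0}\in(-1,0)$: it takes the left side to $-U$ and lets $U\to\infty$, splitting the horizontal integrals at $\Re s=-1$; the far-left portion gives the term $\tfrac{3\log((T+1)/2)+\log(q\pi)+3.570+C_{0}+8/(T-1)}{\pi(T-1)u\log u}$ via the functional equation, and the residues at the trivial zeros give the $1/6$ as the limit of $\sum_{m\ge 1}u^{\mathfrak a-2m}/(2m-\mathfrak a)$.
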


\begin{corollary}
\label{cor:psiPrimitive}
    Assume $q\geq 7$, $\chi$ is a non-principal, primitive character modulo $q>4\cdot10^5$, $T \geq q$ and $2\leq u \leq 3T$. Then, we have 
    \begin{multline*}
        \left|\psi(u,\chi)-C(\chi)+\sum_{|\Im\rho|<T} \frac{u^\rho}{\rho}\right|<2\left(\left(\frac{5}{4}\right)^{1+\frac{1}{\log{2}}}+1\right)\frac{u\log{u}\log\log{u}}{T} \\
        +\frac{34.544u\log u}{T}+\left(0.195\sqrt{q}+147.735\right)\log(T)+8.022\log{u}.
    \end{multline*}
\end{corollary}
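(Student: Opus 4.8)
The plan is to take the general estimate of Proposition \ref{lemma:mainLemmaPsiuchi} and simplify it term by term under the additional hypotheses $q > 4\cdot 10^5$, $T \geq q$, and $2 \leq u \leq 3T$. The strategy is entirely one of bounding: group the terms of Proposition \ref{lemma:mainLemmaPsiuchi} into three families according to the final shape they should contribute to, namely (i) the term $u\log u\log\log u/T$, (ii) the term $u\log u/T$, (iii) terms of the form (something)$\cdot\log T$, and (iv) terms of the form (something)$\cdot\log u$; then show that every summand can be absorbed into one of these, collecting numerical constants as we go.

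First I would handle the ``clean'' terms. The leading term $\bigl((5/4)^{1+1/\log 2}+1\bigr)\frac{2u}{T}\log u\log\log u$ passes through unchanged into family (i) (and indeed the factor $2$ in front of the final constant matches the use of $2\leq u \leq 3T$ to double it, or rather it is already present). The terms $\frac{eu\log u}{T\log(5/4)}$, $\frac{19.498 u\log u}{T}$, $\frac{0.645\sqrt u\log^2 u}{T}$, $\frac{1.156\sqrt u\log u}{T}$, and the pieces of the bracketed expression divided by $(T-1)$ that carry a factor $u$ all get folded into family (ii): here I would use $u \leq 3T$ to convert a stray $u/T$ into an absolute constant where needed, use $\sqrt u \leq \sqrt{3T}$ and $\log u \leq \log(3T)$ to control the $\sqrt u$-terms, and use $T-1 \geq \tfrac12 T$ (valid since $T\geq 4\cdot 10^5$) to replace $T-1$ by $T$ at the cost of a factor $2$. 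The terms $\log u(\log q + 1)$ — wait, in the non-principal primitive case Proposition \ref{lemma:mainLemmaPsiuchi} has only $\log u$, not $\log u(\log q+1)$ — together with $(2e + \log 3/\log 2)\log u$, and $\frac{\log u + 1}{T}$ feed family (iv); the constant $2e + \log 3/\log 2 + 1 + (\text{small}) < 8.022$ is the source of the $8.022\log u$.

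The genuinely delicate part, and the main obstacle, is the large bracketed expression multiplied by $\frac{ue}{\pi(T-1)\log u}$: it contains $\log^2(qT)$, $\log^2(\log(qT))$, $\log(\log(qT))\log(qT)$, the term $\tfrac{15}{4}\log(q/\pi)$, the term $\tfrac{15}{8}\log(T^2/4 + T/2 + 5/2)$, and crucially $-\tfrac{15}{2}\Re B(\chi)$. Dividing by $\log u$ and multiplying by $u/(T-1)$, and using $\log u \geq \log 2$, one sees each $\log^2(qT)$-type term becomes $O\bigl(u\log^2(qT)/T\bigr)$; now I would use $T \geq q$ to write $\log(qT) \leq 2\log T$, so $\log^2(qT)/T \leq 4\log^2 T/T$, which is bounded (since $T \geq 4\cdot 10^5$) and in fact $u\log^2 T/T \leq (u/T)\log^2 T$ can be pushed into family (ii) as $O(u\log u/T)$ only if $\log T \lesssim \log u$, which need NOT hold — so instead these must go into family (iii), giving contributions $\ll \log T$ after using $u/(T-1) \leq 3T/(T-1)\cdot(1) \leq $ const and $\log^2 T/\log u \leq \log^2 T/\log 2$. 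For the $B(\chi)$ term the point is that Lemma \ref{lem:Bchi} (cited in the statement) furnishes a bound $\Re B(\chi) = O(\log q)$ or $O(\sqrt q)$; tracking through the $\tfrac{15}{2}$ and the $u/((T-1)\log u)$ prefactor, and using $T \geq q$ so that $u/((T-1)\log u) \cdot \sqrt q \leq$ const $\cdot \sqrt q \cdot u/T \leq$ const $\cdot \sqrt q$, this is exactly what produces the $0.195\sqrt q\,\log T$ summand — so I would need to check that the coefficient of $\sqrt q$ coming out of $\Re B(\chi)$, after multiplication by $\tfrac{15}{2}$, $e/\pi$, the $1/\log u \leq 1/\log 2$, and the bound $u/(T-1) \leq 3T/(T-1)$, lands below $0.195$ when combined with a $\log T$ (this uses $\sqrt q \leq \sqrt T$ somewhere to turn $\sqrt q \cdot (u/T)$ into $\sqrt q$, or rather to pair it with $\log T$). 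Finally the genuinely lower-order terms — the last big fraction $\frac{3\log((T+1)/2)+\log(q\pi)+3.570+C_0+8/(T-1)}{\pi(T-1)u\log u}$, the $1/6$, and the $\log(q/\pi)$ remnant — are $O(1)$ or smaller and get swept into the $\log T$ and $\log u$ constants. The arithmetic of verifying that all collected constants in family (iii) sum to at most $147.735$ (for the part not proportional to $\sqrt q$) and $0.195$ (for the $\sqrt q$ part), and in family (ii) to at most $34.544$, is the tedious but routine endgame; the one real subtlety is correctly propagating the $T \geq q$ hypothesis so that every appearance of $q$ is traded for $T$ before the simplification, and handling $\Re B(\chi)$ via Lemma \ref{lem:Bchi}.
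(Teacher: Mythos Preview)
Your overall strategy matches the paper's: start from Proposition \ref{lemma:mainLemmaPsiuchi} and sort its terms into the four target families. The $\log u$ collection leading to $8.022$ and the $u\log u/T$ collection leading to $34.544$ are handled essentially as in the paper (the paper uses the small cancellation $\bigl(C_0+\tfrac{0.478}{\log u}\bigr)\tfrac{2e}{\pi}-3.005<0$ to keep the latter constant, which you do not mention, but this is a minor bookkeeping point).

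The genuine gap is in your treatment of the large bracket multiplied by $\tfrac{ue}{\pi(T-1)\log u}$. You propose to bound $1/\log u \leq 1/\log 2$ and $u/(T-1)\leq 3T/(T-1)$ separately. That gives
\[
\frac{ue}{\pi(T-1)\log u}\cdot\log^2(qT)\ \leq\ \frac{3Te}{\pi(T-1)\log 2}\cdot 4\log^2 T,
\]
which is of order $\log^2 T$, not $\log T$. The same defect hits the $B(\chi)$ contribution: the dominant piece of Lemma \ref{lem:Bchi} is $\tfrac{\sqrt q\,\log^2 q}{100}$, so with your bound you obtain roughly $\tfrac{9e}{40\pi\log 2}\sqrt q\,\log^2 T\approx 0.281\sqrt q\,\log^2 T$, which is far larger than the claimed $0.195\sqrt q\,\log T$.

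What the paper does instead is bound the \emph{quotient} $u/\log u$ directly: since $u\mapsto u/\log u$ is increasing on $[e,\infty)$ and $2/\log 2<3T/\log(3T)$, one has
\[
\frac{u}{\log u}\ \leq\ \frac{3T}{\log(3T)}.
\]
This puts an extra $\log(3T)\geq \log T$ in the denominator, so that every $\log^2(qT)$ in the bracket yields only $O(\log T)$ after multiplying by the prefactor, and the $\sqrt q\,\log^2 q$ from $B(\chi)$ becomes (using $\log^2 q\leq \log^2 T$ and $\log^2 T/\log(3T)\leq \log T$)
\[
\frac{3Te}{\pi(T-1)\log(3T)}\cdot\frac{15}{2}\cdot\frac{\sqrt q\,\log^2 q}{100}\ \leq\ \frac{9e}{40\pi}\cdot\frac{T}{T-1}\,\sqrt q\,\log T\ <\ 0.195\,\sqrt q\,\log T.
\]
Replacing your separate bounds by this monotonicity argument for $u/\log u$ is the missing ingredient; with it the rest of your plan goes through.
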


By observing that $|\psi(u,\chi)-\psi(u,\chi*)|\leq \log q\log u$ for a non-primitive character and the character inducing that character, we obtain
\begin{corollary}
\label{cor:psiNotPrimitive}
    Assume $q\geq 7$, $\chi$ is a non-principal, non-primitive character modulo $q>4\cdot10^5$, $T \geq q$ and $2\leq u \leq 3T$. Then, we have 
    \begin{multline*}
        \left|\psi(u,\chi)-C(\chi^*)+\sum_{|\Im\rho|<T} \frac{u^\rho}{\rho}\right|<2\left(\left(\frac{5}{4}\right)^{1+\frac{1}{\log{2}}}+1\right)\frac{u\log{u}\log\log{u}}{T} \\
        +\frac{ 34.544u\log u}{T}+\left(0.195\sqrt{q}+148.430\right)\log(T)+ (\log{q}+8.022)\log{u}.
    \end{multline*}
\end{corollary}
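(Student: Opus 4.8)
The plan is to reduce Corollary~\ref{cor:psiNotPrimitive} to the already-established primitive case in Corollary~\ref{cor:psiPrimitive} by a standard ``descent to the inducing character'' argument. First I would recall that if $\chi \pmod q$ is non-principal and imprimitive, there is a unique primitive character $\chi^* \pmod{q^*}$ with $q^* \mid q$, $q^* < q$, that induces $\chi$, and that $\chi(n) = \chi^*(n)$ whenever $(n,q)=1$ while $\chi(n)=0$ otherwise. Consequently
\[
\psi(u,\chi) - \psi(u,\chi^*) = -\sum_{\substack{n \le u \\ (n,q)>1,\ (n,q^*)=1}} \chi^*(n)\Lambda(n),
\]
and the only $n$ contributing here are prime powers $p^k \le u$ with $p \mid q$ but $p \nmid q^*$. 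For each such prime $p$ the total weight is $\sum_{p^k \le u}\log p = \lfloor \log u/\log p\rfloor \log p \le \log u$, and the number of distinct primes dividing $q$ is at most $\log q/\log 2$ (in fact at most $\omega(q)$, which is crudely $\le \log q$). This yields the elementary bound $|\psi(u,\chi)-\psi(u,\chi^*)| \le \log q\,\log u$ quoted in the text just before the statement; I would insert this one-line estimate explicitly since it is the engine of the reduction.

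Next I would combine this with Corollary~\ref{cor:psiPrimitive} applied to $\chi^*$. Care is needed with the hypotheses: $\chi^*$ is primitive of conductor $q^*$, and one wants $q^* > 4\cdot 10^5$ and $T \ge q^*$. The first is not automatic from $q > 4\cdot 10^5$, so I would handle it by noting that when $q^* \le 4\cdot 10^5$ one instead invokes Corollary~\ref{corollary:GRH} (GRH holds up to height $293.75$ for such conductors) together with Proposition~\ref{lemma:principal2}-style arguments, or more cleanly observe that the bound for $\chi^*$ of small conductor is \emph{dominated} by the bound we are claiming for $\chi$ since the claimed right-hand side grows with $q$; either way the small-conductor case only improves constants. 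For $T \ge q^*$: since $q^* \mid q \le T$ we automatically have $T \ge q^* $, so this is free. Then the triangle inequality gives
\[
\left|\psi(u,\chi) - C(\chi^*) + \sum_{|\Im\rho|<T}\frac{u^\rho}{\rho}\right|
\le \left|\psi(u,\chi^*) - C(\chi^*) + \sum_{|\Im\rho|<T}\frac{u^\rho}{\rho}\right| + \log q\,\log u,
\]
where I use that $L(s,\chi)$ and $L(s,\chi^*)$ have the same non-trivial zeros, so the zero-sum term is literally unchanged.

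Finally I would bookkeep the constants. The primitive bound contributes $2\bigl((5/4)^{1+1/\log 2}+1\bigr)u\log u\log\log u/T + 34.544\,u\log u/T + (0.195\sqrt q + 147.735)\log T + 8.022\log u$. To this I add $\log q\,\log u$, which merges with the $8.022\log u$ to give $(\log q + 8.022)\log u$; and I must absorb the $\log q\,\log u$ excess that could in principle exceed $\log T$ only when $\log q > \log T$, which is impossible since $q \le T$, so no term migrates into the $\log T$ coefficient — except that I would bump $147.735$ to $148.430$ to leave a safety margin covering the small-conductor case reduction (where the gap $\log q - \log q^*$ and the difference in the leading constants of Corollary~\ref{cor:psiPrimitive} versus the principal-character Corollary~\ref{cor:psiPrincipal} must be swept up). The main obstacle is precisely this last point: cleanly justifying the claimed uniform constants when $q^*$ is small, since then $\chi^*$ may even be the principal character of a small modulus and one is really quoting a different proposition; I would phrase the argument so that the dominant $\sqrt q$ and $\log T$ terms on the right are visibly larger than anything the $\chi^*$-bound can produce, making the reduction monotone and the constants valid a fortiori.
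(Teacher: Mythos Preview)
Your approach is essentially identical to the paper's: the paper deduces the corollary in one line from the observation $|\psi(u,\chi)-\psi(u,\chi^*)|\le (\log q)(\log u)$ combined with Corollary~\ref{cor:psiPrimitive}, exactly as you do. Your extra care about the case $q^*\le 4\cdot10^5$ (where Corollary~\ref{cor:psiPrimitive} does not literally apply to $\chi^*$) goes beyond what the paper makes explicit; one small slip in your last paragraph is that $\chi^*$ can never be principal, since a non-principal $\chi$ is always induced by a non-principal primitive character, so that particular sub-case does not arise.
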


\subsection{Proofs of Lemma \ref{lemma:principal1}, Proposition \ref{lemma:principal2} and Corollary \ref{cor:psiPrincipal}}

\begin{proof}[Proof of Lemma \ref{lemma:principal1}]
    By \cite[Lemma 12]{EHP2022} and since $q \geq 7$, in the case of a principal character, we have
    \begin{equation*}
        \left|\psi(u,\chi_0)-\psi(u)\right|=\sum_{\substack{n \leq u \\ (n,q)>1}} \Lambda(n) \leq \log{q}\log{u},
    \end{equation*}
   where $\psi(u)$ is the Chebyshev $\psi$-function. Note that this term can be omitted if $q=1$. Let us now define 
    \begin{equation*}
        \psi_0(u):=\frac{\psi(u^+)+\psi(u^-)}{2},
    \end{equation*}
    and similarly for $\psi(u,\chi)$ in the case of a non-principal character.
    Replacing $\psi(u)$ (or $\psi(u,\chi)$) with $\psi_0(u)$ (or $\psi_0(u,\chi)$) gives an error that is at most $\log{u}$. Let us now estimate the function $\psi_0(u)$ and  $\psi_0(u,\chi)$. 
    
    We first note that the first paragraph in the proof of \cite[Lemma 28]
    {EHP2022} still holds when we replace $\psi_0(u,\chi)$ with $\psi_0(u)$, characters $\chi$ with $1$ and functions $L(s,\chi)$ with $\zeta(s)$. Hence, we consider the sum
    \begin{equation*}
       \left( \sum_{n=1}^{\floor{\frac{4u}{5}}}+\sum_{n=\floor{\frac{4u}{5}}+1}^{\floor{u}-1}+\sum_{\substack{n=\floor{u}\\ n \neq u}}^{\floor{u}+1}+\sum_{n=\floor{u}+2}^{\floor{\frac{5u}{4}}}+\sum_{n=\floor{\frac{5u}{4}}+1}^\infty\right) \left(\Lambda(n)\left(\frac{u}{n}\right)^{c}\min\left\{T^{-1}\left|\log{\frac{u}{n}}\right|^{-1},1\right\}\right)+\frac{c\Lambda(u)}{T}.
    \end{equation*}
    The last term can be estimated trivially as $\leq (\log{u}+1)/T$.   
    Let us denote the first five sums by $S_1,\ldots, S_5$. We will estimate each of the sums.
    
    By the main theorem of \cite{D1987}
    \begin{equation*}
        S_1+S_5 \leq \frac{eu\log{u}}{T\log{(5/4)}}.
    \end{equation*}
    Moreover, because of \cite[p. 1346]{EHP2022} and due to the fact $u \geq 2$, we can estimate
    \begin{equation*}
        S_3 \leq \log{u}\left(\frac{u}{u-1}\right)^c+\log{(u+1)} \leq 2e\log{u}+\log{(u+1)}\leq \left(2e+\frac{\log{3}}{\log{2}}\right)\log{u}.
    \end{equation*}
       
    The sums $S_2$ and $S_4$ can be estimated similarly as the sums $S_2$ and $S_4$ \cite[Lemma 1]{CH2023}. Note first that $S_2 =0$ if $u<5$. Let us not point out the differences. We proceed similarly as in the beginning of the proof. Denote $A(u):= \floor{\frac{4u}{5}}+1$ and $B(u):=\floor{u}-1$. The expression corresponding to one on p. 107 in the proof of \cite[Lemma 1]{CH2023}, will be for $u \geq 6$
    
    \begin{multline}
    \label{eq:S2k1}
    \sum_{A(u) \leq p \leq B(u)} \frac{1}{u-p}  \leq \sum_{A(u) \leq p \leq B(u)} \frac{1}{\lfloor u\rfloor -p} \leq \sum_{n=1}^{\lfloor u\rfloor -A(u)}\frac{1}{n}(P(B(u),n+1)-P(B(u),n))\\ \leq\sum_{n=2}^{\lfloor u\rfloor -A(u)} \frac{1}{(n-1)n}P(B(u),n)+\frac{1}{\lfloor u\rfloor -A(u)}P(B(u), \lfloor u\rfloor -A(u)+1),
    \end{multline}
     where $P(x,y)$ denotes the number of primes in the interval $(x-y,x]$. (If $u/5-1<2$, we consider the first sum to be zero.) By \cite[Theorem 2]{MontgomeryVaughan} $P(x,y) \leq 2y/\log{y}$ if $1<y <x$.  Hence, applying the Euler–Maclaurin formula with the error term $R_2<0$, for $u\geq 21$ the right-hand side of \eqref{eq:S2k1} is
\begin{multline*}
=\sum_{2\leq n\leq \lfloor u\rfloor-A(u)}\frac{2}{(n-1)\log n}+2\left(1+\frac{1}{\lfloor u\rfloor -A(u)}\right)\frac{1}{\log(\lfloor u\rfloor-A(u)+1)}\\ 
< \frac{2}{\log 2}+\frac{2}{\log(\lfloor u\rfloor-A(u)+1)}+\sum_{2\leq n\leq \lfloor u\rfloor-A(u)+1}\frac{2}{n\log n}\leq \frac{2}{\log 2}+\frac{2}{\log(\frac{u}{5}-1)}+\sum_{2\leq n\leq \frac{u}{5}}\frac{2}{n\log n}\\ 
< \frac{2}{\log 2}+\frac{2}{\log(\frac{u}{5}-1)}+2\log\log\left(\frac{u}{5}\right)-2\log\log 2+\frac{1}{2\log 2}+\frac{5}{u\log(u/5)}+\frac{\log 2+1}{6\cdot 4\log^22}\\<2\log\log\left(\frac{u}{5}\right)+6.372.
\end{multline*}
Please observe that at the point of the first strict inequality, the sum has been artificially lengthened to make the summing interval easier to handle.

    Note that $2\log{\log{\left(\frac{u}{5}\right)}}+6.372$ gives an upper bound for the left-hand side of \eqref{eq:S2k1} also in the case $6 \leq u < 21$. 
    For $k\geq 2$ and $u\geq 6$, we estimate
    \begin{equation*}
       \sum_{ 4u/5\leq p^k\leq  u-1}\frac{1}{u-p^k} \leq  \frac{2 \tanh^{-1}\left(\sqrt{\frac{u - 1}{u}}\right) + \log(9 - 4 \sqrt{5})}{k\sqrt{u}}. 
    \end{equation*}
    When we sum over $k \in [2,\log{u}/\log{2}]$, by \cite{Young} the right-hand side is 
        \begin{equation*}
        \leq \frac{2 \tanh^{-1}\left(\sqrt{\frac{u - 1}{u}}\right) + \log(9 - 4 \sqrt{5})}{\sqrt{u}}\left(\log{\frac{\log{u}}{\log{2}}}+\frac{\log{2}}{2\log{u}}+C_0 -1\right) < 0.4824, \quad u \geq 6.
    \end{equation*} 
    We note that to find maximum it is sufficient that $$\frac{\log\left(\frac{\sqrt{x-1}+\sqrt{x}}{\sqrt{x}-\sqrt{x-1}}\right)+\log(9-4\sqrt{5})}{\sqrt{x}}\left(\log\frac{\log{x}}{\log{2}}+C_0-1)\right)$$ is decreasing after the maximum point since all of the other parts are also decreasing then. Since both $\frac{\log\frac{1+\sqrt{x-1}}{\sqrt{x}-\sqrt{x-1}}+\log(9-4\sqrt{5})}{x^{0.35}}$ and $\frac{\log\frac{\log{x}}{\log{2}}+C_0-1}{x^{0.15}}$ are decreasing for $x \geq 158$, it is sufficient to find the maximum in $6 \leq x \leq 158$. The maximum of the whole formula is $0.4824$ around the point $x\approx 75.7$. Thus, for $u \geq 6$, we have
    \begin{equation*}
        S_2 \leq \left(\frac{5}{4}\right)^{1+\frac{1}{\log{2}}}\frac{u}{T}\log{\left(u\right)}\left(2\log\log{\left(\frac{u}{5}\right)}+6.372+0.4824\right) \leq \left(\frac{5}{4}\right)^{1+\frac{1}{\log{2}}}\frac{u}{T}\log{\left(u\right)}\left(2\log\log{\left(u\right)}+ 6.8544\right).
    \end{equation*}

    Let us next consider $S_4$. Now we set $A(u):= \floor{u}+2$ and $B(u):=\floor{\frac{5u}{4}}$. Similarly as in the case of $S_2$, following the lines of the proof of \cite[Lemma 1]{CH2023}, we can deduce that for $u \geq 12$
We have 
\begin{multline*}
\sum_{A(u) \leq p \leq B(u)} \frac{1}{p-u} \leq \sum_{A(u) \leq p \leq B(u)}\frac{1}{p-\lfloor u\rfloor -1}=\sum_{1\leq n\leq  B(u)-A(u)+1}\frac{1}{n}(P(\lfloor u\rfloor+n+1,n+1)-P(\lfloor u\rfloor+n,n))\\ \leq \sum_{2\leq n\leq B(u)-A(u)+1}\frac{P(\lfloor u\rfloor +n,n)}{n(n-1)}+\frac{P(\lfloor u\rfloor +B(u)-A(u)+2,B(u)-A(u)+2)}{B(u)-A(u)+1}\\ \leq \frac{2}{\log 2}+\frac{2}{\log(B(u)-A(u)+2)}+\sum_{n=2}^{B(u)-A(u)+1}\frac{2}{n\log n}\leq \frac{2}{\log 2}+\frac{2}{\log(u/4-1)}+\sum_{n=2}^{u/4}\frac{2}{n\log n}
\\ \leq \frac{2}{\log 2}+\frac{2}{\log(u/4-1)}+2\log\log (u/4)-2\log\log 2+\frac{2}{4\log 2}+\frac{4}{u\log(u/4)}+2\cdot\frac{\log 2+1}{12\cdot 4\log^22}\\ 
\leq 2\log\log (u/4)+7.6755
\end{multline*} for $u\geq 12$.
    The right-hand side of the estimate works in the case $5\leq u <11$. Following the ideas of \cite[p. 109]{CH2023}, for $u \geq 5$ we get an estimate
    \begin{multline*}
        S_4 \leq \frac{\log{u}}{T}\left(2u\log\log{\left(\frac{u}{4}\right)}+7.6755u+2\left(\frac{\pi^2}{6}-1\right)\sqrt{u}\log{\left(2\sqrt{u+1}\right)}\right) \\
        \leq \frac{\log{u}}{T}\left(2u\log\log{\left(u\right)}+7.6755u+2\left(\frac{\pi^2}{6}-1\right)\sqrt{u}\log{\left(2\sqrt{u+1}\right)}\right).
    \end{multline*}
 The last estimate holds in the case $2 \leq u <5$, too. Putting everything together, we have
 \begin{multline*}
 S_2+S_4\leq \left(\frac{5}{4}\right)^{1+\frac{1}{\log{2}}}\frac{u}{T}\log{\left(u\right)}\left(2\log\log{\left(u\right)}+ 6.8544\right)\\ 
 +\frac{\log{u}}{T}\left(2u\log\log{\left(u\right)}+7.6755u+2\left(\frac{\pi^2}{6}-1\right)\sqrt{u}\log{\left(2\sqrt{u+1}\right)}\right)\\ 
 \leq \left(\left(\frac{5}{4}\right)^{1+\frac{1}{\log{2}}}+1\right)\frac{2u}{T}\log{\left(u\right)}\log\log{\left(u\right)}+\frac{19.498u}{T}\log{\left(u\right)}\\
 +\frac{0.645\sqrt{u}\log^2u}{T}+\frac{1.156\sqrt{u}\log u}{T},
 \end{multline*}
 where the identity $\log{(2\sqrt{u+1})}=\frac{1}{2}\log{u}+\log{(2\sqrt{1+1/u})}$ has been used.
 \end{proof}
\begin{proof}[Proof of Proposition \ref{lemma:principal2}]
In Lemma \ref{lemma:principal1}, we have bounded the term
    \begin{equation*}
        \left|\psi_0(u)-\frac{1}{2\pi i}\int_{c-iT}^{c+iT} \left(-\frac{\zeta'(s)}{\zeta(s)}\right)\frac{u^s}{s} \, ds\right|.
    \end{equation*}
  
   The integral in the left-hand side can be estimated by Dudek \cite{D2016}. By \cite{D2016} page 184 and Section 2.1, and \cite[Lemma 18]{EHP2022}, we have
    \begin{multline*}
        \left|\frac{1}{2\pi i}\int_{c-iT}^{c+iT} \left(-\frac{\zeta'(s)}{\zeta(s)}\right)\frac{u^s}{s} \, ds-u+\sum_{|\gamma_\zeta|<T}\frac{u^{\rho_\zeta}}{\rho_\zeta}+\frac{\zeta'(0)}{\zeta(0)}\right|<\frac{4u\log{T}}{T-1}+\frac{2eu\log{u}}{\pi(T-1)}
        \\+\frac{e u}{\pi(T-1)  \log{u}}\left(\log^2{(T+1)}+20\log{(T+1)}\right)+\frac{18+2\log{\sqrt{U^2+T^2}}}{2\pi u^U}+\frac{18+2\log{\sqrt{U^2+(T+1)^2}}}{2\pi u^U(T-1)}\\
        +\frac{9+\frac{1}{2}\log{\left(U^2+(T+1)^2\right)}}{\pi u (T-1)} +\frac{1}{6},
    \end{multline*}
where we have bounded Dudek's term $|I_5|\leq \frac{9+\log{\sqrt{U^2+(T+1)^2}}}{2\pi u^U(T-1)}$, and as also noted in \cite{CH2023}, we have 
$$
\left|I_7\right| \leq eu\frac{\log^2{(T+1)}+20\log{(T+1)}}{2\pi(T-1)\log{u}}.
$$ 
and where the sum runs over non-trivial zeros of the Riemann zeta function and we choose $U$ to be an even integer that is closest to number $u$.  Hence $2\leq U \leq u+1$ and the result follows when we remember that $\zeta'(0)/\zeta(0)=\log(2\pi)$.
\end{proof}

\begin{proof}[Proof of Corollary \ref{cor:psiPrincipal}]
Let us now look at the terms contributing towards the $u\log u$ term: 
\begin{equation}
\label{eq:estuloguT}
\frac{eu\log{u}}{T\log{(5/4)}}+\frac{19.498u\log{u}}{T} +\frac{0.645\sqrt{u}\log^2{u}}{T}+\frac{1.156\sqrt{u}\log{u}}{T}+\frac{2eu\log{u}}{\pi(T-1)}<34.544\frac{u\log u}{T}.
\end{equation}
Here we used the fact that the maximum of $0.645\log{u}/\sqrt{u}+1.156/\sqrt{u}$ for $u\geq 2$ is obtained at $u=2$.
Let us now move towards the term $\log(T)$.
\begin{multline*}
 \frac{9+\frac{1}{2}\log{\left((u+1)^2+(T+1)^2\right)}}{\pi u (T-1)}+\frac{1}{6} \\
=\frac{1}{T-1}\left(\frac{1}{u\pi}\log T+\frac{1}{u\pi}\log \left(1+\frac{1}{T}\right)+\frac{\log(u+1)}{\pi u}+\frac{18+\log\left(\frac{1}{(T+1)^2}+\frac{1}{(u+1)^2}\right)}{2\pi u}\right)+\frac{1}{6} \\
<0.01293\log{T}. 
\end{multline*}

Let us now look at the following terms:
\begin{multline*}
\frac{18+2\log{\sqrt{(u+1)^2+T^2}}}{2\pi u^2}+\frac{1}{T-1}\cdot \frac{9+\frac{1}{2}\log{\left((u+1)^2+(T+1)^2\right)}}{\pi u^2}+\frac{1}{T}+\frac{4u\log T}{T-1}\\ 
<\log (T)\cdot \frac{1}{T-1}\left(\frac{1}{u^2\pi}+\frac{1}{u^2\pi\log T}\log \left(1+\frac{1}{T}\right)+\frac{\log(u+1)}{\pi u^2\log T}+\frac{9+\frac{1}{2}\log\left(\frac{1}{(T+1)^2}+\frac{1}{(u+1)^2}\right)}{\pi u^2\log T} +\frac{1}{\log T}\right)\\
+\log (T)\cdot\left(\frac{1}{u^2\pi}+\frac{\log(u+1)}{\pi u^2\log T}+\frac{9+\frac{1}{2}\log\left(\frac{1}{T^2}+\frac{1}{(u+1)^2}\right)}{\pi u^2\log T}+ \frac{12T}{T-1}\right)
<12.13514\log (T).
\end{multline*}
Also, we have
\begin{multline*}
\frac{e u\left(\log^2{(T+1)}+ 20\log{(T+1)}\right)}{\pi(T-1) \log{u}}=\log(T)\frac{eu\left(\log{(T+1)}+20\right)}{\pi(T-1)\log{u}}+\log\left(1+\frac{1}{T}\right)\frac{eu\left(\log{(T+1)}+20\right)}{\pi(T-1) \log{u}}\\  \leq\log(T)\left(\frac{3Te\left(\log{(T+1)}+20\right)}{\pi(T-1) \log{(3T)}}+\log\left(1+\frac{1}{T}\right)\frac{3Te\left(\log{(T+1)}+20\right)}{\pi(T-1)(\log{(3T)})\log(T)}\right)<6.10088\log(T).
\end{multline*}
We may now put these together to obtain
\[
0.01293\log(T)+12.13514\log (T)+ 6.10088\log(T)<18.249\log(T).
\]
Finally, we have
\begin{equation}
\label{eq:estlogu}
\log{u}+\left(2e+\frac{\log{3}}{\log{2}}\right)\log{u}+\frac{\log{u}}{T}<8.022\log{u}.
\end{equation}
\end{proof}

\subsection{Proofs of Proposition \ref{lemma:mainLemmaPsiuchi} and Corollary \ref{cor:psiPrimitive}}

This is similar to the proof of Theorem 29 in \cite{EHP2022} following the lines of Chapter 19 in \cite{D2000}.

Next we wish to replicate the proof of Theorem 29 \cite{EHP2022} to estimate $I(u,T,\chi)$. There are some differences because in the current setting, we do not assume the GRH. We pick the route of integration in such a way that it avoids zeros of the $L$-function. Lemma 22 in \cite{EHP2022} has to be replaced with the following lemma:

\begin{lemma}
\label{lemmaDifference}
Assume that $\chi$ is a primitive non-principal character modulo $q \geq 3$ and $T \geq 4\cdot10^5$. Then there are numbers $T_1\in (T-1,T+1]$ and $T_2 \in [-T-1,-T+1)$  such that for $T_j$,  $j \in \{1,2\}$, it holds that
\begin{equation*}
\left|\Im\rho-T_j\right|>\frac{1}{1.092\log{(qT)}+4\log{\log{(qT)}}-2.005}
\end{equation*}
for all nontrivial zeros of the function $L(s,\chi)$.
\end{lemma}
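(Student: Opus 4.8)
The plan is a pigeonhole argument on the zero ordinates lying in a window of length $2$ around $\pm T$. First I would bound
\[
M := N(T+1,\chi) - N(T-1,\chi) = \#\{\rho : T-1 \le |\Im\rho| < T+1\},
\]
using the estimate for $N(T,\chi)$ for primitive characters in Lemma \ref{lemma:NumberOfZeros} (applicable since $q \ge 3$ and $\log\tfrac{q(T+2)}{2\pi} > 1.567$ for $T \ge 4\cdot 10^5$). In the difference the two terms $-\chi(-1)/4$ cancel, while the main term telescopes: with $F(x)=\tfrac x\pi\log\tfrac{qx}{2\pi e}$ one has $F'(x)=\tfrac1\pi\log\tfrac{qx}{2\pi}$, so
\[
F(T+1)-F(T-1)=\int_{T-1}^{T+1}\frac1\pi\log\frac{qt}{2\pi}\,dt\le\frac2\pi\log\frac{q(T+1)}{2\pi}
\]
since the integrand is increasing. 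Adding the two error terms $0.22737\,\ell+2\log(1+\ell)-\tfrac12$ (at $\ell=\log\tfrac{q(T+3)}{2\pi}$ and $\ell=\log\tfrac{q(T+1)}{2\pi}$) and absorbing the harmless factors $\log\tfrac{T+3}{T}$ and $-\log 2\pi$, a direct computation gives
\[
M < \Bigl(\tfrac2\pi + 0.45474\Bigr)\log(qT) + 4\log\log(qT) - 3.005 < 1.092\log(qT) + 4\log\log(qT) - 3.005 ;
\]
since $M$ is an integer this yields $M+1 < 1.092\log(qT) + 4\log\log(qT) - 2.005 =: \delta^{-1}$, hence $\tfrac1{M+1} > \delta$.

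\textbf{Choice of $T_1,T_2$.} For $T_1$ I would split $[T-1,T+1)$ into $M+1$ consecutive subintervals of equal length $\tfrac2{M+1}$. At most $M$ of them can contain a zero ordinate of $L(s,\chi)$, so some subinterval $J$ contains none; set $T_1$ to be the midpoint of $J$, so $T_1 \in (T-1,T+1]$. Every zero with $\Im\rho \in [T-1,T+1)$ lies outside $J$, whence $|\Im\rho - T_1| \ge \tfrac1{M+1} > \delta$; and since $T_1$ lies at distance at least $\tfrac1{M+1}$ from both endpoints $T-1$ and $T+1$, the same bound holds for every zero with $\Im\rho < T-1$ or $\Im\rho \ge T+1$. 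Thus $|\Im\rho - T_1| > \delta$ for all nontrivial zeros of $L(s,\chi)$. The number $T_2\in[-T-1,-T+1)$ is produced by the identical construction on $[-T-1,-T+1)$, using that the zeros with $\Im\rho \in [-T-1,-T+1)$ also number at most $M = N(T+1,\chi)-N(T-1,\chi)$.

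\textbf{Main obstacle.} I expect the only delicate point to be the constant bookkeeping in the first step: one must verify that $\tfrac2\pi\log\tfrac{q(T+1)}{2\pi}$ together with the two copies of the $N(T,\chi)$ error term genuinely fits below $1.092\log(qT) + 4\log\log(qT) - 3.005$, with enough slack that the integrality of $M$ upgrades the estimate on $M+1$ to a \emph{strict} inequality (this uses $qT \ge 12\cdot10^{5}$, so $\log(qT)$ is large). Using the half-open window $[T-1,T+1)$ rather than the closed one is a minor device to sidestep an off-by-one issue with a possible zero of ordinate exactly $T\pm1$. Everything else — the pigeonhole itself and the endpoint estimates for far-away zeros — is routine.
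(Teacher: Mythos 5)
Your proposal follows essentially the same route as the paper: bound the number of zero-ordinates in a window of length $2$ around $\pm T$ using the explicit $N(T,\chi)$ estimate for primitive characters, verify that the resulting count is $<1.092\log(qT)+4\log\log(qT)-3.005$, and then pigeonhole to find a zero-free gap whose midpoint serves as $T_j$. The paper does exactly this count (telescoping the $\tfrac{T}{\pi}\log\tfrac{qT}{2\pi e}$ main term, with the $\chi(-1)/4$ pieces cancelling, and summing two copies of the error term), but states the pigeonhole step only implicitly; you make it explicit, and your integral form $F(T+1)-F(T-1)=\int_{T-1}^{T+1}\tfrac1\pi\log\tfrac{qt}{2\pi}\,dt\le\tfrac2\pi\log\tfrac{q(T+1)}{2\pi}$ is equivalent to (and marginally looser than) the paper's bound $\tfrac{T}{\pi}\log(1+\tfrac{2}{T-1})\le 0.6367$, which is why the final constant still comes out below $-3.005$. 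One small point worth tightening: your $M=N(T+1,\chi)-N(T-1,\chi)$ counts $\{\rho:T-1\le|\Im\rho|<T+1\}$, so it directly bounds the number of zero-ordinates in $[T-1,T+1)$ and in $(-T-1,-T+1]$, but not quite in $[-T-1,-T+1)$ (a zero exactly at $\Im\rho=-T-1$ would be missed); running the pigeonhole on the open interval $(-T-1,-T+1)$ — which still has length $2$ and contains at most $M$ zero-ordinates — avoids this and still lands $T_2$ in $[-T-1,-T+1)$ as required.
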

\begin{proof}
For these values of $T$, we use the bound 
\begin{equation*}
\frac{T}{\pi}\log{\left(1+\frac{2}{T-1}\right)}\leq 0.6367,
\end{equation*} 
(which is true, because the second derivative is positive, and the limit of the first derivative at infinity is $0$, meaning that the function is decreasing)
and
\begin{equation*}
\log{(T+1)(T+3)}\leq \log{(1.00002T^2)},
\end{equation*}
 The original estimate is thus replaced with the following one: 
\begin{equation}
\label{estimateNZeros}
\begin{aligned}
& \frac{T+1}{\pi}\log{\frac{q(T+1)}{2\pi e}}-\frac{\chi(-1)}{4}+0.22737\log{\frac{q(T+3)}{2\pi}} +2\log{\left(1+\log{\frac{q(T+3)}{2\pi}}\right)}-0.5\\ & \quad -\frac{T-1}{\pi}\log{\frac{q(T-1)}{2\pi e}}+\frac{\chi(-1)}{4} +0.22737\log{\frac{q(T+1)}{2\pi}}+2\log{\left(1+\log{\frac{q(T+1)}{2\pi}}\right)}-0.5 \\
&<2\left(0.22737+\frac{1}{\pi}\right)\log{(qT)}+0.6367+0.22737\log{(1.00002)}+
\left(-\frac{2}{\pi}-0.45474\right)\log{(2\pi)}\\ 
& \quad+4\log{\left(\log{\frac{(4\cdot 10^5+3)qeT}{2\cdot4\cdot 10^5\pi}}\right)}-\frac{2}{\pi}-1 \\
&<1.092\log{(qT)}+4\log{\log{(qT)}}-3.005,
\end{aligned}
\end{equation}
where we applied \cite[Corollary 1.2]{BMOR2021}.
\end{proof}

We use this lemma to pick suitable $T_1$ and $T_2$. The following lemma is a version of Lemma 23 \cite{EHP2022}:
\begin{lemma}\label{smalldifference}
Assume $T\geq 4\cdot 10^5$, $q\geq 3$ and $\chi$ is a primitive non-principal character modulo $q$. Moreover, let $\Im s$ be either $T_1$ or $T_2$ in Lemma \ref{lemmaDifference}. We have
\begin{multline*}
\sum_{\substack{\rho \\ |\Im\rho- \Im s|< 1}}\frac{3}{|\sigma+ i\Im s-\rho||2+ i\Im s-\rho|}\\<3.578 \log^2(q T) + 48 \log^2(\log(q T)) + 26.208 \log(\log(q T)) \log(q T) - 16.412 \log(q T) -60.12 \log(\log(q T)) + 18.076.
\end{multline*}
where the sum runs over the nontrivial zeros of the function $L(s,\chi)$ with $|\Im\rho- \Im s|< 1$.
\end{lemma}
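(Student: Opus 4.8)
The plan is to estimate the sum
\[
\sum_{\substack{\rho \\ |\Im\rho-\Im s|<1}}\frac{3}{|\sigma+i\Im s-\rho|\,|2+i\Im s-\rho|}
\]
by first discarding the dependence on the real parts: since $0<\Re\rho<1$ and $1\le\sigma\le 2$ (we are on the route of integration chosen to avoid zeros, so in particular $\sigma$ stays in the critical strip up to the line $\Re s=2$), we have $|\sigma+i\Im s-\rho|\ge|\Im s-\Im\rho|$ and also $|2+i\Im s-\rho|\ge 1$, so each summand is at most $3/|\Im s-\Im\rho|$ — but that is too crude because $|\Im s-\Im\rho|$ can be tiny. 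Instead, I would use the defining property of $T_j$ from Lemma \ref{lemmaDifference}: every zero with $|\Im\rho-\Im s|<1$ satisfies $|\Im\rho-\Im s|>1/L$ where $L:=1.092\log(qT)+4\log\log(qT)-2.005$. Combined with $|\sigma+i\Im s-\rho|^2=(\sigma-\Re\rho)^2+(\Im s-\Im\rho)^2\ge(\Im s-\Im\rho)^2$ and likewise $|2+i\Im s-\rho|\ge 1$, each summand is bounded by $\min\{3L,\,3/|\Im s-\Im\rho|\}$; more uniformly, by $3$ when $|\Im s-\Im\rho|\ge1/L$ one gets $3/|\Im s-\Im\rho|\le 3L$, but I actually want to keep the $|2+i\Im s-\rho|\ge1$ factor and just bound the first factor by $L$. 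That gives each summand $\le 3L$, and then I multiply by the number of zeros in the window $|\Im\rho-\Im s|<1$.

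So the key step is to bound $N_s:=\#\{\rho:\ |\Im\rho-\Im s|<1\}$. This is exactly the kind of count controlled by the zero-density estimate in Lemma \ref{lemma:NumberOfZeros} (the refined version for primitive $\chi$): take the difference $N(\Im s+1,\chi)-N(\Im s-1,\chi)$ (after replacing by $|\Im s|$, handling the sign of $\Im s=T_j$), and use the monotonicity in the variable together with the main term $\frac{2}{\pi}\log\frac{q|\Im s|}{2\pi e}$ plus the error term $0.22737\,l+2\log(1+l)-0.5$ evaluated at $l=\log\frac{q(|\Im s|+3)}{2\pi}$. This is essentially the computation already carried out in the proof of Lemma \ref{lemmaDifference} itself, where the right-hand side of \eqref{estimateNZeros} is shown to be $<L+1=1.092\log(qT)+4\log\log(qT)-1.005$. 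Hence $N_s< 1.092\log(qT)+4\log\log(qT)-1.005$ as well (the same bound works, since $N_s$ is bounded by that displayed expression).

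Putting the pieces together: the sum is at most
\[
3L\cdot N_s< 3\bigl(1.092\log(qT)+4\log\log(qT)-2.005\bigr)\bigl(1.092\log(qT)+4\log\log(qT)-1.005\bigr),
\]
and expanding this product and bounding the cross terms crudely (using $T\ge 4\cdot10^5$ and $q\ge3$ so that $\log(qT)$ and $\log\log(qT)$ are bounded below, which lets one absorb lower-order terms) should yield the stated bound
\[
3.578\log^2(qT)+48\log^2(\log(qT))+26.208\log(\log(qT))\log(qT)-16.412\log(qT)-60.12\log(\log(qT))+18.076.
\]
Indeed $3\cdot1.092^2=3.577\ldots$, $3\cdot 16=48$, $3\cdot 2\cdot 1.092\cdot 4=26.208$ exactly, which confirms the leading coefficients; the linear-in-$\log(qT)$, linear-in-$\log\log(qT)$ and constant coefficients then come from collecting $3\bigl(-2.005-1.005\bigr)\cdot 1.092$, $3\bigl(-2.005-1.005\bigr)\cdot 4$ and $3\cdot 2.005\cdot 1.005$ respectively and rounding in the safe direction. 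The main obstacle is purely bookkeeping: one must be careful that the route of integration keeps $1\le\sigma\le 2$ so the factor bounds $|\sigma+i\Im s-\rho|\ge|\Im s-\Im\rho|$ and $|2+i\Im s-\rho|\ge1$ are valid, and that the window count $N_s$ is legitimately bounded by the same expression \eqref{estimateNZeros} already produced — after which the inequality is a finite, verifiable numerical comparison of two quadratics in $\log(qT)$ and $\log\log(qT)$ over the range $qT\ge 1.2\cdot 10^6$.
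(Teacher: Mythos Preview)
Your approach is exactly the paper's: bound each summand by $3L$ (using $|\sigma+i\Im s-\rho|\ge|\Im s-\Im\rho|>1/L$ and $|2+i\Im s-\rho|\ge|2-\Re\rho|\ge 1$), multiply by the number of zeros in the window, and expand. The problem is a bookkeeping slip in the zero count. The displayed computation \eqref{estimateNZeros} in the proof of Lemma~\ref{lemmaDifference} shows that the number of zeros with $|\Im\rho-\Im s|<1$ is at most
\[
1.092\log(qT)+4\log\log(qT)-3.005 \;=\; L-1,
\]
not $L+1$ as you wrote. (The pigeonhole then gives a point $T_j$ at distance $>1/((L-1)+1)=1/L$ from every zero, which is where the constant $-2.005$ in $L$ comes from.) With your count $N_s<L+1$, the product $3L(L+1)$ has the wrong lower-order coefficients: you get $3\cdot(-3.01)\cdot 1.092\approx -9.86$ for the $\log(qT)$ term and $3\cdot 2.005\cdot 1.005\approx 6.05$ for the constant, whereas the lemma claims $-16.412$ and $18.076$. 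Your expression is then \emph{strictly larger} than the stated right-hand side for all $qT\ge 1.2\cdot 10^6$ (the difference is $6.552\log(qT)+24\log\log(qT)-12.031>0$), so ``rounding in the safe direction'' is impossible and the stated inequality is not proved. Replacing $L+1$ by the correct $L-1$ gives exactly the paper's product $3(L-1)L$, whose expansion matches the claimed coefficients.

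A minor side remark: the horizontal contour in the application actually has $\sigma$ ranging over $[-1,\,1+1/\log u]$, not $[1,2]$; your estimate $|\sigma+i\Im s-\rho|\ge|\Im s-\Im\rho|$ holds for every real $\sigma$, so this does not affect the argument, but the stated restriction on $\sigma$ is inaccurate.
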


\begin{proof} 
By \eqref{estimateNZeros}, there are at most $1.092\log{(qT)}+4\log{\log{(qT)}}-3.005$ nontrivial zeros with $|\Im \rho-\Im s|<1$ and by Lemma \ref{lemmaDifference}, we have
\[
\left|\Im\rho-T_j\right|>\frac{1}{1.092\log{(qT)}+4\log{\log{(qT)}}-2.005}.
\]
Hence,
\begin{align*}
&\sum_{\substack{\rho \\ |\Im\rho- \Im s|< 1}}\frac{3}{|\sigma+ i\Im s-\rho||2+ i\Im s-\rho|}\\
&\quad<3\left(1.092\log{(qT)}+4\log{\log{(qT)}}-3.005\right)\left(1.092\log{(qT)}+4\log{\log{(qT)}}-2.005\right)\\ 
&\quad<3.578 \log^2(q T) + 48 \log^2(\log(q T)) + 26.208 \log(\log(q T)) \log(q T) - 16.412 \log(q T) -60.12 \log(\log(q T)) + 18.076.
\end{align*}
\end{proof}

The following lemma is a version of Lemma 24 \cite{EHP2022}:
\begin{lemma}\label{largedifference}
Assume $T\geq 2$, $q\geq 3$, $\chi$ is a primitive non-principal character modulo $q$ and $\Im s$ be either $T_1$ or $T_2$ in Lemma \ref{lemmaDifference}. We have
\[
\sum_{\substack{\rho \\ |\Im\rho- \Im s|\geq 1}}\frac{3}{|\Im s-\Im\rho|^2}<\frac{15}{2}\cdot 0.570+\frac{15}{8}\log\left(\frac{T^2}{4}+\frac{T}{2}+\frac{5}{2}\right)+\frac{15}{4}\log \frac{q}{\pi}-\frac{15}{2}\Re B(\chi),
\]
where $B(\chi)$ is as in \eqref{def:Bchi} and the sums run over the nontrivial zeros of the function $L(s,\chi)$. 
\end{lemma}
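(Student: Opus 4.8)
The plan is to bound the positive sum $\sum_\rho\Re\frac{1}{2+it-\rho}$, with $t:=\Im s\in\{T_1,T_2\}$, by the Hadamard partial fraction expansion of $L'/L$, and then to observe that each zero with $|\Im\rho-t|\ge 1$ already dominates a multiple of $\frac{3}{|t-\Im\rho|^2}$. Since $\chi$ is primitive and non-principal, the completed function $\xi(s,\chi)=(q/\pi)^{(s+\mathfrak a)/2}\Gamma\!\left(\tfrac{s+\mathfrak a}{2}\right)L(s,\chi)$, with $\mathfrak a=(1-\chi(-1))/2\in\{0,1\}$, is entire of order one, and equating its two expressions for $\xi'/\xi$ gives the standard identity
\[
\frac{L'}{L}(s,\chi)=B(\chi)-\tfrac12\log\tfrac{q}{\pi}-\tfrac12\frac{\Gamma'}{\Gamma}\!\left(\tfrac{s+\mathfrak a}{2}\right)+\sum_\rho\left(\frac{1}{s-\rho}+\frac{1}{\rho}\right).
\]
I would evaluate this at $s=2+it$ and take real parts.

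Solving the resulting identity for $\sum_\rho\Re\frac{1}{2+it-\rho}$ and discarding the term $-\sum_\rho\Re\frac1\rho=-\sum_\rho\frac{\Re\rho}{|\rho|^2}$, which is negative because every nontrivial zero has $\Re\rho>0$, gives
\[
\sum_\rho\Re\frac{1}{2+it-\rho}<\Re\frac{L'}{L}(2+it,\chi)-\Re B(\chi)+\tfrac12\log\tfrac q\pi+\tfrac12\Re\frac{\Gamma'}{\Gamma}\!\left(\tfrac{2+\mathfrak a+it}{2}\right).
\]
I would then bound the right-hand side term by term: from the Dirichlet series, $\Re\frac{L'}{L}(2+it,\chi)\le\bigl|\tfrac{L'}{L}(2+it,\chi)\bigr|\le\sum_{n\ge 2}\Lambda(n)n^{-2}=-\zeta'(2)/\zeta(2)<0.570$; and, using $\Re\frac{\Gamma'}{\Gamma}(z)\le\log|z|$ for $\Re z\ge 1$ together with $|t|\le T+1$ and $\mathfrak a\le 1$, one has $\bigl|\tfrac{2+\mathfrak a+it}{2}\bigr|^2\le\tfrac{9+(T+1)^2}{4}=\tfrac{T^2}{4}+\tfrac T2+\tfrac52$, so $\tfrac12\Re\frac{\Gamma'}{\Gamma}\!\left(\tfrac{2+\mathfrak a+it}{2}\right)\le\tfrac14\log\!\left(\tfrac{T^2}{4}+\tfrac T2+\tfrac52\right)$.

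It remains to compare termwise. For a nontrivial zero $\rho=\beta+i\gamma$ write $v:=2-\beta\in(1,2)$ and $u:=|t-\gamma|$; when $u\ge 1$ one checks that $\tfrac{3}{u^2}\le\tfrac{15}{2}\cdot\tfrac{v}{v^2+u^2}=\tfrac{15}{2}\Re\frac{1}{2+it-\rho}$, since after clearing denominators and using $u^2\ge 1$ this reduces to $2v^2-5v+2=(2v-1)(v-2)\le 0$, which holds for $v\in[\tfrac12,2]$. Summing over the zeros with $|\Im\rho-t|\ge 1$ and then bounding by the full nonnegative sum over all $\rho$ yields
\[
\sum_{\substack{\rho\\|\Im\rho-\Im s|\ge 1}}\frac{3}{|\Im s-\Im\rho|^2}\le\frac{15}{2}\sum_\rho\Re\frac{1}{2+it-\rho}<\frac{15}{2}\cdot 0.570+\frac{15}{8}\log\!\left(\frac{T^2}{4}+\frac T2+\frac52\right)+\frac{15}{4}\log\frac q\pi-\frac{15}{2}\Re B(\chi),
\]
which is the claim. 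This argument uses no hypothesis on the location of the zeros, so it follows the GRH-based Lemma~24 of \cite{EHP2022} essentially verbatim; the only points needing care are the parity bookkeeping for $\mathfrak a$, the digamma bound $\Re\frac{\Gamma'}{\Gamma}(z)\le\log|z|$ on $\Re z\ge 1$, and distributing the factor $\tfrac{15}{2}$ consistently over the four terms. That last bit of bookkeeping is really the only (mild) obstacle.
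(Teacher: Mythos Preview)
Your proposal is correct and follows essentially the same route as the paper: bound $\tfrac{3}{|t-\gamma|^2}\le\tfrac{15}{2}\Re\tfrac{1}{2+it-\rho}$, extend to all zeros by positivity (equivalently, add the nonnegative $\Re\tfrac{1}{\rho}$), apply the Hadamard partial-fraction identity at $s=2+it$, and bound $\Re\tfrac{L'}{L}(2+it,\chi)\le 0.570$ and the digamma term by $\tfrac14\log\bigl(\tfrac{T^2}{4}+\tfrac T2+\tfrac52\bigr)$. Your verification of the constant $\tfrac{15}{2}$ via the factorisation $(2v-1)(v-2)\le 0$ for $v=2-\beta\in(1,2)$ is in fact a bit cleaner than the paper's two-case calculus argument, but the content is identical.
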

\begin{proof}
Let us start by bounding $\frac{1}{|\Im s-\Im\rho|^2}$ with $\Re \frac{c}{2+i\Im s-\rho}$ for some suitable $c$. We have
\[
\Re \frac{1}{2+i\Im s-\rho}=\frac{2-\Re \overline{\rho}}{(2-\Re \overline \rho)^2+(\Im s-\Im \rho)^2}.
\]
For the sake of clarity, write $t=2-\Re \overline{\rho}$ and $y=|\Im s-\Im \rho|$. We need to find a constant $c_1=\frac{1}{c}$ such that
\[
\frac{t}{t^2+y^2}\geq \frac{c_1}{y^2}.
\]
Denote $f(t)=\frac{t}{t^2+y^2}$ and differentiate:
\[
f'(t)=\frac{1}{t^2+y^2}-\frac{2t^2}{(t^2+y^2)^2}=\frac{y^2-t^2}{(t^2+y^2)^2}.
\]
Since $1\leq t\leq 2$, when $y\geq 2$, the derivative is always non-negative. For a fixed value of $y$, the function obtains its smallest value at $t=1$. We must have $\frac{1}{1+y^2}\geq \frac{c_1}{y^2}$, so $c_1\leq \frac{y^2}{1+y^2}$. Since this must hold for all $y \geq 2$, we choose $c_1=\frac{4}{5}$.

Let us now look at the case with $1\leq y<2$. Now the derivative is non-negative for $1\leq t\leq y$ and negative for $t>y$. The smallest value is thus obtained either at $t=1$ or at $t=2$. We get the conditions
\[
\frac{1}{1+y^2}\geq \frac{c_1}{y^2}\quad \textrm{and}\quad \frac{2}{4+y^2}\geq \frac{c_1}{y^2}.
\]
Since $c_1$ has to satisfy both of these conditions, we have $c_1=\frac{2}{5}$. Now
\[
\frac{3}{|\Im s-\Im\rho|^2}\leq \frac{3\cdot 5}{2}\Re \frac{1}{2+i\Im s-\rho}=\frac{15}{2}\Re \frac{1}{2+i\Im s-\rho}.
\]
Again, we have $\Re \frac{1}{\rho}\geq 0$ for all $\rho$. Further, we also have $\Re \frac{1}{2+i\Im s-\rho}>0$, so we may estimate
\[
\sum_{\substack{\rho \\ |\Im\rho- \Im s|\geq 1}}\frac{3}{|\Im s-\Im\rho|^2}\leq \sum_{\substack{\rho \\ |\Im\rho- \Im s|\geq 1}}\frac{15}{2}\Re \frac{1}{2+i\Im s-\rho}\leq \sum_{\rho}\frac{15}{2}\Re \left(\frac{1}{2+i\Im s-\rho}+\frac{1}{\rho}\right)
\]
Similarly as in the proof of Lemma 24 \cite{EHP2022}, we use the functional equation to obtain
\begin{multline*}
\sum_{\rho}\frac{15}{2}\Re \left(\frac{1}{2+i\Im s-\rho}+\frac{1}{\rho}\right)=\frac{15}{2}\Re \left(\frac{L'(2+i\Im s,\chi)}{L(2+i\Im s,\chi)}\right)+\frac{15}{2}\cdot\frac{1}{2}\log \frac{q}{\pi} \\
+\frac{15}{4}\Re \left(\frac{\Gamma'(1+i\Im s/2+\mathfrak{a}/2)}{\Gamma(1+i\Im s/2+\mathfrak{a}/2)}\right)-\frac{15}{2}\Re B(\chi),
\end{multline*}
where the value of $\mathfrak{a}$ depends on the character. The first three terms can be bounded similarly as in \cite{EHP2022}. We have
\[
\frac{15}{2}\Re \left(\frac{L'(2+i\Im s,\chi)}{L(2+i\Im s,\chi)}\right)<\frac{15}{2}\cdot 0.570
\]
and
\[
\frac{15}{4}\Re \left(\frac{\Gamma'(1+i\Im s/2+\mathfrak{a}/2)}{\Gamma(1+i\Im s/2+\mathfrak{a}/2)}\right)\leq \frac{15}{8}\log\left(\frac{T^2}{4}+\frac{T}{2}+\frac{5}{2}\right).
\]
Putting everything together finishes the proof.
\end{proof}

Let us now prove the original lemma \ref{lemma:mainLemmaPsiuchi}. 

\begin{proof}[Proof of Proposition \ref{lemma:mainLemmaPsiuchi}]
First, we remember that because of Lemma \ref{lemma:principal1} we only have to estimate the function $J$ given in \eqref{eq:defJ}. We have
\begin{equation}
\label{LxIntegral}
\begin{aligned}
&I(u,T+1,\chi) \\
&\quad= \frac{1}{2\pi i}\int_{c-i(T+1)}^{c+i(T+1)} \left(-\frac{L'(s,\chi)}{L(s,\chi)}\right)\frac{u^s}{s} ds\\
&\quad=\frac{1}{2\pi i}\int_{c-i(T+1)}^{c+iT_2} \left(-\frac{L'(s,\chi)}{L(s,\chi)}\right)\frac{u^s}{s} ds+\frac{1}{2\pi i}\int_{c+iT_1}^{c+i(T+1)} \left(-\frac{L'(s,\chi)}{L(s,\chi)}\right)\frac{u^s}{s} ds  \\
& \quad\quad+\frac{1}{2\pi i}\int_\mathcal{R} \left(-\frac{L'(s,\chi)}{L(s,\chi)}\right)\frac{u^s}{s} ds-\frac{1}{2\pi i}\int_{\mathcal{R}_1} \left(-\frac{L'(s,\chi)}{L(s,\chi)}\right)\frac{u^s}{s} ds \\
& \quad\quad-\frac{1}{2\pi i}\int_{c+iT_1}^{-U+iT_1} \left(-\frac{L'(s,\chi)}{L(s,\chi)}\right)\frac{u^s}{s} ds-\frac{1}{2\pi i}\int_{-U+iT_2}^{c+iT_2} \left(-\frac{L'(s,\chi)}{L(s,\chi)}\right)\frac{u^s}{s} ds.
\end{aligned}
\end{equation}
Here $c=1+1/\log{u}$, $U>1$, $T_1$ and $T_2$ are as in Lemma \ref{lemmaDifference}, if there does not exist a zero $\rho=-U$, then $\mathcal{R}$ is a rectangle with vertices
\begin{equation*}
    c+iT_2, \quad c+iT_1, \quad -U+iT_1 \quad\text{and}\quad -U+iT_2
\end{equation*}
and otherwise we avoid the point $U$ with a small radius. The left horizontal part of $\mathcal{R}$ is denoted by $\mathcal{R}_1$.

We may start the same way as in the original paper. By Lemma 16 \cite{EHP2022}, we have
\[
\left|\frac{1}{2\pi i}\int_{c-i(T+1)}^{c+iT_2} \left(-\frac{L'(s,\chi)}{L(s,\chi)}\right)\frac{u^s}{s} ds+\frac{1}{2\pi i}\int_{c+iT_1}^{c+i(T+1)} \left(-\frac{L'(s,\chi)}{L(s,\chi)}\right)\frac{u^s}{s} ds\right|<\left(\log{u}+C_0+\frac{0.478}{\log{u}}\right)\frac{2ue}{\pi(T-1)}.
\]
We also have
\begin{multline}
\label{eq:RR1}
\frac{1}{2\pi i}\int_\mathcal{R} \left(-\frac{L'(s,\chi)}{L(s,\chi)}\right)\frac{u^s}{s} ds-\frac{1}{2\pi i}\int_{\mathcal{R}_1} \left(-\frac{L'(s,\chi)}{L(s,\chi)}\right)\frac{u^s}{s} ds\\=-\sum_{\substack{\rho \\ T_2<\Im\rho<T_1}} \frac{u^\rho}{\rho}-\mathfrak{a}\frac{L'(0,\chi)}{L(0,\chi)}-(1-\mathfrak{a})\left(\log{u}+b(\chi)\right)+\sum_{m=1}^\infty\frac{u^{\mathfrak{a}-2m}}{2m-\mathfrak{a}},
\end{multline}
as $U \to \infty$, where $\mathfrak{a}=1$ if $\chi(-1)=-1$, $\mathfrak{a}=0$ if $\chi(-1)=1$ and $b(\chi)$ comes from the Laurent series expansion of $L'(s,\chi)/L(s,\chi)-1/s=b(\chi)+...$ at $s=0$ if $\chi(-1)=1$. Note that
$$
-\mathfrak{a}\frac{L'(0,\chi)}{L(0,\chi)}-(1-\mathfrak{a})\left(\log{u}+b(\chi)\right)=C(\chi).
$$ By Lemma 18 \cite{EHP2022}, we have
\[
\sum_{m=1}^\infty\frac{u^{\mathfrak{a}-2m}}{2m-\mathfrak{a}}\leq \begin{cases} 1 & \textrm{if }\chi(-1)=-1 \\ \frac{1}{6} & \textrm{if }\chi(-1)=1.\end{cases}
\]

Let us now change the range of summation of the first term in the right-hand side of \eqref{eq:RR1}
into $-T<\Im \rho <T$. By the proof of Lemma \ref{lemmaDifference}, there are at most $1.092\log{(qT)}+4\log{\log{(qT)}}-3.005$ zeros with $|\Im(\rho)| \in (T-1, T+1]$. Since $|T-T_1|<1$ and $|T+T_2|<1$, the error from changing the range of summation, is at most
\[
\frac{u(1.092\log{(qT)}+4\log{\log{(qT)}}-3.005)}{T-1}.
\]

Now, the last integrals are the most complicated ones. Here we cannot do exactly the same bounds as earlier because bounding these in \cite{EHP2022}, GRH was assumed. We divide the integrals so that we treat separately the interval $\Re s \geq -1$ and $\Re s<-1$. 

Let us start with $\Re s<-1$. Just like in \cite{EHP2022}, we get the bound
\[
\left|-\frac{1}{2\pi i}\int_{-1+iT_1}^{-U+iT_1}\left(\frac{L'(s,\chi)}{L(s,\chi)}\right)\frac{u^s}{s}ds-\frac{1}{2\pi i}\int_{-U+iT_2}^{-1+iT_2}\left(\frac{L'(s,\chi)}{L(s,\chi)}\right)\frac{u^s}{s}ds\right|<\frac{3\log{\left(\frac{T+1}{2}\right)}+\log{(q\pi)}+3.570+C_0+\frac{8}{T-1}}{\pi (T-1)u\log{u}}.
\]

Let us now turn to the case $\Re s \geq -1$. Similarly, as in \cite{EHP2022}, we get the bound
\[
\left|-\frac{1}{2\pi i}\int_{c+iT_1}^{-1+iT_1}\left(\frac{L'(s,\chi)}{L(s,\chi)}\right)\frac{u^s}{s}ds-\frac{1}{2\pi i}\int_{-1+iT_2}^{c+iT_2}\left(\frac{L'(s,\chi)}{L(s,\chi)}\right)\frac{u^s}{s}ds\right|<\frac{ue}{\pi (T-1)\log{u}}R(T,\chi),
\]
but the function $R(T,\chi)$, which is an estimate for the logarithmic derivative of the $L(s,\chi)$, is different.

We can derive the expression for $R(T,\chi)$ using absolute values of the terms in the integral.

We bound the logarithmic derivative of the $L$-function. We follow the structure of \cite{EHP2022} Lemma 25. We have
\[
\frac{L'(s, \chi)}{L(s,\chi)} =\frac{L'(2+ i\Im s, \chi)}{L(2+i\Im s, \chi)}-\frac{1}{2}\frac{\Gamma'\left(\frac{1}{2}s+\frac{1}{2}\mathfrak{a}\right)}{\Gamma\left(\frac{1}{2}s+\frac{1}{2}\mathfrak{a}\right)}+\frac{1}{2}\frac{\Gamma'\left(1+\frac{i\Im s}{2}+\frac{1}{2}\mathfrak{a}\right)}{\Gamma\left(1+\frac{i\Im s}{2}+\frac{1}{2}\mathfrak{a}\right)}+\sum_\rho\left(\frac{1}{s-\rho}-\frac{1}{2+ i\Im s-\rho}\right).
\]
Now using Lemma 16 in \cite{EHP2022}, we have $\frac{L'(2+ i\Im s, \chi)}{L(2+i\Im s, \chi)}<0.570$. Furthermore,
\[
-\frac{1}{2}\frac{\Gamma'\left(\frac{1}{2}s+\frac{1}{2}\mathfrak{a}\right)}{\Gamma\left(\frac{1}{2}s+\frac{1}{2}\mathfrak{a}\right)}+\frac{1}{2}\frac{\Gamma'\left(1+\frac{i\Im s}{2}+\frac{1}{2}\mathfrak{a}\right)}{\Gamma\left(1+\frac{i\Im s}{2}+\frac{1}{2}\mathfrak{a}\right)}<3\left(\frac{1}{(T-1)^2}+\frac{\pi}{4(T-1)}\right).
\]
We only have one term left here. We have
\[
\left|\sum_\rho\left(\frac{1}{s-\rho}-\frac{1}{2+ i\Im s-\rho}\right)\right|=\left|\sum_\rho\frac{2-\Re s}{(s-\rho)(2+i\Im s-\rho)}\right|\leq \left|\sum_\rho\frac{3}{(s-\rho)(2+i\Im s-\rho)}\right|.
\]
We need to divide this sum into to parts:
\[
\sum_\rho\frac{3}{(s-\rho)(2+i\Im s-\rho)}=\sum_{\substack{\rho \\ |\Im\rho- \Im s|< 1}}\frac{3}{(s-\rho)(2+i\Im s-\rho)}+\sum_{\substack{\rho \\ |\Im\rho- \Im s|\geq 1}}\frac{3}{(s-\rho)(2+i\Im s-\rho)}
\]

Let us now consider the case $|\Im\rho- \Im s|< 1$. By Lemma \ref{smalldifference}, we have
\begin{multline*}
\sum_{\substack{\rho \\ |\Im\rho- \Im s|< 1}}\frac{3}{|\sigma+ i\Im s-\rho||2+ i\Im s-\rho|}\\<3.578 \log^2(q T) + 48 \log^2(\log(q T)) + 26.208 \log(\log(q T)) \log(q T) 
 - 16.412 \log(q T) -60.12 \log(\log(q T)) + 18.076
\end{multline*}

By Lemma \ref{largedifference}, we have
\[
\sum_{\substack{\rho \\ |\Im\rho- \Im s|\geq 1}}\frac{3}{|\Im s-\Im\rho|^2}<\frac{15}{2}\cdot 0.570+\frac{15}{8}\log\left(\frac{T^2}{4}+\frac{T}{2}+\frac{5}{2}\right)+\frac{15}{4}\log \frac{q}{\pi}-\frac{15}{2}\Re B(\chi).
\]
Summing these together yields the expression for $R(T,\chi)$, which finishes the proof.
\end{proof}

We still need to bound the expression
\[
-\Re B(\chi)=\Re \sum_{\rho} \frac{1}{\rho}.
\]
This is described in the following lemma.

\begin{lemma}\label{lem:Bchi}
Assume that $q \geq 3$ and $\chi$ is a character with conductor $q$. Then
    \[
\left|\Re B(\chi)\right| \leq 
\begin{cases}
    1.275\log{q}+6.961 &\text{if } 3 \leq q \leq 4\cdot10^5 \\
    2.288\log^2{q}+30.264\log{q}+5.809 &\text{if } 4\cdot10^5< q \text{ and } \chi \text{ is non-real} \\
    \frac{\sqrt{q}\log^2{q}}{800}+2.288\log^2{q}+20.618\log{q}+5.809 &\text{if } 4\cdot10^5< q \text{ and } \chi \text{ is real and odd} \\
    \frac{\sqrt{q}\log^2{q}}{100}+2.288\log^2{q}+20.618\log{q}+5.809 &\text{if } 4\cdot10^5< q \text{ and } \chi \text{ is real and even}. 
\end{cases}
\]
\end{lemma}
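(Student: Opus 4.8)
The plan is to bound $\Re B(\chi) = -\Re\sum_\rho \frac{1}{\rho}$ by relating it to two things we can already control: the logarithmic derivative $\frac{L'}{L}(1,\overline\chi)$ (equivalently $\frac{L'}{L}(2,\chi)$) and the sum $\sum_\rho \frac{1}{|\rho|}$ over non-trivial zeros, which was estimated in Lemma \ref{lemma:ZeroSums}. The standard Hadamard-factorization identity for a primitive Dirichlet $L$-function reads
\[
-\frac{L'}{L}(s,\chi) = \frac{1}{2}\log\frac{q}{\pi} - \frac{1}{2}\frac{\Gamma'}{\Gamma}\!\left(\frac{s+\mathfrak a}{2}\right) - B(\chi) - \sum_\rho\left(\frac{1}{s-\rho}+\frac{1}{\rho}\right),
\]
where $\mathfrak a\in\{0,1\}$ according to the parity of $\chi$. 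Evaluating at a convenient real point — $s=2$ is the natural choice since $\frac{L'}{L}(2,\chi)$ is bounded by $0.570$ in absolute value by \cite[Lemma 16]{EHP2022} and $\frac{\Gamma'}{\Gamma}$ at $1$ or $3/2$ is an explicit constant — gives
\[
\Re B(\chi) = -\Re\frac{L'}{L}(2,\chi) + \frac{1}{2}\log\frac{q}{\pi} - \frac{1}{2}\Re\frac{\Gamma'}{\Gamma}\!\left(1+\tfrac{\mathfrak a}{2}\right) - \sum_\rho \Re\!\left(\frac{1}{2-\rho}+\frac{1}{\rho}\right).
\]
Since $0<\Re\rho<1$, one has $\Re\frac{1}{2-\rho}\ge 0$ and $\Re\frac1\rho\ge 0$, so the zero-sum is non-negative; this already bounds $\Re B(\chi)$ from above by a constant times $\log q$. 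For the lower bound (which is what the absolute value needs in the regime where the zero-sum can be large), I would bound $\sum_\rho\Re\frac{1}{2-\rho} \le \sum_\rho\frac{1}{|2-\rho|}$ and likewise $\sum_\rho\Re\frac1\rho \le \sum_\rho\frac1{|\rho|}$, and feed in the explicit estimates of Lemma \ref{lemma:ZeroSums} (taking the limit $T\to\infty$ there, or rather using that the tail $\sum_{|\gamma_\chi|>T}\frac{1}{|\rho|}$ converges — more precisely I would pair $\rho$ with $1-\overline\rho$ so the sum $\sum_\rho\frac{1}{|\rho(\rho+1)|}$-type bounds of Lemma \ref{lemma:rhosquares} apply and give absolute convergence).

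Concretely: the term $-\sum_\rho\Re(\frac{1}{2-\rho}+\frac1\rho)$ I would bound in absolute value by $\sum_\rho\big(\frac{1}{|2-\rho|}+\frac{1}{|\rho|}\big)$, and since for a non-trivial zero $|2-\rho|\le |\rho|+1$ one can absorb the first sum into the second plus a convergent correction, or more simply note $\frac{1}{|2-\rho|}\le\frac{1}{|\rho|}$ when $\Re\rho\le 1$ fails in general, so instead I would use $\sum_\rho\frac{1}{|2-\rho|}\le\sum_\rho\frac{2}{|\rho(\rho+1)|}$ (valid since $|2-\rho|\ge\tfrac12|\rho+1|$ and $\ge\tfrac12$... — the precise inequality to verify is $|2-\rho|\ge c|\rho+1|$ for some explicit $c$ when $0<\Re\rho<1$) together with the $\sum\frac{1}{|\rho(\rho+1)|}$ bound of Lemma \ref{lemma:rhosquares}, and $\sum_\rho\frac{1}{|\rho|}$ is controlled by Lemma \ref{lemma:ZeroSums} after letting $T\to\infty$ (the tail past any fixed $T$ is dominated by $\sum\frac{1}{\gamma^2}$, which is handled by Lemma \ref{lemma:rhosquares}). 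Then I would split into the case distinctions exactly as in the statement: for $3\le q\le 4\cdot 10^5$ the GRH-up-to-$293.75$ input (Corollary \ref{corollary:GRH}) plus the small-$q$ branches of Lemmas \ref{lemma:ZeroSums} and \ref{lemma:rhosquares} give the $1.275\log q+6.961$ bound; for $q>4\cdot10^5$ the $2.194\log^2 q+9.646\log q\log\log q$ branch of Lemma \ref{lemma:rhosquares} and the $2.194\log^2 q + \cdots$ branch of Lemma \ref{lemma:ZeroSums} give the $2.288\log^2 q+\cdots$ bound for non-real $\chi$; and when $\chi$ is real one must additionally include the Siegel-zero contribution $\frac{1}{1-\beta_1}$, bounded by $\frac{\sqrt q\log^2 q}{800}$ (odd) or $\frac{\sqrt q\log^2 q}{100}$ (even) via Lemmas \ref{lemma:zeroFreeLarge} and \ref{lemma:SiegelBounds}, which is precisely where the $\frac{\sqrt q\log^2 q}{800}$, $\frac{\sqrt q\log^2 q}{100}$ terms in the statement come from.

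The main obstacle I anticipate is bookkeeping the explicit constants: one must carefully track how the bounds from Lemma \ref{lemma:ZeroSums} behave as $T\to\infty$ (several terms there are $O(1/T)$ and vanish, but one has to confirm the $\frac{\log^2 T}{2\pi}+\frac{\log T}{\pi}\log\frac{q}{2\pi}$ growth is exactly cancelled by the corresponding growth in the $\sum\frac1{|\rho|}$ appearing in the Hadamard identity — indeed the divergent parts must cancel since $B(\chi)$ is finite, and the standard trick is to pair $\frac{1}{\rho}$ with $\frac{1}{2-\rho}$ or to use $\Re\sum_\rho(\frac{1}{s-\rho}+\frac{1}{\rho})$ directly rather than the two sums separately). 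The cleanest route is probably to write $-\Re B(\chi) = \Re\frac{L'}{L}(2,\chi) - \frac12\log\frac{q}{\pi}+\frac12\Re\frac{\Gamma'}{\Gamma}(1+\frac{\mathfrak a}{2}) + \sum_\rho\Re(\frac{1}{2-\rho}+\frac1\rho)$ and bound the last sum above by $\sum_\rho\frac{2}{|\rho(\rho+1)|}$ using $\Re(\frac{1}{2-\rho}+\frac1\rho) = \Re\frac{2}{(2-\rho)\rho}\le \frac{2}{|\rho||2-\rho|}\le\frac{2}{|\rho(\rho+1)|}$ after checking $|2-\rho|\ge|\rho+1|$ for $\Re\rho\le\frac12$ and using the functional-equation symmetry $\rho\leftrightarrow 1-\overline\rho$ to reduce to that half — this sidesteps the divergence issue entirely and lets me plug Lemma \ref{lemma:rhosquares} in directly, with the Siegel-zero term added separately in the real case.
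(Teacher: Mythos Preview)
Your approach via the Hadamard identity at $s=2$ is conceptually valid (modulo some sign slips in the displayed identities), but it is \emph{not} what the paper does, and it would not recover the stated constants. The paper's argument is entirely direct: since $-\Re B(\chi)=\sum_\rho \Re\frac{1}{\rho}$, it bounds this sum pointwise. For low-lying zeros ($|\gamma_\chi|\le 5/7$) it uses $|\Re\frac{1}{\rho}|\le 2$ in the GRH-verified range $q\le 4\cdot 10^5$, and for $q>4\cdot 10^5$ it uses Lemma~\ref{lemma:zeroFreeLarge} to get $|\Re\frac{1}{\rho}|\le 9.646\log q$ (plus the Siegel-zero contribution from Lemma~\ref{lemma:SiegelBounds} in the real case). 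For $|\gamma_\chi|>5/7$ it uses $|\Re\frac{1}{\rho}|\le\frac{1}{\gamma^2}$ (or $\frac{1}{2\gamma^2}$ where GRH is verified) and partial summation against the zero-counting bounds of Lemma~\ref{lemma:NumberOfZeros}. No Hadamard identity, no Lemma~\ref{lemma:ZeroSums}, no Lemma~\ref{lemma:rhosquares}.

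Your route would yield a bound of the right shape, but with a worse leading coefficient. After your ``cleanest route'' one has $|\Re B(\chi)|\le O(1)+\tfrac12\log q + \sum_\rho\frac{2}{|\rho(2-\rho)|}$, and to feed in Lemma~\ref{lemma:rhosquares} you need $\frac{1}{|2-\rho|}\le\frac{c}{|1+\rho|}$. Your inequality $|2-\rho|\ge|1+\rho|$ holds only for $\Re\rho\le\tfrac12$; the symmetry $\rho\leftrightarrow 1-\bar\rho$ does pair the remaining zeros with ones on the correct side, but it changes \emph{both} factors $|\rho|$ and $|2-\rho|$ simultaneously, so the reduction is not as clean as you suggest. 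In the worst case one only has $\frac{1}{|2-\rho|}\le\frac{2}{|1+\rho|}$, giving $\sum_\rho\frac{2}{|\rho(2-\rho)|}\le 2\sum_\rho\frac{2}{|\rho(\rho+1)|}$ and hence a leading term of roughly $4\cdot 2.194\log^2 q$ from Lemma~\ref{lemma:rhosquares}, well above the stated $2.288\log^2 q$. The paper's direct estimate $|\Re\frac{1}{\rho}|\le\frac{1}{\gamma^2}$ plus partial summation is both simpler and sharper here.
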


\begin{proof}
If $q\leq 4\cdot 10^5$, then there are no exceptional zeros on the real line. Furthermore, the GRH holds up to height $293.75$. In this case, we wish to work similarly as in \cite[Lemma 17]{EHP2022}. For zeros near real axis, we bound the term $\left|\Re \frac{1}{\rho}\right|\leq 2$ to obtain $2N(5/7,\chi)$. For the sake of simplicity, the constant $5/7$ is chosen similarly as in the proof of \cite[Lemma 17]{EHP2022}. For zeros above this, we simply use the bound $\left|\Re \frac{1}{\rho}\right|\leq \frac{1}{2t^2}$ up to $293.75$ and $\left|\Re \frac{1}{\rho}\right|\leq \frac{1}{t^2}$ for values above it. We use partial summation to obtain
\begin{multline*}
\sum_{\substack{\rho \\ |\Im(\rho)| \geq 5/7}} \Re\frac{1}{\rho}\leq \int_{5/7}^{\infty}\frac{2}{t^3}(N(t,\chi)-N(5/7,\chi))dt-\int_{5/7}^{293.75}\frac{1}{t^3}(N(t,\chi)-N(5/7,\chi))dt\\
\leq -0.98N (5/7, \chi)+\int_{5/7}^{293.75} \frac{1}{t^3}\left(\frac{t}{\pi}\log{\frac{qt}{2\pi e}}+0.298\log{(qt)}+4.358\right) \, dt \\
+\int_{293.75}^\infty \frac{2}{t^3}\left(\frac{t}{\pi}\log{\frac{qt}{2\pi e}}+0.298\log{(qt)}+4.358\right) \, dt\\
\leq 0.73876 \log(q) + 3.35384 -0.98N (5/7, \chi).
\end{multline*}
The total bound is therefore
\begin{equation*}
   0.73876 \log{q}+3.35384+1.02 N (5/7, \chi) \leq 1.275\log{q}+6.961
\end{equation*}

Assume now that $q>4\cdot 10^5$. Then there may be exceptional zeros with absolute values of the real parts less than $293.75$.

We bound the sum for $T>5/7$ similarly as in \cite[p. 1333]{EHP2022}, and get
\begin{equation}
\label{eq:qLargetLarge}
\frac{-3.920N (5/7, \chi) + 2.751 \log q + 23.308}{2}<-1.960N (5/7, \chi)+1.351\log q+11.654
\end{equation}
for the sum on this interval. Now we need to bound the contribution coming from zeros near but off the real axis. 
Since again at least half of the non-trivial zeros lie on the line $\Re(s)=1/2$, we can bound
\begin{equation*}
    \sum_{\substack{\rho \\ |\rho| \leq 5/7}} \Re\left(\frac{1}{\rho}\right) \leq
    \begin{cases}
        \left(\frac{9.646\log{q}}{2}+1\right)N (5/7, \chi) &\text{if } \chi \text{ is non-real} \\
        \frac{\sqrt{q}\log^2{q}}{800}+9.646\log{q}\left(\frac{N (5/7, \chi)}{2}-1\right)+N (5/7, \chi) &\text{if } \chi \text{ is real and odd} \\
        \frac{\sqrt{q}\log^2{q}}{100}+9.646\log{q}\left(\frac{N (5/7, \chi)}{2}-1\right)+N (5/7, \chi) &\text{if } \chi \text{ is real and even}. 
    \end{cases}
\end{equation*}
Combining the previous estimate with the estimate \eqref{eq:qLargetLarge} and using the bound $N (5/7, \chi)\leq \frac{5}{7\pi}\log{\frac{5q}{14\pi e}}+0.247\log{\frac{5q}{7}}+6.894$, the claim follows.
\end{proof}

Lastly, we prove Corollary \ref{cor:psiPrimitive} that gives a simplified error term for $\psi(u,\chi)$.

\begin{proof}[Proof of Corollary \ref{cor:psiPrimitive}]
    First we note that
    \begin{equation*}
        \left(C_0+\frac{0.478}{\log{u}}\right)\cdot\frac{2e}{\pi}-3.005<0
    \end{equation*}
for all $u \geq 2$. Hence, we can estimate the $u\log{u}/T$ terms as in \eqref{eq:estuloguT} and the $\log{u}$ terms as in \eqref{eq:estlogu}. Thus, we have only $\log{T}$ terms left.

Since $T\geq \max\{q, 4\cdot10^5\}$ and $u \leq 3T$, we have
\begin{multline*}
    \frac{u(1.092\log{(qT)}+4\log{\log{(qT)}})}{T-1}+\frac{3\log{\left(\frac{T+1}{2}\right)}+\log{(q\pi)}+3.570+C_0+\frac{8}{T-1}}{\pi (T-1)u\log{u}}+\frac{1}{6}+\frac{1}{T} \\
    <\frac{3T}{T-1}\log{T}\left(2\cdot1.092+\frac{4\log{(2\log{T})}}{\log{T}}\right)+\frac{1}{6}+\frac{1}{T}\\
    +\frac{\log{T}}{\pi (T-1)u\log{u}}\left(4+\frac{3\log{(1+\frac{1}{T})}-3\log{2}+\log{\pi}+3.570+C_0+\frac{8}{T-1}}{\log{T}}\right) \\
    <9.58869\log{T}.
\end{multline*}

Next we recognize that $u/\log{u}$ is decreasing until $u=e$ and after that it is increasing. By Lemma \ref{lem:Bchi} and since $2/\log{2}<3T/\log{(3T)}$, we obtain
\begin{multline*}
    \frac{ue}{\pi (T-1)\log{u}}\left(\vphantom{\frac{15}{2}\Re B(\chi)}3.578 \log^2(q T) + 48 \log^2(\log(q T)) + 26.208 \log(\log(q T)) \log(q T) 
    \right.\\ 
    \left.+  3\left(\frac{1}{(T-1)^2}+\frac{\pi}{4(T-1)}\right)-16.412\log(q T) -60.12 \log(\log(q T)) + 22.921+\frac{15}{8}\log\left(\frac{T^2}{4}+\frac{T}{2}+\frac{5}{2}\right)  \right.\\
    \left.+\frac{15}{4}\log \frac{q}{\pi}-\frac{15}{2}\Re B(\chi)\right) \\
    < \frac{3Te}{\pi (T-1)\log{(3T)}}\left(\vphantom{\frac{15}{2}\Re B(\chi)}3.578\cdot4 \log^2(T) + 48 \log^2(2\log{T}) + 2\cdot26.208 \log(2\log{T}) \log{T} 
    \right.\\ 
    \left.+\frac{15}{4}\log{T}+\frac{3\sqrt{q}\log^2{q}}{40}+\frac{15\cdot2.288}{2}\log^2{T}+125.561\log{T} \right) \\
    <\left(0.195\sqrt{q}+138.146\right)\log{T}.
\end{multline*}
Here we used the fact that if we divide the terms that do not contain $q$ in the fourth and the fifth line by $\log{T}$, then we get a decreasing function for all $T>4\cdot10^5$. Combining the estimates, we get the results.
\end{proof}

\section{Proofs of Theorems \ref{thm:main1}---\ref{thm:main4}}
\label{sec:ProofsMain}

\begin{proof}[Proof of Theorems \ref{thm:main1} and \ref{thm:main2}] These theorems are numerical versions of Propositions \ref{prop:firstMainS(x)} and \ref{prop:SxSecond}. We use the following values for constants in these propositions:
\begin{center}
\begin{tabular}{ |c|c|c|c|c| } 
 \hline
 $c_1$ & $c_2$ & $T_0$ & $T_1$ & $x_0$ \\ \hline
 $0$ & $53.989$ & $4\cdot10^5$ & $2\pi e+1$ & $2\pi e+1$\\ \hline
\end{tabular}
\end{center}
\begin{center}
\begin{tabular}{|c|c|c|c|c|c|c|c| } 
 \hline
 $d_1(1)$ & $d_2(1)$ & $d_3(1)$ & $d_4(1)$ & $d_5(1)$ & $d_6(1)$ & $d_7(1)$ & $d_8(1)$ \\ \hline
 $2\left(\left(\frac{5}{4}\right)^{1+\frac{1}{\log{2}}}+1\right)$ & $34.544$ & $8.022$ & $18.249$ & $\frac{1}{2\pi}$ & $0$ & $2.058$ & $0.04621$ \\ \hline
\end{tabular}
\end{center}
\begin{center}
\begin{tabular}{|c|c|c| } 
 \hline
$d_9$ & $d_{10}(1)$ & $d_{11}$ \\ \hline
 $3.523$ & $6.879$ & $1.03883$ \\ \hline
\end{tabular}
\end{center}
These values are available in Lemmas  \ref{lem:BellottiZeros}, \ref{lemma:explicitMangoldt}, \ref{lemma:principalrhoT}, \ref{lemma:rho2Zeta}
and Corollaries \ref{cor:numberOfZeros}, \ref{cor:zerosBetween}, \ref{cor:psiPrincipal}
\end{proof}

\begin{proof}[Proofs of Theorems \ref{thm:main3} and \ref{thm:main4}] We use Proposition \ref{prop:firstMain} and Remark \ref{rmk:primitiveS}. 
 Using Lemmas \ref{lemma:zeroFreeAll}, \ref{lemma:explicitMangoldt}, \ref{lemma:rho2Zeta} and \ref{lemma:rhosquares}, and Corollaries \ref{corollary:ZeroFree}, \ref{cor:numberOfZeros}, \ref{cor:zerosBetween}, \ref{cor:RhoNoChi}, \ref{cor:sumRhoRhoGen},  \ref{cor:psiPrincipal} and \ref{cor:psiPrimitive} we have the following values
\begin{center}
\begin{tabular}{ |c|c|c|c|c|c|c| } 
 \hline
 $c_1$ & $c_2$ & $c_{q,3}$ & $c_{q,4}$ & $T_0$ & $T_1$ & $x_0$ \\ \hline
 $10.5$ & $61.5$ & $\frac{100}{\sqrt{q}\log^2{q}}$ & $0.055\sqrt{q}\log{q}$ & $4\cdot10^5$ &$e^e$ & $e^e$ \\ \hline
\end{tabular}
\end{center}
\begin{center}
\begin{tabular}{|c|c|c|c|c|c|c| } 
 \hline
 $d_1(q)$ & $d_2(q)$ & $d_3(q)$ & $d_4(q)$ & $d_5(q)$ & $d_6(q)$ & $d_7(q)$ \\ \hline
 $2\left(\left(\frac{5}{4}\right)^{1+\frac{1}{\log{2}}}+1\right)$ &$34.544$ & $\log{q}+8.022$ & $0.195\sqrt{q}+147.735$ & $\frac{1}{2\pi}$ & $4.434\log{q}$ & $0.430$ \\ \hline
\end{tabular}
\end{center}
\begin{center}
\begin{tabular}{|c|c|c|c| } 
 \hline
 $d_8(q)$ & $d_9$ & $d_{10}(q)$ & $d_{11}$ \\ \hline
  $\frac{\sqrt{q}\log^2{q}}{100}+2.194 \log^2{q}+9.646\log{q}\log{\log{q}}$ & $1.325$ & $0.364$ & $1.03883$ \\ \hline
\end{tabular}
\end{center}
Note that in the term $2d_8(q)/\varphi_1^*(q)$ we can choose 
$$
d_8(q)=\frac{2\sqrt{q}\log^2{q}}{100\varphi_1^*(q)}+2.194 \log^2{q}+9.646\log{q}\log{\log{q}},
$$ 
since there are at most two primitive characters that are real. We use \eqref{eq:logqLowerNum} for the lower bound for $\log{x}$ and Lemma \ref{lemma:psi1Estimate} for the lower bound of $\varphi_1^*(q)$. Notice that the coefficient of $x^{B_q^*(x)+1}$ obtains its minimum at $q=4\cdot10^5+1$ and hence we get a constant coefficient. Similarly, we find the largest value in the second case, too.

In addition we note that since $q \geq 4\cdot10^5+1$, we can improve the constant $82.366$ to $25.090$. In both theorems, we can also improve the constant $2.1$ to $2.001$. We also note that 
$$
25.096x^{1+B_q^*(x)}\left(\frac{(\log\log{x})^2}{\log{x}}\right)^2+2.001x^{\frac{1+B_q^*(x)}{2}}\log{x}<x^{1+B_q^*(x)}
$$ 
for all $x$ satisfying the condition \eqref{eq:logqLowerNum}. Hence, according to the proof of Proposition \ref{prop:firstMain}, we can remove that term.
\end{proof}

\section*{Acknowledgements}

We  thank Daniel Johnston, Nicol Leong and Valeriia Starichkova for helpful 
discussions.
The work of Neea Paloj\"arvi was mostly carried out at the University of 
Helsinki and supported by the Emil Aaltonen Foundation.  

Le premiere auteur remercie le CDP C2EMPI pour son soutien, ainsi que
l’État Français dans le cadre du programme France-2030, l'Université de 
Lille, l'Initiative d'Excellence de l'Université de Lille, la Métropole
Européenne de Lille pour leur financement et leur appui au projet 
R-CDP-24-004-C2EMPI.

\bibliographystyle{abbrv}
\bibliography{references}

\newpage
\renewcommand\thesection{\Alph{section}}
\setcounter{section}{0}
\section{Appendix: Assumptions}
\label{appendix:assumptions}
Here $T_i, c_i, c_{q,i}, d_i(q), d_i $ and $q\geq 1$ are positive real numbers. The characters $\chi, \chi_1, \chi_2$ denote Dirichlet characters modulo $q$ and $\chi^*, \chi_1^*, \chi_2^*$ denotes the primitive character that induce $\chi, \chi_1, \chi_2$, respectively. For the definitions of $\delta_1(\chi)$, $\psi(u,\chi)$ and $C(\chi^*)$ we refer to the beginning of Section \ref{sec:preliminarylemmas}.
\begin{enumerate}[label=\textbf{A.\arabic*}]
    \item \label{eq:ZeroFree}
    For $c_1,c_2$ and $|\Im(s)| \geq T_1$ the function $L(s,\chi)$ does not have any non-trivial zeros in the set 
    \begin{equation}
    \label{set:zerofree}
        \left\{s: \frac{1}{2}\leq \Re(s)\leq 1,\Re(s)\geq 1- \frac{1}{c_1\log{q}+c_2(\log{|\Im(s)|})^{2/3}(\log{|\Im(s)|})^{1/3}}\right\}.
    \end{equation} 
    \item \label{eq:ZeroFree3}
    For $c_1,c_2$ the function $L(s,\chi)$ has at most one non-trivial zero ${\rho_\chi}=\beta_1\in \mathbb{R}$ in the set \eqref{set:zerofree}.
    \item \label{assumptionProductExceptional} Let $L_q(s)=\prod_{\chi \pmod q} L(s,\chi)$. Assume that $L_q$ has at most one zero ${\rho_\chi}=\beta_1 \in \mathbb{R}$ inside the set \eqref{set:zerofree}. Assume also that this zero does not arise for principal character modulo $q$. Suppose also that $\beta_1 \leq 1-1/c_{q,3}$ for some positive constant $c_{q,3}$ that may depend on $q$. If there
    is no exceptional zero in the set \eqref{set:zerofree} if $|s|<x$, we choose $c_{q,3}=c_1\log{q}+c_2(\log{x})^{2/3}(\log{\log{x}})^{1/3}$. 
    \item \label{eq:assumptionLogDer} Let $\chi\neq \chi_0$ be a character $\pmod q$ and $\beta_1\neq 1$ a possible exceptional zero inside the set \eqref{set:zerofree}. Assume
     \begin{equation*}
        \left|\frac{L'}{L}\left(1,\overline{\chi^*}\right)-\frac{\delta_1(\chi^*)}{1-\beta_1}\right|\leq c_{q,4}\log{q}.
    \end{equation*}
    \item    \label{assumption:psiuchi}
    For all $u \geq 2$, $T\geq \max\{q,T_0\}$ and $u \leq 3T$, we have
    \begin{equation*}
    \left|\psi(u,\chi)-\delta_0(\chi) u-C(\chi^*)+\sum_{|\Im({\rho_\chi})|<T} \frac{u^{\rho_\chi}}{{\rho_\chi}} \right |
    \leq d_1(q)\frac{u(\log{u})(\log\log{u})}{T}+d_2(q)\frac{u\log{u}}{T}+d_3(q)\log{u}+d_4(q)\log{T}. 
    \end{equation*}
\item \label{eq:assumption1OverRhoSum} 
For all $T \geq T_0$, we have
\begin{equation*}
      \sum_{\substack{\rho_\chi \neq 1-\beta_1 \\ |\gamma_\chi| \leq T}} \frac{1}{\left|\rho_\chi\right|} \leq d_{5}(q)\log^2{T}+d_{6}(q)\log{T},
    \end{equation*}
    where the sum runs over non-trivial zeros of $L(s,\chi)$ with additional assumptions.
\item \label{eq:rhoSecondUpper}
For all $T \geq T_1$, we have
\begin{equation*}
    \sum_{\substack{\rho_\chi \\ |\gamma_\chi|>T}} \frac{1}{\left|\rho_\chi\right|^2}\leq \frac{d_{7}(q)\log{\left(\frac{qT}{2\pi}\right)}}{T},
\end{equation*}   
where the sum runs over non-trivial zeros of $L(s,\chi)$ with additional assumptions. 
    where the sum runs over non-trivial zeros of $L(s,\chi)$ with additional assumptions.
    \item \label{eq:assumptionrhorho1}
    For $T \geq T_0$, we have
    \begin{equation*}
    \sum_{\substack{|\rho_\chi }|<T} \frac{1}{\left|\rho_\chi(1+\rho_\chi)\right|}\leq d_{8}(q)
\end{equation*}
 where the sum runs over non-trivial zeros of $L(s,\chi)$. 
 \item \label{eq:defIntervalZeroGeneral}
Assume that for all $T \geq T_1$, we have
\begin{equation*}
        \sum_{\substack{\rho_\chi \\ T \leq |\gamma_\chi|\leq T+1}} 1 \leq d_{9}\log{\left(q(T+1)\right)},
    \end{equation*} 
  \item \label{eq:defzerosGeneral}
    For all $T \geq T_1$ we have
    \begin{equation*}
    N(T,\chi) \leq d_{10}(q)T\log{\frac{qT}{2\pi e}}.
    \end{equation*}
    \item \label{eq:assumptionLambda}
    For $T \geq \max\{q, T_0\}$, we have 
     $\sum_{n \leq T} \Lambda(n) \leq d_{11} T$, where $\Lambda(n)$ denotes the von Mangoldt function.
\end{enumerate}

\section{Appendix: Functions $f_j$}
\label{appendix:f}
Here we give notation for functions $f_j$ that are used in the results. For the terms $T_i, c_i, c_{q,i}, d_i(q), d_i$ we refer to Appendix \ref{appendix:assumptions}. The function $\mathfrak{S}_q(c)$ is given as in \eqref{def:Gqc}. 

\allowdisplaybreaks
\begin{align*}
    &f_1(q,x_0):=\frac{\pi^2 (d_{5}(q))^2\left(2x_0+1\right)}{2\log{2}}, \\
    & f_2(q,x_0):=\pi\left(1.2\cdot\left(4+\frac{1}{x_0}\right)\left(1+\frac{\log{1.2}}{\log\log{x_0}}\right)d_1(q)+\frac{1.2d_2(q)}{\log\log{x_0}}\left(4+\frac{1}{x_0}\right) \right. \\
    &\quad\quad\left.+\frac{d_3(q)}{\log\log{x_0}}\left(2+\frac{\log{(2(2+x_0^{-1}))}}{\log{x_0}}\right)+\frac{2d_4(q)}{\log\log{x_0}}\right)^2, \\
    & f_3(q,T_1,x_0):=\sqrt{2}\left(\left(2+\frac{1}{x_0}\right)^{3}-1\right)\left(2d_{7}(q)+d_{10}(q)\right)\left(\left(4+\frac{11.9}{\log{x_0}}+\frac{6.3}{(\log{x_0})^2}+\frac{\log{x_0}+\log{2}}{x_0(\log{x_0})^2}\right)d_{9} \right. \\
    &\quad\quad\left.+\frac{2}{(\log{x_0})^2}d_{10}(q)T_1\log{\frac{qT_1}{2\pi e}}\right), \\
    & f_4(q,T_1,x_0):=\left(2x_0+1\right)\left(\frac{d_1(q) \log\log{x_0}}{\sqrt{x_0}\log{x_0}}+\frac{d_2(q)+d_3(q)+d_4(q)+c_{q,4}+1}{\sqrt{x_0}\log{x_0}}+2d_{5}(q)+\frac{d_{6}(q)+1}{\log{x_0}}\right)  \\
        & \quad\quad \cdot\frac{\pi^2}{2}\left(\frac{d_1(q)\log\log{x_0}}{\sqrt{x_0}}+\frac{d_2(q)+d_3(q)+d_4(q)+c_{q,4}+1}{\sqrt{x_0}}+d_{6}(q)+1\right) \\
        &\quad\quad+\frac{6 \pi\log\log{x_0}}{\log{x_0}}f_2(q,x_0)+3\pi^2 f_3(q,T_1,x_0), \\
    &  f_5(q,T_1,x_0):=2\pi^2\left(\frac{\log\log{x_0}}{(\log{x_0})^2 \sqrt{\pi}}\sqrt{3f_2(q, x_0)}+\sqrt{\frac{3f_3(q,T_1,x_0)}{(\log{x_0})^3}}\right)+\frac{\pi^2}{2(\log{x_0})^3} \\
        & \quad\quad+\frac{2\left(2x_0+1\right)\pi^2}{\sqrt{x_0}\log{x_0}}\left(\frac{d_1(q)\log\log{x_0}+d_2(q)+d_3(q)+d_4(q)+c_{q,4}+1}{\sqrt{x_0}\log{x_0}}+d_{5}(q)+\frac{d_{6}(q)}{\log{x_0}}+\frac{1}{\sqrt{x_0}(\log{x_0})^2}\right),  \\ 
    & f_6(q,x_0):=d_1(q)+\frac{d_2(q)}{2\log\log{x_0}}+\frac{d_3(q)+d_4(q)+c_{q,4}+1}{\log\log{x_0}}+\frac{4d_{8}(q)}{x_0(\log{x_0})(\log\log{x_0})}+\frac{d_{11}}{(\log{x_0})(\log\log{x_0})} \\ 
    &  f_7(q, T_1,x_0):=0.35x_0f_1(q,x_0)+\left(1.443+\frac{0.5x_0}{\log{x_0}}\right)\left(f_4(q,T_1,x_0)+\frac{2f_5(q,T_1,x_0)}{\varphi(q)}\right) \\
     &\quad\quad    +\frac{\log\log{x_0}}{\varphi(q)(\log{x_0})^3}\left(\frac{2c_{q,3}+1.5}{\varphi(q)(\log{x_0})(\log\log{x_0})}+2f_6(q, x_0)\right) \\
     & \text{ and }\\
       & f_8(q,T_1,x_0,c):=\frac{2(\log\log{x_0})f_6(q,x_0)/\varphi(q)+d_{11}\log{q}/\log{2}+17.314+1.624/\log{x_0}}{(\log{x_0})^4}+\frac{4c_{q,3}+3}{2(\log x_0)^5\varphi(q)} \\
        &\quad\quad +\frac{1}{\log{x_0}}\left(0.35x_0f_1(q,x_0)+\left(1.443+\frac{0.5x_0}{\log{x_0}}\right)\left(f_4(q,T_1,x_0)+\frac{f_5(q,T_1,x_0)}{\varphi(q)}\right)\right)+f_1(q,x_0).
\end{align*}

In the case of $S(x)$ we have a slightly different versions for the some of the functions above:
\begin{align*}
& f_{4,\zeta}(q,T_1,x_0):=\left(2x_0+1\right)\left(\frac{d_1(q) \log\log{x_0}}{\sqrt{x_0}\log{x_0}}+\frac{d_2(q)+d_3(q)+d_4(q)}{\sqrt{x_0}\log{x_0}}+2d_{5}(q)+\frac{d_{6}(q)}{\log{x_0}}+\frac{\log{(2\pi)}}{\sqrt{x_0}(\log{x_0})^2}\right)  \\
        & \quad\quad \cdot\frac{\pi^2}{2}\left(\frac{d_1(q)\log\log{x_0}}{\sqrt{x_0}}+\frac{d_2(q)+d_3(q)+d_4(q)}{\sqrt{x_0}}+d_{6}(q)+\frac{\log{(2\pi)}}{\sqrt{x_0}\log{x_0}}\right) \\
        &\quad\quad+\frac{6 \pi\log\log{x_0}}{\log{x_0}}f_2(q,x_0)+3\pi^2 f_3(q,T_1,x_0) \\
    & f_{5,\zeta}(q,T_1,x_0):= {2\pi^2}\left(\frac{\log\log{x_0}}{(\log{x_0})^2 \sqrt{\pi}}\sqrt{3f_2(q, x_0)}+\sqrt{\frac{3f_3(q,T_1,x_0)}{(\log{x_0})^3}}\right)+\frac{\pi^2}{2(\log{x_0})^3} \\
        & \quad\quad+\frac{2\left(2x_0+1\right)\pi^2}{\sqrt{x_0}\log{x_0}}\left(d_{5}(q)+\frac{d_{6}(q)}{\log{x_0}}+\frac{d_1(q)\log\log{x_0}+d_2(q)+d_3(q)+d_4(q)}{\sqrt{x_0}\log{x_0}}+\frac{\log{(2\pi)}+1}{\sqrt{x_0}(\log{x})^2}\right), \\
& f_{6, \zeta}(q,x_0):=2d_1(q)+\frac{d_2(q)+2(d_3(q)+d_{4}(q))}{\log\log{x_0}}+\frac{8d_8(q)}{x_0(\log{x_0})(\log\log{x_0})}+\frac{2d_{11}+2\log{(2\pi)}+1.5}{(\log{x_0})(\log\log{x_0})} \\
&\text{and} \\
& f_{7,\zeta}(q, T_1,x_0):=0.35x_0f_1(q,x_0)+\left(1.443+\frac{0.5x_0}{\log{x_0}}\right)\left(f_{4,\zeta}(q,T_1,x_0)+f_{5,\zeta}(q,T_1,x_0)\right) \\
        &\quad\quad+\frac{\log\log{x_0}}{(\log{x_0})^3}\left(\frac{1.5}{(\log{x_0})(\log\log{x_0})}+2f_{6,\zeta}(q, x_0)\right).
\end{align*}
\end{document}